\theoremstyle{definition}
\newtheorem{defn}{\protect\definitionname}[section]
\theoremstyle{plain}
\newtheorem{thm}{\protect\theoremname}[section]
\theoremstyle{plain}
\newtheorem{prop}{\protect\propositionname}[section]
\theoremstyle{plain}
\newtheorem{assumption}{\protect\assumptionname}
\theoremstyle{plain}
\newtheorem{lem}{\protect\lemmaname}[section]
\theoremstyle{remark}
\newtheorem{rem}{\protect\remarkname}[section]
\theoremstyle{definition}
\newtheorem{example}{\protect\examplename}[section]
\providecommand{\examplename}{Example}
\theoremstyle{corollary}
\newtheorem{coro}{\protect\corollaryname}[section]
\newcommand{\assign}{\coloneqq}
\newcommand{\tmop}[1]{\ensuremath{\operatorname{#1}}}
\newcommand{\tmtextbf}[1]{\text{{\bfseries{#1}}}}
\newcommand{\phietaenvk}{{\hat{\phi}}_{k}}
\newcommand{\hatx}{\hat{\xbf}}
\newcommand{\x}{\mathbf{x}}
\newcommand{\tma}{\mathbf{a}}
\newcommand{\y}{\mathbf{y}}
\newcommand{\qbf}{\mathbf{q}}
\newcommand{\agls}{\texttt{AGLS}}
\newcommand{\sspg}{\texttt{SSPG}}
\newcommand{\gm}{\texttt{GM}}
\newcommand{\proxacc}{\texttt{AGD-SIPP}}
\newcommand{\sgm}{\texttt{SGM}}
\newcommand{\proxaccvr}{\texttt{ASGD-SIPP}}
\newcommand{\asgdsipp}{\texttt{ASGD-SIPP}}
\newcommand{\aglssipp}{\texttt{AGLS-SIPP}}
\providecommand{\assumptionname}{Assumption}
\providecommand{\definitionname}{Definition}
\providecommand{\lemmaname}{Lemma}
\providecommand{\propositionname}{Proposition}
\providecommand{\remarkname}{Remark}
\providecommand{\theoremname}{Theorem}
\providecommand{\corollaryname}{Corollary}
\crefname{section}{\textbf{section}}{\textbf{sections}}
\Crefname{section}{\textbf{Section}}{\textbf{Sections}}
\crefname{thm}{\textbf{Theorem}}{\textbf{theorems}}
\Crefname{thm}{\textbf{Theorem}}{\textbf{Theorems}}
\crefname{lem}{\textbf{Lemma}}{\textbf{lemmas}}
\Crefname{lem}{\textbf{Lemma}}{\textbf{Lemmas}}
\crefname{prop}{\textbf{proposition}}{\textbf{propositions}}
\Crefname{prop}{\textbf{Proposition}}{\textbf{Propositions}}
\crefname{algorithm}{\textbf{algorithm}}{\textbf{algorithms}}
\Crefname{algorithm}{\textbf{Algorithm}}{\textbf{Algorithms}}
\crefname{coro}{\textbf{Corollary}}{\textbf{corollaries}}
\Crefname{coro}{\textbf{Corollary}}{\textbf{corollaries}}
\crefname{defn}{\textbf{Definition}}{\textbf{definitions}}
\Crefname{defn}{\textbf{Definition}}{\textbf{definitions}}
\crefname{table}{\textbf{Table}}{\textbf{tables}}
\Crefname{table}{\textbf{Table}}{\textbf{tables}}
\crefname{figure}{\textbf{Figure}}{\textbf{figures}}
\Crefname{figure}{\textbf{Figure}}{\textbf{figures}}
\crefname{exple}{\textbf{Example}}{\textbf{examples}}
\Crefname{exple}{\textbf{Example}}{\textbf{examples}}
\Crefname{assumption}{\textbf{Assumption}}{\textbf{Assumptions}}
\crefname{assumption}{\textbf{Assumption}}{\textbf{Assumptions}}
\Crefname{rem}{\textbf{Remark}}{\textbf{Remarks}}
\crefname{rem}{\textbf{Remark}}{\textbf{Remarks}}
\newif\ifcomments
  \newcommand{\comm}[2][]{\textcolor{red}{\textbf{[#1:} #2\textbf{]}}}
  \newcommand{\qd}[1]{\comm[QD]{#1}}
  \newcommand{\zl}[1]{\comm[ZL]{#1}}
  \newcommand{\comm}[2][]{}
  \newcommand{\qd}[1]{}
  \newcommand{\zl}[1]{}
  \newcommand{\xx}[1]{}
\providecommand{\corollaryname}{Corollary}
\begin{document}
\global\long\def\inprod#1#2{\left\langle #1,#2\right\rangle }%
\global\long\def\inner#1#2{\langle#1,#2\rangle}%
\global\long\def\binner#1#2{\big\langle#1,#2\big\rangle}%
\global\long\def\Binner#1#2{\Big\langle#1,#2\Big\rangle}%

\global\long\def\abs#1{|#1|}%

\global\long\def\norm#1{\lVert#1\rVert}%
\global\long\def\bnorm#1{\big\Vert#1\big\Vert}%
\global\long\def\Bnorm#1{\Big\Vert#1\Big\Vert}%

\global\long\def\setnorm#1{\Vert#1\Vert_{-}}%
\global\long\def\bsetnorm#1{\big\Vert#1\big\Vert_{-}}%
\global\long\def\Bsetnorm#1{\Big\Vert#1\Big\Vert_{-}}%

\global\long\def\brbra#1{\big(#1\big)}%
\global\long\def\Brbra#1{\Big(#1\Big)}%
\global\long\def\rbra#1{(#1)}%
\global\long\def\sbra#1{[#1]}%
\global\long\def\bsbra#1{\big[#1\big]}%
\global\long\def\Bsbra#1{\Big[#1\Big]}%
\global\long\def\cbra#1{\{#1\}}%
\global\long\def\bcbra#1{\big\{#1\big\}}%
\global\long\def\Bcbra#1{\Big\{#1\Big\}}%

\global\long\def\vertiii#1{\left\vert \kern-0.25ex  \left\vert \kern-0.25ex  \left\vert #1\right\vert \kern-0.25ex  \right\vert \kern-0.25ex  \right\vert }%

\global\long\def\matr#1{\bm{#1}}%

\global\long\def\til#1{\tilde{#1}}%
\global\long\def\wtil#1{\widetilde{#1}}%

\global\long\def\wh#1{\widehat{#1}}%

\global\long\def\mcal#1{\mathcal{#1}}%

\global\long\def\mbb#1{\mathbb{#1}}%

\global\long\def\mtt#1{\mathtt{#1}}%

\global\long\def\ttt#1{\texttt{#1}}%

\global\long\def\dtxt{\textrm{d}}%

\global\long\def\bignorm#1{\bigl\Vert#1\bigr\Vert}%
\global\long\def\Bignorm#1{\Bigl\Vert#1\Bigr\Vert}%

\global\long\def\rmn#1#2{\mathbb{R}^{#1\times#2}}%

\global\long\def\deri#1#2{\frac{d#1}{d#2}}%
\global\long\def\pderi#1#2{\frac{\partial#1}{\partial#2}}%

\global\long\def\limk{\lim_{k\rightarrow\infty}}%

\global\long\def\smid{\mskip1mu\mid\mskip1mu}%

\global\long\def\trans{\textrm{T}}%

\global\long\def\onebf{\mathbf{1}}%
\global\long\def\zerobf{\mathbf{0}}%
\global\long\def\zero{\mathbf{0}}%

\global\long\def\Euc{\mathrm{E}}%
\global\long\def\Expe{\mathbb{E}}%

\global\long\def\rank{\mathrm{rank}}%
\global\long\def\range{\mathrm{range}}%
\global\long\def\diam{\mathrm{diam}}%
\global\long\def\epi{\mathrm{epi} }%
\global\long\def\relint{\mathrm{relint} }%
\global\long\def\dom{\mathrm{dom}}%
\global\long\def\prox{\mathrm{prox}}%
\global\long\def\proj{\mathrm{Proj}}%
\global\long\def\for{\mathrm{for}}%
\global\long\def\diag{\mathrm{diag}}%
\global\long\def\and{\mathrm{and}}%
\global\long\def\var{\textrm{VaR}}%
\global\long\def\where{\mathrm{where}}%
\global\long\def\dist{\mathrm{dist}}%
\global\long\def\textop{\mathrm{op}}%
\global\long\def\level{\mathrm{lev}}%

\global\long\def\st{\mathrm{s.t.}}%

\global\long\def\inte{\operatornamewithlimits{int}}%
\global\long\def\cov{\mathrm{Cov}}%

\global\long\def\argmin{\operatornamewithlimits{arg\,min}}%
\global\long\def\argmax{\operatornamewithlimits{arg\,max}}%
\global\long\def\maximize{\operatornamewithlimits{maximize}}%
\global\long\def\minimize{\operatornamewithlimits{minimize}}%

\global\long\def\tr{\operatornamewithlimits{tr}}%

\global\long\def\dis{\operatornamewithlimits{dist}}%

\global\long\def\prob{{\rm Pr}}%

\global\long\def\spans{\textrm{span}}%
\global\long\def\st{\operatornamewithlimits{s.t.}}%
\global\long\def\subjectto{\textrm{subject to}}%

\global\long\def\Var{\operatornamewithlimits{Var}}%

\global\long\def\raw{\rightarrow}%
\global\long\def\law{\leftarrow}%
\global\long\def\Raw{\Rightarrow}%
\global\long\def\Law{\Leftarrow}%

\global\long\def\vep{\varepsilon}%

\global\long\def\dom{\operatornamewithlimits{dom}}%

\begingroup
\expandafter\ifx\csname oldsum\endcsname\relax
  \let\oldsum\sum
\fi
\endgroup
\global\long\def\tsum{{\textstyle {\oldsum}}}%

\global\long\def\Cbb{\mathbb{C}}%
\global\long\def\Ebb{\mathbb{E}}%
\global\long\def\Fbb{\mathbb{F}}%
\global\long\def\Nbb{\mathbb{N}}%
\global\long\def\Rbb{\mathbb{R}}%
\global\long\def\Vbb{\mathbb{V}}%
\global\long\def\reals{\mathbb{R}}%

\global\long\def\extR{\widebar{\mathbb{R}}}%
\global\long\def\Pbb{\mathbb{P}}%

\global\long\def\Mrm{\mathrm{M}}%
\global\long\def\Acal{\mathcal{A}}%
\global\long\def\Bcal{\mathcal{B}}%
\global\long\def\Ccal{\mathcal{C}}%
\global\long\def\Dcal{\mathcal{D}}%
\global\long\def\Ecal{\mathcal{E}}%
\global\long\def\Fcal{\mathcal{F}}%
\global\long\def\Gcal{\mathcal{G}}%
\global\long\def\Hcal{\mathcal{H}}%
\global\long\def\Ical{\mathcal{I}}%
\global\long\def\Kcal{\mathcal{K}}%
\global\long\def\Lcal{\mathcal{L}}%
\global\long\def\Mcal{\mathcal{M}}%
\global\long\def\Ncal{\mathcal{N}}%
\global\long\def\Ocal{\mathcal{O}}%
\global\long\def\Pcal{\mathcal{P}}%
\global\long\def\Scal{\mathcal{S}}%
\global\long\def\Rcal{\mathcal{R}}%
\global\long\def\Tcal{\mathcal{T}}%
\global\long\def\Wcal{\mathcal{W}}%
\global\long\def\Xcal{\mathcal{X}}%
\global\long\def\Ycal{\mathcal{Y}}%
\global\long\def\Zcal{\mathcal{Z}}%

\global\long\def\abf{\mathbf{a}}%
\global\long\def\bbf{\mathbf{b}}%
\global\long\def\cbf{\mathbf{c}}%
\global\long\def\dbf{\mathbf{d}}%
\global\long\def\ebf{\mathbf{e}}%
\global\long\def\fbf{\mathbf{f}}%
\global\long\def\gbf{\mathbf{g}}%
\global\long\def\pbf{\mathbf{p}}%
\global\long\def\sbf{\mathbf{s}}%
\global\long\def\lbf{\mathbf{l}}%
\global\long\def\ubf{\mathbf{u}}%
\global\long\def\vbf{\mathbf{v}}%
\global\long\def\wbf{\mathbf{w}}%
\global\long\def\xbf{\mathbf{x}}%
\global\long\def\ybf{\mathbf{y}}%
\global\long\def\zbf{\mathbf{z}}%

\global\long\def\Abf{\mathbf{A}}%
\global\long\def\Ubf{\mathbf{U}}%
\global\long\def\Pbf{\mathbf{P}}%
\global\long\def\Ibf{\mathbf{I}}%
\global\long\def\Ebf{\mathbf{E}}%
\global\long\def\Mbf{\mathbf{M}}%
\global\long\def\Qbf{\mathbf{Q}}%
\global\long\def\Lbf{\mathbf{L}}%
\global\long\def\Pbf{\mathbf{P}}%
\global\long\def\Wbf{\mathbf{W}}%

\global\long\def\lambf{\bm{\lambda}}%
\global\long\def\mubf{\bm{\mu}}%
\global\long\def\alphabf{\bm{\alpha}}%
\global\long\def\sigmabf{\bm{\sigma}}%
\global\long\def\thetabf{\bm{\theta}}%
\global\long\def\deltabf{\bm{\delta}}%
\global\long\def\vepbf{\bm{\vep}}%
\global\long\def\pibf{\bm{\pi}}%
\global\long\def\phibf{\bm{\phi}}%

\global\long\def\abm{\bm{a}}%
\global\long\def\bbm{\bm{b}}%
\global\long\def\cbm{\bm{c}}%
\global\long\def\dbm{\bm{d}}%
\global\long\def\ebm{\bm{e}}%
\global\long\def\fbm{\bm{f}}%
\global\long\def\gbm{\bm{g}}%
\global\long\def\hbm{\bm{h}}%
\global\long\def\pbm{\bm{p}}%
\global\long\def\qbm{\bm{q}}%
\global\long\def\rbm{\bm{r}}%
\global\long\def\sbm{\bm{s}}%
\global\long\def\tbm{\bm{t}}%
\global\long\def\ubm{\bm{u}}%
\global\long\def\vbm{\bm{v}}%
\global\long\def\wbm{\bm{w}}%
\global\long\def\xbm{\bm{x}}%
\global\long\def\ybm{\bm{y}}%
\global\long\def\zbm{\bm{z}}%

\global\long\def\Abm{\bm{A}}%
\global\long\def\Bbm{\bm{B}}%
\global\long\def\Cbm{\bm{C}}%
\global\long\def\Dbm{\bm{D}}%
\global\long\def\Ebm{\bm{E}}%
\global\long\def\Fbm{\bm{F}}%
\global\long\def\Gbm{\bm{G}}%
\global\long\def\Hbm{\bm{H}}%
\global\long\def\Ibm{\bm{I}}%
\global\long\def\Jbm{\bm{J}}%
\global\long\def\Lbm{\bm{L}}%
\global\long\def\Obm{\bm{O}}%
\global\long\def\Pbm{\bm{P}}%
\global\long\def\Qbm{\bm{Q}}%
\global\long\def\Rbm{\bm{R}}%
\global\long\def\Sbm{\bm{S}}%
\global\long\def\Ubm{\bm{U}}%
\global\long\def\Vbm{\bm{V}}%
\global\long\def\Wbm{\bm{W}}%
\global\long\def\Xbm{\bm{X}}%
\global\long\def\Ybm{\bm{Y}}%
\global\long\def\Zbm{\bm{Z}}%
\global\long\def\lambm{\bm{\lambda}}%

\global\long\def\alphabm{\bm{\alpha}}%
\global\long\def\albm{\bm{\alpha}}%
\global\long\def\pibm{\bm{\pi}}%
\global\long\def\taubm{\bm{\tau}}%
\global\long\def\mubm{\bm{\mu}}%
\global\long\def\yrm{\mathrm{y}}%

\global\long\def\ssgd{\texttt{SSGD}}%
\global\long\def\sipp{\texttt{SIPP}}%

\title{\textbf{A Smooth Approximation Framework for Weakly Convex Optimization}}

\author{Qi Deng\thanks{qdeng24@sjtu.edu.cn, Shanghai Jiao Tong University} \qquad\qquad\quad Wenzhi Gao\thanks{gwz@stanford.edu, Stanford University}  }

\maketitle

\begin{abstract}
Standard complexity analyses for weakly convex optimization rely on the Moreau envelope technique proposed by Davis and Drusvyatskiy (2019). The main insight is that nonsmooth algorithms, such as proximal subgradient, proximal point, and their stochastic variants, implicitly minimize a smooth surrogate function induced by the Moreau envelope. Meanwhile, explicit smoothing, which directly minimizes a smooth approximation of the objective, has long been recognized as an efficient strategy for nonsmooth optimization. 
In this paper, we generalize the notion of smoothable functions, 
which was proposed by Beck and Teboulle (2012) for nonsmooth convex
optimization. This generalization provides a unified viewpoint on several important
smoothing techniques for weakly convex optimization, including Nesterov-type smoothing
and Moreau envelope smoothing. Our theory yields a framework for designing
smooth approximation algorithms for both deterministic and stochastic weakly convex
problems with provable complexity guarantees. 
Furthermore, our theory extends to the smooth approximation of non-Lipschitz functions, allowing for complexity analysis even when global Lipschitz
continuity does not hold.

\end{abstract}

\section{Introduction}

Our primary problem of interest is represented as follows: 
\begin{equation}
\min_{\xbf\in\Rbb^{d}}\ \phi(\xbf) = f(\xbf) + r(\xbf). \label{pb:main}
\end{equation}
Here, $f$ is a nonsmooth nonconvex function, and $r$ is a convex, lower-bounded, and lower semi-continuous function. We assume that $f$ is $\rho$-weakly convex for some $\rho > 0$; that is, the function $f(\xbf) + \frac{\rho}{2}\|\xbf\|^2$ is convex. We also assume that $r$ is prox-friendly, meaning its associated proximal operator can be computed efficiently. Weakly convex optimization problems frequently arise in various data-driven applications, including signal processing~\citep{duchi2019solving}, machine learning~\citep{charisopoulos2021low}, and reinforcement learning~\citep{wang2023policy}.

The nonsmoothness of $f$ prevents the direct application of classical complexity results for smooth nonconvex optimization~\citep{ghadimi2016mini, saeed-lan-nonconvex-2013}.  Early progress in weakly convex optimization was achieved
using double-loop proximal algorithms that require solving auxiliary subproblems at
each iteration~\citep{damek19Proximally, drusvyatskiy2019efficiency}. A major
breakthrough came from \citet{davis2019stochastic}, who proved that single-loop
methods, such as the (stochastic) subgradient method, achieve an
$\Ocal(1/\varepsilon^4)$ complexity bound for finding an approximate stationary point.
Their work reveals an implicit smoothing effect in nonsmooth optimization: algorithms like the proximal subgradient method can be interpreted as optimizing the Moreau envelope $\phi^{\hat\rho}(\xbf)$ ($\hat\rho>\rho$), a smooth approximation of $\phi$. 
Notably, $\phi$ and its Moreau envelope $\phi^{\hat\rho}$
share the same stationary points, and the gradient of the envelope provides a natural
measure of approximate stationarity for~\eqref{pb:main}. This insight has established
the Moreau envelope as a central tool in the algorithm analysis for weakly convex problems, influencing a wide range of subsequent work
\citep{mai2020convergence, deng2021minibatch, gao24-stochastic, chen2021distributed,
li2024revisiting, zhu2023unified}.

An alternative to implicit smoothing is \emph{explicit smoothing}, which replaces
\eqref{pb:main} with a smooth optimization problem:
\begin{equation}\label{pb:comp-smooth}
    \min_{\xbf\in\Rbb^{d}}\, \phi_{\eta}(\xbf) = f_{\eta}(\xbf) + r(\xbf).
\end{equation}
Here, $f_\eta:\Rbb^d \to \Rbb$ is a smooth surrogate of $f$, parameterized by a
smoothing parameter $\eta > 0$ that controls the approximation accuracy.

Explicit smoothing has a long history~\citep{chen2012smoothing}. In particular,
Nesterov's pioneering work~\citep{nesterov2005smooth} showed that applying accelerated
methods to a suitably smoothed approximation of a structured convex problem improves
the complexity of finding an $\varepsilon$-optimal solution from $\Ocal(1/\varepsilon^2)$
to $\Ocal(1/\varepsilon)$. Building on this idea,
\citet{Beck2012smoothing} developed a unified smoothing framework that integrates
multiple smoothing schemes with accelerated gradient methods. However, both \cite{nesterov2005smooth} and \cite{Beck2012smoothing} restrict their
analyses to convex problems. While earlier works have examined the asymptotic
behavior of smoothing in nonconvex settings~\citep{chen2012smoothing, xu2009smooth}, a rigorous complexity theory for explicit smoothing in the weakly convex regime has remained underdeveloped.

\subsection{Contributions}
To address this gap, we extend the smooth approximation framework of
\citet{Beck2012smoothing} from the convex setting to the broader class of weakly
convex problems. In particular, our framework seamlessly incorporates two prominent
smoothing techniques as special cases:  1) Moreau envelope smoothing, and 2) Nesterov-type smoothing for compositions of convex functions with smooth mappings. A key advantage of this generalization arises in stochastic optimization, where one can
smooth the function associated with each stochastic sample individually and then
perform minibatching. This strategy can provide computational benefits over
model-based minibatching algorithms~\citep{davis2019stochastic, deng2021minibatch},
which require solving a potentially expensive proximal subproblem at every iteration. 
Another important consequence is that Moreau envelope smoothing becomes substantially
more practical: it can now be used not only for convergence
analysis~\citep{davis2019stochastic}, but also as an effective and scalable algorithmic tool.

It is important to emphasize that the smooth approximation we propose, akin to that of \citet{Beck2012smoothing}, describes a broad class of surrogate functions
and is not tied to any particular smoothing technique. 
We establish quantitative
relationships connecting the stationarity measures of the smoothed problem
\eqref{pb:comp-smooth} to those of the original nonsmooth problem
\eqref{pb:main}. A key implication is that the design of the smooth surrogate can be
decoupled from the design of the optimization algorithm. This stands in contrast to
many prior smoothing strategies for weakly convex optimization, which often rely on
exploiting structural properties of the underlying problem
\citep{bohm2021variable,drusvyatskiy2019efficiency}.

The effectiveness of our approach is primarily governed by the choice of the smoothing
parameter $\eta$, which introduces a fundamental trade-off: a smaller $\eta$ enhances
the approximation fidelity to the original objective but worsens the conditioning
(i.e., increases the gradient Lipschitz constant $L_\eta$) of $f_\eta$; conversely, a
larger $\eta$ improves conditioning at the cost of reduced approximation accuracy.
Consequently, simply applying standard gradient-based algorithms to the smoothed problem does
not automatically yield better complexity guarantees.
To improve the convergence result, we exploit the asymmetry in the curvature bounds of $f_\eta$: while the upper curvature $L_\eta$ may become arbitrarily large as $\eta \rightarrow 0^+$, the lower curvature, determined by the weak convexity modulus of $f_\eta$, remains bounded below. To take advantage of this structure, we propose solving the smoothed problem using an inexact proximal point (IPP) framework. By applying accelerated (and stochastic) algorithms to the subproblems of the proximal point method, we can mitigate the adverse effects of the ill conditioning induced by small $\eta$.
This approach yields an overall complexity of $\Ocal(1/\vep^3)$ in the deterministic setting. For general stochastic optimization, we obtain an $\Ocal(\max \{1/\varepsilon^3, 1/(m\varepsilon^4)\})$ complexity when using a minibatch size of $m$.
We summarize our complexity results and compare them with those of the standard proximal subgradient method in \Cref{tab:complexity}.

\begin{table}[h]
\centering
\caption{Complexity guarantees for achieving $\varepsilon$-approximate stationarity; $m$: size of minibatch.}
\label{tab:complexity}
\begin{tabular}{ccc}
\toprule
\textsf{Methods} & \textsf{Deterministic} & \textsf{Stochastic} \\
\midrule
\textsf{Subgradient-based method} & $\Ocal(\frac{1}{\vep^4}) $ & $\Ocal(\frac{1}{\vep^4}) $ \\
\textsf{IPP-based smoothing approach} & $\Ocal(\frac{1}{\vep^3}) $ & $\Ocal\left(\max\{\frac{1}{\vep^3}, \frac{1}{m\vep^4}\}\right)$ \\
\bottomrule
\end{tabular}
\end{table}

A further contribution of our work is the relaxation of the global Lipschitz
smoothness assumption~\citep{Beck2012smoothing} in the design of smooth approximation
functions. As a result, we can handle objectives that are not globally Lipschitz,
thereby substantially broadening the class of problems to which our framework applies.
For example, in real phase retrieval, the loss
$\ell(\xbf) = |\langle \abf, \xbf \rangle ^{2}- b|$ has an unbounded subgradient in $\Rbb^d$, and
standard Nesterov-type smoothing produces approximations that fail to satisfy global Lipschitz smoothness.
To overcome this limitation, we employ a line search-based accelerated gradient
method for convex optimization problems without global Lipschitz smoothness. The use
of line search ensures that accelerated convergence rates for convex problems remain
attainable even when global smoothness is absent. Under mild assumptions,
we show that the inexact proximal point method, equipped with this accelerated solver
for its subproblems, still achieves an $\Ocal(1/\varepsilon^{3})$ complexity
guarantee. This result markedly improves upon the previous $\Ocal(1/\vep^4)$ complexity achieved by non-Lipschitz subgradient methods~\citep{grimmer2019convergence, zhu2023unified}. Finally, our theoretical findings are supported by numerical experiments. Our results
demonstrate that smoothing leads to smoother convergence behavior and outperforms
standard subgradient methods on robust nonlinear regression problems.

\subsection{Related works}

The $\Ocal(1/\vep^4)$ complexity is well established as the standard result for general weakly convex optimization~\citep{davis2019stochastic}. The central analytical tool in such results is typically the Moreau envelope, which serves as a potential function in the convergence analysis~\citep{davis2019stochastic}. To obtain better complexity guarantees, previous works have leveraged additional structural properties of the objective through smoothing techniques. To the best of our knowledge, all existing smoothing-based approaches for weakly convex functions impose explicit structural assumptions on the objective. For instance, \citet{drusvyatskiy2019efficiency} studied composite functions of the form $f(\xbf) = h(F(\xbf)) + r(\xbf)$, where $h$ is convex and $F$ is a smooth map. They applied the prox-linear method to this composite setup, solving each subproblem inexactly via accelerated gradient methods and achieving an improved complexity of $\Ocal(1/\vep^3)$. 
Similarly, \citet{bohm2021variable,peng2023riemannian} developed a variable smoothing technique for problems of the form $f(\xbf) = g(A\xbf)$, where $g$ is weakly convex and $A$ is a linear operator. 
However, their analysis relies on the surjectivity of $A$, an assumption that our framework does not require. Notably, these previous smoothing techniques arise as special instances within our broader framework. Our proposed approach is more general, accommodating a wider class of weakly convex objectives without imposing restrictive structural assumptions on either the objective function or the smoothing procedure.

The relaxation of global Lipschitz continuity assumptions in weakly convex optimization has received significant attention in recent years. \citet{mai2021stability} pioneered this direction by analyzing the stability and convergence of stochastic gradient clipping methods beyond Lipschitz continuity and smoothness assumptions. \citet{li2024revisiting} extended the classical subgradient method to handle non-Lipschitz convex and weakly convex functions, establishing a complexity of $\Ocal(1/\vep^2)$ for convex objectives and $\Ocal(1/\vep^4)$ for weakly convex objectives, without requiring modifications to the algorithm or imposing additional assumptions on the subgradients. In the stochastic setting, \citet{gao2024stochastic} developed  adaptive regularization strategies that maintain the $\Ocal(1/\vep^4)$ complexity for stochastic weakly convex optimization, allowing the Lipschitz parameter to be either a general function of the iterate norm or estimated locally through random samples. \citet{zhu2023unified} provided a unified analysis framework of subgradient methods for minimizing composite nonconvex, nonsmooth, and non-Lipschitz functions, establishing convergence guarantees under diminishing step sizes. 

The use of smoothing techniques extends far beyond weakly convex functions and has a rich and influential history (see, e.g.,~\citep{zang1980smoothing,xu2009smooth,chen2004smoothing,chen2012smoothing,lin2022gradient,kume2024variable}). Notably, \citet{xu2009smooth} introduced a smooth sample average approximation (SAA) framework for nonsmooth stochastic optimization, demonstrating that stationary points of the smoothed SAA converge, in a suitable sense, to stationary points of the original nonsmooth problem, and illustrating their results through several concrete applications. \citet{chen2012smoothing} established a general smoothing theory based on gradient consistency, which unifies many classical smoothing methods and ensures the asymptotic convergence of gradient-type algorithms to Clarke stationary points under mild assumptions. In contrast to these works, the present paper addresses weakly convex (and potentially non-Lipschitz) composite optimization problems and seeks to develop an explicit complexity theory. More recently, \citet{lin2022gradient} proposed a gradient-free approach to nonsmooth, nonconvex optimization using zeroth-order randomized smoothing. It is important to note that their analysis pertains to more general Lipschitz continuous objectives and establishes convergence only to $\delta$-Goldstein stationary points, a notion weaker than the stationarity considered in this work.

\section{Preliminaries}\label{sec:preliminary}

\paragraph{Notations} We use boldface letters (e.g. $\xbf$ and $\ybf$) to represent vectors. Let $\Rbb^d$ be a $d$-dimensional Euclidean space with Euclidean inner product $\inner{\cdot}{\cdot}$; we use $\norm{\xbf}$ to express the induced norm $\norm{\cdot}=\sqrt{\inner{\cdot}{\cdot}}$. We use $\norm{\cdot}_p$ to denote the vector $L^p$-norm. Hence $\norm{\cdot}_2$ is $\norm{\cdot}$ by default.  For a set $\Scal\subset\Rbb^d$, we define $\dist(\xbf, \Scal)\assign\inf \{\norm{\ybf-\xbf}: \ybf\in \Scal\}$ and use
$\norm{\Scal}\assign \dist(\zero, \Scal)$ to denote its distance to the origin.  
For any convex function $f:\Rbb^d \rightarrow (-\infty,+\infty]$, its Fenchel conjugate is defined by $f^*(\ybf)\coloneqq \sup_{\xbf} \{\inner{\ybf}{\xbf} - f(\xbf)\}$.  When $f$ is closed and convex, we have biconjugacy $f=f^{**}$ \citep[4.2]{borwein2006convex}.
Denote the proximal operator of $f$ by $\prox_{f/\beta}(\xbf)\coloneqq\argmin_{\ybf\in\Rbb^d}\{f(\ybf)+\frac{\beta}{2}\norm{\ybf-\xbf}^{2}\}$. We use \(\level(f,v) \coloneqq \{\mathbf{x} \in \mathbb{R}^d : f(\mathbf{x}) \le v\}\) to denote the $v$-sublevel set of a function $f$. 

\paragraph{Subdifferential and approximate stationarity}
Let $f:\Rbb^d\raw(-\infty,+\infty]$ be a proper lower semi-continuous function. 
The Fr\'{e}chet subdifferential at $\xbf$ is given by $\hat{\partial}f(\xbf)=\{\vbf: {f(\ybf)\ge f(\xbf)+\inner{\vbf}{\ybf-\xbf}}+o(\norm{\ybf-\xbf}), \ \text{as } \ybf\raw\xbf \}$. 
The limiting subdifferential is defined by $\partial f(\xbf)=\{\vbf: \exists \xbf^k\raw\xbf, f(\xbf^k)\raw f(\xbf), \vbf^k\in\hat\partial f(\xbf^k), \vbf^k\raw \vbf\}$. 
$f$ is said to be a \emph{$\rho$-weakly convex function} ($\rho\ge 0$) if $f(\xbf)+\frac{\rho}{2}\norm{\xbf}^{2}$ is convex. For (weakly) convex functions, these two subdifferentials coincide. 
For a nonsmooth problem, $\norm{\partial f(\xbf)}$ may not be a good convergence criterion. For example, consider $f(x)=|x|$: whenever $x\neq 0$, we have $\partial f(x)={\operatorname{sign}(x)}$ and $\|\partial f(x)\|=1$, irrespective of how far $x$ is from the minimizer $x^{\star}=0$. Therefore, we adopt the following notion of stationarity.
\begin{defn}[Approximate stationarity]\label{defi:apprx-stationary}
We say that $\xbf$ is a ($\delta,\vep$)-(approximate) stationary point of
problem~\eqref{pb:main} if there exists a point $\hatx\in\dom\phi$
such that 
$\norm{\hatx-\xbf}\le\delta$ and $\norm{\partial \phi(\hatx)}\le\vep$. For convenience, we informally call $\xbf$ an $\varepsilon$-approximate stationary point when $\norm{\hatx-\xbf}\le \Ocal(\vep)$ and $\norm{\partial \phi(\hatx)}\le\Ocal(\vep)$.
\end{defn}

\paragraph{Moreau Envelope}
Let $f$ be a $\rho$-weakly convex function. The Moreau envelope~\citep{moreau1965proximite} of $f$ is defined as the smooth function
\begin{equation}\label{eq:moreau-envelope}
f^{\beta}(\xbf)\coloneqq\,\min_{\ybf} \Big\{ f(\ybf)+\frac{\beta}{2}\norm{\ybf-\xbf}^{2} \Big\},
\end{equation}
where $\beta\in(\rho, \infty)$ is the parameter of the proximal term. Moreau envelope plays a central role in our analysis. It will be used as both an approximate stationary criterion and an explicit smoothing tool. 
\begin{lem}\label{lem:moreau}
The Moreau envelope $f^{\beta}$ defined in \eqref{eq:moreau-envelope} is differentiable with gradient
\begin{equation}
    \nabla f^{\beta}(\xbf)=\beta\brbra{\xbf-\prox_{f/\beta}(\xbf)} \in \partial f(\prox_{f/\beta}(\xbf)). \label{eqn:moreau-gradient}
\end{equation}
\end{lem}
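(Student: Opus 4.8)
The plan is to reduce differentiability of $f^{\beta}$ to a standard fact about Fenchel conjugates of strongly convex functions. First I would note that, since $f$ is $\rho$-weakly convex and $\beta>\rho$, the inner objective $h_{\xbf}(\ybf)\assign f(\ybf)+\tfrac{\beta}{2}\norm{\ybf-\xbf}^{2}$ is $(\beta-\rho)$-strongly convex and coercive; hence it has a \emph{unique} minimizer, so $\prox_{f/\beta}(\xbf)$ is well-defined and single-valued and the stated formula is meaningful. Completing the square in $\ybf$ yields the identity
\[
f^{\beta}(\xbf)=\tfrac{\beta}{2}\norm{\xbf}^{2}-g^{*}(\beta\xbf),\qquad\text{where}\quad g(\ybf)\assign f(\ybf)+\tfrac{\beta}{2}\norm{\ybf}^{2},
\]
and $g$ is proper, closed, and $(\beta-\rho)$-strongly convex (weak convexity of $f$ is exactly what makes $g$ convex).

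The key step is the classical fact that the conjugate of a closed $(\beta-\rho)$-strongly convex function $g$ is differentiable on all of $\Rbb^{d}$, with $\nabla g^{*}(\zbf)=\argmax_{\ybf}\{\inner{\zbf}{\ybf}-g(\ybf)\}$. I would either invoke the convex-analysis background already cited (e.g.\ \citep{borwein2006convex}) or prove it in two lines: the argmax is a singleton by strong convexity and, by a standard estimate, depends Lipschitz-continuously on $\zbf$, which upgrades the inclusion ``$\argmax\in\partial g^{*}(\zbf)$'' to differentiability with the stated gradient. Evaluating at $\zbf=\beta\xbf$ and unwinding the definition of $g$, the maximizer is precisely $\argmin_{\ybf}\{f(\ybf)+\tfrac{\beta}{2}\norm{\ybf-\xbf}^{2}\}=\prox_{f/\beta}(\xbf)$. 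Differentiating the displayed identity via the chain rule then gives
\[
\nabla f^{\beta}(\xbf)=\beta\xbf-\beta\,\nabla g^{*}(\beta\xbf)=\beta\brbra{\xbf-\prox_{f/\beta}(\xbf)},
\]
which establishes both differentiability of $f^{\beta}$ and the gradient formula.

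For the inclusion $\nabla f^{\beta}(\xbf)\in\partial f(\prox_{f/\beta}(\xbf))$, set $\pbf\assign\prox_{f/\beta}(\xbf)$, the minimizer of $h_{\xbf}$. Fermat's rule gives $\zero\in\partial h_{\xbf}(\pbf)$, and since $\ybf\mapsto\tfrac{\beta}{2}\norm{\ybf-\xbf}^{2}$ is $C^{1}$, the sum rule for the limiting subdifferential yields $\partial h_{\xbf}(\pbf)=\partial f(\pbf)+\beta(\pbf-\xbf)$. Hence $\beta(\xbf-\pbf)\in\partial f(\pbf)$, i.e.\ $\nabla f^{\beta}(\xbf)\in\partial f(\prox_{f/\beta}(\xbf))$.

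I expect the only real obstacle to be the rigorous justification of differentiability — concretely, the fact that the conjugate of a strongly convex function has a (Lipschitz) gradient rather than merely a nonempty subdifferential; once that is in place, everything else is bookkeeping with the conjugacy identity and the subdifferential sum rule. A Danskin/envelope-theorem argument applied directly to \eqref{eq:moreau-envelope} would be an equally valid alternative, but it requires the same uniqueness-plus-continuity input on the proximal map.
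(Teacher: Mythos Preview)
Your argument is correct. The conjugacy identity $f^{\beta}(\xbf)=\tfrac{\beta}{2}\norm{\xbf}^{2}-g^{*}(\beta\xbf)$ with $g=f+\tfrac{\beta}{2}\norm{\cdot}^{2}$ is valid, $g$ is indeed proper, closed, and $(\beta-\rho)$-strongly convex, and the standard fact that the conjugate of a strongly convex function is everywhere differentiable with the argmax as its gradient does the job. The inclusion via Fermat's rule and the smooth sum rule is also fine.

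The paper proceeds quite differently: it works directly from the three-point inequality for $\prox_{f/\beta}$ to derive, by hand, two-sided quadratic bounds of the form
\[
-\tfrac{\rho}{2(1-\rho/\beta)}\norm{\hat{\xbf}-\xbf}^{2}\;\le\;f^{\beta}(\hat{\xbf})-f^{\beta}(\xbf)-\beta\inner{\xbf-\ybf_{\xbf}}{\hat{\xbf}-\xbf}\;\le\;\tfrac{\beta}{2}\norm{\hat{\xbf}-\xbf}^{2},
\]
and reads off Fr\'echet differentiability from these. Your route is shorter and cleaner, since it offloads the analytic work to a known conjugacy result. The paper's route is more self-contained and, more importantly, the intermediate estimates it generates (the quadratic upper and lower bounds and the prox Lipschitz bound) are \emph{reused verbatim} in the proof of \Cref{prop:moreau-smoothing} to establish the weak convexity and smoothness of $f^{\beta}$. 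So while your proof of the lemma itself is arguably preferable in isolation, if you adopt it you will need to rederive those curvature bounds separately when you get to \Cref{prop:moreau-smoothing}.
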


\paragraph{Smooth and generalized smooth functions}
Analyzing smooth approximations of non-globally Lipschitz functions requires a generalization of global smoothness, which we detail in \Cref{defi:gen-smooth}.
\begin{defn}[Generalized smoothness]\label{defi:gen-smooth}
    Let $f:\Rbb^{d}\to\Rbb$ be differentiable on a closed, convex set $\Xcal\subseteq\Rbb^{d}$. We say that $f$ is \emph{$\Lcal$-generalized  smooth} on $\Xcal$ if
\begin{equation}\label{eq:gen-smooth}
\norm{\nabla f(\xbf)-\nabla f(\ybf)} \le \Lcal(\xbf,\ybf) \norm{\xbf-\ybf}\quad \text{for all } \xbf,\ybf \in \Xcal,
\end{equation}
where $\Lcal:\Xcal\times \Xcal \raw [0,\infty)$ is non-negative and symmetric in its two arguments on its domain. 
\end{defn}
\begin{rem}
Condition~\eqref{eq:gen-smooth} strictly generalizes the classical global $L$-Lipschitz smoothness assumption: if $\Lcal(\xbf,\ybf)\equiv L$ for some constant $L>0$, $f$ is said to be an (globally) $L$-Lipschitz smooth function on~$\Xcal$. The requirement that $\mathcal{L}(\xbf, \ybf)$ be symmetric is made without loss of generality; if an initial $\Lcal$ is not symmetric, it can be replaced by $\min\{\mathcal{L}(\mathbf{x},\mathbf{y}),\mathcal{L}(\mathbf{y},\mathbf{x})\}$.
\end{rem}

The following lemma establishes useful curvature bounds for weakly convex and smooth functions.
\begin{lem} \label{lem:smoothness}
    Let $g$ be a $\rho$-weakly convex and differentiable function on the domain $\Xcal$. 
\begin{enumerate}[label=\arabic*), leftmargin=15pt]
    \item If $g$ is $L$-Lipschitz smooth, then
$g(\xbf)-g(\ybf) \le \inner{\nabla g(\ybf)}{\xbf-\ybf} + \frac{L}{2} \norm{\ybf-\xbf}^2,$ $\text{for all } \xbf,\ybf\in\Xcal$.
\item If $g$ is $\Lcal$-generalized smooth, then
$g(\xbf)-g(\ybf) \le \inner{\nabla g(\ybf)}{\xbf-\ybf} + \bsbra{\frac{\rho}{2} + \Lcal(\ybf, \xbf)}\norm{\xbf-\ybf}^2,$ $\text{for all } \xbf,\ybf\in\Xcal$.
\end{enumerate}
\end{lem}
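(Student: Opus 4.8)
The plan is to prove both items by combining the convexity of the regularized function $g(\cdot)+\frac{\rho}{2}\|\cdot\|^2$ with the (generalized) Lipschitz bound on $\nabla g$. For part 1), I would start from the standard descent-lemma argument: since $g$ is differentiable on $\Xcal$, write $g(\xbf)-g(\ybf) = \int_0^1 \inner{\nabla g(\ybf + t(\xbf-\ybf))}{\xbf-\ybf}\,dt$ (valid because $\Xcal$ is convex, so the segment stays in the domain), then subtract $\inner{\nabla g(\ybf)}{\xbf-\ybf}$ from both sides and bound the resulting integrand by $\|\nabla g(\ybf+t(\xbf-\ybf)) - \nabla g(\ybf)\|\,\|\xbf-\ybf\| \le Lt\|\xbf-\ybf\|^2$ using $L$-Lipschitz smoothness; integrating in $t$ gives the factor $\frac{L}{2}$. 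This is entirely routine and does not even use weak convexity.

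For part 2), the integral trick fails directly because $\Lcal(\ybf+t(\xbf-\ybf),\ybf)$ is not controlled by $\Lcal(\ybf,\xbf)$ in general — the generalized smoothness constant is only assumed along the pair $(\xbf,\ybf)$, not along interior points of the segment. So instead I would exploit weak convexity to reduce to a one-sided estimate. Let $h(\xbf) \assign g(\xbf) + \frac{\rho}{2}\|\xbf\|^2$, which is convex and differentiable with $\nabla h(\xbf) = \nabla g(\xbf) + \rho\xbf$. By convexity of $h$ (the subgradient inequality, or equivalently monotonicity of $\nabla h$), we have $h(\xbf) - h(\ybf) \le \inner{\nabla h(\xbf)}{\xbf-\ybf}$. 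Expanding, $g(\xbf) + \frac{\rho}{2}\|\xbf\|^2 - g(\ybf) - \frac{\rho}{2}\|\ybf\|^2 \le \inner{\nabla g(\xbf) + \rho\xbf}{\xbf-\ybf}$, and using the identity $\frac{\rho}{2}\|\xbf\|^2 - \frac{\rho}{2}\|\ybf\|^2 = \rho\inner{\xbf}{\xbf-\ybf} - \frac{\rho}{2}\|\xbf-\ybf\|^2$ on the left, the $\rho\inner{\xbf}{\xbf-\ybf}$ terms cancel, leaving
\[
g(\xbf) - g(\ybf) \le \inner{\nabla g(\xbf)}{\xbf-\ybf} + \frac{\rho}{2}\|\xbf-\ybf\|^2.
\]
Now rewrite the right-hand side as $\inner{\nabla g(\ybf)}{\xbf-\ybf} + \inner{\nabla g(\xbf) - \nabla g(\ybf)}{\xbf-\ybf} + \frac{\rho}{2}\|\xbf-\ybf\|^2$, and apply Cauchy–Schwarz together with the generalized smoothness bound \eqref{eq:gen-smooth} to estimate $\inner{\nabla g(\xbf) - \nabla g(\ybf)}{\xbf-\ybf} \le \|\nabla g(\xbf) - \nabla g(\ybf)\|\,\|\xbf-\ybf\| \le \Lcal(\xbf,\ybf)\|\xbf-\ybf\|^2$. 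Using symmetry of $\Lcal$ to write this as $\Lcal(\ybf,\xbf)\|\xbf-\ybf\|^2$ yields exactly the claimed bound $g(\xbf)-g(\ybf) \le \inner{\nabla g(\ybf)}{\xbf-\ybf} + [\frac{\rho}{2} + \Lcal(\ybf,\xbf)]\|\xbf-\ybf\|^2$.

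The main obstacle is recognizing that part 2) cannot be handled by the integral-along-the-segment argument of part 1) — the generalized smoothness hypothesis only gives information about the endpoints, so one must route through weak convexity to get a descent-type inequality anchored at $\nabla g(\xbf)$ first, and only then transfer to $\nabla g(\ybf)$ using a single application of the generalized smoothness bound between the two endpoints. Everything else is bookkeeping with the elementary norm identity and Cauchy–Schwarz. One minor point to check is that the convexity/subgradient inequality for $h$ holds on all of $\Xcal$ as stated (it does, since $h$ is convex and differentiable on the convex set $\Xcal$), so that $\xbf,\ybf\in\Xcal$ suffices.
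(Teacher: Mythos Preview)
Your proposal is correct and follows essentially the same approach as the paper: Part 1) is the standard integral descent-lemma argument, and Part 2) first applies weak convexity in the form $g(\xbf)-g(\ybf)\le\inner{\nabla g(\xbf)}{\xbf-\ybf}+\tfrac{\rho}{2}\norm{\xbf-\ybf}^2$, then adds and subtracts $\inner{\nabla g(\ybf)}{\xbf-\ybf}$ and bounds the cross term via Cauchy--Schwarz and generalized smoothness. The only cosmetic difference is that you derive the weak-convexity inequality explicitly from the convexity of $h=g+\tfrac{\rho}{2}\norm{\cdot}^2$, whereas the paper simply invokes it.
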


The above results show that generalized smoothness leads to a looser quadratic upper bound (with a factor of $2$ and additional $\rho$) compared to the standard smooth case. \Cref{prop:tightness} demonstrates that this bound is nearly tight.
\begin{prop} \label{prop:tightness}Let $\rho\ge 0$. For any $\vep \in (0,1)$, there exists a $\rho$-weakly convex and $\Lcal_g$-generalized smooth function $g$, together with points $\xbf,\ybf\in\dom g$, such that
    \[g(\xbf)\ge g(\ybf) +  \inner{\nabla g(\ybf)}{\xbf-\ybf} + (1-\vep)\frac{\rho+2\Lcal_g(\ybf, \xbf)}{2} \norm{\ybf-\xbf}^2.\]
\end{prop}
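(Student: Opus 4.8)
\emph{Proof plan.} The plan is to exhibit an explicit one-dimensional example (so $d=1$, $\Xcal=\Rbb$) that essentially saturates the second (generalized-smoothness) bound of \Cref{lem:smoothness}. It helps to first see where that bound loses tightness: a natural way to prove it is to write $g=q-\tfrac{\rho}{2}\norm{\cdot}^{2}$ with $q$ convex, and bound the Bregman divergence $D_q(\xbf,\ybf)\assign q(\xbf)-q(\ybf)-\inprod{\nabla q(\ybf)}{\xbf-\ybf}$ by $D_q(\xbf,\ybf)+D_q(\ybf,\xbf)=\inprod{\nabla q(\xbf)-\nabla q(\ybf)}{\xbf-\ybf}$, then by Cauchy--Schwarz, and finally by $\norm{\nabla q(\xbf)-\nabla q(\ybf)}\le(\Lcal_g(\xbf,\ybf)+\rho)\norm{\xbf-\ybf}$. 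In one dimension, if one uses the pointwise-tightest smoothness certificate and $\nabla g(\xbf)-\nabla g(\ybf)$ has the same sign as $\xbf-\ybf$, the last two steps are \emph{equalities}; the only genuine slack is the discarded term $D_q(\ybf,\xbf)\ge0$. Hence the bound is nearly tight precisely when $q$ has a strongly asymmetric Bregman divergence --- $\nabla q$ almost constant near $\ybf$ but rising sharply before $\xbf$. Building that shape is the crux; everything afterward is bookkeeping.

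Concretely, I would fix $V\assign\rho+1$ and a transition width $\delta\in(0,\tfrac12]$ to be chosen last, and let $q:\Rbb\to\Rbb$ be the $C^{1}$ convex function with $q(0)=0$, $q'(0)=0$, and nondecreasing piecewise-linear derivative $q'(t)=0$ for $t\le0$, $q'(t)=Vt/\delta$ for $t\in[0,\delta]$, $q'(t)=V$ for $t\ge\delta$ (so $q(t)=Vt-\tfrac{V\delta}{2}$ for $t\ge\delta$). Set $g(\xbf)\assign q(\xbf)-\tfrac{\rho}{2}\norm{\xbf}^{2}$; then $g$ is $C^{1}$ and $\rho$-weakly convex since $g+\tfrac{\rho}{2}\norm{\cdot}^{2}=q$ is convex. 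As the generalized-smoothness certificate I would take the tightest admissible one, $\Lcal_g(\xbf,\ybf)\assign\norm{\nabla g(\xbf)-\nabla g(\ybf)}/\norm{\xbf-\ybf}$ for $\xbf\ne\ybf$ and $\Lcal_g(\xbf,\xbf)\assign 0$; this is symmetric, non-negative, finite ($g'$ is globally $(V/\delta+\rho)$-Lipschitz here), and turns \eqref{eq:gen-smooth} into an equality, so nothing smaller is admissible at a given pair. The evaluation points are $\ybf=0$ and $\xbf=1$, so $\norm{\xbf-\ybf}=1$ and $1>\delta$.

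The verification is then a short computation. We have $g(0)=0$, $\nabla g(0)=q'(0)=0$, $\nabla g(1)=q'(1)-\rho=V-\rho$, and $g(1)=q(1)-\tfrac{\rho}{2}=V-\tfrac{V\delta}{2}-\tfrac{\rho}{2}$; hence $\Lcal_g(0,1)=V-\rho$ and, the linear term vanishing, the left-hand side of the asserted inequality equals $g(1)-g(0)=V-\tfrac{V\delta}{2}-\tfrac{\rho}{2}$. The right-hand side is $(1-\vep)\tfrac{\rho+2\Lcal_g(0,1)}{2}=(1-\vep)\bigl(V-\tfrac{\rho}{2}\bigr)$, so the claim reduces to $\tfrac{V\delta}{2}\le\vep\bigl(V-\tfrac{\rho}{2}\bigr)$, which holds for $\delta\assign\min\bigl\{\tfrac12,\ \vep(2V-\rho)/V\bigr\}$. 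This is a valid choice because $V=\rho+1>0$, $2V-\rho=\rho+2>0$, and $V-\tfrac{\rho}{2}=\tfrac{\rho}{2}+1>0$ force $\delta\in(0,\tfrac12]\subset(0,1)$.

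The only steps requiring care --- all routine --- are: checking that $q$ is genuinely $C^{1}$ (the one-sided derivatives agree at $t=0$ and $t=\delta$) and convex ($q'$ nondecreasing); confirming that the chosen $\Lcal_g$ is a legitimate certificate and that $\Lcal_g(\ybf,\xbf)$ cannot be replaced by anything smaller without violating \eqref{eq:gen-smooth} at the pair $(\ybf,\xbf)$ itself, which is what keeps the statement non-vacuous and rules out falling back on the first bound of \Cref{lem:smoothness} (whose Lipschitz constant $L=V/\delta+\rho$ is far larger); and noting that the final estimate holds uniformly in $\rho\ge0$, including the degenerate case $\rho=0$ in which $g=q$ is merely convex. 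I do not anticipate a serious obstacle beyond discovering the front-loaded shape of $q'$ in the first paragraph.
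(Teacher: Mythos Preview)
Your proof is correct, and the approach is genuinely different from the paper's. The paper takes $g(x)=\tfrac{1}{p}x^{p}-\tfrac{\rho}{2}x^{2}$ with an even integer $p>2/\vep$, evaluates at $x=0$ and $y>\rho^{1/(p-2)}$, and uses the same tightest certificate $\Lcal_g(x,y)=\abs{g'(x)-g'(y)}/\abs{x-y}$; the computation reduces to showing $\tfrac{2(p-1)}{p}y^{p-2}-\rho\ge(1-\vep)(2y^{p-2}-\rho)$, which holds once $p\ge 2/\vep$. Both constructions exploit the same asymmetric-Bregman intuition you identified: the derivative is flat at $\ybf$ and rises steeply before $\xbf$. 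The paper achieves this with a high-degree monomial, giving a $C^{\infty}$ example at the cost of some algebra in factoring $x^{p-1}-y^{p-1}$; your piecewise-linear-derivative ramp is only $C^{1}$ but makes the computation entirely transparent and lets you tune the transition width $\delta$ directly against $\vep$. Either route suffices, since the definition of generalized smoothness requires only differentiability.
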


\section{Smoothing theory}

This section develops the theory of smooth approximation for weakly convex optimization. We begin by formalizing the notion of smooth approximation of a function and
establishing its connection to the stationarity measures of the original function. As an important application of our framework, we discuss partial smoothing for composite optimization problems.

\subsection{Smoothable function} 
To quantify the quality of a smooth approximation, we start by formalizing the definition of a (generalized) smooth approximation of a function $f$.
\begin{defn}
\label{df:smooth-apprx} Let $f:\Rbb^d\raw\Rbb$ be weakly
convex on a closed convex set $\Xcal\subseteq\Rbb^d$. We say that $f$ admits a generalized smooth approximation (or generalized smoothable) on $\Xcal$ if  there exists  a family of continuously differentiable functions $\{f_\eta\}_{\eta>0}$, such that for any $\eta>0$, i) $f_\eta$ is $\bar\rho$-weakly convex for a constant $\bar\rho>0$, ii) there exists a nonnegative function $\Rcal_{\eta}:\Xcal\rightarrow \Rbb_+$, and a symmetric function $\Lcal_{\eta}:\Xcal\times \Xcal \rightarrow \Rbb_+$,
such that the following conditions hold:
\begin{enumerate}[label=\textbf{S\arabic*:},ref={\textbf{S\arabic*}}]
\item $f_{\eta}(\xbf)\le f(\xbf)\le f_{\eta}(\xbf)+\Rcal_\eta(\xbf)$, for any $\xbf\in\Xcal$.
\label{SA-func} 
\item \label{SA-smooth} $f_\eta(\xbf)$ is $\Lcal_\eta$-generalized smooth on $\Xcal$:
$\norm{\nabla f_\eta(\xbf)-\nabla f_\eta(\ybf)}\le \Lcal_\eta(\xbf, \ybf)\,\norm{\xbf-\ybf}.$
\end{enumerate}
When these conditions are met, $f_{\eta}$ is called a $(\bar\rho, \Rcal_\eta,\Lcal_\eta)$-smooth approximation (SA) of $f$. For brevity, we may refer to it as a $(\bar\rho,\eta)$-SA, or, when the context is clear, simply an $\eta$-SA of $f$.
\end{defn}

Conditions~\ref{SA-func}--\ref{SA-smooth} differ from the standard notion of smooth approximation~\citep{Beck2012smoothing}, in which $\Rcal_\eta$ and $\Lcal_\eta$ do not depend on $\xbf$ or $\ybf$.
This added flexibility is crucial for handling nonsmooth functions that are not globally Lipschitz continuous, thereby broadening the class of problems that can be addressed; see \Cref{sec:nes-smoothing}.
Moreover, our framework extends smooth approximation theory beyond the convex setting to encompass nonconvex functions.
In the convex setting, convergence rates are typically measured in terms of the function value gap, and Condition~\ref{SA-func} directly relates the optimality gap of the nonsmooth problem to that of its smooth approximation.
This connection is less informative in nonconvex optimization, where achieving global optimality is generally intractable.
Accordingly, we instead establish a relationship between the first-order properties of the approximation $f_{\eta}$ and those of the original function $f$.
We start by showing that the gradient of the smoothed function can be interpreted as a specific approximate subgradient of $f$, which is defined below. 
\begin{defn}[Approximate subgradient]
Let $f$ be a proper, lower semi-continuous, and weakly convex function.
A vector $\ubf\in\Rbb^{d}$ is called an $(\bar\rho, \varepsilon)$-subgradient
of $f$ at $\xbf\in\dom f$ if, for some $\bar{\rho}\ge 0$,
\[
f(\ybf)\ge f(\xbf)+\inner{\ubf}{\ybf-\xbf}-\frac{\bar\rho}{2}\norm{\ybf-\xbf}^{2}- \varepsilon
\]
holds for any $\ybf\in\Rbb^{d}$. The set of all such vectors is called the $(\bar\rho, \varepsilon)$-subdifferential.
\end{defn}
A smooth approximation provides a natural source of approximate subgradients.
\begin{prop}\label{prop:rel-sa-epsi-subgrad}
    Let $f_\eta$ be a $(\bar\rho,\eta)$-smooth approximation of $f$, then $\nabla f_\eta(\xbf)$ is a $(\bar\rho,\Rcal_\eta(\xbf))$-subgradient of $f$ at $\xbf$.
\end{prop}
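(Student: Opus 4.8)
The plan is to reduce the claim to the elementary first-order inequality for differentiable weakly convex functions, and then bridge the gap between $f_\eta$ and $f$ using the sandwich condition \ref{SA-func}. Note that \ref{SA-smooth} (generalized smoothness) plays no role here: only property i) of \Cref{df:smooth-apprx} (the $\bar\rho$-weak convexity of $f_\eta$) together with \ref{SA-func} are needed.

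First I would record the one substantive ingredient. Since $f_\eta$ is $\bar\rho$-weakly convex and differentiable, the function $g\coloneqq f_\eta + \frac{\bar\rho}{2}\norm{\cdot}^2$ is convex and differentiable, so the standard convex subgradient inequality $g(\ybf)\ge g(\xbf)+\inner{\nabla g(\xbf)}{\ybf-\xbf}$ holds for all $\ybf$. Substituting $\nabla g(\xbf)=\nabla f_\eta(\xbf)+\bar\rho\,\xbf$ and cancelling the quadratic terms via $\frac{\bar\rho}{2}\norm{\xbf}^2-\frac{\bar\rho}{2}\norm{\ybf}^2+\bar\rho\inner{\xbf}{\ybf-\xbf}=-\frac{\bar\rho}{2}\norm{\ybf-\xbf}^2$ yields
\[
f_\eta(\ybf)\ \ge\ f_\eta(\xbf)+\inner{\nabla f_\eta(\xbf)}{\ybf-\xbf}-\frac{\bar\rho}{2}\norm{\ybf-\xbf}^2 ,
\]
i.e. $\nabla f_\eta(\xbf)$ is a $(\bar\rho,0)$-subgradient of $f_\eta$ itself at $\xbf$.

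Next I would invoke \ref{SA-func} twice: at the point $\ybf$ it gives $f(\ybf)\ge f_\eta(\ybf)$, and at the point $\xbf$ it gives $f_\eta(\xbf)\ge f(\xbf)-\Rcal_\eta(\xbf)$. Chaining these two bounds with the displayed inequality produces
\[
f(\ybf)\ \ge\ f(\xbf)+\inner{\nabla f_\eta(\xbf)}{\ybf-\xbf}-\frac{\bar\rho}{2}\norm{\ybf-\xbf}^2-\Rcal_\eta(\xbf),
\]
which is exactly the assertion that $\nabla f_\eta(\xbf)$ is a $(\bar\rho,\Rcal_\eta(\xbf))$-subgradient of $f$ at $\xbf$.

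There is essentially no obstacle; the only point needing mild care is the set over which the inequality is quantified. The approximate-subgradient definition ranges over all $\ybf\in\Rbb^d$, whereas the weak convexity of $f_\eta$ and \ref{SA-func} are posited on $\Xcal$; if $\Xcal\neq\Rbb^d$ one should read the conclusion with $\ybf$ restricted to $\Xcal$ (or, as is implicit elsewhere, take $\Xcal=\Rbb^d$). Beyond this bookkeeping the argument is just the two-line chain above, so I would present it directly rather than expanding the routine algebra of the quadratic cancellation.
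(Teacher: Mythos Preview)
Your proof is correct and follows essentially the same approach as the paper: both use the weak-convexity inequality for $f_\eta$ together with the two-sided sandwich \ref{SA-func} (at $\ybf$ to replace $f_\eta(\ybf)$ by $f(\ybf)$, and at $\xbf$ to replace $f_\eta(\xbf)$ by $f(\xbf)-\Rcal_\eta(\xbf)$). Your remark about the quantifier domain $\Xcal$ versus $\Rbb^d$ is a fair caveat that the paper leaves implicit.
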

\begin{proof}
    From the definition of a smooth approximation (\ref{SA-func}) and weak convexity of  $f_\eta$, we have
    \begin{equation*}
        \begin{aligned}
            f(\ybf) -f(\xbf) & \ge f_\eta(\ybf) -f_\eta(\xbf) + f_\eta(\xbf) - f(\xbf) \\ 
            & \ge \inner{\nabla f_\eta(\xbf)}{\ybf-\xbf} - \frac{\bar\rho}{2}\norm{\xbf-\ybf}^2 - \Rcal_\eta(\xbf),
        \end{aligned}
    \end{equation*}
    which completes our proof.
\end{proof}
\begin{rem}
If $f$ is convex, then a $(0,\vep)$-subgradient reduces to the $\vep$-subgradient in convex optimization~\citep{brondsted1965subdifferentiability}.
To our knowledge, its extension to weakly convex functions was previously explored by \citet{ruszczynski1987linearization}, and its use in the complexity analysis for nonconvex optimization has been more recently leveraged in \citet{boob2024level} and \citet{van2024weak}.
Our definition differs slightly by allowing the weak convexity parameter of the approximation $\bar\rho$ to be different from $\rho$. This flexibility allows us to handle smoothing operations that might increase the weak convexity modulus (i.e. $\bar{\rho}>\rho$), a scenario we will examine further in the next section.
\end{rem}

The following theorem connects the norm of the $\vep$-subgradient and approximate stationarity of the problem $\min_\xbf f(\xbf)$. 
\begin{thm}
\label{thm:e-subgrad} 
Let $f$ be proper, lower semi-continuous, and $\rho$-weakly convex $\rho>0$, and let $\xbf\in\dom f$.
\begin{enumerate}[label=\arabic*), leftmargin=15pt]
\item If $\vbf$ is a $(\bar\rho,\varepsilon)$-subgradient of $f$ at $\xbf$, then $\xbf$
is a $\Brbra{\sqrt{\frac{2\varepsilon}{\hat{\rho}-\rho}},\norm{\vbf}+\hat{\rho}\sqrt{\frac{2\varepsilon}{\hat{\rho}-\rho}}}$-stationary
point for any $\hat{\rho}>\max\{\bar\rho, \rho\}$. If $\varepsilon=0$ and $\bar\rho=\rho$, then $\vbf\in\partial f(\xbf)$.
\item Conversely, if $\xbf$ is an {$({\eta}/{\rho},\eta)$}-stationary point, then there exists a $(\rho,\varepsilon)$-subgradient $\vbf$ of $f$ at $\xbf$ such that $\norm{\vbf}\le 2\eta$, where $\varepsilon\coloneqq{2\eta^2}/{\rho}+\eta\norm{\partial f(\xbf)}/\rho$. %
\end{enumerate}
\end{thm}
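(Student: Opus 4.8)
The plan is to handle the two parts separately, each by constructing an explicit auxiliary point and invoking the Moreau envelope / proximal point machinery from \Cref{lem:moreau}.

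For part 1, suppose $\vbf$ is a $(\bar\rho,\varepsilon)$-subgradient of $f$ at $\xbf$, i.e. $f(\ybf)\ge f(\xbf)+\inner{\vbf}{\ybf-\xbf}-\tfrac{\bar\rho}{2}\norm{\ybf-\xbf}^2-\varepsilon$ for all $\ybf$. Fix $\hat\rho>\max\{\bar\rho,\rho\}$ and define $\barx\assign\prox_{f/\hat\rho}(\xbf-\vbf/\hat\rho)$, the minimizer of $\ybf\mapsto f(\ybf)+\tfrac{\hat\rho}{2}\norm{\ybf-\xbf+\vbf/\hat\rho}^2$. This choice is natural: the optimality condition gives $\hat\rho(\xbf-\vbf/\hat\rho-\barx)\in\partial f(\barx)$, i.e. $\hat\rho(\xbf-\barx)-\vbf\in\partial f(\barx)$, so $\norm{\partial f(\barx)}\le\norm{\vbf}+\hat\rho\norm{\xbf-\barx}$. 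It remains to bound $\norm{\xbf-\barx}$. For this I would compare the value of the proximal objective at $\ybf=\barx$ against its value at $\ybf=\xbf$: on one hand the objective at $\barx$ is $\le f(\xbf)+\tfrac{\hat\rho}{2}\norm{\vbf/\hat\rho}^2$ (plug in $\ybf=\xbf$); on the other hand, using the $(\bar\rho,\varepsilon)$-subgradient inequality at $\ybf=\barx$ to lower-bound $f(\barx)$, the objective at $\barx$ is at least $f(\xbf)+\inner{\vbf}{\barx-\xbf}-\tfrac{\bar\rho}{2}\norm{\barx-\xbf}^2-\varepsilon+\tfrac{\hat\rho}{2}\norm{\barx-\xbf+\vbf/\hat\rho}^2$. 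Expanding the last square and cancelling the cross term $\inner{\vbf}{\barx-\xbf}$ and the $\tfrac{1}{2\hat\rho}\norm{\vbf}^2$ terms, the comparison collapses to $\tfrac{\hat\rho-\bar\rho}{2}\norm{\barx-\xbf}^2\le\varepsilon$, hence $\norm{\barx-\xbf}\le\sqrt{2\varepsilon/(\hat\rho-\bar\rho)}$. Since I may take $\hat\rho$ satisfying $\hat\rho-\bar\rho\ge\hat\rho-\rho$ is not automatic — the cleanest route is to note the bound holds with $\hat\rho-\bar\rho$ in place of $\hat\rho-\rho$; but the statement writes $\hat\rho-\rho$, so I should instead use weak convexity of $f$ itself rather than the $\bar\rho$-bound, i.e. replace the $(\bar\rho,\varepsilon)$-inequality's use of $\bar\rho$ by the fact that $\hat\rho(\xbf-\barx)-\vbf\in\partial f(\barx)$ together with $\rho$-weak convexity gives a cleaner $\tfrac{\hat\rho-\rho}{2}\norm{\barx-\xbf}^2\le\varepsilon$ estimate. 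Combining, $\xbf$ is $\bigl(\sqrt{2\varepsilon/(\hat\rho-\rho)},\ \norm{\vbf}+\hat\rho\sqrt{2\varepsilon/(\hat\rho-\rho)}\bigr)$-stationary. When $\varepsilon=0$ and $\bar\rho=\rho$ the subgradient inequality is exactly the definition of $\vbf\in\partial f(\xbf)$ (for weakly convex $f$ the limiting and the "global-weak" subdifferentials coincide).

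For part 2, suppose $\xbf$ is $(\eta/\rho,\eta)$-stationary: there is $\barx$ with $\norm{\barx-\xbf}\le\eta/\rho$ and some $\gbf\in\partial f(\barx)$ with $\norm{\gbf}\le\eta$. I want to produce a $(\rho,\varepsilon)$-subgradient of $f$ \emph{at $\xbf$}. The natural candidate is $\vbf\assign\gbf+\rho(\xbf-\barx)$ — this is the "weakly-convex transport" of $\gbf$ from $\barx$ to $\xbf$. Then $\norm{\vbf}\le\norm{\gbf}+\rho\norm{\xbf-\barx}\le\eta+\eta=2\eta$. To verify the subgradient inequality, for arbitrary $\ybf$ apply $\rho$-weak convexity of $f$ with the subgradient $\gbf$ at $\barx$: $f(\ybf)\ge f(\barx)+\inner{\gbf}{\ybf-\barx}-\tfrac{\rho}{2}\norm{\ybf-\barx}^2$. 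I then need to rewrite the right-hand side in terms of $\xbf$ and $\vbf$: substitute $\ybf-\barx=(\ybf-\xbf)+(\xbf-\barx)$, collect the inner-product terms into $\inner{\vbf}{\ybf-\xbf}$ plus a leftover, expand $\norm{\ybf-\barx}^2=\norm{\ybf-\xbf}^2+2\inner{\ybf-\xbf}{\xbf-\barx}+\norm{\xbf-\barx}^2$, and bound $f(\barx)\ge f(\xbf)-\norm{\partial f(\xbf)}\norm{\barx-\xbf}-\tfrac{\rho}{2}\norm{\barx-\xbf}^2$ (again weak convexity, now transporting a subgradient of $f$ at $\xbf$). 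Carefully grouping, all the stray linear-in-$(\ybf-\xbf)$ terms should combine into exactly $\inner{\vbf}{\ybf-\xbf}$, the quadratic $-\tfrac{\rho}{2}\norm{\ybf-\xbf}^2$ appears, and the remaining constant terms are bounded by $\varepsilon=2\eta^2/\rho+\eta\norm{\partial f(\xbf)}/\rho$ using $\norm{\xbf-\barx}\le\eta/\rho$ and $\norm{\gbf}\le\eta$. I expect the bookkeeping in this step — tracking exactly which cross terms merge into $\inner{\vbf}{\ybf-\xbf}$ versus which become part of the constant — to be the main technical obstacle; it is pure algebra but needs care with signs. Everything else reduces to the definitions and \Cref{lem:moreau}.
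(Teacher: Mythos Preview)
Your overall strategy for both parts matches the paper's, but Part 1 contains a sign error that actually breaks the argument. With your choice $\barx=\prox_{f/\hat\rho}(\xbf-\vbf/\hat\rho)$, expanding $\tfrac{\hat\rho}{2}\norm{\barx-\xbf+\vbf/\hat\rho}^2$ produces the cross term $+\inner{\vbf}{\barx-\xbf}$, which \emph{adds} to the $+\inner{\vbf}{\barx-\xbf}$ from the approximate-subgradient inequality rather than cancelling it. The resulting bound $2\inner{\vbf}{\barx-\xbf}+\tfrac{\hat\rho-\bar\rho}{2}\norm{\barx-\xbf}^2\le\varepsilon$ does not control $\norm{\barx-\xbf}$ since the inner product can be negative. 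Your proposed fallback (``use $\rho$-weak convexity with the subgradient $\hat\rho(\xbf-\barx)-\vbf$ at $\barx$'') still leaves an uncontrolled $\inner{\vbf}{\xbf-\barx}$ term, so it does not rescue the bound either.

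The correct shift is $\barx=\prox_{f/\hat\rho}(\xbf+\vbf/\hat\rho)$. Up to an additive constant this minimizes $\Psi(\ybf)\coloneqq f(\ybf)+\tfrac{\hat\rho}{2}\norm{\ybf-\xbf}^2-\inner{\vbf}{\ybf-\xbf}$, which is exactly the auxiliary function the paper uses. The $(\bar\rho,\varepsilon)$-subgradient inequality (with $\hat\rho\ge\bar\rho$) says precisely that $\Psi(\xbf)\le\Psi(\ybf)+\varepsilon$ for all $\ybf$, so $\xbf$ is $\varepsilon$-optimal for $\Psi$; since $f$ is $\rho$-weakly convex, $\Psi$ is $(\hat\rho-\rho)$-strongly convex, and the two together give $\tfrac{\hat\rho-\rho}{2}\norm{\barx-\xbf}^2\le\varepsilon$ directly. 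This cleanly resolves your $\hat\rho-\bar\rho$ versus $\hat\rho-\rho$ concern without any ad hoc patch.

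For Part 2 your plan is the paper's, but your candidate $\vbf=\gbf+\rho(\xbf-\barx)$ also has the sign flipped: when you expand $\norm{\ybf-\barx}^2$ and collect the coefficient of $\ybf-\xbf$, you obtain $\gbf-\rho(\xbf-\barx)$. With that correction the rest of your bookkeeping goes through exactly as in the paper.
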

\begin{proof} %

\textsf{Part 1).} Let $\vbf$ be an $\vep$-subgradient of $f$ at $\xbf$. Then, for any $\ybf$ and any $\hat{\rho}>\max\{\rho,\bar\rho\}$, we have 
\begin{equation}
f(\ybf)\ge f(\xbf)+\inner{\vbf}{\ybf-\xbf}-\frac{\hat{\rho}}{2}\norm{\ybf-\xbf}^{2}-\varepsilon.
\end{equation}
Rearranging the terms, we have
\begin{equation}
f(\xbf)\le f(\ybf)+\frac{\hat{\rho}}{2}\norm{\ybf-\xbf}^{2}+\inner{\vbf}{\xbf-\ybf}+\varepsilon.\label{eq:mid-01}
\end{equation}
Let us define $\Psi(\ybf)=f(\ybf)+\frac{\hat{\rho}}{2}\norm{\ybf-\xbf}^{2}+\inner{\vbf}{\xbf-\ybf}$.
It is clear that $\Psi(\xbf)=f(\xbf)$. Then \eqref{eq:mid-01} implies
\begin{equation}
\Psi(\xbf)\le\Psi(\ybf)+\varepsilon,\quad\ \text{for all }\ybf\in\Rbb^{d}.\label{eq:mid-03}
\end{equation}
Due to the weak convexity of $f$ and $\hat{\rho}>\rho$, $\Psi(\ybf)$
is $(\hat{\rho}-\rho)$-strongly convex, let $\ybf_{\xbf}$ denote
the global minimizer~$\argmin_{\ybf}\Psi(\ybf)$. It then follows
from strong convexity that 
\begin{equation}
\Psi(\xbf)-\Psi(\ybf_{\xbf})\ge\frac{\hat{\rho}-\rho}{2}\norm{\ybf_{\xbf}-\xbf}^{2}.\label{eq:mid-02}
\end{equation}
Combining \eqref{eq:mid-03} and \eqref{eq:mid-02}, we have 
\begin{equation}\label{eq:mid-08}
\norm{\ybf_{\xbf}-\xbf}\le\sqrt{\frac{2\varepsilon}{\hat{\rho}-\rho}}.
\end{equation}
Using the global optimality of $\ybf_{\xbf}$, we have 
\begin{equation}\label{eq:mid-10}
0\in\partial f(\ybf_{\xbf})+\hat{\rho}(\ybf_{\xbf}-\xbf)-\vbf.
\end{equation}
which implies $\norm{\partial f(\ybf_{\xbf})}\le\norm{\vbf}+\hat{\rho}\norm{\ybf_{\xbf}-\xbf}\le\norm{\vbf}+\hat{\rho}\sqrt{\frac{2\varepsilon}{\hat{\rho}-\rho}}$.

Suppose $\varepsilon=0$ and $\bar\rho=\rho$, then combining \eqref{eq:mid-08} and \eqref{eq:mid-10}, it is easy to see $\vbf\in\partial f(\xbf)$.
Hence, we complete the proof of \textsf{Part 1)}.

\textsf{Part 2).} In view of the $(\eta/\rho,\eta)$-stationary
condition, there exists an $\hatx$ and a subgradient $\hat{\ubf}\in\partial f(\hatx)$
such that $\norm{\hatx-\xbf}\le\eta/\rho$ and $\norm{\hat{\ubf}}\le\eta$. Let $\ubf\in\partial f(\xbf)$ be a minimum-norm subgradient. 
Applying the definition of weak convexity,  we have 
\begin{equation}\label{eq:mid-04}
\begin{aligned}
f(\ybf) & \ge f(\hatx)+\inner{\hat{\ubf}}{\ybf-\hatx}-\frac{\rho}{2}\norm{\ybf-\hatx}^{2} \nonumber\\
     & \ge f(\xbf) + \inner{\ubf}{\hat\xbf-\xbf}+\inner{\hat{\ubf}}{\ybf-\hatx} -\frac{\rho}{2}\norm{\ybf-\hatx}^{2} -\frac{\rho}{2}\norm{\xbf-\hatx}^{2}\nonumber \\
 & = f(\xbf)+\inner{\hat{\ubf}}{\ybf-\xbf} +\inner{\hat{\ubf}-\ubf}{\xbf-\hat\xbf} -\frac{\rho}{2}\norm{\ybf-\hat\xbf}^{2} -\frac{\rho}{2}\norm{\xbf-\hatx}^{2}.
\end{aligned}
\end{equation}
It is elementary to check 
\[
\frac{\rho}{2}\norm{\ybf-\hatx}^{2}=\frac{\rho}{2}\norm{\ybf-\xbf}^{2}+\frac{\rho}{2}\norm{\xbf-\hatx}^{2}+\rho\inner{\ybf-\xbf}{\xbf-\hatx}.
\]
Placing this result in \eqref{eq:mid-04}, we have 
\[\begin{aligned}
f(\ybf) & \ge f(\xbf)+\inner{\hat{\ubf}-\rho(\xbf-\hat\xbf)}{\ybf-\xbf}-\frac{\rho}{2}\norm{\ybf-\xbf}^{2}-{\rho}\norm{\xbf-\hatx}^{2}-\norm{\hat{\ubf}-\ubf}\norm{\hatx-\xbf}\\
 & \ge f(\xbf)+\inner{\hat{\ubf}-\rho(\xbf-\hat\xbf)}{\ybf-\xbf}-\frac{\rho}{2}\norm{\ybf-\xbf}^{2}-\frac{\eta^2}{\rho}-(\eta+\norm{\ubf})\frac{\eta}{\rho}.
\end{aligned}
\]
Then it is clear that $\vbf=\hat{\ubf}-\rho(\xbf-\hat\xbf)$ is an $\varepsilon$-subgradient of size $\norm{\vbf}\le \norm{\hat\ubf}+\rho\norm{\xbf-\hat\xbf}\le 2\eta$, where $\varepsilon\coloneqq{2\eta^2}/{\rho}+\eta\norm{\partial f(\xbf)}/\rho$.
\end{proof}

\paragraph{Intuition for algorithm design}
We illustrate the motivation of smoothing when $f$ is real-valued. It is natural to consider the smooth approximation problem $\min_{\xbf} f_\eta(\xbf)$ instead.
In view of \Cref{prop:rel-sa-epsi-subgrad} and \Cref{thm:e-subgrad}, achieving a small gradient norm $\|\nabla f_\eta(\xbf)\|$ directly corresponds to attaining an approximate stationary point of \eqref{pb:main}. 
Specifically, finding a solution $\xbf$ such that $\norm{\nabla f_\eta(\xbf)}\le \vep$ yields an $(\Ocal(\sqrt{\Rcal_\eta(\xbf)}), \Ocal(\vep+\sqrt{\Rcal_\eta(\xbf)}))$-stationary point to the original nonsmooth problem.
Consequently, our goal reduces to applying smooth algorithms to minimize the gradient norm $\norm{\nabla f_\eta(\xbf)}$. 
Furthermore, while our illustration is based on an unconstrained problem, the same idea extends to optimizing a composite objective \(\phi(\mathbf{x})=f(\mathbf{x})+r(\mathbf{x})\), where $f(\xbf)$ can be more complicated, such as the expectation function $f(\xbf)=\Expe_\xi [f(\xbf,\xi)]$. We will develop termination criteria other than the gradient norm for this composite setting.

\subsection{Partial smooth minimization of composite problems}\label{sec:smoothing-composite}

Consider the composite problem \eqref{pb:main}, which seeks to minimize the sum of two functions, $f$ and $r$, both of which can be nonsmooth.
In many applications, the function $r$ serves to enforce some desirable structure of the solution, such as sparsity, or represents the indicator function of a constraint set. Functions of this nature are typically preserved in their exact, nonsmooth form. Therefore, we focus on the \emph{partially smoothed} problem
\begin{equation} \label{pb:comp-smooth-2}
	\min_{\xbf\in\Rbb^{d}}\,\phi_{\eta}(\xbf)=f_{\eta}(\xbf)+r(\xbf)
\end{equation}
 where only $ f $ is replaced by its smooth approximation $f_{\eta}$.
We assume that the smooth approximation \( f_{\eta} \) is computationally tractable, namely, its (approximate) first-order information is available via certain oracle calls. 
To analyze algorithms for solving \eqref{pb:comp-smooth-2}, we consider two commonly used stationarity measures from the literature.
The first is the norm of the \emph{generalized gradient}, which is closely associated with the proximal gradient method \citep{Saeed2016accelerated, nesterov2013gradient}. Given a stepsize $\gamma > 0$, the generalized gradient is defined by:
\begin{equation}\label{eq:generalized-grad}
\Gcal_\gamma ({\xbf}) \coloneqq \frac{1}{\gamma}(\xbf-\hat{\xbf}),\ \ \text{where } \, 
\hat{\xbf}\coloneqq
\prox_{\gamma r}(\xbf-\gamma \nabla f_\eta(\xbf)).
\end{equation}
The optimality condition for the proximal operator implies that
\[
\zerobf \in\nabla f_{\eta}(\xbf)+\frac{1}{\gamma}(\hat{\xbf}-\xbf)+\partial r(\hat{\xbf}).
\]
When $\Gcal_{\gamma}\rbra{\xbf}=\zerobf$, $\xbf$ satisfies the first-order stationarity condition for \eqref{pb:comp-smooth}. Alternatively, \citet{davis2019stochastic} propose the \emph{gradient norm of the Moreau envelope}:  $\norm{\nabla\phi_{\eta}^{\beta}(\xbf)}$ as a stationarity measure. The quantitative relationship between these two measures has been explored by \citet{drusvyatskiy2019efficiency}.
The following theorem connects these criteria for the smoothed problem to the approximate stationarity of the original nonsmooth problem. 
\begin{thm}
\label{thm:smooth-criteria-convert} Suppose that $f$ is $\rho$-weakly convex and $f_\eta$ is a $(\bar{\rho},\eta)$-smooth approximation of $f$.
Let $\xbf\in\dom\phi$.  
\begin{enumerate}[label=\arabic*), leftmargin=15pt]
\item If $\xbf$ satisfies $\norm{\Gcal_{\gamma}(\xbf)}\le\vep$
for some $\vep>0$, then $\xbf$ is a $\Brbra{\sqrt{\frac{2\Rcal_\eta(\hat{\xbf})}{\hat{\rho}-\rho}}+\gamma\vep,\left(1+\gamma \Lcal_{\eta}({\xbf, \hat{\xbf}})\right)\vep+\hat{\rho}\sqrt{\frac{2\Rcal_\eta(\hat{\xbf})}{\hat{\rho}-\rho}}}$-stationary
point for any $\hat{\rho}>\max\{\rho,\bar\rho\}$, where $\hat{\xbf}$ is defined in \eqref{eq:generalized-grad}. 
\item Assume $\beta > \max\{\rho,\bar\rho\}$. If $\norm{\nabla\phi_{\eta}^{\beta}(\xbf)}\le\vep$, then $\xbf$
is a $\Brbra{\sqrt{\frac{2\Rcal_\eta(\ybf_\xbf)}{\beta-\rho}}+\beta^{-1}\vep,\beta\sqrt{\frac{2\Rcal_\eta(\ybf_\xbf)}{\beta-\rho}}+\vep}$-stationary point of problem~\eqref{pb:main}, where $\ybf_{\xbf}=\prox_{\phi_{\eta}/\beta}(\xbf)$.
\end{enumerate}
\end{thm}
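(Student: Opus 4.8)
The plan is to reduce both parts to the already-established machinery: \Cref{prop:rel-sa-epsi-subgrad} (the gradient of a smooth approximation is an approximate subgradient) together with \Cref{thm:e-subgrad} Part 1) (an approximate subgradient yields an approximate stationary point). The only real work is to identify, in each of the two stationarity criteria, a point and a vector that together constitute an approximate subgradient of the \emph{composite} objective $\phi=f+r$, then feed that data into \Cref{thm:e-subgrad}. A small wrinkle is that \Cref{thm:e-subgrad} is stated for a single function, so I will apply it to $\phi$ itself, noting that $\phi$ is $\rho$-weakly convex (since $r$ is convex) and that $\phi_\eta = f_\eta + r$ is $\bar\rho$-weakly convex, while $r$ contributes its exact (not approximate) subgradient.

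For \textbf{Part 1)}, start from the optimality condition for the prox step displayed just before the theorem: $\zerobf \in \nabla f_\eta(\xbf) + \frac1\gamma(\hat\xbf - \xbf) + \partial r(\hat\xbf)$, i.e. there is $\wbf \in \partial r(\hat\xbf)$ with $\wbf = -\nabla f_\eta(\xbf) + \Gcal_\gamma(\xbf)$. Now combine the approximate-subgradient inequality for $f$ at the point $\hat\xbf$ with the convexity inequality for $r$ at $\hat\xbf$. To get an approximate subgradient \emph{at $\hat\xbf$}, I use \ref{SA-func} together with $\bar\rho$-weak convexity of $f_\eta$ to write, for all $\ybf$,
\[
f(\ybf) \ge f_\eta(\ybf) \ge f_\eta(\hat\xbf) + \inner{\nabla f_\eta(\hat\xbf)}{\ybf - \hat\xbf} - \tfrac{\bar\rho}{2}\norm{\ybf-\hat\xbf}^2 \ge f(\hat\xbf) + \inner{\nabla f_\eta(\hat\xbf)}{\ybf-\hat\xbf} - \tfrac{\bar\rho}{2}\norm{\ybf-\hat\xbf}^2 - \Rcal_\eta(\hat\xbf).
\]
Adding $r(\ybf) \ge r(\hat\xbf) + \inner{\wbf}{\ybf-\hat\xbf}$ shows that $\vbf \coloneqq \nabla f_\eta(\hat\xbf) + \wbf$ is a $(\bar\rho, \Rcal_\eta(\hat\xbf))$-subgradient of $\phi$ at $\hat\xbf$. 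To bound $\norm{\vbf}$, write $\vbf = \big(\nabla f_\eta(\hat\xbf) - \nabla f_\eta(\xbf)\big) + \Gcal_\gamma(\xbf)$, so $\norm{\vbf} \le \Lcal_\eta(\xbf,\hat\xbf)\norm{\hat\xbf-\xbf} + \norm{\Gcal_\gamma(\xbf)} = \big(1 + \gamma\Lcal_\eta(\xbf,\hat\xbf)\big)\vep$, using $\norm{\hat\xbf-\xbf} = \gamma\norm{\Gcal_\gamma(\xbf)} \le \gamma\vep$. Applying \Cref{thm:e-subgrad} Part 1) to $\phi$ at $\hat\xbf$ gives that $\hat\xbf$ is $\big(\sqrt{2\Rcal_\eta(\hat\xbf)/(\hat\rho-\rho)},\, \norm{\vbf} + \hat\rho\sqrt{2\Rcal_\eta(\hat\xbf)/(\hat\rho-\rho)}\big)$-stationary; since $\norm{\xbf-\hat\xbf}\le\gamma\vep$, the triangle inequality on the displacement component bumps the first coordinate by $\gamma\vep$, yielding exactly the claimed pair.

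For \textbf{Part 2)}, recall from \Cref{lem:moreau} that $\nabla\phi_\eta^\beta(\xbf) = \beta(\xbf - \ybf_\xbf) \in \partial\phi_\eta(\ybf_\xbf)$ where $\ybf_\xbf = \prox_{\phi_\eta/\beta}(\xbf)$; write $\gbf \coloneqq \nabla\phi_\eta^\beta(\xbf)$, so $\norm{\gbf}\le\vep$ and $\norm{\ybf_\xbf - \xbf} = \norm{\gbf}/\beta \le \vep/\beta$. Decompose $\gbf = \gbf_f + \gbf_r$ with $\gbf_f = \nabla f_\eta(\ybf_\xbf)$ and $\gbf_r \in \partial r(\ybf_\xbf)$. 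As in Part 1), \ref{SA-func} plus $\bar\rho$-weak convexity of $f_\eta$ make $\gbf_f$ a $(\bar\rho,\Rcal_\eta(\ybf_\xbf))$-subgradient of $f$ at $\ybf_\xbf$, and adding the convexity inequality for $r$ shows $\gbf$ itself is a $(\bar\rho,\Rcal_\eta(\ybf_\xbf))$-subgradient of $\phi$ at $\ybf_\xbf$ with $\norm{\gbf}\le\vep$. Feeding this into \Cref{thm:e-subgrad} Part 1) with the choice $\hat\rho = \beta$ (legitimate since the statement already assumes $\beta > \rho$, and one takes $\beta \ge \bar\rho$ without loss, or more carefully $\beta > \max\{\rho,\bar\rho\}$) gives that $\ybf_\xbf$ is $\big(\sqrt{2\Rcal_\eta(\ybf_\xbf)/(\beta-\rho)},\, \vep + \beta\sqrt{2\Rcal_\eta(\ybf_\xbf)/(\beta-\rho)}\big)$-stationary; accounting for $\norm{\xbf - \ybf_\xbf}\le\vep/\beta$ in the displacement slot produces the stated pair.

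The main obstacle is bookkeeping rather than depth: one must be careful that \Cref{thm:e-subgrad} is invoked with the base function $\phi$ (whose weak-convexity modulus is $\rho$, matching the ``$\rho$'' appearing in the denominators $\hat\rho-\rho$ and $\beta-\rho$) while the \emph{approximation's} modulus $\bar\rho$ only needs $\hat\rho > \bar\rho$ (resp. $\beta > \bar\rho$) for the strong-convexity argument inside that theorem to go through — and to track the extra $\gamma\vep$ (resp. $\vep/\beta$) displacement coming from the gap between $\xbf$ and the auxiliary point $\hat\xbf$ (resp. $\ybf_\xbf$) at which the approximate stationarity is actually certified. I would also remark that the argument tacitly uses the sum rule $\partial\phi_\eta(\ybf_\xbf) = \nabla f_\eta(\ybf_\xbf) + \partial r(\ybf_\xbf)$, valid since $f_\eta$ is differentiable.
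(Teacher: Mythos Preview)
Your proposal is correct and follows essentially the same approach as the paper's proof: in both parts you identify the auxiliary point ($\hat\xbf$ or $\ybf_\xbf$), show that $\nabla f_\eta$ plus an element of $\partial r$ at that point is a $(\bar\rho,\Rcal_\eta)$-subgradient of $\phi$, bound its norm, invoke \Cref{thm:e-subgrad}, and absorb the displacement $\norm{\xbf-\hat\xbf}$ (resp.\ $\norm{\xbf-\ybf_\xbf}$) via the triangle inequality. Your remark that Part 2) tacitly needs $\beta>\max\{\rho,\bar\rho\}$ is a fair observation that the paper's proof also relies on implicitly.
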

\begin{proof}
\textsf{Part 1).} By definition, $\norm{\Gcal_\gamma(\xbf)} = \gamma^{-1} \norm{\xbf-\hat{\xbf}} \leq \varepsilon$. Using the optimality condition of the proximal operator, there exists $\vbf \in \partial r(\hat{\xbf})$ such that 
$\xbf - \hat{\xbf} = \gamma(\nabla f_\eta(\xbf) + \vbf)$.
We now bound the subgradient norm of the smoothed objective at $\hat{\xbf}$:
\begin{equation}
\begin{aligned}
\norm{\partial\phi_{\eta}(\hat{\xbf})} & \le \norm{\nabla f_\eta(\hat{\xbf}) + \vbf} \\
&\leq \norm{\nabla f_{\eta}(\xbf) - \nabla f_{\eta}(\hat{\xbf})} + \norm{\nabla f_{\eta}(\xbf) + \vbf}\\
&= \norm{\nabla f_{\eta}(\xbf) - \nabla f_{\eta}(\hat{\xbf})} + \gamma^{-1}\norm{\xbf - \hat{\xbf}}\\
&\leq \Lcal_{\eta}(\xbf, \hat{\xbf})\norm{\xbf-\hat{\xbf}} + \gamma^{-1}\norm{\xbf-\hat{\xbf}}\\
&= \left(1 + \gamma \Lcal_{\eta}(\xbf, \hat{\xbf})\right)\varepsilon.
\end{aligned}
\end{equation}
Since $\phi_\eta(\hat{\xbf}) \leq \phi(\hat{\xbf}) \leq \phi_\eta(\hat{\xbf}) + \Rcal_\eta(\hat{\xbf})$, by an argument similar to that in the proof of \Cref{prop:rel-sa-epsi-subgrad}, 
we see that the subgradient $\nabla f_\eta(\hat{\xbf}) + \vbf$ is a $(\bar{\rho}, \Rcal_\eta(\hat{\xbf}))$-subgradient of $\phi(\hat{\xbf})$. 
Applying \Cref{thm:e-subgrad}, it follows that the point $\hat{\xbf}$ is a $\Big(\sqrt{\frac{2\Rcal_\eta(\hat{\xbf})}{\hat{\rho}-\rho}}, \big(1+\gamma \Lcal_{\eta}(\xbf,\hat{\xbf})\big)\varepsilon + \hat{\rho}\sqrt{\frac{2\Rcal_\eta(\hat{\xbf})}{\hat{\rho}-\rho}}\Big)$-stationary point. 
This implies the existence of a point $\tilde{\xbf}$ satisfying:
\[\norm{\tilde{\xbf}-\hat{\xbf}} \leq \sqrt{\frac{2\Rcal_\eta(\hat{\xbf})}{\hat{\rho}-\rho}}, \ \  \text{and} \ \ 
\norm{\partial\phi(\tilde{\xbf})} \leq \left(1+\gamma \Lcal_\eta(\xbf,\hat{\xbf})\right)\varepsilon + \hat{\rho}\sqrt{\frac{2\Rcal_\eta(\hat{\xbf})}{\hat{\rho}-\rho}}.
\]
Finally, applying the triangle inequality yields:
\begin{equation}
\norm{\xbf-\tilde{\xbf}} \leq \norm{\tilde{\xbf}-\hat{\xbf}} + \norm{\hat{\xbf}-\xbf} \leq \sqrt{\frac{2\Rcal_\eta(\hat{\xbf})}{\hat{\rho}-\rho}} + \gamma\varepsilon.
\end{equation}
This completes the proof of \textsf{Part 1)}. 

\textsf{Part 2)}. Using \Cref{lem:moreau}, we have 
\begin{equation}
\norm{\ybf_{\xbf}-\xbf}=\beta^{-1}\norm{\nabla\phi_{\eta}^{\beta}(\xbf)}\le\beta^{-1}\vep.\label{eq:opt-10}
\end{equation}
Moreover, the stationary condition shows
\[
\zerobf\in\partial\phi_{\eta}(\ybf_{\xbf})+ {\beta}(\ybf_{\xbf}-\xbf) = \nabla f_\eta (\ybf_\xbf) + \partial r(\ybf_\xbf) + {\beta}(\ybf_{\xbf}-\xbf),
\]
which implies $\norm{\nabla f_{\eta}(\ybf_{\xbf})+\vbf}\le\vep$
for some $\vbf\in\partial r(\ybf_{\xbf})$. Since $\nabla f_\eta(\ybf_\xbf)$ is a $(\bar\rho,R_\eta(\ybf_\xbf))$ subgradient of $f(\ybf_\xbf)$ and $\vbf$ is a subgradient of $r(\ybf_\xbf)$, 
it is easy to see that $\nabla f_{\eta}(\ybf_{\xbf})+\vbf$ is a $(\bar\rho,R_\eta(\ybf_\xbf))$-subgradient of $\phi(\ybf_{\xbf})$.  
Consequently, \Cref{thm:e-subgrad} implies $\ybf_{\xbf}$ is a $\Brbra{\sqrt{\frac{2\Rcal_\eta(\ybf_\xbf)}{\beta-\rho}},\beta\sqrt{\frac{2\Rcal_\eta(\ybf_\xbf)}{\beta-\rho}}+\vep}$-stationary
point of $\phi$. Namely, there exists some $\hat{\ybf}$ such that 
\[
\norm{\hat{\ybf}-\ybf_{\xbf}}\le\sqrt{\frac{2\Rcal_\eta(\ybf_\xbf)}{\beta-\rho}},\quad\norm{\partial\phi(\hat{\ybf})}\le\beta\sqrt{\frac{2\Rcal_\eta(\ybf_\xbf)}{\beta-\rho}}+\vep.
\]
Using triangle inequality and \eqref{eq:opt-10}, we get 
\[
\norm{\hat{\ybf}-\xbf}\le\norm{\hat{\ybf}-\ybf_{\xbf}}+\norm{\ybf_{\xbf}-\xbf}\le\sqrt{\frac{2\Rcal_\eta(\ybf_\xbf)}{\beta-\rho}}+\beta^{-1}\vep.
\]
This result and the bound on $\norm{\partial\phi(\hat{\ybf})}$ lead
to the approximate stationarity of $\xbf$. 
\end{proof}

\begin{rem}
\Cref{thm:smooth-criteria-convert} provides a guidance on selecting the smoothing parameter $\eta$. For many standard smoothing techniques, the approximation error $\Rcal_\eta$ is of order $\Ocal(\eta)$. To obtain a target $(\vep,\vep)$-stationary point for the original problem, the dominating term $\sqrt{2\Rcal_\eta(\ybf_\xbf)/(\beta-\rho)}$ must be of order $\Ocal(\vep)$, which means we need to enforce $\Rcal_\eta = \Ocal(\vep^2)$. For $\Rcal_\eta=\Ocal(\eta)$ the smoothing parameter should be chosen to satisfy $\eta = \Ocal(\vep^2)$.
\end{rem}

\section{Smoothing operations}\label{sec:smoothing-operations}

In this section, we discuss several smoothing techniques and their applications to weakly convex functions. 

\subsection{Generalized Nesterov's smooth approximation}\label{sec:nes-smoothing}

Building on the seminal work of \citet{nesterov2005smooth}, we study nonsmooth functions that can be expressed as the composition of a convex function with a \emph{nonlinear} map:
\begin{equation}
f(\xbf)=h(A(\xbf)).\label{eq:composite-form}
\end{equation}
Here, $h:\Vbb\raw\reals$ is a convex continuous function, $A:\Ebb\rightarrow \Vbb$ is a smooth mapping, and $\Ebb$ and $\Vbb$ are two finite-dimensional vector spaces. 
In Nesterov's setting, adding a strongly convex proximal term in the dual space yields a Lipschitz smooth approximation function. Our extension generalizes this idea by accommodating the additional curvature induced by the nonlinear map.

\paragraph{Problem setup}  We equip $\Ebb$ with the standard Euclidean norm $\norm{\cdot}$ and $\Vbb$ with a general norm $\norm{\cdot}_\Vbb$. Let $\Vbb^*$ be the dual space with $\norm{\ybf}_{\Vbb^*} = 
 \sup \{\langle \xbf, \ybf \rangle: \norm{\xbf}_\Vbb\leq1\}$. Since $\Ebb$ is a Euclidean space, it is self-dual, and we denote its norm and the corresponding dual norm by $\norm{\cdot}_{\Ebb}=\norm{\cdot}_{\Ebb^*}=\norm{\cdot}$.
 Let $T:\Ebb\!\to\!\Vbb$ be a linear map between $\Ebb$ and $\Vbb$. Its operator norm is defined by $\norm{T}_\mathrm{op} \coloneqq \sup_{\|\xbf\|_{\Ebb}= 1} \norm{T\xbf}_\Vbb$. For the conjugate operator $T^*:\Vbb^*\!\to\!\Ebb^*$, we have $\norm{T^*}_{\mathrm{op}}=\norm{T}_{\mathrm{op}}$. A useful fact is 
\begin{equation}\label{eq:op-norm-bound}
\norm{T\xbf}_{\Vbb}\le \norm{T}_\textop \cdot \norm{\xbf}_{\Ebb}, \text{ and } \norm{T^*\ybf} \le \norm{T^*}_\textop \cdot \norm{\ybf}_{\Vbb^*}.
\end{equation}
The nonlinear map  $A$ is assumed to be continuously differentiable on a closed convex set $\Xcal\subseteq\Ebb$.   
We denote the Jacobian of $A$ at $\xbf$ by $\nabla A(\xbf)\;=\;[\nabla A_{1}(\xbf),\dots,\nabla A_{m}(\xbf)]^\top$ and assume that
\[
\norm{\nabla A(\xbf)-\nabla A(\hat{\xbf})}_\mathrm{op} \le L_A \norm{\xbf-\hat\xbf}, \quad \text{for all } \xbf,\hat\xbf\in\Xcal.
\]

Let $h^*(\ybf)=\sup_{\zbf\in\Vbb}\{\inner{\zbf}{\ybf}-h(\zbf)\}$ be the convex conjugate of $h$. Due to bi-conjugacy (e.g.~\citep[Theorem 4.2.1]{borwein2006convex}), we can express $f$ as 
\[
f(\xbf)=\max_{\ybf\in\Vbb^*}\bcbra{\inner{\ybf}{A(\xbf)}-h^{*}(\ybf)}.
\]
Let $\omega:\Vbb^*\rightarrow\Rbb\cup \{+\infty\}$ be a prox-function, namely, $\omega$ is lower semi-continuous, differentiable and $\sigma$-strongly convex ($\sigma>0$) on $\dom h^*$ with respect to $\norm{\cdot}_{\Vbb^*}$. 
Without loss of generality, we assume $\min_{\ybf\in\Vbb^*}\omega(\ybf)=0$; otherwise
we can simply let $\ybf^{\star}=\argmin_{\ybf\in\dom h^*}\omega(\ybf)$ and
then replace $\omega$ by $\tilde{\omega}(\ybf)=\omega(\ybf)-\omega(\ybf^{\star})$.  Furthermore, we assume $\dom h^*$ to be a bounded set and define $B\coloneqq \sup\{\norm{\ybf}_{\Vbb^*}: \ybf\in\dom h^* \}$ and 
$ D\coloneqq\sup \{\omega(\ybf): \ybf\in\dom h^*\}$. Now, consider the smooth function 
\begin{equation}\label{eq:nesterov-smoothing}
f_{\eta}(\xbf)\coloneqq h_{\eta}(A(\xbf)),\quad \text{where }\ h_{\eta}(\zbf)\coloneqq\sup_{\ybf\in\Vbb^*}\ \{\inner{\ybf}{\zbf}-h^{*}(\ybf)-\eta\omega(\ybf)\}.
\end{equation}
The key properties of $h_\eta$ are summarized below.
\begin{prop}\label{prop:hmu}
  Let $\eta>0$, then 
\begin{enumerate}[label=\arabic*), leftmargin=15pt]
    \item $h_{\eta}(\zbf)\le h(\zbf)\le h_{\eta}(\zbf)+\eta D$ for all $\zbf \in \Vbb$.
    \item $h_{\eta}$ is continuously differentiable with gradient $\nabla h_\eta (\zbf)=\ybf_{\zbf}\coloneqq\argmax_{\ybf\in\Vbb^*}\bcbra{\langle \ybf, \zbf \rangle -h^{*}(\ybf)-\eta\,\omega(\ybf)}.$ 
    \item $\nabla h_\eta$ is $L_{h_{\eta}}$-Lipschitz continuous with $L_{h_{\eta}}=\frac{1}{\sigma\eta}$. That is, for any $\zbf_1, \zbf_2\in\Vbb$, we have $\norm{\nabla h_\eta(\zbf_1)-\nabla h_\eta(\zbf_2)}_{\Vbb^*} \le \frac{1}{\sigma\eta} \norm{\zbf_1-\zbf_2}_\Vbb$.
\end{enumerate}
\end{prop}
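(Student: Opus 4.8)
The plan is to recognize $h_{\eta}$ as the Fenchel conjugate of the strongly convex function $g_{\eta}\coloneqq h^{*}+\eta\omega$ and then read off all three claims from standard conjugate–duality facts. Throughout, recall that by hypothesis $\min_{\ybf}\omega(\ybf)=0$, so $\omega\ge 0$ on $\dom h^{*}$, that $\omega\le D$ on $\dom h^{*}$, and that $g_{\eta}$ is closed and $\sigma\eta$-strongly convex with respect to $\norm{\cdot}_{\Vbb^{*}}$ (as a sum of the convex $h^{*}$ and the $\sigma\eta$-strongly convex $\eta\omega$).

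For Part 1), I would sandwich $h_{\eta}$ between $h-\eta D$ and $h$. Dropping the nonnegative term $\eta\omega(\ybf)$ from the supremum in \eqref{eq:nesterov-smoothing} gives $h_{\eta}(\zbf)\le\sup_{\ybf}\{\inner{\ybf}{\zbf}-h^{*}(\ybf)\}=h^{**}(\zbf)=h(\zbf)$, where the last equality is biconjugacy of the closed convex function $h$. For the reverse direction, since $\omega(\ybf)\le D$ for every $\ybf\in\dom h^{*}$, we have $\inner{\ybf}{\zbf}-h^{*}(\ybf)-\eta\omega(\ybf)\ge\inner{\ybf}{\zbf}-h^{*}(\ybf)-\eta D$; taking the supremum over $\ybf\in\dom h^{*}$ yields $h_{\eta}(\zbf)\ge h(\zbf)-\eta D$. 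In particular $h_{\eta}$ is finite-valued on $\Vbb$.

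For Part 2), the key observation is $h_{\eta}=g_{\eta}^{*}$ by definition. Because $\dom h^{*}$ is bounded (hence the admissible set is compact) and $g_{\eta}$ is closed strongly convex, the inner supremum is attained at a unique point $\ybf_{\zbf}=\argmax_{\ybf}\{\inner{\ybf}{\zbf}-g_{\eta}(\ybf)\}$ (uniqueness from strong convexity). I would then invoke the standard fact that the conjugate of a closed, strongly convex function is differentiable on the whole space with $\nabla g_{\eta}^{*}(\zbf)=\ybf_{\zbf}$ — this follows from Danskin's theorem applied to the concave maximand, or directly from the subgradient inequality for $g_{\eta}^{*}$. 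Continuity of $\nabla h_{\eta}=\ybf_{(\cdot)}$ then follows from the Lipschitz bound established in Part 3).

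For Part 3), I would use first-order optimality of the inner problem: for $i=1,2$, $\ybf_{\zbf_{i}}$ minimizes $g_{\eta}(\ybf)-\inner{\ybf}{\zbf_{i}}$, hence $\zbf_{i}\in\partial g_{\eta}(\ybf_{\zbf_{i}})$. Strong convexity of $g_{\eta}$ yields strong monotonicity of its subdifferential, so
\[
\inner{\zbf_{1}-\zbf_{2}}{\ybf_{\zbf_{1}}-\ybf_{\zbf_{2}}}\ \ge\ \sigma\eta\,\norm{\ybf_{\zbf_{1}}-\ybf_{\zbf_{2}}}_{\Vbb^{*}}^{2}.
\]
On the other hand, the duality pairing obeys $\inner{\zbf_{1}-\zbf_{2}}{\ybf_{\zbf_{1}}-\ybf_{\zbf_{2}}}\le\norm{\zbf_{1}-\zbf_{2}}_{\Vbb}\,\norm{\ybf_{\zbf_{1}}-\ybf_{\zbf_{2}}}_{\Vbb^{*}}$. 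Combining the two and dividing by $\norm{\ybf_{\zbf_{1}}-\ybf_{\zbf_{2}}}_{\Vbb^{*}}$ (the case $\ybf_{\zbf_{1}}=\ybf_{\zbf_{2}}$ being trivial) gives $\norm{\nabla h_{\eta}(\zbf_{1})-\nabla h_{\eta}(\zbf_{2})}_{\Vbb^{*}}\le\frac{1}{\sigma\eta}\norm{\zbf_{1}-\zbf_{2}}_{\Vbb}$, i.e. $L_{h_{\eta}}=\frac{1}{\sigma\eta}$.

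The only genuinely delicate point is the differentiability identity $\nabla h_{\eta}=\ybf_{(\cdot)}$ in Part 2): one must confirm the inner supremum is attained and then justify passing the gradient through the value function. Everything else reduces to sandwiching (Part 1) and a one-line strong-monotonicity estimate (Part 3). I would also be careful with the norm bookkeeping — $g_{\eta}$ is strongly convex in $\norm{\cdot}_{\Vbb^{*}}$, so the monotonicity inequality lives in $\Vbb^{*}$ while the perturbation $\zbf_{1}-\zbf_{2}$ is measured in $\norm{\cdot}_{\Vbb}$, which is precisely why the Lipschitz statement is phrased as a bound on $\norm{\cdot}_{\Vbb^{*}}$ in terms of $\norm{\cdot}_{\Vbb}$.
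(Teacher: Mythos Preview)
Your proposal is correct and follows essentially the same route as the paper: the sandwich in Part~1 and the appeal to Danskin's theorem in Part~2 are identical to the paper's argument. For Part~3 the paper simply cites the Baillon--Haddad theorem via \citep[Lemma~4.1]{Beck2012smoothing}, whereas you spell out the underlying strong-monotonicity estimate directly; this is the standard elementary proof behind that citation, so the difference is purely one of presentation (your version is slightly more self-contained).
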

\begin{proof}
\textsf{Part 1).} The inequality $h_{\eta}(\zbf)\le h(\zbf)$ follows from the non-negativity of $\omega$. For the second inequality,
note that 
\[
h_{\eta}(\zbf) =\max_{\ybf\in{\Vbb^*}}\ \{\inner{\ybf}{\zbf}-h^{*}(\ybf)-\eta\omega(\ybf)\}\ge\max_{\ybf\in \Vbb^*}\ \{\inner{\ybf}{\zbf}-h^{*}(\ybf)-\eta D\} = h(\zbf)-\eta D.
\]
\textsf{Part 2).}  The expression of the gradient $\nabla h_\eta(\zbf)$ directly follows from Danskin's theorem.\\
\textsf{Part 3).} Lipschitz smoothness of $h_\eta$ follows from the Baillon-Haddad theorem \citep[Lemma 4.1]{Beck2012smoothing}. 
\end{proof}

We now use \Cref{prop:hmu} to show that $f_{\eta}$ is a well-behaved smooth approximation of $f$.
\begin{thm}
\label{thm:nes-smoothing} We have the following properties of $f_{\eta}$:  
\begin{enumerate}[label=\arabic*), leftmargin=15pt]
\item $f_{\eta}(\xbf)\le f(\xbf)\le f_{\eta}(\xbf)+\Rcal_\eta(\xbf)$ with $\Rcal_\eta(\xbf)\equiv\eta D$. 
\item $f_{\eta}$ is continuously differentiable on $\Xcal$ with gradient $\nabla f_{\eta}(\xbf)=\nabla A(\xbf)^\top \ybf_{A(\xbf)}$, where $\ybf_{(\cdot)}$ is defined in \textsf{Part 2)} of \Cref{prop:hmu}.
\item $f_{\eta}$ is $B L_A$-weakly convex and generalized Lipschitz smooth with parameter 
\[
\Lcal_\eta(\xbf_1,\xbf_2)= \frac{1}{\sigma \eta} \sup_{0\le\theta\le 1}\norm{\nabla A(\theta \xbf_1+(1-\theta)\xbf_2)}_\mathrm{op}^2+BL_A.
\]
\end{enumerate}
Consequently, $f_{\eta}$ is a $(BL_A,\Rcal_\eta, \Lcal_\eta)$-smooth approximation of $f$ on $\Xcal$.
\end{thm}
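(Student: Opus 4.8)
The plan is to establish the three enumerated properties in turn and then read off the conclusion directly from \Cref{df:smooth-apprx}. Properties 1) and 2) are immediate. For 1), evaluating the first assertion of \Cref{prop:hmu} at $\zbf=A(\xbf)$ gives $f_\eta(\xbf)=h_\eta(A(\xbf))\le h(A(\xbf))=f(\xbf)\le h_\eta(A(\xbf))+\eta D=f_\eta(\xbf)+\eta D$, so $\Rcal_\eta\equiv\eta D$. For 2), $h_\eta$ is differentiable with $\nabla h_\eta(\zbf)=\ybf_\zbf$ (second assertion of \Cref{prop:hmu}) and $A$ is continuously differentiable on $\Xcal$, so the chain rule yields $\nabla f_\eta(\xbf)=\nabla A(\xbf)^\top\ybf_{A(\xbf)}$; continuity of $\nabla f_\eta$ follows because $\nabla A$, $A$, and $\nabla h_\eta$ (which is Lipschitz by the third assertion of \Cref{prop:hmu}) are all continuous.

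The substance is property 3). For $BL_A$-weak convexity I would not compose $h_\eta$ with the nonlinear map $A$ directly, since such a composition need not be convex; instead I would use the max-representation $f_\eta(\xbf)=\sup_{\ybf\in\dom h^*}\bcbra{\inner{\ybf}{A(\xbf)}-h^*(\ybf)-\eta\omega(\ybf)}$. For each fixed $\ybf\in\dom h^*$, the map $\xbf\mapsto\inner{\ybf}{A(\xbf)}$ is differentiable with gradient $\nabla A(\xbf)^\top\ybf$, and by the $L_A$-Lipschitz continuity of $\nabla A$ together with $\norm{T^*}_\textop=\norm{T}_\textop$ and $\norm{\ybf}_{\Vbb^*}\le B$, this gradient is $BL_A$-Lipschitz on $\Xcal$; hence $\xbf\mapsto\inner{\ybf}{A(\xbf)}+\tfrac{BL_A}{2}\norm{\xbf}^2$ is convex on $\Xcal$. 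Subtracting the $\ybf$-dependent constants and taking the supremum over $\ybf$ preserves convexity, so $f_\eta+\tfrac{BL_A}{2}\norm{\cdot}^2$ is convex, i.e.\ $f_\eta$ is $BL_A$-weakly convex.

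For the generalized-smoothness bound I would start from $\nabla f_\eta(\xbf)=\nabla A(\xbf)^\top\ybf_{A(\xbf)}$ and use the splitting
\[\nabla f_\eta(\xbf_1)-\nabla f_\eta(\xbf_2)=\nabla A(\xbf_1)^\top\bsbra{\nabla h_\eta(A(\xbf_1))-\nabla h_\eta(A(\xbf_2))}+\bsbra{\nabla A(\xbf_1)-\nabla A(\xbf_2)}^\top\ybf_{A(\xbf_2)}.\]
The first term is bounded by $\norm{\nabla A(\xbf_1)}_\textop\cdot\tfrac{1}{\sigma\eta}\norm{A(\xbf_1)-A(\xbf_2)}_\Vbb$ using the $\tfrac{1}{\sigma\eta}$-Lipschitz continuity of $\nabla h_\eta$, and then $\norm{A(\xbf_1)-A(\xbf_2)}_\Vbb\le\sup_{\theta\in[0,1]}\norm{\nabla A(\theta\xbf_1+(1-\theta)\xbf_2)}_\textop\norm{\xbf_1-\xbf_2}$ by integrating the Jacobian along the segment joining $\xbf_1$ and $\xbf_2$ (which lies in the convex set $\Xcal$); since $\norm{\nabla A(\xbf_1)}_\textop$ is itself dominated by that same supremum (take $\theta=1$), the first term is at most $\tfrac{1}{\sigma\eta}\sup_{\theta\in[0,1]}\norm{\nabla A(\theta\xbf_1+(1-\theta)\xbf_2)}_\textop^2\norm{\xbf_1-\xbf_2}$. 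The second term is bounded by $\norm{\nabla A(\xbf_1)-\nabla A(\xbf_2)}_\textop\norm{\ybf_{A(\xbf_2)}}_{\Vbb^*}\le L_AB\norm{\xbf_1-\xbf_2}$, using that the maximizer $\ybf_{A(\xbf_2)}$ lies in $\dom h^*$. Summing the two bounds gives $\norm{\nabla f_\eta(\xbf_1)-\nabla f_\eta(\xbf_2)}\le\Lcal_\eta(\xbf_1,\xbf_2)\norm{\xbf_1-\xbf_2}$ with the claimed $\Lcal_\eta$, which is symmetric because the segment $[\xbf_1,\xbf_2]$ does not depend on the order of its endpoints.

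Assembling the pieces, $f_\eta$ is continuously differentiable and $BL_A$-weakly convex, satisfies \ref{SA-func} with $\Rcal_\eta\equiv\eta D\ge0$ and \ref{SA-smooth} with the symmetric $\Lcal_\eta$ above, so \Cref{df:smooth-apprx} certifies that $f_\eta$ is a $(BL_A,\Rcal_\eta,\Lcal_\eta)$-smooth approximation of $f$ on $\Xcal$. The step I expect to be the main obstacle is keeping the norm bookkeeping straight across the spaces $\Ebb$, $\Vbb$, and $\Vbb^*$ — in particular invoking $\norm{T^*}_\textop=\norm{T}_\textop$ at the right moments — and recognizing that the nonlinear map $A$ enters $\nabla f_\eta$ twice, as the argument of $\nabla h_\eta$ and as the Jacobian prefactor, which is precisely what produces the \emph{squared} operator-norm term in $\Lcal_\eta$; the weak-convexity claim additionally hinges on the structural observation that it must go through the dual max-representation rather than naive composition.
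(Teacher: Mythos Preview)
Your proof is correct and almost entirely matches the paper's: Parts 1) and 2) are handled identically, and your generalized-smoothness argument in Part 3) uses exactly the same splitting $\nabla A(\xbf_1)^\top[\nabla h_\eta(A(\xbf_1))-\nabla h_\eta(A(\xbf_2))]+[\nabla A(\xbf_1)-\nabla A(\xbf_2)]^\top\nabla h_\eta(A(\xbf_2))$ and the same bounds (operator norm, $1/(\sigma\eta)$-Lipschitzness of $\nabla h_\eta$, mean-value estimate on $A$, and $\norm{\ybf}_{\Vbb^*}\le B$).

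The one genuine difference is the $BL_A$-weak-convexity step. The paper does \emph{not} go through the dual max-representation; instead it applies convexity of $h_\eta$ directly at the points $A(\xbf_1)$ and $A(\xbf_2)$ to get $f_\eta(\xbf_1)\ge f_\eta(\xbf_2)+\nabla h_\eta(A(\xbf_2))^\top[A(\xbf_1)-A(\xbf_2)]$, then writes $A(\xbf_1)-A(\xbf_2)=\nabla A(\xbf_2)(\xbf_1-\xbf_2)+\text{(Taylor remainder)}$ and bounds the remainder by $\tfrac{L_A}{2}\norm{\xbf_1-\xbf_2}^2$ together with $\norm{\nabla h_\eta(A(\xbf_2))}_{\Vbb^*}\le B$. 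Your approach---observe that each $\xbf\mapsto\inner{\ybf}{A(\xbf)}$ has $BL_A$-Lipschitz gradient, hence is $BL_A$-weakly convex, and then take a supremum---is equally valid and arguably more structural (the constant $BL_A$ falls out without inequality chasing), while the paper's route is a touch more self-contained in that it avoids invoking the auxiliary fact that an $L$-Lipschitz gradient forces $L$-weak convexity. Either way the conclusion and the constants agree.
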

\begin{proof} 

\textsf{Part 1)} immediately follows from \textsf{Part 1)} of \Cref{prop:hmu}.  

\textsf{Part 2).} The continuous differentiability and the gradient expression follow from  \Cref{prop:hmu} and the chain rule. 

\textsf{Part 3).} We first show the weak convexity of $f_\eta$. Using convexity of $h_{\eta}$, we have 
\begin{equation}\label{eq:mid-05}
\begin{aligned}
f_{\eta}(\xbf_{1}) & =h_{\eta}(A(\xbf_{1})) \\
 & \ge h_{\eta}(A(\xbf_{2}))+\nabla h_{\eta}(A(\xbf_{2}))^\top\bsbra{A(\xbf_{1})-A(\xbf_{2})} \\
 & =f_{\eta}(\xbf_{2})+\nabla h_{\eta}(A(\xbf_{2}))^\top\bsbra{\nabla A(\xbf_{2})(\xbf_{1}-\xbf_{2})} \\
 & \quad+\nabla h_{\eta}(A(\xbf_{2}))^\top\bsbra{A(\xbf_{1})-A(\xbf_{2})-\nabla A(\xbf_{2})(\xbf_{1}-\xbf_{2})}  \\
 & =f_{\eta}(\xbf_{2})+\nabla f_{\eta}(\xbf_{2})^\top(\xbf_{1}-\xbf_{2})+\nabla h_{\eta}(A(\xbf_{2}))^\top\bsbra{A(\xbf_{1})-A(\xbf_{2})-\nabla A(\xbf_{2})(\xbf_{1}-\xbf_{2})},
\end{aligned}
\end{equation}
where the last equality uses the chain rule $\nabla f_{\eta}(\xbf_{2})=\nabla A(\xbf_{2})^\top \nabla h_{\eta}(A(\xbf_{2}))$.
Moreover, the Lipschitz smoothness of $A$ implies
\begin{equation}
\norm{A(\xbf_{1})-A(\xbf_{2})-\nabla A(\xbf_{2})(\xbf_{1}-\xbf_{2})}_\Vbb\le\frac{L_{A}}{2}\norm{\xbf_{1}-\xbf_{2}}^{2}.\label{eq:mid-07}
\end{equation}
It then follows from \eqref{eq:mid-07} and Cauchy-Schwartz inequality
that 
\begin{equation}\label{eq:mid-06}
\begin{aligned}
 & \nabla h_{\eta}(A(\xbf_{2}))^\top\bsbra{A(\xbf_{1})-A(\xbf_{2})-\nabla A(\xbf_{2})(\xbf_{1}-\xbf_{2})} \\
 & \ge-\norm{\nabla h_{\eta}(A(\xbf_{2}))}_{\Vbb^*}\cdot\norm{A(\xbf_{1})-A(\xbf_{2})-\nabla A(\xbf_{2})(\xbf_{1}-\xbf_{2})}_{\Vbb} \\
 & \ge-\frac{L_{A}}{2}\norm{\nabla h_{\eta}(A(\xbf_{2}))}_{\Vbb^*} \cdot \norm{\xbf_{1}-\xbf_{2}}^{2} \\
 & \ge-\frac{BL_{A}}{2}\norm{\xbf_{1}-\xbf_{2}}^{2}.
\end{aligned}
\end{equation}
Combining \eqref{eq:mid-05} and \eqref{eq:mid-06} gives the desired weak convexity.

To establish the generalized smoothness property, we have
\begin{equation}
    \begin{aligned}
       &  \norm{\nabla f_{\eta} (\xbf_1)-\nabla f_{\eta} (\xbf_2)} \\
       & = \norm{\nabla A(\xbf_1)^\top \nabla h_\eta(A(\xbf_1))-\nabla A(\xbf_2)^\top\nabla h_\eta(A(\xbf_2))}  \\
        & = \norm{\nabla A(\xbf_1)^\top [\nabla h_\eta(A(\xbf_1))-\nabla h_\eta(A(\xbf_2))] + [\nabla A(\xbf_1)-\nabla A(\xbf_2)]^\top \nabla h_\eta(A(\xbf_2))  } \\
        & \le \norm{\nabla A(\xbf_1)}_\mathrm{op}\norm{\nabla h_\eta(A(\xbf_1))-\nabla h_\eta(A(\xbf_2))}_{\Vbb^*} + \norm{\nabla A(\xbf_1)-\nabla A(\xbf_2)}_\mathrm{op} \norm{\nabla h_\eta(A(\xbf_2))}_{\Vbb^*} \\
        & \le L_{h_\eta} \norm{\nabla A(\xbf_1)}_\mathrm{op} \norm{A(\xbf_1)-A(\xbf_2)}_\Vbb + B L_A \norm{\xbf_1-\xbf_2} 
    \end{aligned}
\end{equation}
where the first inequality follows from~\eqref{eq:op-norm-bound} and the second inequality applies the Lipschitz continuity of $\nabla h_\eta$ and $\nabla A$. By the mean value theorem,
   \begin{equation*}
   \begin{aligned}
   \norm{A(\xbf_1)-A(\xbf_2)}_\Vbb &\le \sup_{0\le \theta\le 1} \norm{\nabla A(\theta \xbf_1+(1-\theta)\xbf_2)}_\text{op}\norm{(\xbf_1-\xbf_2)}.
   \end{aligned}
   \end{equation*}
   Combining the above two relations, we have
   \[\norm{\nabla f_{\eta} (\xbf_1)-\nabla f_{\eta} (\xbf_2)} \le \Bcbra{\sup_{0\le\theta\le 1}\norm{\nabla A(\theta \xbf_1+(1-\theta)\xbf_2)}_\text{op}^2L_{h_\eta}+BL_A}  \norm{\xbf_1-\xbf_2}.\]
\end{proof}

\begin{rem}\label{remark:nesterov-smoothing-bound}
\Cref{thm:nes-smoothing} shows that the generalized smoothness of $f_{\eta}$ depends on the local properties of $\nabla A$. If this local parameter can be bounded uniformly, $f_{\eta}$ is then globally Lipschitz smooth. Two notable cases include:
\begin{enumerate}[label=\arabic*), leftmargin=15pt]
\item \textit{Linear Mapping:} If $A$ is an affine map, i.e., $A(\xbf)=Q\xbf+\bbf$ for some fixed $Q, \bbf$, then its Jacobian $\nabla A(\xbf) = Q$ is constant and $L_A=0$. The function $f_\eta(\xbf)$ is convex, and the smoothness parameter from \Cref{thm:nes-smoothing} becomes a global constant:
 $\Lcal_{\eta}(\xbf_1,\xbf_2) = \frac{\|Q\|_\mathrm{op}^2}{\sigma\eta}$.
\item \textit{Bounded Domain:} If the domain $\Xcal$ is compact, then the continuous map $\nabla A$ is bounded, i.e., $M\coloneqq\sup_{\xbf \in \Xcal} \|\nabla A(\xbf)\|_\mathrm{op} < \infty$. Hence, $\Lcal_\eta$ can be uniformly bounded as: $\Lcal_{\eta}(\xbf_1,\xbf_2)\le \frac{M^2}{\sigma\eta}+BL_A$.
\end{enumerate}
\end{rem}

\begin{example}[Piecewise smooth function]  Consider the max-type function 
\[f(\xbf)=
   \max_{1 \leq j \leq m}  f_j(\xbf) ,\] 
   where each $f_j(\xbf)$ is smooth. 
   This is a special case of \eqref{eq:composite-form} where 
   $h(\zbf)=\max_{1\le j\le m} z_j=\max_{\ybf\in\Scal^m} \langle \ybf, \zbf \rangle$ and $\Scal^m=\{\ybf\in\Rbb^m: \langle \onebf_m, \ybf \rangle  =1, \ybf\ge \zerobf\}$ is the probability simplex. Define the prox-function $\omega(\ybf)=\sum_{i=1}^m y_i \log y_i + \log m$ when $\ybf\in\Scal^m$ and $\omega(\ybf)=+\infty$ otherwise. Then the smoothed approximation is given by
\begin{equation} \label{eqn:softmax}
	   f_\eta(\xbf)= \max_{\ybf\in\Scal^m} \cbra{\langle \ybf,  F(\xbf) \rangle -\eta\omega(\ybf)} = \eta \log \Brbra{\sum_{i=1}^m \exp \Brbra{\frac{f_i(\xbf)}{\eta}}} - \eta \log m,
\end{equation}
   where $F(\xbf)=[f_1(\xbf),\ldots, f_m(\xbf)]^\top$. This is precisely the softmax approximation. 
\end{example}
We summarize the properties of the softmax operator below. 
\begin{coro} Suppose  $f_j(\xbf)$ is $L_j$-smooth for $j \in [m]$.
  Then  $f_\eta(\xbf)$ in \eqref{eqn:softmax} is a $(\bar\rho, \Rcal_\eta, \Lcal_\eta)$-SA of $f$ with $\bar\rho=\max_{j \in [m]} L_j$, $\Rcal_\eta(\xbf)=\eta \log m$, and $\Lcal_{\eta}(\xbf_1,\xbf_2)=\bar\rho+\eta^{-1} \sup_{\theta \in [0, 1]}\max_{1\le j\le m}\norm{\nabla f_j(\theta\xbf_1+(1-\theta)\hat\xbf_2)}^2 $.
\end{coro}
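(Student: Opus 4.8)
The plan is to obtain this corollary as a direct specialization of \Cref{thm:nes-smoothing}, by instantiating the generalized Nesterov smoothing data at the particular ingredients that produce the softmax. Concretely, I would set $\Ebb=\Rbb^{d}$ with the Euclidean norm, take $\Vbb=\Rbb^{m}$ equipped with $\norm{\cdot}_{\Vbb}=\norm{\cdot}_{\infty}$ (so that $\norm{\cdot}_{\Vbb^{*}}=\norm{\cdot}_{1}$), let $h(\zbf)=\max_{1\le j\le m}z_{j}=\max_{\ybf\in\Scal^{m}}\inner{\ybf}{\zbf}$ whose conjugate $h^{*}$ is the indicator of the simplex $\Scal^{m}$, let the nonlinear map be $A=F=[f_{1},\dots,f_{m}]^{\top}$, and choose as prox-function the \emph{normalized} negative entropy $\omega(\ybf)=\sum_{i}y_{i}\log y_{i}+\log m$ on $\Scal^{m}$. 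This normalized entropy is the version satisfying the paper's standing assumption $\min_{\ybf}\omega=0$: it is nonnegative on $\Scal^{m}$, attains its maximum $\log m$ at the vertices, and is $1$-strongly convex on $\Scal^{m}$ with respect to $\norm{\cdot}_{1}$ (the classical fact underlying entropic smoothing), so $\sigma=1$.

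Next I would pin down the remaining constants entering \Cref{thm:nes-smoothing}. Since $\dom h^{*}=\Scal^{m}$ is compact and its elements are nonnegative with unit sum, $B\coloneqq\sup\{\norm{\ybf}_{1}:\ybf\in\Scal^{m}\}=1$, while $D\coloneqq\sup\{\omega(\ybf):\ybf\in\Scal^{m}\}=\log m$. The Jacobian of $A$ is $\nabla A(\xbf)=[\nabla f_{1}(\xbf),\dots,\nabla f_{m}(\xbf)]^{\top}$, and the operator norm relevant here is the one from $(\Rbb^{d},\norm{\cdot}_{2})$ to $(\Rbb^{m},\norm{\cdot}_{\infty})$, which equals the largest Euclidean row norm: $\norm{\nabla A(\xbf)}_{\textop}=\max_{1\le j\le m}\norm{\nabla f_{j}(\xbf)}$. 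Hence $\norm{\nabla A(\xbf)-\nabla A(\hat\xbf)}_{\textop}=\max_{j}\norm{\nabla f_{j}(\xbf)-\nabla f_{j}(\hat\xbf)}\le(\max_{j}L_{j})\norm{\xbf-\hat\xbf}$, so the Jacobian-Lipschitz hypothesis of \Cref{thm:nes-smoothing} holds with $L_{A}=\bar\rho\coloneqq\max_{j\in[m]}L_{j}$.

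With these identifications, \Cref{thm:nes-smoothing} applies verbatim. Its Part 1 gives $f_{\eta}\le f\le f_{\eta}+\Rcal_{\eta}$ with $\Rcal_{\eta}(\xbf)\equiv\eta D=\eta\log m$; its Part 3 gives that $f_{\eta}$ is $BL_{A}=\bar\rho$-weakly convex and $\Lcal_{\eta}$-generalized smooth with $\Lcal_{\eta}(\xbf_{1},\xbf_{2})=\tfrac{1}{\sigma\eta}\sup_{\theta\in[0,1]}\norm{\nabla A(\theta\xbf_{1}+(1-\theta)\xbf_{2})}_{\textop}^{2}+BL_{A}=\bar\rho+\eta^{-1}\sup_{\theta\in[0,1]}\max_{1\le j\le m}\norm{\nabla f_{j}(\theta\xbf_{1}+(1-\theta)\xbf_{2})}^{2}$, which is precisely the asserted parameter triple. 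I would close by recalling that the explicit form $f_{\eta}(\xbf)=\eta\log\big(\sum_{j}\exp(f_{j}(\xbf)/\eta)\big)$ (up to the harmless additive constant $-\eta\log m$ arising from the entropy normalization) follows by solving the inner maximization in \eqref{eqn:softmax} via its first-order optimality condition, exactly as in the preceding example.

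The only genuinely non-mechanical points I anticipate are: (i) identifying that the operator norm appearing in \Cref{thm:nes-smoothing} is the Euclidean-to-$\ell_{\infty}$ norm, which is what makes $\norm{\nabla A(\xbf)}_{\textop}$ collapse to $\max_{j}\norm{\nabla f_{j}(\xbf)}$ and $L_{A}$ collapse to $\max_{j}L_{j}$; and (ii) using the normalized (rather than raw) negative entropy as the prox-function, so that the sign and normalization conventions of the smoothing framework are met while one simultaneously reads off $\sigma=1$ and $D=\log m$. Everything else is routine substitution into \Cref{thm:nes-smoothing}.
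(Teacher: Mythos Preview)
Your proposal is correct and follows essentially the same route as the paper's own proof: instantiate \Cref{thm:nes-smoothing} with $\norm{\cdot}_{\Vbb}=\norm{\cdot}_{\infty}$, $\norm{\cdot}_{\Vbb^{*}}=\norm{\cdot}_{1}$, the entropy prox on the simplex ($\sigma=1$, $D=\log m$, $B=1$), identify the $\ell_2\!\to\!\ell_\infty$ operator norm as the maximum row norm so that $L_A=\max_j L_j$, and read off the parameters. Your extra care in normalizing the entropy so that $\min\omega=0$ is in fact tidier than the paper, which uses the unnormalized $\omega(\ybf)=\sum_i y_i\log y_i$ in the example but then asserts $D=\log m$ in the proof.
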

\begin{proof}
     Let $\norm{\cdot}_\Vbb=\norm{\cdot}_\infty$ and its dual norm $\norm{\cdot}_{\Vbb^*}=\norm{\cdot}_1$.
     It is known that $\omega$ is 1-strongly convex with respect to $\norm{\cdot}_1$ (i.e., $\sigma=1$).
     Then the smoothing function $h_\eta(\zbf)=\max_{\ybf\in\Scal^m} \{\langle \ybf, \zbf \rangle -\eta\omega(\ybf)\}$ is $\eta^{-1}$-Lipschitz smooth with respect to $\norm{\cdot}_\infty$.
   The operator norm $\norm{\cdot}_\text{op}$ for any mapping $A\in \Rbb^{m\times n}$ is given by $\norm{A}_\text{op}=\max_{j \in [m]} \norm{A_{j,:}}$, where $A_{j,:}$ denotes the $j$-th row. By the smoothness of $f_j$, 
   \[
   \norm{\nabla F(\xbf)-\nabla F(\hat\xbf)}_\text{op} = \max_{1\le j\le m} \norm{\nabla f_j(\xbf)-\nabla f_j(\hat\xbf)} \le \max_{1\le j\le m} L_j \norm{\xbf-\hat\xbf}.
   \]
   Hence, $F$ is Lipschitz smooth with $L=\max_{1\le j\le m} L_j$.
   It is easy to check that $D=\sup_{\zbf\in\Scal^m} \omega(\zbf)=\log m$ and $B=\sup_ {\ybf\in\Scal^m} \norm{\ybf}_1 = 1$. This completes the proof in view of the definition from \Cref{thm:nes-smoothing}.
\end{proof}

\subsection{Moreau envelope smoothing}
We present several properties regarding the weak convexity and Lipschitz smoothness of the Moreau envelope. While some of these results appear in prior works~\citep{davis2019stochastic,hoheisel2010proximal}, we include them here for completeness.
\begin{prop}
\label{prop:moreau-smoothing} Let $f$ be $\rho$-weakly convex and set $\beta\in(\rho,\infty)$. Then for any $\xbf\in\dom f$, we have 
 \begin{enumerate}[label=\arabic*), leftmargin=15pt]
 \item For any $\xbf, \ybf \in \dom f$, we have the following quadratic approximation bounds: 
 \begin{equation}
 \begin{aligned}
 f^{\beta}(\xbf) & \ge f^{\beta}(\ybf)+\inner{\nabla f^{\beta}(\ybf)}{\xbf-\ybf}-\frac{\rho}{2(1-\rho/\beta)}\norm{\ybf-\xbf}^{2},\\
 f^{\beta}(\xbf) & \le f^{\beta}(\ybf)+\inner{\nabla f^{\beta}(\ybf)}{\xbf-\ybf}+\frac{\beta}{2}\norm{\ybf-\xbf}^{2}.
 \end{aligned}
 \label{eq:quad-bound-2}
 \end{equation}
 Moreover, $\nabla f^{\beta}$ is $\max\{\beta,\frac{\rho}{1-\rho/\beta}\}$-Lipschitz continuous.
 \item Let $\xbf\in\dom f$ be subdifferentiable, we have 
 \[
 f^{\beta}(\xbf)+\frac{(1-\rho/\beta)}{2\beta}\norm{\nabla f^{\beta}(\xbf)}^{2}\le f(\xbf)\le f^{\beta}(\xbf)+\frac{\norm{\partial f(\xbf)}^{2}}{2(\beta-\rho)}.
 \]
 \end{enumerate}
\end{prop}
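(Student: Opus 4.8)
The plan is to reduce the entire statement to the proximal identity $\nabla f^{\beta}(\xbf)=\beta\brbra{\xbf-\pbf_{\xbf}}$ with $\pbf_{\xbf}\coloneqq\prox_{f/\beta}(\xbf)$ from \Cref{lem:moreau}, combined with the defining weak-convexity inequality of $f$ evaluated at the relevant proximal points. For the upper bound in \eqref{eq:quad-bound-2}: since $f^{\beta}(\ybf)=f(\pbf_{\ybf})+\tfrac{\beta}{2}\norm{\pbf_{\ybf}-\ybf}^{2}$, while by suboptimality of $\pbf_{\ybf}$ in the minimization defining $f^{\beta}(\xbf)$ we have $f^{\beta}(\xbf)\le f(\pbf_{\ybf})+\tfrac{\beta}{2}\norm{\pbf_{\ybf}-\xbf}^{2}$, subtracting the two and expanding $\norm{\pbf_{\ybf}-\xbf}^{2}-\norm{\pbf_{\ybf}-\ybf}^{2}=2\inner{\pbf_{\ybf}-\ybf}{\ybf-\xbf}+\norm{\ybf-\xbf}^{2}$, then substituting $\beta(\pbf_{\ybf}-\ybf)=-\nabla f^{\beta}(\ybf)$, gives exactly the upper inequality.

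For the lower bound, I would show that $f^{\beta}$ is $c$-weakly convex with $c\coloneqq\rho/(1-\rho/\beta)$; since $f^{\beta}$ is differentiable, that is equivalent to the stated inequality. The cleanest route is a change of variables: writing $\hat{f}\coloneqq f+\tfrac{\rho}{2}\norm{\cdot}^{2}$, which is convex, and completing the square in the minimization variable inside $f^{\beta}(\xbf)=\min_{\zbf}\{f(\zbf)+\tfrac{\beta}{2}\norm{\zbf-\xbf}^{2}\}$, one obtains $f^{\beta}(\xbf)=\hat{f}^{\,\beta-\rho}\brbra{\tfrac{\beta}{\beta-\rho}\xbf}-\tfrac{c}{2}\norm{\xbf}^{2}$, where $\hat{f}^{\,\beta-\rho}$ is the Moreau envelope of the convex function $\hat{f}$ with parameter $\beta-\rho>0$, hence convex; therefore $f^{\beta}+\tfrac{c}{2}\norm{\cdot}^{2}$ is a convex function precomposed with a linear map, hence convex. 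Alternatively, one can argue directly: apply $\rho$-weak convexity of $f$ with the subgradient $\beta(\ybf-\pbf_{\ybf})\in\partial f(\pbf_{\ybf})$ at the point $\pbf_{\xbf}$, substitute the prox identities, and lower-bound the resulting residual by $\tfrac{\beta}{2}\norm{(\pbf_{\xbf}-\pbf_{\ybf})-(\xbf-\ybf)}^{2}-\tfrac{\rho}{2}\norm{\pbf_{\xbf}-\pbf_{\ybf}}^{2}+\tfrac{c}{2}\norm{\xbf-\ybf}^{2}$, which is nonnegative because the associated $2\times2$ quadratic form has zero discriminant by the identity $(\beta-\rho)(\beta+c)=\beta^{2}$.

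For the Lipschitz claim, I would first establish a co-coercivity-type bound for the prox map. Applying $\rho$-hypomonotonicity of $\partial f$ (equivalent to $\rho$-weak convexity) to $\beta(\xbf-\pbf_{\xbf})\in\partial f(\pbf_{\xbf})$ and $\beta(\ybf-\pbf_{\ybf})\in\partial f(\pbf_{\ybf})$ yields $\inner{\xbf-\ybf}{\pbf_{\xbf}-\pbf_{\ybf}}\ge(1-\rho/\beta)\norm{\pbf_{\xbf}-\pbf_{\ybf}}^{2}$, and hence $\norm{\pbf_{\xbf}-\pbf_{\ybf}}\le\tfrac{\beta}{\beta-\rho}\norm{\xbf-\ybf}$. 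Then, since $\nabla f^{\beta}(\xbf)-\nabla f^{\beta}(\ybf)=\beta\bsbra{(\xbf-\ybf)-(\pbf_{\xbf}-\pbf_{\ybf})}$, I expand $\norm{\nabla f^{\beta}(\xbf)-\nabla f^{\beta}(\ybf)}^{2}=\beta^{2}\bsbra{\norm{\xbf-\ybf}^{2}-2\inner{\xbf-\ybf}{\pbf_{\xbf}-\pbf_{\ybf}}+\norm{\pbf_{\xbf}-\pbf_{\ybf}}^{2}}$, insert the inner-product lower bound, and split into two regimes: for $\beta\ge2\rho$ the coefficient multiplying $\norm{\pbf_{\xbf}-\pbf_{\ybf}}^{2}$ is nonpositive and is discarded, giving the constant $\beta$; for $\beta<2\rho$ that coefficient is nonnegative and one substitutes the Lipschitz bound on $\norm{\pbf_{\xbf}-\pbf_{\ybf}}$, giving the constant $c$. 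In both regimes the resulting constant equals $\max\{\beta,c\}=\max\{\beta,\rho/(1-\rho/\beta)\}$.

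Finally, for Part 2, fix $\pbf\coloneqq\prox_{f/\beta}(\xbf)$. The left inequality follows from $\beta(\xbf-\pbf)\in\partial f(\pbf)$ and $\rho$-weak convexity, which give $f(\xbf)\ge f(\pbf)+\beta\norm{\xbf-\pbf}^{2}-\tfrac{\rho}{2}\norm{\xbf-\pbf}^{2}$; combining with $f^{\beta}(\xbf)=f(\pbf)+\tfrac{\beta}{2}\norm{\xbf-\pbf}^{2}$ and $\norm{\nabla f^{\beta}(\xbf)}^{2}=\beta^{2}\norm{\xbf-\pbf}^{2}$ and rearranging yields the claim. For the right inequality, pick any $\gbf\in\partial f(\xbf)$; $\rho$-weak convexity gives $f(\ybf)+\tfrac{\beta}{2}\norm{\ybf-\xbf}^{2}\ge f(\xbf)+\inner{\gbf}{\ybf-\xbf}+\tfrac{\beta-\rho}{2}\norm{\ybf-\xbf}^{2}$ for every $\ybf$, so minimizing the right-hand side over $\ybf$ (at $\ybf=\xbf-\gbf/(\beta-\rho)$) gives $f^{\beta}(\xbf)\ge f(\xbf)-\tfrac{1}{2(\beta-\rho)}\norm{\gbf}^{2}$, and taking $\gbf$ of minimal norm finishes (in fact with the slightly sharper constant $\tfrac{1}{2(\beta-\rho)}$). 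The main obstacle is the sharp Lipschitz constant: naively combining the two quadratic bounds in \eqref{eq:quad-bound-2} only yields $\beta+c$, so pinning down the exact value $\max\{\beta,c\}$ requires the two-regime case analysis above (equivalently, exploiting firm nonexpansiveness of $\prox_{\hat{f}/(\beta-\rho)}$ via the change-of-variables reduction); the remaining steps are short manipulations of the prox identity and the weak-convexity inequality.
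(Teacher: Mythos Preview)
Your proposal is correct and follows essentially the same route as the paper: the upper quadratic bound via suboptimality of $\pbf_{\ybf}$, the Lipschitz constant via the co-coercivity inequality $(\beta-\rho)\norm{\pbf_{\xbf}-\pbf_{\ybf}}^{2}\le\beta\inner{\xbf-\ybf}{\pbf_{\xbf}-\pbf_{\ybf}}$ followed by the same two-regime ($\beta\gtrless2\rho$) case split, and Part~2 exactly as you describe (the paper also loses the factor $1/2$ you noticed). The one pleasant addition is your change-of-variables identity $f^{\beta}(\xbf)=\hat{f}^{\,\beta-\rho}\brbra{\tfrac{\beta}{\beta-\rho}\xbf}-\tfrac{c}{2}\norm{\xbf}^{2}$, which gives the lower quadratic bound more conceptually than the paper's direct estimate, but your alternative ``direct'' argument is essentially the paper's.
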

With all these setups, we are ready to establish the smoothing properties of the Moreau envelope. 
\begin{thm}
\label{thm:moreau-2} Suppose $f$ is $\rho$-weakly convex. Let $\eta>0$ and define $f_\eta(\xbf)=f^\beta(\xbf)$, where $\beta=\rho+\max\{\eta^{-1},\rho\}$.
Then $f_\eta$ is an $(2\rho,\eta)$-smooth approximation of $f$ with 
\begin{equation}\label{eq:opt-02}
\Rcal_\eta(\xbf) =\frac{\eta}{2\max\{1, \rho\eta\}} \norm{\partial f(\xbf)}^2, \quad \Lcal_\eta(\xbf,\ybf)=2\rho+\eta^{-1}.
\end{equation}
\end{thm}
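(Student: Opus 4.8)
The plan is to read off the claim directly from the Moreau-envelope facts already assembled in \Cref{lem:moreau} and \Cref{prop:moreau-smoothing}, after substituting the specific choice $\beta=\rho+\max\{\eta^{-1},\rho\}$. The first step is to record the algebraic simplifications this choice yields: $\beta-\rho=\max\{\eta^{-1},\rho\}$ and $1-\rho/\beta=(\beta-\rho)/\beta$, so that
\[
\frac{1}{\beta-\rho}=\frac{1}{\max\{\eta^{-1},\rho\}}=\min\Big\{\eta,\tfrac{1}{\rho}\Big\}=\frac{\eta}{\max\{1,\rho\eta\}}.
\]
Differentiability of $f_\eta=f^\beta$ is immediate from \Cref{lem:moreau}, so it remains only to verify the three requirements in \Cref{df:smooth-apprx}: the two-sided sandwich \ref{SA-func}, the generalized smoothness \ref{SA-smooth}, and $\bar\rho$-weak convexity with $\bar\rho=2\rho$.

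For \ref{SA-func} I would invoke \textsf{Part 2)} of \Cref{prop:moreau-smoothing}. Discarding the nonnegative term $\tfrac{1-\rho/\beta}{2\beta}\norm{\nabla f^{\beta}(\xbf)}^{2}$ on the left gives $f_\eta(\xbf)\le f(\xbf)$, while the right inequality reads $f(\xbf)\le f_\eta(\xbf)+\norm{\partial f(\xbf)}^{2}/(\beta-\rho)$; substituting the identity above identifies the error term as $\Rcal_\eta(\xbf)=\tfrac{\eta}{\max\{1,\rho\eta\}}\norm{\partial f(\xbf)}^{2}$, exactly as stated.

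For weak convexity, the lower quadratic bound in \textsf{Part 1)} of \Cref{prop:moreau-smoothing} shows $f^\beta$ is $\tfrac{\rho}{1-\rho/\beta}$-weakly convex, and
\[
\frac{\rho}{1-\rho/\beta}=\frac{\rho\beta}{\beta-\rho}=\rho+\frac{\rho^{2}}{\beta-\rho}=\rho\Big(1+\frac{\rho}{\max\{\eta^{-1},\rho\}}\Big)\le 2\rho,
\]
since $\max\{\eta^{-1},\rho\}\ge\rho$. For \ref{SA-smooth}, \Cref{prop:moreau-smoothing} states that $\nabla f^\beta$ is $\max\{\beta,\tfrac{\rho}{1-\rho/\beta}\}$-Lipschitz, hence $\max\{\beta,2\rho\}$-Lipschitz by the previous display. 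A one-line case split then closes it: if $\eta^{-1}\ge\rho$ then $\beta=\rho+\eta^{-1}\le 2\rho+\eta^{-1}$, and if $\eta^{-1}<\rho$ then $\beta=2\rho\le 2\rho+\eta^{-1}$; in all cases $\max\{\beta,2\rho\}\le 2\rho+\eta^{-1}$. Thus the constant $\Lcal_\eta(\xbf,\ybf)\equiv 2\rho+\eta^{-1}$ (trivially symmetric) is a valid generalized-smoothness modulus, and collecting the three verified items against \Cref{df:smooth-apprx} shows $f_\eta$ is a $(2\rho,\Rcal_\eta,\Lcal_\eta)$-SA of $f$. I do not expect any substantive obstacle; the only point requiring care is keeping the two regimes $\rho\eta\le 1$ and $\rho\eta>1$ straight when simplifying $\beta-\rho$ and $\beta$, since $\Rcal_\eta$ and $\Lcal_\eta$ are deliberately written in a unified $\max$-form that must be shown to dominate both regimes.
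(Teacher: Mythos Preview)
Your proposal is correct and follows essentially the same route as the paper's proof: both read off \ref{SA-func}, weak convexity, and \ref{SA-smooth} directly from \Cref{prop:moreau-smoothing} after substituting $\beta=\rho+\max\{\eta^{-1},\rho\}$ and simplifying $\beta-\rho$ and $\tfrac{\rho}{1-\rho/\beta}$. The only cosmetic difference is that the paper bounds the Lipschitz constant by $\beta$ directly (since $\beta\ge 2\rho$ forces $\max\{\beta,\tfrac{\rho}{1-\rho/\beta}\}=\beta$), whereas you pass through $\max\{\beta,2\rho\}$; both land on $2\rho+\eta^{-1}$ via the same case split.
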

\begin{proof}
In view of \eqref{eq:quad-bound-2}, we have
\[
\begin{aligned}
f_\eta(\xbf) - f_\eta(\ybf)+\inner{\nabla f_\eta(\ybf)}{\xbf-\ybf} 
& \ge -\frac{\rho}{2(1-\rho/\beta)}\norm{\ybf-\xbf}^{2} \\
& = -\frac{\rho}{2}\left(1+\frac{\rho}{\max\{\eta^{-1},\rho\}}\right)\norm{\ybf-\xbf}^{2} \\
& \ge -\rho \norm{\ybf-\xbf}^{2}.
\end{aligned}
\]
Hence $f_\eta$ is $2\rho$-weakly convex. The choice of $\beta$ ensures $\beta \ge \rho/(1-\rho/\beta)$. Together with the Lipschitz continuity of $\nabla f^{\beta}$, we have
\[
\norm{\nabla f^{\beta}({\xbf})-\nabla f^{\beta}(\ybf)} \le (\rho+\max\{\eta^{-1},\rho\})\norm{\xbf-\ybf} \le (2\rho+\eta^{-1}) \norm{\xbf-\ybf}.
\]
\textsf{Part 2)} of \Cref{prop:moreau-smoothing} implies $f_\eta(\xbf)\le f(\xbf)\le {f_\eta(\xbf) + \norm{\partial f(\xbf)}^2}/{2(\max\{\eta^{-1},\rho\})}$.
\end{proof}

\begin{rem}
   \Cref{thm:moreau-2} implies that for a fixed $\xbf$, the error term $\Rcal_\eta(\xbf)$ is $\Ocal(\min\{\eta, \rho^{-1}\})$, while the smoothness constant of the approximation is $\Ocal(\rho + \eta^{-1})$. The error term differs slightly from that of convex smoothing, where $\Rcal_\eta(\xbf)=R\eta$ for some $R>0$ \cite{beck2017first}. The difference arises as applying the Moreau envelope to a nonconvex function can increase its negative curvature (i.e., yield a larger weak convexity constant), as shown in \eqref{eq:quad-bound-2} of \Cref{prop:moreau-smoothing}.
\end{rem}

\begin{rem}\label{remark:moreau-smoothing-bound}
The error term $\Rcal_\eta(\xbf)$ depends on the least-norm subgradient at $\xbf$. In practice, it is sometimes possible to establish a uniform bound on $\Rcal_\eta(\xbf)$ independent of $\xbf$. Suppose that $\xbf$ is in a compact set $\Xcal \subseteq \mathrm{int}(\dom f)$. Since a weakly convex function is locally Lipschitz continuous in the interior of its domain, as stated in \Cref{lem:lip-bdd-subgrad}, local Lipschitz continuity implies that the subdifferential $\partial f(\xbf)$ is bounded. The compactness of $\Xcal$ then ensures that the subgradient norm is uniformly bounded over $\Xcal$: $\sup\{\|\partial f(\xbf)\|: \xbf\in\Xcal\}<\infty$. 
\end{rem}

\paragraph{Smoothing $h(F(\xbf))$ via the Moreau envelope}
Consider the composite nonsmooth function in~\eqref{eq:composite-form}, where $h : \mathbb{R}^m \rightarrow \mathbb{R}$ is a convex, piecewise linear function, and $F : \mathbb{R}^d \rightarrow \mathbb{R}^m$ is a smooth nonlinear mapping. Computing the Moreau envelope of $f$ in this setting requires solving the proximal problem
\[
    \min_{\xbf \in \mathbb{R}^d} \big\{ h(F(\xbf)) + \frac{\gamma}{2} \|\xbf - \hatx\|^2 \big\} 
\]
for a given reference point $\hatx \in \mathbb{R}^d$. This problem can sometimes be solved in closed form~\cite[Section 5]{davis2019stochastic}, or more generally via the prox-linear algorithm
\[
    \xbf^{k+1} = \argmin_{\xbf} \big\{ h\big(F(\xbf^k) + \nabla F(\xbf^k)(\xbf - \xbf^k)\big) + \frac{\gamma}{2} \|\xbf - \xbf^k\|^2 \big\},
\]
which admits linear convergence guarantees~\cite[Section D.3]{deng2021minibatch}. When $m$ is moderate and $h$ has a simple structure (e.g., $h(\cdot) = |\cdot|$), it is advantageous to reformulate the problem as a min-max saddle-point problem:
\[
    \min_{\xbf} \max_{\ybf} \left\{ \langle\ybf, F(\xbf)\rangle - h^{\ast}(\ybf) \right\} + \frac{\gamma}{2} \|\xbf - \hatx\|^2 = \max_{\ybf} \Big\{ - h^{\ast}(\ybf) + \min_{\xbf} \big[ \langle\ybf, F(\xbf)\rangle + \frac{\gamma}{2} \|\xbf - \hatx\|^2 \big] \Big\}.
\]
This dual reformulation enables one to solve a lower-dimensional maximization problem in $\ybf \in \mathbb{R}^m$, thus reducing computational complexity. 
In many machine learning applications, $h$ is separable, and $F$ is elementwise quadratic, so it suffices to consider the univariate case with $m = 1$ and $F(\xbf) = \tfrac{1}{2} \langle \xbf, Q\xbf \rangle + \langle \qbf, \xbf \rangle$, where $Q \in \mathbb{R}^{n \times n}$ is symmetric.  
As a concrete example, for $h(z) = |z|$,  $h^{\ast}(y) = \delta_{[a, b]}(y)$ is the indicator of interval $[a,b ]$ for some $a, b \in \mathbb{R}$. Choosing $\gamma > b\lambda_{\max}(Q)$, we have
\begin{equation}
\begin{aligned}
  & \max_{y \in [a, b]} \min_{\xbf} \left\{ y F(\xbf) + \frac{\gamma}{2}\|\xbf - \bar{\xbf}\|^2 \right\} \nonumber\\
  &= \max_{y \in [a, b]} \min_{\xbf} \left\{ \tfrac{1}{2} y \langle \xbf, Q \xbf \rangle + y \langle \qbf, \xbf \rangle + \frac{\gamma}{2}\|\xbf - \bar{\xbf}\|^2 \right\} \nonumber\\
  &= \max_{y \in [a, b]} \left\{ -\tfrac{1}{2} (\gamma \bar{\xbf} - y \qbf)^{\top} (Q y + \gamma I)^{-1} (\gamma \bar{\xbf} - y \qbf) + \frac{\gamma}{2}\|\bar{\xbf}\|^2 \right\} \nonumber\\
  & =: \max_{y \in [a, b]} \tau(y). \nonumber
\end{aligned}
\end{equation}
The solution is obtained by maximizing $\tau(y)$ over $y \in [a, b]$, which can be efficiently accomplished by finding roots of $\tau'(y) = 0$ and checking the endpoints $a$ and $b$.

\paragraph{Comparison and discussion}
We now compare the generalized Nesterov smoothing with the classical Moreau envelope smoothing for the composite function $h(A(\xbf))$. The two techniques are closely related and are, in certain cases, identical. Specifically, suppose $A$ is the identity map $A(\xbf)=\xbf$, then we have $h^*(\ybf)=f^*(\ybf)$. 
In this case, Nesterov's smoothing reduces to the infimal convolution smooth approximation. If we choose the prox-function to be the squared norm, $\omega(\ybf) = \frac{1}{2}\|\ybf\|^2$, then it can be further interpreted as the dual formulation of Moreau envelope smoothing. See \citep[Sec. 4.4]{beck2017first}.
For a general nonlinear map, the two smoothing approaches are different, revealing a fundamental trade-off: 
\begin{enumerate}[label=\arabic*), leftmargin=15pt]
\item Nesterov's smoothing is often more computationally tractable since the subproblem in \eqref{eq:nesterov-smoothing} is solved over the dual space $\dom h^*$. It can be more efficient if the structure of $h^*$ is simple. 
In contrast, computing the Moreau envelope of $f$ requires evaluating a nontrivial proximal operator, which often requires subroutines.
An exception is in stochastic optimization, where the proximal subproblem involving only a single sample sometimes admits a closed-form solution~\citep[Sec. 5]{davis2019stochastic}.
\item The advantage of the Moreau envelope smoothing is that it always yields a globally Lipschitz smooth function. In contrast, as established in \Cref{thm:nes-smoothing}, the Nesterov-smoothed function $f_\eta$ can have non-Lipschitz gradient that depends on the evaluation point. 
\end{enumerate}

\subsection{Smoothing by parts}
Beyond smoothing a single objective, the principle of smoothing can be applied in a component-wise manner to functions with more complex structures, such as sums, expectations, or nested compositions. This allows us to leverage the underlying structure and then construct valid smooth approximations for a broader class of problems.

\subsubsection{Summation function} 
Consider a function formed by the weighted sum of two nonsmooth, weakly convex functions. We can smooth each component independently; the sum of these approximations then constitutes a valid smooth approximation of the original function. 
\begin{prop} \label{prop:smooth-sum}
Let $\alpha_1, \alpha_2 \geq 0$.
Let  $f(\xbf)=\alpha_1 f_1(\xbf)+\alpha_2 f_2(\xbf)$, where $f_{1}$ and $f_{2}$ are two weakly convex functions.
Suppose $f_{i,\eta}$ is a $(\bar{\rho}_i,\Rcal_{i,\eta}, \Lcal_{i,\eta})$-SA of $f_{i}$ where $i\in\{1,2\}$. Then
$f_\eta(\xbf)=\alpha_1 f_{1,\eta}(\xbf)+\alpha_2 f_{2,\eta}(\xbf)$ is an $(\alpha_1{\bar\rho}_1+\alpha_2{\bar\rho}_2,\Rcal_\eta,\Lcal_\eta)$-SA of $f$ with $\Rcal_\eta(\xbf)\coloneqq\alpha_1\Rcal_{1,\eta}(\xbf)+\alpha_2\Rcal_{2,\eta}(\xbf)$ and $\Lcal_\eta(\xbf, \ybf)\coloneqq \alpha_1\Lcal_{1,\eta}(\xbf,\ybf)+\alpha_2\Lcal_{2,\eta}(\xbf,\ybf)$.
\end{prop}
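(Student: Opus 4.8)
The plan is to verify the three defining conditions of a smooth approximation (\Cref{df:smooth-apprx}) for the candidate function $f_\eta = \alpha_1 f_{1,\eta} + \alpha_2 f_{2,\eta}$ directly, using the corresponding properties of $f_{1,\eta}$ and $f_{2,\eta}$ together with the fact that a nonnegative combination of weakly convex (resp.\ generalized smooth) functions inherits the same structure with additively combined moduli.

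First I would observe that $f_\eta$ is continuously differentiable on $\Xcal$ as a linear combination of continuously differentiable functions, with $\nabla f_\eta = \alpha_1 \nabla f_{1,\eta} + \alpha_2 \nabla f_{2,\eta}$. For the weak-convexity requirement (i), note that since $f_{i,\eta}(\xbf) + \frac{\bar\rho_i}{2}\norm{\xbf}^2$ is convex for $i\in\{1,2\}$ and $\alpha_i\ge 0$, the sum $\alpha_1 f_{1,\eta}(\xbf) + \alpha_2 f_{2,\eta}(\xbf) + \frac{\alpha_1\bar\rho_1 + \alpha_2\bar\rho_2}{2}\norm{\xbf}^2$ is convex, so $f_\eta$ is $(\alpha_1\bar\rho_1 + \alpha_2\bar\rho_2)$-weakly convex. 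For condition \ref{SA-func}, I would add the two sandwich inequalities $f_{i,\eta}(\xbf)\le f_i(\xbf)\le f_{i,\eta}(\xbf) + \Rcal_{i,\eta}(\xbf)$ with nonnegative weights $\alpha_i$: the left inequalities give $f_\eta(\xbf)\le \alpha_1 f_1(\xbf) + \alpha_2 f_2(\xbf) = f(\xbf)$, and adding the right inequalities gives $f(\xbf)\le f_\eta(\xbf) + \alpha_1\Rcal_{1,\eta}(\xbf) + \alpha_2\Rcal_{2,\eta}(\xbf) = f_\eta(\xbf) + \Rcal_\eta(\xbf)$, and $\Rcal_\eta$ is nonnegative as a nonnegative combination of nonnegative functions. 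For condition \ref{SA-smooth}, the triangle inequality yields
\[
\norm{\nabla f_\eta(\xbf) - \nabla f_\eta(\ybf)} \le \alpha_1\norm{\nabla f_{1,\eta}(\xbf) - \nabla f_{1,\eta}(\ybf)} + \alpha_2\norm{\nabla f_{2,\eta}(\xbf) - \nabla f_{2,\eta}(\ybf)} \le \bigl(\alpha_1\Lcal_{1,\eta}(\xbf,\ybf) + \alpha_2\Lcal_{2,\eta}(\xbf,\ybf)\bigr)\norm{\xbf - \ybf},
\]
and $\Lcal_\eta(\xbf,\ybf) = \alpha_1\Lcal_{1,\eta}(\xbf,\ybf) + \alpha_2\Lcal_{2,\eta}(\xbf,\ybf)$ is clearly symmetric and nonnegative. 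Collecting these facts shows $f_\eta$ is an $(\alpha_1\bar\rho_1 + \alpha_2\bar\rho_2, \Rcal_\eta, \Lcal_\eta)$-SA of $f$.

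This proof is essentially routine bookkeeping, so I do not anticipate a genuine obstacle; the only point requiring a modicum of care is making sure the nonnegativity of $\alpha_1,\alpha_2$ is invoked at each step (it is needed both to preserve convexity of the scaled quadratic perturbation and to preserve the direction of the sandwich inequalities in \ref{SA-func}), and that the definitions of $\Rcal_\eta$ and $\Lcal_\eta$ land exactly on the nonnegative, symmetric functions promised by the statement. One could also note in passing that the argument extends verbatim to any finite nonnegative combination $\sum_i \alpha_i f_i$ by induction, though the two-term version suffices for the applications in the sequel.
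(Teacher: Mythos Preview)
Your proposal is correct and matches the paper's approach exactly: the paper's proof consists of the single line ``The proof follows from the definition,'' and your proposal simply unpacks that verification of \Cref{df:smooth-apprx} condition by condition. There is nothing to add.
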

\begin{proof}
    The proof follows from the definition.
\end{proof}
As a corollary of \Cref{prop:smooth-sum}, we can construct a smooth approximation of a finite-sum objective by smoothing each component. 

\begin{coro}\label{coro:smooth-finite-sum}
    Let $f(\xbf) = \frac{1}{m}\sum_{i=1}^m f_i(\xbf)$, where each $f_i$ is weakly convex.
    Suppose $f_{i, \eta}$ is a $(\bar{\rho}_i,\Rcal_{i,\eta}, \Lcal_{i,\eta})$-SA of $f_{i}$. Then $f_\eta(\xbf) = \frac{1}{m} \sum_{i=1}^m f_{i, \eta}(\xbf)$ is a $(\bar\rho,\Rcal_\eta,\Lcal_\eta)$-SA of $f$ with $\bar\rho\coloneqq\frac{1}{m}\sum_{i=1}^m \bar{\rho}_i $,  $\Rcal_\eta(\xbf)\coloneqq\frac{1}{m} \sum_{i=1}^m \Rcal_{i,\eta}(\xbf)$ and $\Lcal_\eta(\xbf, \ybf)\coloneqq \frac{1}{m} \sum_{i=1}^m \Lcal_{i,\eta}(\xbf,\ybf)$.
\end{coro}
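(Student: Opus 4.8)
The plan is to verify the three defining conditions of \Cref{df:smooth-apprx} directly for $f_\eta = \frac{1}{m}\sum_{i=1}^m f_{i,\eta}$, which is quicker than chaining \Cref{prop:smooth-sum} through $m-1$ steps (though that route also works). First, continuous differentiability of $f_\eta$ is immediate, as it is a finite linear combination of the continuously differentiable maps $f_{i,\eta}$. For the weak convexity requirement, write
\[
f_\eta(\xbf) + \frac{\bar\rho}{2}\norm{\xbf}^2 = \frac{1}{m}\sum_{i=1}^m\Brbra{f_{i,\eta}(\xbf) + \frac{\bar\rho_i}{2}\norm{\xbf}^2},
\]
with $\bar\rho = \frac{1}{m}\sum_{i=1}^m \bar\rho_i$; each summand is convex because $f_{i,\eta}$ is $\bar\rho_i$-weakly convex, so the average is convex and $f_\eta$ is $\bar\rho$-weakly convex.

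Next I would check \ref{SA-func}: summing the sandwich bounds $f_{i,\eta}(\xbf) \le f_i(\xbf) \le f_{i,\eta}(\xbf) + \Rcal_{i,\eta}(\xbf)$ over $i$ and dividing by $m$ gives $f_\eta(\xbf) \le f(\xbf) \le f_\eta(\xbf) + \frac{1}{m}\sum_{i=1}^m \Rcal_{i,\eta}(\xbf)$, which is exactly \ref{SA-func} with $\Rcal_\eta(\xbf) = \frac{1}{m}\sum_{i=1}^m \Rcal_{i,\eta}(\xbf)$, a nonnegative function since each $\Rcal_{i,\eta}$ is. For \ref{SA-smooth}, the triangle inequality together with the generalized smoothness of each $f_{i,\eta}$ yields
\[
\norm{\nabla f_\eta(\xbf) - \nabla f_\eta(\ybf)} \le \frac{1}{m}\sum_{i=1}^m \norm{\nabla f_{i,\eta}(\xbf) - \nabla f_{i,\eta}(\ybf)} \le \Brbra{\frac{1}{m}\sum_{i=1}^m \Lcal_{i,\eta}(\xbf,\ybf)}\norm{\xbf-\ybf},
\]
so $\Lcal_\eta(\xbf,\ybf) = \frac{1}{m}\sum_{i=1}^m \Lcal_{i,\eta}(\xbf,\ybf)$ is a valid modulus, and it inherits symmetry from the $\Lcal_{i,\eta}$. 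Collecting these facts shows that $f_\eta$ is a $(\bar\rho,\Rcal_\eta,\Lcal_\eta)$-SA of $f$ with the stated parameters.

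There is essentially no hard step here: the result is a bookkeeping consequence of the fact that all three SA ingredients --- weak convexity modulus, the two-sided functional bound, and the generalized-smoothness modulus --- are preserved under nonnegative convex combinations, each aggregating as the same $\frac1m$-weighted average. The only mild point of care is that these three quantities combine additively (rather than, say, through a maximum), which is precisely what the displays above confirm; an equivalent derivation is a short induction on $m$ via \Cref{prop:smooth-sum}, peeling off one term at a time from the partial averages with appropriately renormalized weights $\alpha_1,\alpha_2$, but the direct verification avoids that bookkeeping.
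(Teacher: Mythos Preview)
Your proof is correct. The paper's own proof is the one-line remark that the result ``immediately follows from \Cref{prop:smooth-sum} and induction,'' whereas you verify the three SA conditions directly for the $m$-term average; since you explicitly note the induction route as an equivalent alternative and both arguments are routine bookkeeping, there is no substantive difference to highlight.
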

\begin{proof}
The proof immediately follows from \Cref{prop:smooth-sum} and induction.
\end{proof}

\subsubsection{Expectation function}\label{subsec:expectation-func}
Consider stochastic optimization where the objective takes the following expectation form
\begin{equation}\label{eq:expectation-form}
f(\xbf)\coloneqq\Expe_{\xi}\bsbra{f(\xbf,\xi)},
\end{equation}
where $f(\cdot,\xi)$ is $\rho$-weakly convex ($\rho\ge 0$) in $\xbf$ for every outcome of the random variable $\xi$. Directly applying smoothing techniques, such as the Moreau envelope smoothing, to $f$ is often computationally challenging, as it requires solving an optimization problem that itself involves an expectation.
A more practical approach is to smooth the integrand $f(\cdot,\xi)$ and then compute the expectation.
Let $f_{\eta}(\cdot,\xi)$ be a smooth approximation of $f(\cdot,\xi)$. We then consider the smoothed objective:
\begin{equation}
{f}_{\eta}(\xbf)\coloneqq\Expe_{\xi}\bsbra{f_{\eta}(\xbf,\xi)},\label{eq:smooth-stochastic-func}
\end{equation}
assuming the integral $f_{\eta}$ is well-defined.
The following theorem shows that this procedure yields a valid smooth approximation of $f$ under standard regularity conditions.
\begin{thm}
 \label{thm:smooth-stochastic}
 Suppose for almost every $\xi$, \(f_{\eta}(\cdot,\xi)\) is a $(\bar{\rho},\Rcal_{\xi, \eta},\Lcal_{\xi,\eta})$-SA of $f(\cdot,\xi)$, where  \(\Rcal_{\xi,\eta}: \Rbb^d \to (0,\infty)\) and \(\Lcal_{\xi,\eta}: \Rbb^d  \to (0,\infty)\) are integrable stochastic functions.
Further assume that for any fixed $\xbf\in\Rbb^d$ that
\begin{enumerate}[leftmargin=15pt, label=\arabic*)]
	\item \(\Expe_\xi [|f_\eta(\xbf,\xi)|]< + \infty\),
	\item there exists a nonnegative random variable $C(\xi)$ with $\Expe_\xi[C(\xi)]<+\infty$ such that $f_\eta(\cdot,\xi)$ is Lipschitz continuous in the neighborhood of $\xbf$ with module $C(\xi)$, almost surely.
\end{enumerate}
Then, ${f}_\eta$ is differentiable with its gradient given by $\nabla {f}_\eta(\xbf)=\Expe_\xi\bsbra{\nabla f_\eta(\xbf,\xi)}$.
Let $\Rcal_\eta(\xbf) \coloneqq \Expe_{\xi} \bsbra{\Rcal_{\xi, \eta}(\xbf)}$, $\Lcal_\eta(\xbf, \ybf) \coloneqq \Expe_{\xi} \bsbra{\Lcal_{\xi,\eta}(\xbf,\ybf)}$.
Then
${f}_{\eta}$ is a $(\bar{\rho}, \Rcal_\eta, \Lcal_\eta)$-SA of $f$. 
\end{thm}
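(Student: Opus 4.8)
The plan is to verify the three defining conditions of \Cref{df:smooth-apprx} for $f_\eta=\Expe_\xi[f_\eta(\cdot,\xi)]$ one at a time: $\bar\rho$-weak convexity, condition~\ref{SA-func} with error $\Rcal_\eta$, and condition~\ref{SA-smooth} with modulus $\Lcal_\eta$. Before any of that, the first and only delicate step is to establish the differentiation-under-the-integral identity $\nabla f_\eta(\xbf)=\Expe_\xi[\nabla f_\eta(\xbf,\xi)]$; once this is in hand, everything else reduces to linearity and monotonicity of the expectation.

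For the differentiability step, I would fix $\xbf$ and a unit vector $\dbf$ and study the difference quotients $q_t(\xi)\coloneqq t^{-1}\brbra{f_\eta(\xbf+t\dbf,\xi)-f_\eta(\xbf,\xi)}$. Assumption~2) supplies a neighborhood of $\xbf$ and an integrable $C(\xi)$ with $|q_t(\xi)|\le C(\xi)$ for all sufficiently small $|t|$, almost surely, so the family $\{q_t\}$ is dominated; since each $f_\eta(\cdot,\xi)$ is differentiable, $q_t(\xi)\to\inprod{\nabla f_\eta(\xbf,\xi)}{\dbf}$ pointwise, and $\norm{\nabla f_\eta(\xbf,\xi)}\le C(\xi)$ makes the limit integrable. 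Dominated convergence then gives that the directional derivative of $f_\eta$ at $\xbf$ along $\dbf$ exists and equals $\Expe_\xi[\inprod{\nabla f_\eta(\xbf,\xi)}{\dbf}]=\inprod{\Expe_\xi[\nabla f_\eta(\xbf,\xi)]}{\dbf}$; since this holds for every $\dbf$ and is linear in $\dbf$, $f_\eta$ is differentiable with the claimed gradient. Continuity of $\nabla f_\eta$, hence $f_\eta\in C^1$, will follow a posteriori once $\Lcal_\eta$ is shown to be locally bounded in the last step.

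The remaining conditions are then routine transfers from the integrand. For weak convexity, each $f_\eta(\cdot,\xi)+\tfrac{\bar\rho}{2}\norm{\cdot}^2$ is convex, and taking the (finite, by assumption~1)) expectation preserves convexity, so $f_\eta+\tfrac{\bar\rho}{2}\norm{\cdot}^2$ is convex. For~\ref{SA-func}, I would integrate the almost-sure sandwich $f_\eta(\xbf,\xi)\le f(\xbf,\xi)\le f_\eta(\xbf,\xi)+\Rcal_{\xi,\eta}(\xbf)$; the bound $|f(\xbf,\xi)|\le|f_\eta(\xbf,\xi)|+\Rcal_{\xi,\eta}(\xbf)$ together with assumption~1) and integrability of $\Rcal_{\xi,\eta}$ shows $f(\xbf)=\Expe_\xi[f(\xbf,\xi)]$ is finite, and monotonicity of the expectation yields $f_\eta(\xbf)\le f(\xbf)\le f_\eta(\xbf)+\Rcal_\eta(\xbf)$. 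For~\ref{SA-smooth}, the gradient formula plus the triangle inequality for the (Bochner) integral give
\begin{align*}
\norm{\nabla f_\eta(\xbf)-\nabla f_\eta(\ybf)}&=\norm{\Expe_\xi\bsbra{\nabla f_\eta(\xbf,\xi)-\nabla f_\eta(\ybf,\xi)}}\\
&\le\Expe_\xi\norm{\nabla f_\eta(\xbf,\xi)-\nabla f_\eta(\ybf,\xi)}\le\Expe_\xi\bsbra{\Lcal_{\xi,\eta}(\xbf,\ybf)}\norm{\xbf-\ybf},
\end{align*}
which is~\ref{SA-smooth} with $\Lcal_\eta(\xbf,\ybf)=\Expe_\xi[\Lcal_{\xi,\eta}(\xbf,\ybf)]$, finite by integrability of $\Lcal_{\xi,\eta}$. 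I expect the main obstacle to be concentrated in the second paragraph: carefully checking that the two stated regularity hypotheses are precisely what is needed to legitimately interchange $\nabla$ and $\Expe_\xi$ (dominating the difference quotients and securing integrability of the limiting gradient), after which the SA properties transfer to $f_\eta$ by inspection.
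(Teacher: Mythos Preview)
Your proposal is correct and follows essentially the same approach as the paper. The only difference is that where you spell out the dominated-convergence argument for interchanging $\nabla$ and $\Expe_\xi$, the paper simply cites \citep[Proposition~2]{ruszczynski2003optimality}; the remaining verifications of weak convexity, \ref{SA-func}, and \ref{SA-smooth} via Jensen's inequality are identical.
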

\begin{proof}
Applying \citep[Proposition 2]{ruszczynski2003optimality}, we can show \(f_\eta(\cdot,\xi)\) is locally Lipschitz and differentiable: 
\begin{equation}\label{eq:mid-11}
\nabla {f}_\eta(\xbf)=\nabla\,\Expe_\xi[f_\eta(\xbf,\xi)]=\Expe_\xi\bsbra{\nabla f_\eta(\xbf,\xi)}
.
\end{equation}
By the smooth approximation property \ref{SA-func} of $f_\eta(\cdot,\xi)$, we have  
$f_{\eta}(\xbf,\xi)\le f(\xbf,\xi)\le f_{\eta}(\xbf,\xi)+\Rcal_{\xi, \eta}(\xbf)$.
Taking expectations over $\xi$ on both sides yields
${f}_{\eta}(\xbf) \le f(\xbf) \le {f}_{\eta}(\xbf) + \Rcal_\eta(\xbf),$
which verifies the approximation bound \ref{SA-func} of ${f}_\eta$. 
Using a similar argument and the interchangeability result of expectation and differentiation~\eqref{eq:mid-11}, it is easy to show ${f}_\eta$ is $\bar\rho$-weakly convex.

In view of the smoothness condition \ref{SA-smooth} of $f_\eta(\cdot,\xi)$, we have
\begin{equation}
    \begin{aligned}
        \bnorm{\nabla {f}_\eta(\xbf)-\nabla {f}_\eta(\ybf)} & = \norm{\Expe[\nabla {f}_\eta(\xbf,\xi) -\nabla {f}_\eta(\ybf,\xi)]} \\
        & \le \Expe [\norm{\nabla {f}_\eta(\xbf,\xi) -\nabla {f}_\eta(\ybf,\xi)}] \\
        & \le \Expe [\Lcal_{\xi, \eta}(\xbf,\ybf)] \norm{\xbf-\ybf}  = \Lcal_\eta(\xbf,\ybf)\norm{\xbf-\ybf},
    \end{aligned}
\end{equation}
where the first inequality is by Jensen's inequality. This completes the proof.
\end{proof}

\subsection{More composite functions}
\paragraph{Composition of convex and weakly convex functions}
Consider a composition function of the form $f(\xbf)=h(F(\xbf))$, where the inner function is also nonsmooth. Let $h:\Rbb^m \to \Rbb_+$ be a convex function and $F:\Rbb^d \to \Rbb^m$ be a map where each component $F_i:\Rbb^d \to \Rbb$ is $\rho$-weakly convex.
We assume that $h$ is coordinate-wise non-decreasing, which means the subgradient $h'(\xbf)$ has nonnegative components. Such functions arise in penalty methods. For example, consider the penalty function $h(\zbf) = \|[\zbf]_+\|_1$.
\begin{thm}\label{thm:composite-smoothing}
Let ${h}_\eta$ be a $(0, \Rcal_{1,\eta}, \Lcal_{1,\eta})$-SA of $h$. For the map $F$, let  $F_\eta$ be a component-wise SA, such that each component $F_{i,\eta}(\xbf)$ is a $(\bar{\rho}_F, \Rcal_{2, \eta}, \Lcal_{2, \eta})$-SA of $F_i(\xbf)$.
Assume that both $h$ and its approximation $h_\eta$ are $M$-Lipschitz continuous and have non-negative (sub)gradients. Then, the composite function $f_\eta(\xbf)={h}_\eta(F_\eta(\xbf))$ is a $(\sqrt{m}\bar{\rho}_F M,\Rcal_\eta,\Lcal_\eta)$-SA of $f$ with 
\begin{equation*}
\begin{aligned}
\Rcal_\eta(\xbf) & = \Rcal_{1, \eta}(F(\xbf)) + \sqrt{m} M \Rcal_{2, \eta}(\xbf),\\
\Lcal_\eta(\xbf) & = \Lcal_{1, \eta} (F_\eta(\xbf),F_\eta(\ybf)) \min\{\norm{\nabla  F_\eta(\xbf)},\norm{\nabla F_\eta(\ybf)}\} + M \Lcal_{2, \eta}(\xbf, \ybf).
\end{aligned}
\end{equation*}
\end{thm}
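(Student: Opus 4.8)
The plan is to verify, for the candidate $f_\eta\coloneqq h_\eta\circ F_\eta$, the three defining properties of a smooth approximation in \Cref{df:smooth-apprx}: $\sqrt{m}\,\bar{\rho}_F M$-weak convexity, the two-sided bound \ref{SA-func} with error $\Rcal_\eta$, and $\Lcal_\eta$-generalized smoothness \ref{SA-smooth}. I would first record the tools used throughout: $f_\eta$ is continuously differentiable with $\nabla f_\eta(\xbf)=\nabla F_\eta(\xbf)^\top\nabla h_\eta(F_\eta(\xbf))$ by the chain rule; $h_\eta$ is convex (a $(0,\cdot,\cdot)$-SA is $0$-weakly convex) with $\nabla h_\eta(\cdot)\ge\zerobf$ (nonnegative subgradients) and $\norm{\nabla h_\eta(\cdot)}\le M$ (since $h_\eta$ is $M$-Lipschitz); and, componentwise, $0\le F_i(\xbf)-F_{i,\eta}(\xbf)\le\Rcal_{2,\eta}(\xbf)$, so that $F_\eta(\xbf)\le F(\xbf)$ coordinatewise and $\norm{F(\xbf)-F_\eta(\xbf)}\le\sqrt{m}\,\Rcal_{2,\eta}(\xbf)$.

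For the approximation bound, the lower estimate $f_\eta(\xbf)\le f(\xbf)$ follows by chaining monotonicity with domination: $h_\eta(F_\eta(\xbf))\le h_\eta(F(\xbf))$ because $h_\eta$ is coordinate-wise non-decreasing and $F_\eta(\xbf)\le F(\xbf)$, and $h_\eta(F(\xbf))\le h(F(\xbf))=f(\xbf)$ by \ref{SA-func} for $h$. For the upper estimate I would apply \ref{SA-func} for $h$ at $F(\xbf)$ and then use the $M$-Lipschitz continuity of $h_\eta$,
\begin{equation*}
f(\xbf)=h(F(\xbf))\le h_\eta(F(\xbf))+\Rcal_{1,\eta}(F(\xbf))\le h_\eta(F_\eta(\xbf))+M\norm{F(\xbf)-F_\eta(\xbf)}+\Rcal_{1,\eta}(F(\xbf)),
\end{equation*}
which is at most $f_\eta(\xbf)+\sqrt{m}\,M\Rcal_{2,\eta}(\xbf)+\Rcal_{1,\eta}(F(\xbf))=f_\eta(\xbf)+\Rcal_\eta(\xbf)$. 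For weak convexity I would linearize the convex function $h_\eta$ at $F_\eta(\ybf)$, writing $\wbf\coloneqq\nabla h_\eta(F_\eta(\ybf))\ge\zerobf$:
\begin{equation*}
f_\eta(\xbf)\ge f_\eta(\ybf)+\inner{\wbf}{F_\eta(\xbf)-F_\eta(\ybf)}=f_\eta(\ybf)+\tsum_i w_i\bigl(F_{i,\eta}(\xbf)-F_{i,\eta}(\ybf)\bigr).
\end{equation*}
Since each $F_{i,\eta}$ is $\bar{\rho}_F$-weakly convex and $w_i\ge 0$, lower bounding the $i$-th summand by $w_i\inner{\nabla F_{i,\eta}(\ybf)}{\xbf-\ybf}-\tfrac{\bar{\rho}_F}{2}w_i\norm{\xbf-\ybf}^2$ and summing, the linear terms recombine via the chain rule into $\inner{\nabla f_\eta(\ybf)}{\xbf-\ybf}$, while the quadratic terms total $-\tfrac{\bar{\rho}_F}{2}\norm{\wbf}_1\norm{\xbf-\ybf}^2\ge-\tfrac{\sqrt{m}\,\bar{\rho}_F M}{2}\norm{\xbf-\ybf}^2$ using $\norm{\wbf}_1\le\sqrt{m}\norm{\wbf}\le\sqrt{m}\,M$; this is exactly $\sqrt{m}\,\bar{\rho}_F M$-weak convexity.

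For generalized smoothness I would use the chain rule and split $\nabla f_\eta(\xbf)-\nabla f_\eta(\ybf)=\nabla F_\eta(\xbf)^\top[\nabla h_\eta(F_\eta(\xbf))-\nabla h_\eta(F_\eta(\ybf))]+[\nabla F_\eta(\xbf)-\nabla F_\eta(\ybf)]^\top\nabla h_\eta(F_\eta(\ybf))$. The second term is at most $\norm{\nabla F_\eta(\xbf)-\nabla F_\eta(\ybf)}_{\mathrm{op}}\norm{\nabla h_\eta(F_\eta(\ybf))}\le M\Lcal_{2,\eta}(\xbf,\ybf)\norm{\xbf-\ybf}$, via the operator-norm generalized smoothness of the Jacobian $\nabla F_\eta$ inherited from the componentwise smoothness of $F_\eta$ together with $\norm{\nabla h_\eta}\le M$. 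The first term is at most $\norm{\nabla F_\eta(\xbf)}_{\mathrm{op}}\,\Lcal_{1,\eta}(F_\eta(\xbf),F_\eta(\ybf))\,\norm{F_\eta(\xbf)-F_\eta(\ybf)}$ by the generalized smoothness of $h_\eta$, and I would bound $\norm{F_\eta(\xbf)-F_\eta(\ybf)}$ by $\norm{\xbf-\ybf}$ times a Jacobian estimate along the segment $[\ybf,\xbf]$, exactly the mean-value argument used in the proof of \Cref{thm:nes-smoothing}. Performing the analogous split with $\xbf$ and $\ybf$ swapped and retaining the smaller of the two bounds yields a symmetric modulus containing $\Lcal_{1,\eta}(F_\eta(\xbf),F_\eta(\ybf))\min\{\norm{\nabla F_\eta(\xbf)}_{\mathrm{op}},\norm{\nabla F_\eta(\ybf)}_{\mathrm{op}}\}+M\Lcal_{2,\eta}(\xbf,\ybf)$, admissible in \Cref{defi:gen-smooth} because it is symmetric. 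The hard part will be exactly this step: I must keep careful track of which norms the constants $M$, $\Lcal_{1,\eta}$, and $\Lcal_{2,\eta}$ live in (Euclidean on $\Rbb^d$, and a norm on $\Rbb^m$ compatible with $\norm{\nabla h_\eta}\le M$), control $\norm{F_\eta(\xbf)-F_\eta(\ybf)}$ with only a segment-dependent Jacobian bound rather than a global Lipschitz constant for $F_\eta$ (as forced by the non-Lipschitz regime, cf. \Cref{thm:nes-smoothing}), and choose the splitting of $\nabla f_\eta(\xbf)-\nabla f_\eta(\ybf)$ so that the resulting modulus comes out symmetric, which is what produces the $\min$ over $\norm{\nabla F_\eta(\xbf)}_{\mathrm{op}}$ and $\norm{\nabla F_\eta(\ybf)}_{\mathrm{op}}$.
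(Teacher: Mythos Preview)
Your proposal follows essentially the same route as the paper's proof: the same monotonicity-plus-Lipschitz chain for the two-sided bound \ref{SA-func}, the same linearization of the convex $h_\eta$ together with nonnegativity of $\nabla h_\eta$ and componentwise weak convexity of $F_{i,\eta}$ for the $\sqrt{m}\,\bar{\rho}_F M$ modulus, and the same chain-rule split $\nabla F_\eta(\xbf)^\top[\nabla h_\eta(F_\eta(\xbf))-\nabla h_\eta(F_\eta(\ybf))]+[\nabla F_\eta(\xbf)-\nabla F_\eta(\ybf)]^\top\nabla h_\eta(F_\eta(\ybf))$ for the smoothness bound. You are in fact slightly more explicit than the paper in two places: you spell out the symmetrization (swapping $\xbf,\ybf$ in the split and taking the smaller bound) that produces the $\min\{\norm{\nabla F_\eta(\xbf)},\norm{\nabla F_\eta(\ybf)}\}$, and you flag that controlling $\norm{F_\eta(\xbf)-F_\eta(\ybf)}$ requires a mean-value/Jacobian argument along the segment, which the paper's proof simply absorbs into its final displayed inequality without comment.
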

\begin{proof} The proof is similar to that of \Cref{thm:nes-smoothing}.
We establish the weak convexity of the composite approximation $f_\eta(\xbf) = h_\eta(F_\eta(\xbf))$. 
Let $\mathbf{1}_m\in\Rbb^m$ be an all-one vector.
 Due to the convexity of $h_\eta$,  weak convexity of $F_\eta(\xbf)$, and the non-negativity of $\nabla h_\eta(F_\eta(\ybf))$, we have:
\begin{equation}
\begin{aligned}
f_\eta(\xbf) - f_\eta(\ybf) &\ge \langle \nabla h_\eta(F_\eta(\ybf)), F_\eta(\xbf)-F_\eta(\ybf) \rangle \\
&\ge \langle \nabla h_\eta(F_\eta(\ybf)), \nabla F_\eta(\ybf)(\xbf-\ybf) - \frac{\bar{\rho}_F}{2}\|\xbf-\ybf\|^2 \mathbf{1}_m \rangle \\
&= \langle \nabla f_\eta(\ybf), \xbf-\ybf \rangle - \frac{\bar{\rho}_F}{2}\|\xbf-\ybf\|^2 \langle \nabla h_\eta(F_\eta(\ybf)), \mathbf{1}_m \rangle \\
&\ge \langle \nabla f_\eta(\ybf), \xbf-\ybf \rangle - \frac{\sqrt{m}\bar{\rho}_F M}{2}\|\xbf-\ybf\|^2,
\end{aligned}
\end{equation}
where the last inequality follows from Cauchy's inequality: $\norm{\nabla h({F}(\ybf))^\top \onebf_m}\le \sqrt{m} M$.
Next, we show that \( f(\xbf) \ge {f}_\eta(\xbf) \). Since $h_\eta$ is element-wise increasing, i.e., \( \zbf_1 \ge \zbf_2 \) (element-wise) implies \( h_\eta(\zbf_1) \ge h_\eta(\zbf_2) \), we obtain
$h(F(\xbf)) \ge h(F_\eta(\xbf)) \ge {h}_\eta(F_\eta(\xbf)).$ To bound the approximation error, we consider
\begin{equation*}
\begin{aligned}
 f(\xbf) - f_\eta(\xbf) & = h(F(\xbf)) - {h}_\eta(F_\eta(\xbf)) \\
& = \left[ h(F(\xbf)) - {h}_\eta(F(\xbf)) \right] + \left[ {h}_\eta(F(\xbf)) - {h}_\eta(F_\eta(\xbf)) \right] \\
& \le \Rcal_{1, \eta}(F(\xbf)) + M \| F(\xbf) - F_\eta(\xbf) \| \\
& \le \Rcal_{1, \eta}(F(\xbf)) + \sqrt{m} M\Rcal_{2, \eta}(\xbf).
\end{aligned}
\end{equation*}
Finally, we establish the smoothness of \({f}_\eta(\xbf)\) using the same technique as in \Cref{thm:nes-smoothing}:
\begin{equation*}
    \begin{aligned}
        &\norm{\nabla f_\eta(\xbf)-\nabla f_\eta(\ybf)}  \\
        & =\norm{\nabla F_\eta(\xbf)^\top \nabla h_\eta(F_\eta(\xbf)) - \nabla F_\eta(\ybf)^\top \nabla h_\eta(F_\eta(\ybf))} \\
        & = \norm{\nabla F_\eta(\xbf)^\top (\nabla h_\eta(F_\eta(\xbf)) -\nabla h_\eta(F_\eta(\ybf)))  + (\nabla F_\eta(\xbf)-\nabla F_\eta(\ybf))^\top \nabla h_\eta(F_\eta(\ybf))} \\
        & \le \cbra{\Lcal_{1, \eta} (F_\eta(\xbf), F_\eta(\ybf)) \norm{\nabla F_\eta(\xbf)} + M \Lcal_{2, \eta}(\xbf, \ybf)} \norm{\xbf-\ybf}.
    \end{aligned}
\end{equation*}
This completes the proof.
\end{proof}

\paragraph{Composition of a weakly convex function and a linear map}
We now examine objective functions of the form $f(\xbf) = g(A\xbf)$, where $g$ is a $\rho$-weakly convex function and $A$ is a linear operator. This class of problems is studied in \citet{bohm2021variable}. 

\begin{thm}
Let $g_\eta$ be a $(\bar{\rho}, \Rcal_\eta, \Lcal_\eta)$-smooth approximation (SA) of $g$. Then $f_\eta$ is a $(\tilde{\rho}, \tilde{R}_\eta, \tilde{\Lcal}_\eta)$-SA of $f$ with parameters given by
\[
\tilde{\rho} = \bar{\rho}\norm{A}_\mathrm{op}^2, \quad \tilde{R}_\eta(\xbf) = \Rcal_\eta(A\xbf), \quad \tilde{\Lcal}_\eta(\xbf, \ybf) = \norm{A}_\mathrm{op} \Lcal_\eta(A\xbf, A\ybf).
\]
\end{thm}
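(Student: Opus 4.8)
The plan is to transport each of the three defining conditions of a smooth approximation (\Cref{df:smooth-apprx}) through the linear map $A$, absorbing all distortion into the operator norm $\norm{A}_{\mathrm{op}}$. Crucially, no rank or surjectivity hypothesis on $A$ enters, which is where the present framework improves on \citet{bohm2021variable}. It is convenient to fix the pulled-back domain $\Xcal'\coloneqq\{\xbf:A\xbf\in\Xcal\}$: it is closed and convex since $\Xcal$ is and $A$ is linear, and on it the map $\xbf\mapsto g_\eta(A\xbf)$ is well defined and $C^1$ with $\nabla f_\eta(\xbf)=A^\top\nabla g_\eta(A\xbf)$ by the chain rule.

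\emph{Approximation bound} \ref{SA-func}: substituting $\zbf=A\xbf$ into $g_\eta(\zbf)\le g(\zbf)\le g_\eta(\zbf)+\Rcal_\eta(\zbf)$ gives $f_\eta(\xbf)\le f(\xbf)\le f_\eta(\xbf)+\Rcal_\eta(A\xbf)$, so $\tilde R_\eta(\xbf)=\Rcal_\eta(A\xbf)$ is a valid nonnegative error function. \emph{Weak convexity}: since $g_\eta$ is differentiable and $\bar\rho$-weakly convex, $g_\eta(\zbf_1)\ge g_\eta(\zbf_2)+\inner{\nabla g_\eta(\zbf_2)}{\zbf_1-\zbf_2}-\frac{\bar\rho}{2}\norm{\zbf_1-\zbf_2}^2$; evaluating at $\zbf_i=A\xbf_i$ and using the adjoint identity $\inner{\nabla g_\eta(A\xbf_2)}{A(\xbf_1-\xbf_2)}=\inner{\nabla f_\eta(\xbf_2)}{\xbf_1-\xbf_2}$ together with $\norm{A(\xbf_1-\xbf_2)}\le\norm{A}_{\mathrm{op}}\norm{\xbf_1-\xbf_2}$ shows $f_\eta$ is $\bar\rho\norm{A}_{\mathrm{op}}^2$-weakly convex, i.e. $\tilde\rho=\bar\rho\norm{A}_{\mathrm{op}}^2$. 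Equivalently, $f_\eta(\xbf)+\frac{\tilde\rho}{2}\norm{\xbf}^2$ is the sum of the convex function $g_\eta(A\xbf)+\frac{\bar\rho}{2}\norm{A\xbf}^2$ and the convex quadratic $\frac{\bar\rho}{2}(\norm{A}_{\mathrm{op}}^2\norm{\xbf}^2-\norm{A\xbf}^2)$.

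\emph{Generalized smoothness} \ref{SA-smooth}: from $\nabla f_\eta(\xbf)-\nabla f_\eta(\ybf)=A^\top(\nabla g_\eta(A\xbf)-\nabla g_\eta(A\ybf))$, one application of \eqref{eq:op-norm-bound} (on $A^\top$), then the $\Lcal_\eta$-generalized smoothness of $g_\eta$, then \eqref{eq:op-norm-bound} again (on $A$ acting on $\xbf-\ybf$), yields $\norm{\nabla f_\eta(\xbf)-\nabla f_\eta(\ybf)}\le\norm{A}_{\mathrm{op}}^2\,\Lcal_\eta(A\xbf,A\ybf)\,\norm{\xbf-\ybf}$, and $(\xbf,\ybf)\mapsto\norm{A}_{\mathrm{op}}^2\Lcal_\eta(A\xbf,A\ybf)$ inherits nonnegativity and symmetry from $\Lcal_\eta$, which is the claimed $\tilde\Lcal_\eta$ (reading $L_\eta$ as $\Lcal_\eta$). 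Running the same composition argument with $g$ in place of $g_\eta$ shows $f=g\circ A$ is $\rho\norm{A}_{\mathrm{op}}^2$-weakly convex, so the weakly convex setup is internally consistent.

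\emph{Main obstacle}: there is no conceptually deep step — the whole argument is a push-forward of \Cref{df:smooth-apprx} along a linear map. The only care needed is the curvature bookkeeping: composition with $A$ amplifies both the weak-convexity modulus and the local smoothness modulus by the \emph{square} of $\norm{A}_{\mathrm{op}}$ (one factor from the adjoint $A^\top$, one from $A$ acting on the argument), and one must keep the domain pull-back straight so that weak convexity of $g_\eta$ and the functions $\Rcal_\eta,\Lcal_\eta$ are only ever invoked at admissible points $A\xbf,A\ybf\in\Xcal$.
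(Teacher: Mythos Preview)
Your proof is correct and follows exactly the same chain-rule/operator-norm argument as the paper: pull back the approximation bound, the weak-convexity inequality, and the gradient-Lipschitz inequality through $A$. One point worth flagging: your derivation correctly produces $\tilde\Lcal_\eta(\xbf,\ybf)=\norm{A}_{\mathrm{op}}^{2}\,\Lcal_\eta(A\xbf,A\ybf)$ with a \emph{squared} operator norm (one factor from $A^\top$, one from bounding $\norm{A\xbf-A\ybf}$), whereas the theorem statement and the paper's own proof carry only a single $\norm{A}_{\mathrm{op}}$ --- that appears to be a typo in the paper, and your version is the right constant.
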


\begin{proof}
By the weak convexity of $g_\eta$, it holds that
\begin{equation*}
    \begin{aligned}
        f_\eta(\xbf) - f_\eta(\ybf)
        & = g_\eta(A\xbf) - g_\eta(A\ybf) \\
        & \ge \nabla g_\eta(A\ybf)^\top (A\xbf - A\ybf) - \frac{\bar{\rho}}{2} \|A\xbf - A\ybf\|^2 \\
        & \geq \nabla f_\eta(\ybf)^\top (\xbf - \ybf) - \frac{\bar{\rho}\norm{A}^2_\mathrm{op}}{2}\|\xbf - \ybf\|^2,
    \end{aligned}
\end{equation*}
where we used the chain rule $\nabla f_\eta(\ybf) = A^\top \nabla g_\eta(A\ybf)$ and the fact that $\|A\xbf - A\ybf\| \le \|A\|_\mathrm{op}\|\xbf - \ybf\|$. 

Moreover, by the SA property of $g_\eta$, we obtain
\[
f_\eta(\xbf) = g_\eta(A\xbf) \le g(A\xbf) \le g_\eta(A\xbf) + \Rcal_\eta(A\xbf)=f_\eta(\xbf)+\Rcal_\eta(\xbf),
\]
and the Lipschitz continuity of the gradient satisfies
\[
\|\nabla f_\eta(\xbf) - \nabla f_\eta(\ybf)\| = \|A^\top (\nabla g_\eta(A\xbf) - \nabla g_\eta(A\ybf))\| \le \|A\|_\mathrm{op} \Lcal_\eta(A\xbf, A\ybf)\|\xbf - \ybf\|.
\]
This completes the proof.
\end{proof}
\section{Algorithms for smooth approximation}
\label{sec:algorithms-for-smooth-approximation}

This section discusses how to design algorithms based on the smooth approximation theory developed so far. We consider the setting where $f$ is given by the expectation of a stochastic function, namely, $f(\xbf)=\Expe_{\xi}\bsbra{f(\xbf,\xi)}$, with $f$ and $f(\cdot,\xi)$ defined as in \Cref{subsec:expectation-func}. To address such problems, it is natural to employ optimization algorithms for smooth functions to solve an appropriately constructed surrogate problem~\eqref{pb:comp-smooth}. 
 Suppose $f_\eta$ is a smooth approximation of $f$ satisfying the assumptions in \Cref{thm:smooth-stochastic}. The resulting smoothed optimization problem takes the form
\begin{equation}
\min_{\xbf\in\Rbb^{d}}\ \phi_\eta(\xbf)\coloneqq f_\eta(\xbf)+r(\xbf),\quad\text{where }f_\eta(\xbf)=\Expe_{\xi}\bsbra{f_\eta(\xbf,\xi)}.\label{pb:sa-smoothing}
\end{equation}
To elucidate the intuition behind our algorithm design, we introduce, for this section only, the following simplifying assumption on $\Rcal_\eta$ and $\Lcal_\eta$.

\begin{assumption}\label{assum:standard-SA}
Suppose \Cref{df:smooth-apprx} holds and that there exists $\theta > 0$ such that for all $\eta\in(0,\theta]$,
    \begin{equation}
    \Rcal_\eta (\xbf) \equiv R_\eta\coloneqq R\cdot \eta, \quad \Lcal_\eta(\xbf,\ybf)\equiv L_\eta\coloneqq B+\frac{L}{\eta}, \label{eq:standard-smoothing}
    \end{equation}
    for some constants $R, B, L \ge 0$. Furthermore, we assume that the target accuracy $\vep \ll\theta$.
\end{assumption}
Condition~\eqref{eq:standard-smoothing} is satisfied, for example, when $\dom r$ is a compact subset in the interior of $\Xcal$; see \Cref{remark:moreau-smoothing-bound} and \Cref{remark:nesterov-smoothing-bound} for further discussions. We restrict $\eta$ to a bounded interval since, in general, $\Rcal_\eta$ may not be linear with respect to $\eta$ over $\mathbb{R}$ (as occurs, for instance, with Moreau envelope smoothing). This restriction to $\eta \in (0, \theta]$ is mild, since our analysis ultimately adopts a choice of $\eta = \Ocal(\vep^{2})$. Note that while we assume $\Rcal_\eta$ and $\Lcal_\eta$ are independent of the variables $\xbf$ and $\ybf$, this assumption will be relaxed to more general settings in which these quantities depend on the variables, as discussed in \Cref{sec:generalized-smooth}. The following assumption regarding the bounded variance of the gradient is standard in the literature \cite{lan2020first}.
\begin{assumption} \label{asp:noise}
$\Expe_\xi[\norm{\nabla f_\eta(\xbf,\xi)-\nabla f_\eta(\xbf)}^2]\le \sigma^2$, for any $\xbf\in \dom r$.
\end{assumption}
For the rest of this section, we assume that both \Cref{assum:standard-SA,asp:noise}, and the assumptions in \Cref{thm:smooth-stochastic} are satisfied. 

\subsection{Smooth minimization with gradient-based methods} \label{sec-gradient-based-smoothing}

Given the smooth nonconvex problem \eqref{pb:sa-smoothing}, one can directly apply proximal gradient-type methods. \Cref{alg:Smoothed-spg} presents the stochastic
proximal gradient method for solving \eqref{pb:sa-smoothing}. We adopt a minibatch
strategy, drawing a batch of $m$ i.i.d.\ samples at each iteration to form the
gradient estimator $\gbf^{k-1}$. When $\sigma = 0$, the problem reduces to the
deterministic setting, in which case \Cref{alg:Smoothed-spg} recovers the standard
proximal gradient method.
\begin{algorithm}[h]
\caption{The smoothing stochastic proximal gradient method (\sspg)\protect}
\label{alg:Smoothed-spg}
\KwIn{Initial point $\xbf^{0}$, stepsize $\gamma > 0$;}
\For{$k=1,2,3,\ldots$}{
Sample minibatch $\{\xi^{k-1}_1,\ldots,\xi^{k-1}_m\}$ and compute $\gbf^{k-1}=\frac{1}{m}\sum_{i=1}^m \nabla f_\eta(\xbf^{k-1},\xi^{k-1}_i) $\;
  $\xbf^k= \prox_{\gamma r}(\xbf^{k-1}-\gamma \gbf^{k-1})$\;
} 
\end{algorithm}

There are two ways to analyze \Cref{alg:Smoothed-spg}. One follows the analysis in \citet{ghadimi2016mini}, which can be interpreted as a perturbed proximal gradient method. The batch size $m$ needs to be chosen sufficiently large to control the stochastic approximation error.
\begin{thm}\label{thm:rate-smooth-sgd}
Suppose we set $\gamma=1/(2L_{\eta})$ in \Cref{alg:Smoothed-spg}. Then, the iterates satisfy
\[
\min_{0\leq k \leq K-1} \Ebb[\|\Gcal_{\gamma}(\xbf^{k})\|^{2}] \leq \frac{8L_{\eta}\bigl(\Delta + R\eta\bigr)}{K} + \frac{6\sigma^{2}}{m},
\]
where $\Delta \geq \phi(\xbf^{0}) - \min_{\xbf} \phi(\xbf)$ and the generalized gradient $\mathcal{G}_\gamma$ is defined in \eqref{eq:generalized-grad}.
\end{thm}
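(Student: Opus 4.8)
The plan is to adapt the standard convergence analysis of the stochastic proximal gradient method for smooth nonconvex composite problems~\citep{ghadimi2016mini} to the smoothed objective $\phi_\eta = f_\eta + r$. Under \Cref{assum:standard-SA} the surrogate $f_\eta$ is globally $L_\eta$-Lipschitz smooth, so \textsf{Part 1)} of \Cref{lem:smoothness} gives the descent inequality
\[
f_\eta(\xbf^k) \le f_\eta(\xbf^{k-1}) + \inner{\nabla f_\eta(\xbf^{k-1})}{\xbf^k-\xbf^{k-1}} + \tfrac{L_\eta}{2}\norm{\xbf^k-\xbf^{k-1}}^2 .
\]
First I would combine this with the strong-convexity/optimality inequality for the prox step $\xbf^k = \prox_{\gamma r}(\xbf^{k-1}-\gamma\gbf^{k-1})$, namely $\inner{\gbf^{k-1}}{\xbf^k-\xbf^{k-1}} + r(\xbf^k) - r(\xbf^{k-1}) \le -\tfrac{1}{\gamma}\norm{\xbf^k-\xbf^{k-1}}^2$, which follows since $\xbf^k$ minimizes a $\gamma^{-1}$-strongly convex objective. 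Writing the stochastic error as $\deltabf^{k-1}\coloneqq\gbf^{k-1}-\nabla f_\eta(\xbf^{k-1})$ and substituting $\gamma = 1/(2L_\eta)$ so that $\tfrac1\gamma-\tfrac{L_\eta}{2}=\tfrac{3L_\eta}{2}$, this yields the one-step recursion
\[
\phi_\eta(\xbf^k) \le \phi_\eta(\xbf^{k-1}) - \tfrac{3L_\eta}{2}\norm{\xbf^k-\xbf^{k-1}}^2 - \inner{\deltabf^{k-1}}{\xbf^k-\xbf^{k-1}} .
\]

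The key device for controlling the noise is to introduce the \emph{noiseless} proximal step $\bar\xbf^k\coloneqq\prox_{\gamma r}(\xbf^{k-1}-\gamma\nabla f_\eta(\xbf^{k-1}))$, so that by \eqref{eq:generalized-grad} one has $\Gcal_\gamma(\xbf^{k-1})=\gamma^{-1}(\xbf^{k-1}-\bar\xbf^k)$, while nonexpansiveness of $\prox_{\gamma r}$ gives $\norm{\xbf^k-\bar\xbf^k}\le\gamma\norm{\deltabf^{k-1}}$. I would then split $\inner{\deltabf^{k-1}}{\xbf^k-\xbf^{k-1}} = \inner{\deltabf^{k-1}}{\bar\xbf^k-\xbf^{k-1}} + \inner{\deltabf^{k-1}}{\xbf^k-\bar\xbf^k}$: the first inner product has zero conditional expectation, since $\bar\xbf^k-\xbf^{k-1}$ is measurable with respect to the history $\Fcal_{k-1}$ and the minibatch estimator is unbiased, and the second is bounded by $\gamma\norm{\deltabf^{k-1}}^2$ via Cauchy--Schwarz and the nonexpansiveness bound. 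Analogously, the elementary estimate $\norm{\xbf^k-\xbf^{k-1}}^2 \ge \tfrac12\norm{\bar\xbf^k-\xbf^{k-1}}^2 - \norm{\xbf^k-\bar\xbf^k}^2 \ge \tfrac{\gamma^2}{2}\norm{\Gcal_\gamma(\xbf^{k-1})}^2 - \gamma^2\norm{\deltabf^{k-1}}^2$ converts the progress term into one involving $\norm{\Gcal_\gamma(\xbf^{k-1})}^2$ at the cost of another $O(\gamma^2\norm{\deltabf^{k-1}}^2)$ term. Taking conditional expectation and using \Cref{asp:noise} together with the independence of the $m$ minibatch samples, so that $\Ebb[\norm{\deltabf^{k-1}}^2\mid\Fcal_{k-1}]\le\sigma^2/m$, all residual noise terms collapse into a single term of order $\sigma^2/(L_\eta m)$, producing a recursion of the form $\Ebb[\phi_\eta(\xbf^k)\mid\Fcal_{k-1}] \le \phi_\eta(\xbf^{k-1}) - c_1 L_\eta^{-1}\norm{\Gcal_\gamma(\xbf^{k-1})}^2 + c_2\sigma^2/(L_\eta m)$ for explicit constants $c_1,c_2>0$.

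Finally I would take total expectation, telescope over $k=1,\dots,K$, and bound $\phi_\eta(\xbf^0)-\Ebb[\phi_\eta(\xbf^K)]\le\Delta+R\eta$ using~\ref{SA-func}, which gives $\phi_\eta\le\phi$ (hence $\phi_\eta(\xbf^0)\le\phi(\xbf^0)$) and, since $\Rcal_\eta\equiv R\eta$, also $\min_\xbf\phi_\eta(\xbf)\ge\min_\xbf\phi(\xbf)-R\eta$. Dividing by $K$, lower-bounding $\tfrac1K\sum_{k=0}^{K-1}\Ebb[\norm{\Gcal_\gamma(\xbf^k)}^2]$ by its minimum over $k$, and rearranging yields the claimed bound $\min_{0\le k\le K-1}\Ebb[\norm{\Gcal_\gamma(\xbf^k)}^2]\le 8L_\eta(\Delta+R\eta)/K + 6\sigma^2/m$. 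The conceptual content — descent lemma, the prox three-point inequality, the noiseless-step decomposition of the noise, minibatch variance reduction, and telescoping — is routine; I expect the only genuine obstacle to be bookkeeping the several $O(\gamma^2\norm{\deltabf^{k-1}}^2)$ contributions and choosing the Young-type splittings so that the final numerical constants come out no larger than $8$ and $6$.
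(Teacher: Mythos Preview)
Your proposal is correct and follows essentially the same route as the paper's proof: descent lemma for $f_\eta$, the prox optimality inequality at $\xbf^{k-1}$, introduction of the noiseless proximal step $\bar\xbf^k$ to compare the stochastic gradient mapping $G^k=\gamma^{-1}(\xbf^{k-1}-\xbf^k)$ with $\Gcal_\gamma(\xbf^{k-1})$, the bound $\norm{\xbf^k-\bar\xbf^k}\le\gamma\norm{\deltabf^{k-1}}$, telescoping, and the final passage from $\phi_\eta$ to $\phi$ via~\ref{SA-func}. The one tactical difference is in how the cross term $-\inner{\deltabf^{k-1}}{\xbf^k-\xbf^{k-1}}$ is handled: the paper applies Young's inequality directly, $-\inner{\deltabf^{k-1}}{\xbf^k-\xbf^{k-1}}\le\tfrac{1}{2L_\eta}\norm{\deltabf^{k-1}}^2+\tfrac{L_\eta}{2}\norm{\xbf^k-\xbf^{k-1}}^2$, and never explicitly uses that $\inner{\deltabf^{k-1}}{\bar\xbf^k-\xbf^{k-1}}$ has zero conditional mean; your splitting into the measurable part plus residual does use that, and in fact yields slightly sharper constants ($16/3$ and $14/3$ in place of $8$ and $6$) if you carry the bookkeeping through, so your worry about matching the stated constants is unfounded.
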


\paragraph{Deterministic optimization ($\sigma=0$).}
When $\sigma=0$, \Cref{alg:Smoothed-spg} reduces to the proximal gradient method. 
In order to obtain an $(\vep,\vep)$-stationary point of \eqref{pb:main}, according to \Cref{thm:smooth-criteria-convert}, we would need $\eta=\Ocal(\vep^2)$.
Under this setting, \Cref{thm:rate-smooth-sgd} establishes an $\Ocal(1/\vep^4)$ complexity bound. This complexity matches, but does not improve upon, the worst-case complexity of the proximal subgradient method~\citep{davis2019stochastic}. The lack of improvement is attributed to the poor conditioning (large $L_\eta$) introduced by smoothing.

\paragraph{Stochastic optimization ($\sigma>0$).} The deterministic analysis suggests that in the stochastic setting, the total iteration number will be at least $\Ocal(1/\vep^{4})$. 
Even worse, a direct application of \Cref{thm:rate-smooth-sgd} requires a large minibatch size of $m = \Ocal({\sigma^2}/\vep^2)$ to control the gradient variance. Combined with the iteration requirement $K=\Ocal (L_\eta/\vep^2)$, the sample complexity becomes
\[
mK =  \Ocal\Big(\frac{\sigma^2}{\vep^2}\Big) \cdot \Ocal \Big(\frac{B+L/\vep^2}{\vep^2}\Big) = \Ocal\Big(\frac{1}{\vep^6}\Big).
\]
This complexity is even inferior to the $\Ocal(1/\vep^4)$ result of the standard stochastic subgradient method. Fortunately, this suboptimal bound is an artifact of the analysis. A sharper complexity can be derived by analyzing the algorithm through the lens of the Moreau envelope~\citep{davis2019stochastic}, which allows an arbitrary batch size. Following the notation from the previous sections, we denote the Moreau envelope of $\phi_\eta$ by \[
\phi_\eta^{\hat\rho}(\xbf)  \coloneqq \min_\zbf \Big\{\phi_\eta(\zbf)+\frac{\hat\rho}{2}\norm{\zbf-\xbf}^2 \Big\}.
\]
By leveraging the Moreau envelope as a potential function and following the analysis of \citet{davis2019stochastic}, we establish the following convergence guarantee.
\begin{thm}\label{thm:moreau-analysis}
Let $\hat{\rho}>\bar{\rho}$ and  $\gamma<(\hat{\rho}+L_{\eta})^{-1}$ in \Cref{alg:Smoothed-spg}. Then we have
\[
\frac{1}{K}\sum^{K-1}_{k=0}\Ebb[\norm{\nabla\phi_\eta^{\hat\rho}(\xbf^{k})}^{2}]\le\frac{\hat{\rho}(\hat{\rho}-\bar{\rho}+\gamma^{-1})}{\hat{\rho}-\bar{\rho}}\frac{\Ebb[\phi_\eta^{\hat\rho}(\xbf^{0})-\phi_\eta^{\hat\rho}(\xbf^{K})]}{K}+\frac{\hat{\rho}^{2}}{2(\hat{\rho}-\bar{\rho})}\frac{\sigma^{2}}{(\gamma^{-1}-\hat{\rho}-L_{\eta})m}.
\]
Suppose $\Delta\ge\phi(\xbf^{0})-\min_{\xbf}\phi(\xbf)$. Let $m=1$,
$\gamma=(c\sqrt{K}+\hat{\rho}+L_{\eta})^{-1}$ where $c=\sqrt{\frac{\hat{\rho}}{2(\Delta+R\eta)}}\sigma$, then 
\begin{equation}\label{eq:rate-moreau}
\min_{0\le k \le K-1} \Ebb[\norm{\nabla\phi_\eta^{\hat\rho}(\xbf^k)}^{2}]\le \frac{\hat{\rho}}{\hat{\rho}-\bar{\rho}}\Big\{ \frac{(2\hat{\rho}-\bar{\rho}+L_{\eta})(\Delta+R\eta)}{K}+\sqrt{\frac{2\hat{\rho}(\Delta+R\eta)}{K}}\sigma\Big\} .
\end{equation}
\end{thm}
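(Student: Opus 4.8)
The plan is to reuse the Moreau-envelope-as-potential-function technique of \citet{davis2019stochastic}, now applied to the smooth surrogate $\phi_\eta=f_\eta+r$ and to the proximal \emph{gradient} update of \Cref{alg:Smoothed-spg} (rather than a subgradient update). I would first collect the structural facts: $f_\eta$ is $\bar\rho$-weakly convex (condition~i) of \Cref{df:smooth-apprx}) and, by \ref{SA-smooth} together with \Cref{assum:standard-SA}, $L_\eta$-Lipschitz smooth, so $\phi_\eta=f_\eta+r$ is $\bar\rho$-weakly convex; by \ref{SA-func} and \Cref{assum:standard-SA}, $\phi_\eta\ge\phi-R\eta$, hence $\phi_\eta$ is bounded below. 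Consequently, for $\hat\rho>\bar\rho$ the envelope $\phi_\eta^{\hat\rho}$ is finite and $C^1$, and by \Cref{lem:moreau}, $\nabla\phi_\eta^{\hat\rho}(\xbf^k)=\hat\rho(\xbf^k-\hat\xbf^k)$ where $\hat\xbf^k\coloneqq\prox_{\phi_\eta/\hat\rho}(\xbf^k)$; in particular $\norm{\nabla\phi_\eta^{\hat\rho}(\xbf^k)}=\hat\rho\norm{\xbf^k-\hat\xbf^k}$, and $\phi_\eta(\hat\xbf^k)+\tfrac{\hat\rho}{2}\norm{\hat\xbf^k-\xbf^k}^2=\phi_\eta^{\hat\rho}(\xbf^k)\le\phi_\eta(\xbf^k)$.

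The crux is a one-step recursion of the form
\[
\Ebb\bsbra{\phi_\eta^{\hat\rho}(\xbf^{k+1})\smid\mathcal{F}_k}\le\phi_\eta^{\hat\rho}(\xbf^k)-c_1\norm{\nabla\phi_\eta^{\hat\rho}(\xbf^k)}^2+c_2\tfrac{\sigma^2}{m}
\]
for positive constants $c_1,c_2$ with $1/c_1=\hat\rho(\hat\rho-\bar\rho+\gamma^{-1})/(\hat\rho-\bar\rho)$ and $c_2/c_1$ equal to the coefficient of $\sigma^2/m$ in the claimed bound, where $\mathcal{F}_k$ is the filtration generated by $\xbf^0,\dots,\xbf^k$. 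To derive it, write $\deltabf^k\coloneqq\gbf^k-\nabla f_\eta(\xbf^k)$, so that $\Ebb[\deltabf^k\smid\mathcal{F}_k]=\zero$ and, by \Cref{asp:noise} and i.i.d.\ minibatching, $\Ebb[\norm{\deltabf^k}^2\smid\mathcal{F}_k]\le\sigma^2/m$. Then: (a) insert the feasible point $\hat\xbf^k$ into the minimization defining $\phi_\eta^{\hat\rho}(\xbf^{k+1})$ to get $\phi_\eta^{\hat\rho}(\xbf^{k+1})\le\phi_\eta(\hat\xbf^k)+\tfrac{\hat\rho}{2}\norm{\hat\xbf^k-\xbf^{k+1}}^2$; (b) since $\xbf^{k+1}=\argmin_\ybf\{\inner{\gbf^k}{\ybf}+r(\ybf)+\tfrac1{2\gamma}\norm{\ybf-\xbf^k}^2\}$ and this objective is $\gamma^{-1}$-strongly convex, apply the three-point inequality with test point $\hat\xbf^k$ to bound $\norm{\hat\xbf^k-\xbf^{k+1}}^2$ in terms of $\inner{\gbf^k}{\hat\xbf^k-\xbf^{k+1}}$, $r(\hat\xbf^k)-r(\xbf^{k+1})$, $\norm{\hat\xbf^k-\xbf^k}^2$, and $-\norm{\xbf^{k+1}-\xbf^k}^2$; (c) split $\inner{\gbf^k}{\hat\xbf^k-\xbf^{k+1}}=\inner{\nabla f_\eta(\xbf^k)}{\hat\xbf^k-\xbf^k}+\inner{\nabla f_\eta(\xbf^k)}{\xbf^k-\xbf^{k+1}}+\inner{\deltabf^k}{\hat\xbf^k-\xbf^{k+1}}$, bounding the first term by $\bar\rho$-weak convexity of $f_\eta$ (turning it into $f_\eta(\hat\xbf^k)-f_\eta(\xbf^k)+\tfrac{\bar\rho}{2}\norm{\hat\xbf^k-\xbf^k}^2$) and the second by the descent lemma for the $L_\eta$-smooth $f_\eta$; (d) assemble (a)--(c), use the envelope identity $\phi_\eta(\hat\xbf^k)+\tfrac{\hat\rho}{2}\norm{\hat\xbf^k-\xbf^k}^2=\phi_\eta^{\hat\rho}(\xbf^k)$ together with the prox-optimality inequality $\phi_\eta(\hat\xbf^k)\le\phi_\eta(\xbf^k)-\tfrac{\hat\rho}{2}\norm{\hat\xbf^k-\xbf^k}^2$, so that the net coefficient of $\norm{\hat\xbf^k-\xbf^k}^2$ is a negative multiple of $(\hat\rho-\bar\rho)$, which becomes $-c_1\norm{\nabla\phi_\eta^{\hat\rho}(\xbf^k)}^2$ after substituting $\norm{\hat\xbf^k-\xbf^k}=\hat\rho^{-1}\norm{\nabla\phi_\eta^{\hat\rho}(\xbf^k)}$. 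The stochastic cross term is handled by splitting $\inner{\deltabf^k}{\hat\xbf^k-\xbf^{k+1}}=\inner{\deltabf^k}{\hat\xbf^k-\xbf^k}+\inner{\deltabf^k}{\xbf^k-\xbf^{k+1}}$: the first has zero conditional mean; the second is bounded by Young's inequality and absorbed into the residual negative multiple of $\norm{\xbf^{k+1}-\xbf^k}^2$ (available because $\gamma<(\hat\rho+L_\eta)^{-1}$), which produces the $\sigma^2/m$ term.

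Given the recursion, I would take total expectations, telescope over $k=0,\dots,K-1$, and divide by $K$, using $\phi_\eta^{\hat\rho}(\xbf^0)-\phi_\eta^{\hat\rho}(\xbf^K)\le\phi(\xbf^0)-\inf_\xbf\phi+R\eta\le\Delta+R\eta$ (since $\phi_\eta^{\hat\rho}(\xbf^0)\le\phi_\eta(\xbf^0)\le\phi(\xbf^0)$ and $\phi_\eta^{\hat\rho}(\xbf^K)\ge\inf_\xbf\phi_\eta\ge\inf_\xbf\phi-R\eta$). This yields the first displayed inequality. For the refined rate, set $m=1$ and $\gamma=(c\sqrt K+\hat\rho+L_\eta)^{-1}$, so that $\gamma^{-1}-\hat\rho-L_\eta=c\sqrt K$ and $\hat\rho-\bar\rho+\gamma^{-1}=2\hat\rho-\bar\rho+L_\eta+c\sqrt K$; substitute into the first inequality, choose $c=\sqrt{\hat\rho/(\Delta+R\eta)}\,\sigma$ to balance the two $\Ocal(1/\sqrt K)$ contributions, and pass from the average to $\min_{0\le k\le K-1}$ to obtain \eqref{eq:rate-moreau}.

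The step I expect to be the main obstacle is the bookkeeping in (d): several quadratic quantities ($\norm{\xbf^{k+1}-\xbf^k}^2$, $\norm{\hat\xbf^k-\xbf^{k+1}}^2$, $\norm{\hat\xbf^k-\xbf^k}^2$) and the function-value differences $\phi_\eta(\hat\xbf^k)-\phi_\eta(\xbf^k)$ and $\phi_\eta(\xbf^k)-\phi_\eta(\xbf^{k+1})$ must be combined---some telescoped, some discarded, some absorbed using $\gamma<(\hat\rho+L_\eta)^{-1}$---so that the coefficient of $\norm{\nabla\phi_\eta^{\hat\rho}(\xbf^k)}^2$ is exactly $-c_1$ and that of $\sigma^2/m$ is exactly $c_2$. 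This is where the explicit constants in the statement are pinned down, and a constant-factor slip would propagate into the final rate and hence into the prescribed choices of $\gamma$ and $c$.
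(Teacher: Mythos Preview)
Your plan is essentially the paper's: Moreau envelope of $\phi_\eta$ as potential, algorithm's three-point inequality at test point $\hat\xbf^k$, weak convexity plus $L_\eta$-smoothness to control the linearization errors, and the noise split $\inner{\deltabf^k}{\hat\xbf^k-\xbf^{k+1}}=\inner{\deltabf^k}{\hat\xbf^k-\xbf^k}+\inner{\deltabf^k}{\xbf^k-\xbf^{k+1}}$ with Young's inequality absorbing the second piece. The telescoping and the parameter substitution for the second displayed bound are exactly as you describe.

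There is one concrete place where your outline departs from the paper and would not give the stated constants. In step~(d) you propose to control $\phi_\eta(\hat\xbf^k)-\phi_\eta(\xbf^{k+1})$ by splitting it as $[\phi_\eta(\hat\xbf^k)-\phi_\eta(\xbf^k)]+[\phi_\eta(\xbf^k)-\phi_\eta(\xbf^{k+1})]$ and using only the weak prox-optimality $\phi_\eta(\hat\xbf^k)\le\phi_\eta(\xbf^k)-\tfrac{\hat\rho}{2}\norm{\hat\xbf^k-\xbf^k}^2$. This leaves a stray $\phi_\eta(\xbf^k)-\phi_\eta(\xbf^{k+1})$ term (multiplied by $\hat\rho\gamma$), so the first displayed inequality would acquire a non-envelope telescoping contribution and the coefficient of $\norm{\nabla\phi_\eta^{\hat\rho}}^2$ would come out as $\tfrac{\gamma(\hat\rho-\bar\rho)}{2\hat\rho}$ rather than $\tfrac{\hat\rho-\bar\rho}{\hat\rho(\hat\rho-\bar\rho+\gamma^{-1})}$. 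The paper instead uses the \emph{full} three-point inequality for the envelope prox (\Cref{lem:three-point}) with test point $\xbf^{k+1}$:
\[
\phi_\eta(\hat\xbf^k)-\phi_\eta(\xbf^{k+1})\le \tfrac{\hat\rho}{2}\norm{\xbf^{k+1}-\xbf^k}^2-\tfrac{\hat\rho}{2}\norm{\hat\xbf^k-\xbf^k}^2-\tfrac{\hat\rho-\bar\rho}{2}\norm{\xbf^{k+1}-\hat\xbf^k}^2.
\]
This eliminates the function-value difference altogether, and the extra $-\tfrac{\hat\rho-\bar\rho}{2}\norm{\xbf^{k+1}-\hat\xbf^k}^2$ coming from the $(\hat\rho-\bar\rho)$-strong convexity of the envelope subproblem combines with the $\tfrac{1}{2\gamma}\norm{\xbf^{k+1}-\hat\xbf^k}^2$ from the algorithm's three-point inequality to produce the factor $\hat\rho-\bar\rho+\gamma^{-1}$ that appears in the stated bound. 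With that single substitution, the rest of your bookkeeping goes through verbatim and yields the exact constants.
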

\begin{rem}
 To guarantee that the expected squared gradient norm in \eqref{eq:rate-moreau} is bounded by $\Ocal(\vep^2)$, we need to select the number of iterations $K$ sufficiently large so that both terms on the right-hand side of \eqref{eq:rate-moreau} are controlled. Choosing $\eta = \Theta(\vep^2)$ and $m=1$, the total sample complexity becomes
\[
mK = 1 \cdot \Ocal\bigg(\max\Big\{\frac{2\hat{\rho}-\bar{\rho}+L_\eta}{\vep^2}, \frac{\sigma^2}{\vep^4}\Big\}\bigg) = \Ocal\Big(\frac{1}{\vep^4}\Big),
\]
which matches the sample complexity bound achieved by the stochastic subgradient method~\citep{davis2019stochastic}.
\end{rem}

The preceding analyses indicate that straightforward gradient-based smoothing methods
can match, but cannot improve upon, existing complexity bounds. To overcome the
limitations caused by poor problem conditioning, we resort to Nesterov's acceleration.
Specifically, under a proximal point scheme, the original problem is reduced to a
sequence of convex subproblems whose condition numbers are on the order of
$\Ocal(1/\varepsilon^{2})$. These subproblems can then be solved using accelerated
gradient descent, which requires only $\Ocal(1/\sqrt{\varepsilon^{2}})=\Ocal(1/\varepsilon)$
iterations. Combined with the $\Ocal(1/\varepsilon^{2})$ complexity of the outer
proximal-point updates, it yields an overall complexity of $\Ocal(1/\varepsilon^{3})$. The same proximal-point strategy extends naturally to the stochastic setting. Although
the sample complexity is not improved in the worst case, our sharper analysis
demonstrates that smoothing provides clear advantages in minibatching regimes.

\subsection{Smooth minimization with inexact proximal point} \label{sec:smoothing-proxpt}
We see from \Cref{sec-gradient-based-smoothing} that directly applying smoothing
 does not surpass the complexity of the subgradient method \citep{davis2019stochastic}.
The main issue is that the simple gradient descent algorithm is ineffective in dealing with the ill-conditioned smoothed problem arising from the asymmetry between the lower curvature $\bar{\rho}$ and the upper curvature $L_\eta$.  
To further improve the convergence rate, we propose solving the smooth problem using the proximal point method~(\Cref{alg:Smoothed-ipp}), which turns the original problem into a sequence of strongly convex subproblems.

\begin{algorithm}[h]
\caption{The smoothed inexact proximal point method (\sipp)\protect}
\label{alg:Smoothed-ipp} \KwIn{Initial point $\xbf^{0}$, parameter $\hat{\rho} > \bar{\rho}$;}
\For{$k=1,2,3,\ldots$}{
Compute an approximate solution $\xbf^k$ to the following problem:
\begin{equation}\label{eq:pp-subprob}
\xbf^k \approx \argmin_\xbf\, \Big\{ \phietaenvk(\xbf) \assign \phi_{\eta}(\xbf)+\frac{\hat{\rho}}{2}\norm{\xbf-\xbf^{k-1}}^{2} \Big\},
\end{equation}
such that condition~\eqref{eq:sub-criteria-1} is satisfied.
} 
\end{algorithm}

The subproblems \eqref{eq:pp-subprob} can be solved efficiently with acceleration. For convenience, we denote 
\[\phietaenvk(\xbf)\coloneqq{\phi_\eta}(\xbf)+\frac{\hat{\rho}}{2}\norm{\xbf-\xbf^{k-1}}^{2} \quad \text{and} \quad \hatx^k\coloneqq\prox_{{\phi_\eta/\hat{\rho}}}(\xbf^{k-1}) = \argmin_{\xbf} \phietaenvk(\xbf).\]  
Let ${\xbf}^{\star}_{\eta}$ be an optimal solution to the smoothed problem.
We assume that the proximal subproblem \eqref{eq:pp-subprob} is inexactly solved: the suboptimality of $\xbf^{k}$ consists of a relative error to the initial suboptimality plus an absolute error term:
\begin{equation}
\Expe_{k}\bsbra{\phietaenvk(\xbf^{k})-\phietaenvk(\hatx^{k})}\le\lambda\bsbra{\phietaenvk(\xbf^{k-1})-\phietaenvk(\hatx^{k})}+\zeta_{k},\quad\text{where }\lambda\in[0,1),\ \zeta_{k}\in[0,\infty).\label{eq:sub-criteria-1}
\end{equation}
Here, $\Expe_{k}[\cdot]$ is short for the conditional expectation $\Expe[\cdot|\Fcal_k]$, where $\Fcal_k$ denotes the $\sigma$-algebra generated by $\xbf^{1},\xbf^{2},\ldots,\xbf^{k-1}$. Let 
$\zeta_{k}$ be the stochastic error determined by $\Fcal_k$.
The bound~\eqref{eq:sub-criteria-1}  characterizes the convergence rate of many first-order methods. For instance, in smooth convex optimization, many gradient-based methods guarantee
the shrinkage of the optimality gap relative to the initial one. In stochastic optimization, the factor $\zeta_{k}$ often accounts for the error due to stochastic noise. The following theorem develops the main convergence property of \Cref{alg:Smoothed-ipp}. 
\begin{thm}
\label{thm:sipp-rate}
Let $\hat{\rho} > \bar{\rho}$ in \Cref{alg:Smoothed-ipp}. Then we have
\[
\min_{1\le k\le K}\Expe \bsbra{\norm{\nabla \phi_\eta^{\hat\rho} (\xbf^{k})}^{2}} \le \frac{2\hat\rho^2}{\hat\rho-\bar\rho} \frac{(1+\lambda)\sbra{\phi(\xbf^{0})-\phi(\xbf^{\star})+R\eta}+\sum_{k=1}^{K}\Expe[\zeta_{k}]}{(1-\lambda)K}.
\]
\end{thm}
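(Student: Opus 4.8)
The plan is to run a potential-function argument for the inexact proximal-point scheme, but with the potential chosen to be the Moreau-envelope gap
\[
P_{k}\assign\phi_{\eta}(\xbf^{k})-\phi_{\eta}^{\hat\rho}(\xbf^{k})\ge 0,
\]
rather than the raw function value, so that the relative inexactness $\lambda$ in \eqref{eq:sub-criteria-1} does not compound across iterations. First I would record the link between $P_{k}$ and the target quantity $\norm{\nabla\phi_{\eta}^{\hat\rho}(\xbf^{k})}^{2}$. Since $f_{\eta}$ is $\bar\rho$-weakly convex and $r$ is convex, $\phi_{\eta}$ is $\bar\rho$-weakly convex and lower semi-continuous, so for every $k$ the subproblem objective $\phietaenvk=\phi_{\eta}+\frac{\hat\rho}{2}\norm{\cdot-\xbf^{k-1}}^{2}$ is $(\hat\rho-\bar\rho)$-strongly convex with exact minimizer $\barx^{k}=\prox_{\phi_{\eta}/\hat\rho}(\xbf^{k-1})$, and \Cref{lem:moreau} gives $\nabla\phi_{\eta}^{\hat\rho}(\xbf^{k-1})=\hat\rho(\xbf^{k-1}-\barx^{k})$. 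Using $\phietaenvk(\xbf^{k-1})=\phi_{\eta}(\xbf^{k-1})$ and $\phietaenvk(\barx^{k})=\phi_{\eta}^{\hat\rho}(\xbf^{k-1})$, strong convexity yields $P_{k-1}\ge\frac{\hat\rho-\bar\rho}{2}\norm{\xbf^{k-1}-\barx^{k}}^{2}=\frac{\hat\rho-\bar\rho}{2\hat\rho^{2}}\norm{\nabla\phi_{\eta}^{\hat\rho}(\xbf^{k-1})}^{2}$, i.e. $\norm{\nabla\phi_{\eta}^{\hat\rho}(\xbf^{k})}^{2}\le\frac{2\hat\rho^{2}}{\hat\rho-\bar\rho}P_{k}$ for all $k\ge 0$. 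Hence it suffices to bound $\frac1K\sum_{k=1}^{K}\Expe[P_{k}]$.

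Next I would establish a one-step recursion for $P_{k}$. Writing the subproblem suboptimality as $\delta_{k}\assign\phietaenvk(\xbf^{k})-\phietaenvk(\barx^{k})\ge 0$, condition \eqref{eq:sub-criteria-1} reads $\Expe_{k}[\delta_{k}]\le\lambda P_{k-1}+\zeta_{k}$, again via $\phietaenvk(\xbf^{k-1})-\phietaenvk(\barx^{k})=P_{k-1}$. Since $\phi_{\eta}(\xbf^{k})\le\phietaenvk(\xbf^{k})=\delta_{k}+\phi_{\eta}^{\hat\rho}(\xbf^{k-1})$, subtracting $\phi_{\eta}^{\hat\rho}(\xbf^{k})$ gives
\[
P_{k}\le\delta_{k}+\big(\phi_{\eta}^{\hat\rho}(\xbf^{k-1})-\phi_{\eta}^{\hat\rho}(\xbf^{k})\big).
\]
Summing over $k=1,\dots,K$, taking total expectations, inserting $\Expe[\delta_{k}]\le\lambda\Expe[P_{k-1}]+\Expe[\zeta_{k}]$, and telescoping the envelope differences yields $\sum_{k=1}^{K}\Expe[P_{k}]\le\lambda\sum_{k=0}^{K-1}\Expe[P_{k}]+\sum_{k=1}^{K}\Expe[\zeta_{k}]+\phi_{\eta}^{\hat\rho}(\xbf^{0})-\Expe[\phi_{\eta}^{\hat\rho}(\xbf^{K})]$. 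Rearranging and discarding the nonnegative term $\lambda\Expe[P_{K}]$ bounds $(1-\lambda)\sum_{k=1}^{K}\Expe[P_{k}]$ by $\lambda P_{0}+\sum_{k=1}^{K}\Expe[\zeta_{k}]+\big(\phi_{\eta}^{\hat\rho}(\xbf^{0})-\Expe[\phi_{\eta}^{\hat\rho}(\xbf^{K})]\big)$; this is exactly where the choice of $P_{k}$ pays off, since $\lambda$ now enters only through the factor $(1+\lambda)/(1-\lambda)$ instead of accumulating geometrically.

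It remains to translate the boundary terms into quantities involving $\phi$. Using \ref{SA-func} with $\Rcal_{\eta}\equiv R\eta$ (\Cref{assum:standard-SA}) together with $\phi_{\eta}^{\hat\rho}\le\phi_{\eta}\le\phi$, I would bound $\phi_{\eta}^{\hat\rho}(\xbf^{0})\le\phi_{\eta}(\xbf^{0})\le\phi(\xbf^{0})$, $\phi_{\eta}^{\hat\rho}(\xbf^{K})\ge\inf_{\xbf}\phi_{\eta}(\xbf)\ge\inf_{\xbf}\phi(\xbf)-R\eta=\phi(\xbf^{\star})-R\eta$, and, by the same two facts applied at $\xbf^{0}$, $P_{0}\le\phi(\xbf^{0})-\phi(\xbf^{\star})+R\eta$. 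Plugging these in, dividing by $(1-\lambda)K$, combining with $\norm{\nabla\phi_{\eta}^{\hat\rho}(\xbf^{k})}^{2}\le\frac{2\hat\rho^{2}}{\hat\rho-\bar\rho}P_{k}$, and bounding the minimum by the average gives the stated estimate (up to an innocuous shift in the index range of the $\zeta_{k}$-sum).

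The step I expect to be the crux is the construction of this recursion: a naive potential such as $\phi_{\eta}(\xbf^{k})$ lets the relative error propagate geometrically, so the argument must be routed through the envelope gap $P_{k}$ via the identities $\phietaenvk(\xbf^{k-1})=\phi_{\eta}(\xbf^{k-1})$, $\phietaenvk(\barx^{k})=\phi_{\eta}^{\hat\rho}(\xbf^{k-1})$, and $\phi_{\eta}(\xbf^{k})\le\phietaenvk(\xbf^{k})$, combined with the telescoping Moreau-envelope differences. Everything else---the stochastic bookkeeping (the tower property applies cleanly because $\zeta_{k}$ and $\xbf^{k-1}$ are $\Fcal_{k}$-measurable) and the conversion between $\phi_{\eta}$ and $\phi$ at the two endpoints---is routine.
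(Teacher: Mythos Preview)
Your proposal is correct and is essentially the paper's argument, just organized more transparently: your potential $P_{k}=\phi_{\eta}(\xbf^{k})-\phi_{\eta}^{\hat\rho}(\xbf^{k})$ is precisely the paper's quantity $\hat\phi_{k+1}(\xbf^{k})-\hat\phi_{k+1}(\barx^{k+1})$ once one uses the identities you noted, and your one-step recursion $P_{k}\le\delta_{k}+\phi_{\eta}^{\hat\rho}(\xbf^{k-1})-\phi_{\eta}^{\hat\rho}(\xbf^{k})$ is exactly the paper's inequality $\phietaenvk(\xbf^{k-1})-\phietaenvk(\barx^{k})\le[\hat\phi_{k-1}(\barx^{k-1})-\phietaenvk(\barx^{k})]+[\hat\phi_{k-1}(\xbf^{k-1})-\hat\phi_{k-1}(\barx^{k-1})]$ at a shifted index, since $\hat\phi_{j}(\barx^{j})=\phi_{\eta}^{\hat\rho}(\xbf^{j-1})$. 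The telescoping, the subtraction of $\lambda\sum\Expe[P_{k}]$, and the boundary-term conversion via \ref{SA-func} match line for line; the $\zeta$-sum index discrepancy you flag is indeed present in the paper's own statement versus its proof.
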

\begin{proof}
Using the definition of $\phietaenvk$ and $\hatx^{k}$,
we have the lower-bound 
\vspace{-0.3\baselineskip}
\begin{align}
\phietaenvk(\hatx^{k}) & \le\phi_{\eta}(\xbf^{k-1})+\frac{\hat{\rho}}{2}\norm{\xbf^{k-1}-\xbf^{k-1}}^{2}=\phietaenvk(\xbf^{k-1}),\label{eq:phi-tilde-1}\\
\phietaenvk(\hatx^{k}) & \le{\phi_\eta}({\xbf}^{\star}_{\eta})+\frac{\hat{\rho}}{2}\norm{{\xbf}^{\star}_{\eta}-\xbf^{k-1}}^{2},\label{eq:phi-tilde-2}
\end{align}
\vspace{-40pt}

\noindent and the upper bound 
\begin{equation}
\phietaenvk(\hatx^{k})\ge\min_{\xbf}\,\phi_\eta(\xbf)={\phi_\eta}({\xbf}^{\star}_{\eta}).\label{eq:phi-tilde-3}
\end{equation}
It follows that
\[
\begin{aligned} 
\phietaenvk(\xbf^{k-1})-\phietaenvk(\hatx^{k})
 & ={\phi_\eta}(\xbf^{k-1})-\phietaenvk(\hatx^{k})\\
 & \le\hat{\phi}_{k-1}(\xbf^{k-1})-\phietaenvk(\hatx^{k})\\
 & =\hat{\phi}_{k-1}(\hatx^{k-1})-\phietaenvk(\hatx^{k})+\hat{\phi}_{k-1}(\xbf^{k-1})-\hat{\phi}_{k-1}(\hatx^{k-1})
\end{aligned}
\]
for $k\ge 2$. 
Taking conditional expectation and applying \eqref{eq:sub-criteria-1}, we arrive at 
\[
\Expe_{k-1}\bsbra{\phietaenvk(\xbf^{k-1})-\phietaenvk(\hatx^{k})}\le\hat{\phi}_{k-1}(\hatx^{k-1})-\Ebb_{k-1}[\phietaenvk(\hatx^{k})]+\lambda\bsbra{\hat{\phi}_{k-1}(\xbf^{k-2})-\hat{\phi}_{k-1}(\hatx^{k-1})}+\zeta_{k-1}.
\]
Summing up the above bound for $k=2,3,\ldots, K+1$, and taking the expectation
over all the randomness,
\[
\sum_{k=2}^{K+1}\Expe\bsbra{\phietaenvk(\xbf^{k-1})-\phietaenvk(\hatx^{k})}\le\hat{\phi}_{1}(\hatx^{1})-\Expe[\hat{\phi}_{K+1}(\hatx^{K+1})]+\lambda\sum_{k=1}^{K}\Expe\bsbra{\phietaenvk(\xbf^{k-1})-\phietaenvk(\hatx^{k})}+\sum_{k=1}^{K}\Expe[\zeta_{k}].
\]
Subtracting $\lambda\sum_{k=2}^{K}\Expe\bsbra{\phietaenvk(\xbf^{k-1})-\phietaenvk(\hatx^{k})}$
on both sides, we have 
\begin{equation}\label{eq:opt-13}
\begin{aligned}
 & (1-\lambda)\sum_{k=2}^{K+1}\Expe\bsbra{\phietaenvk(\xbf^{k-1})-\phietaenvk(\hatx^{k})}  \\
 & \le\hat{\phi}_{1}(\hatx^{1})-\Expe\bsbra{\hat{\phi}_{K+1}(\hatx^{K+1})}+\lambda\Expe\bsbra{\hat{\phi}_{1}(\xbf^{0})-\hat{\phi}_{1}(\hatx^{1})}+\sum_{k=1}^{K}\Expe[\zeta_{k}] \\
 & \le\hat{\phi}_{1}(\hatx^{1})-{\phi}_\eta({\xbf}^{\star}_{\eta})+\lambda\bsbra{\hat{\phi}_{1}(\xbf^{0})-{\phi_\eta}({\xbf}^{\star}_{\eta})}+\sum_{k=1}^{K}\Expe[\zeta_{k}] \\
 & \le(1+\lambda)\bsbra{{\phi_\eta}(\xbf^{0})-{\phi_\eta}({\xbf}^{\star}_{\eta})}+\sum_{k=1}^{K}\Expe[\zeta_{k}]
\end{aligned}
\end{equation}
where the second inequality uses \eqref{eq:phi-tilde-3} and the last one uses the fact that $\hat{\phi}_{1}(\hatx^{1})\le\hat{\phi}_{1}(\xbf^{0})={\phi_\eta}(\xbf^{0})$.
Moreover, since $\phietaenvk$ is $(\hat{\rho}-\bar\rho)$-strongly convex, we have
\begin{equation}
\norm{\hatx^{k}-\xbf^{k-1}}^{2}\le\frac{2}{\hat{\rho}-\bar\rho}\bsbra{\phietaenvk(\xbf^{k-1})-\phietaenvk(\hatx^{k})}.\label{eq:opt-14}
\end{equation}
Putting \eqref{eq:opt-13} and \eqref{eq:opt-14} together and noticing $\norm{\nabla \phi_\eta^{\hat\rho}(\xbf^{k-1})}=\hat\rho\norm{\hatx^{k}-\xbf^{k-1}}$, we have
\[
\sum_{k=2}^{K+1}\Expe[\norm{\nabla \phi_\eta^{\hat\rho}(\xbf^{k-1})}^{2}] 
=\hat\rho^2\sum_{k=2}^{K+1}\Ebb[\norm{\hatx^{k}-\xbf^{k-1}}^{2}]
\le \frac{2\hat\rho^2}{\hat\rho-\bar\rho}\frac{(1+\lambda)\sbra{{\phi}_\eta(\xbf^{0})- {\phi_\eta}({\xbf}^{\star}_{\eta})}+\sum_{k=1}^{K}\Expe[\zeta_{k}]}{(1-\lambda)}.
\]
In view of the definition of smooth approximation, we have 
\[
{\phi_\eta}(\xbf^{0})-{\phi_\eta}({\xbf}^{\star}_{\eta})\le\phi(\xbf^{0})-{\phi_\eta}({\xbf}^{\star}_{\eta})\le\phi(\xbf^{0})-\phi(\xbf^\star)+R\eta.
\]
Combining the above two inequalities gives the desired result.
\end{proof}

Next, we consider solving the proximal subproblems in \eqref{pb:sa-smoothing}. We employ accelerated stochastic gradient descent~\citep[(4.2.5)-(4.2.7)]{lan2020first}. 
It is a stochastic variant of the accelerated gradient descent that obtains the optimal $\Ocal(L/K^2 + \sigma^2/\sqrt{K})$ convergence rate for stochastic and $L$-smooth convex optimization problems. When $\sigma=0$, the algorithm reduces to the optimal gradient method with Nesterov's acceleration.

We refer to \Cref{alg:Smoothed-ipp} with the accelerated stochastic gradient method as a subroutine for solving the proximal subproblems as the {\asgdsipp} method. Specifically, at each iteration $k$ of {\sipp}, the accelerated method is initialized at $\xbf^{k-1}$ and executed for $T_k$ iterations to obtain an approximate solution to the subproblem~\eqref{eq:pp-subprob}. 
Observe that in \eqref{eq:pp-subprob}, the objective function $f_\eta(\xbf) + \frac{\hat{\rho}}{2}\|\xbf-\xbf^{k-1}\|^2$ is $(L_\eta+\hat\rho)$-smooth and $(\hat\rho-\bar\rho)$-strongly convex.
According to~\citep[Prop. 4.6]{lan2020first}, the output $\xbf^k$ produced by \texttt{ASGD} satisfies the following inequality:
\begin{equation}
\begin{aligned}
\Ebb_k\bsbra{\phietaenvk(\xbf^{k})-\phietaenvk(\hatx^{k})} 
    & \le \frac{2(L_\eta+\hat\rho)}{T_k(T_k+1)} \norm{\xbf^{k-1}-\hatx^k}^2
+\frac{4 \sigma^2}{(\hat\rho-\bar\rho)(T_k+1)}\\
 & \le \frac{4(L_\eta+\hat\rho)}{(\hat\rho-\bar\rho)T_k^2} \bsbra{\phietaenvk(\xbf^{k-1})-\phietaenvk(\hatx^{k})} +\frac{4\sigma^2}{(\hat\rho-\bar\rho)T_k},
\end{aligned}
\label{eq:sub-criteria-acc-stoc}
\end{equation}
for solving the proximal point subproblem. In view of \eqref{eq:sub-criteria-acc-stoc} and \Cref{thm:sipp-rate}, we have the sample complexity bound for {\asgdsipp} as follows:
\begin{thm}[Complexity of {\asgdsipp}]
    Let $\hat\rho=2\bar\rho$, $\eta=\vep^{2}$, and define
    \[
    T_k=\max\Big\{2\sqrt{\frac{2(B+L/\eta+2\bar\rho)}{\bar\rho}},\ \frac{8\sigma^2}{\vep^2}\Big\}.
    \]
    Then, the total number of stochastic gradient computations required to obtain an $\vep$-approximate stationary point of problem~\eqref{pb:main} is bounded by
    \[
    \mathcal{O}\Big(
        \frac{\, \bar\rho\left[\phi(\xbf^0) - \phi(\xbf^\star) + R\eta\right]}{\vep^2}
        \cdot
        \max\Big\{
            \sqrt{\frac{B + L/\vep^2 + 2\bar\rho}{\bar\rho}},\ \frac{\sigma^2}{\vep^2}
        \Big\}
    \Big).
    \]
\end{thm}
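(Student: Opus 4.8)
The plan is to chain three ingredients that are already available: the outer-loop bound of \Cref{thm:sipp-rate}, the per-subproblem accuracy \eqref{eq:sub-criteria-acc-stoc} produced by running \texttt{ASGD} for $T_k$ iterations, and the conversion from a small Moreau-envelope gradient to approximate stationarity of \eqref{pb:main} supplied by \Cref{thm:smooth-criteria-convert}.

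First I would match \eqref{eq:sub-criteria-acc-stoc} against the inexactness template \eqref{eq:sub-criteria-1}. With $\hat\rho=2\bar\rho$ (so $\hat\rho-\bar\rho=\bar\rho$) and $L_\eta=B+L/\eta$, this reads off the relative error $\lambda=4(L_\eta+\hat\rho)/(\bar\rho T_k^{2})$ and the absolute error $\zeta_k=4\sigma^{2}/(\bar\rho T_k)$. The two branches of the $\max$ defining $T_k$ are tailored precisely so that the first branch forces $T_k^{2}\ge 8(L_\eta+\hat\rho)/\bar\rho$, hence $\lambda\le\tfrac12$, while the second forces $T_k\ge 8\sigma^{2}/(\bar\rho\vep^{2})$, hence $\zeta_k\le\tfrac{\vep^{2}}{2}$; verifying these two scalar inequalities is the only real computation in this step.

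Next I would substitute $\lambda\le\tfrac12$ and $\zeta_k\le\tfrac{\vep^{2}}{2}$ into \Cref{thm:sipp-rate}. Using $2\hat\rho^{2}/(\hat\rho-\bar\rho)=8\bar\rho$ and $(1+\lambda)/(1-\lambda)\le 3$, the estimate collapses to $\min_{1\le k\le K}\Expe[\norm{\nabla\phi_\eta^{\hat\rho}(\xbf^{k})}^{2}]\le 24\bar\rho[\phi(\xbf^{0})-\phi(\xbf^{\star})+R\eta]/K+\Ocal(\bar\rho\vep^{2})$, so choosing $K=\Ocal(\bar\rho[\phi(\xbf^{0})-\phi(\xbf^{\star})+R\eta]/\vep^{2})$ renders the whole right-hand side $\Ocal(\vep^{2})$; Jensen's inequality then produces an iterate with $\Expe[\norm{\nabla\phi_\eta^{\hat\rho}(\xbf^{k})}]=\Ocal(\vep)$. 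With $\eta=\vep^{2}$, so that $\Rcal_\eta\equiv R\vep^{2}$, I then invoke Part 2 of \Cref{thm:smooth-criteria-convert} with $\beta=\hat\rho=2\bar\rho$ to turn this into a $(\Ocal(\vep),\Ocal(\vep))$-stationary point of \eqref{pb:main}, i.e. an $\vep$-approximate stationary point. Finally, since $T_k$ does not depend on $k$, the total number of stochastic-gradient evaluations is $\sum_{k=1}^{K}T_k=K\cdot T_k$; substituting $\eta=\vep^{2}$ (so $L/\eta=L/\vep^{2}$), the value of $T_k$, and the above choice of $K$, and absorbing numerical constants into $\Ocal(\cdot)$, reproduces the claimed bound.

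The step I expect to require the most care is the conversion in \Cref{thm:smooth-criteria-convert}: one must verify that the approximation-error term $\sqrt{2\Rcal_\eta(\ybf_{\xbf})/(\hat\rho-\rho)}$ is indeed $\Ocal(\vep)$ under $\eta=\vep^{2}$, and that $\hat\rho=2\bar\rho$ genuinely exceeds the weak-convexity modulus $\rho$ of $f$, which holds since every smoothing operation constructed in \Cref{sec:smoothing-operations} yields $\bar\rho\ge\rho$. Everything else amounts to tracking constants.
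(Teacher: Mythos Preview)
Your proposal is correct and follows essentially the same route as the paper: verify that the choice of $T_k$ forces $\lambda\le\tfrac12$ and $\zeta_k\le\tfrac{\vep^2}{2}$ in \eqref{eq:sub-criteria-1}, plug into \Cref{thm:sipp-rate} with $2\hat\rho^2/(\hat\rho-\bar\rho)=8\bar\rho$, pick $K=\Ocal(\bar\rho[\phi(\xbf^0)-\phi(\xbf^\star)+R\eta]/\vep^2)$, and multiply $K\cdot T_k$. The paper's proof stops at $\min_k\Expe[\norm{\nabla\phi_\eta^{\hat\rho}(\xbf^k)}]\le 4\vep$ and leaves the conversion to stationarity of \eqref{pb:main} implicit, whereas you spell out the appeal to Part~2 of \Cref{thm:smooth-criteria-convert} and the check that $\hat\rho>\rho$; this extra care is appropriate and does not change the argument.
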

\begin{proof}
The construction of $T_k$ ensures that $\frac{4(L_\eta+\hat\rho)}{\bar\rho T_k^2}\le \frac{1}{2}=\lambda$ and  $\zeta_k=\frac{4\sigma^2}{\bar\rho T_k} \le \frac{\vep^2}{2\bar\rho}$. 
    Applying \Cref{thm:sipp-rate}, we have
\begin{equation}
\begin{aligned}
\min_{1\le k\le K}\Expe[\norm{\nabla{\phi}_\eta^{\hat\rho}(\xbf^{k})}^{2}] 
& \le\frac{24\bar\rho\sbra{\phi(\xbf^{0})-\phi({\xbf}^{\star})+R\eta}}{K} + 8 \vep^2. \\ 
\end{aligned}
\end{equation}
Therefore, it requires at most $K=\frac{3\bar\rho\sbra{\phi(\xbf^{0})-\phi({\xbf}^{\star})+R\eta}}{\vep^{2}}$ iterations of \sipp{} to have the error bound $\min_{1\le k\le K}\Expe [\norm{\nabla{\phi}_\eta^{\hat\rho}(\xbf^{k})}]\le 4\vep$.
The total number of stochastic gradient computations is bounded by:
\[
N=\sum_{k=1}^{K}T_{k}=\mathcal{O}\left(
\frac{\bar\rho\bsbra{\phi(\xbf^{0})-\phi({\xbf}^{\star})+R\eta}}{\vep^{2}} \cdot
\max\left\{\sqrt{\frac{B+L/\vep^2+2\bar\rho}{\bar\rho}}, \frac{\sigma^2}{\vep^2}\right\}\right) = \Ocal \Big(\max \Big\{\frac{1}{\varepsilon^3}, \frac{\sigma^2}{\varepsilon^4}\Big\}\Big).
\]
\end{proof}
\begin{rem}
    As an alternative to the smoothing-based approach, one may employ the proximal subgradient method~\citep{davis2019stochastic}, which achieves a sample complexity of 
    $   
    \mathcal{O}( {M^2}/{\vep^4} )$,
    where $M$ denotes the Lipschitz constant of $f$. This complexity bound is independent of the batch size. 
    Note that in the minibatch setting (with batch size $m$), the smoothing-based method achieves a sample complexity of
    $\mathcal{O}(\max\{ {1}/{\vep^3}, {\sigma^2}/(m\vep^4)\} )$.
    It replaces the dependence on $M^2$ with $\sigma^2$ and allows minibatching to achieve variance reduction. 
A prior work by \citet{deng2021minibatch} also considered minibatching for
weakly convex optimization; however, their minibatch prox-linear and proximal point
algorithms require solving increasingly complex proximal subproblems as batch size $m$ increases. In contrast, our
smoothing-based approach yields simpler proximal subproblems that can be applied to
each sample independently. \Cref{tab:complexity} summarizes the resulting sample complexity guarantees.

\begin{table}[h!]
\caption{Sample complexities of minibatch algorithms ($m$: batch size)}
\centering
\begin{tabular}{@{}lccc@{}}
\toprule
\textsf{Algorithm} & \textsf{Subgradient~\citep{davis2019stochastic}} & {\asgdsipp} & \textsf{Model-based~\citep{deng2021minibatch}} \\
\midrule
\textsf{Complexity} & 
$\mathcal{O}( \frac{M^2}{\varepsilon^4})$ &
$\mathcal{O}\big(\max\big\{\frac{1}{\varepsilon^3}, \frac{\sigma^2}{m\varepsilon^4}\big\}\big)$ &
$\mathcal{O}\big(\max\big\{\frac{1}{\varepsilon^2}, \frac{M^2}{m\varepsilon^4}\big\}\big)$ \\
\bottomrule
\end{tabular}
\end{table}
\end{rem}

\begin{rem}
Instead of using inexact proximal point, it is also feasible to adopt other algorithms that can exploit the asymmetry of the function's upper/lower curvature. For example, adopting the parameter-free algorithm from \cite{lan2023optimal} yields an
$\Ocal\big(\frac{\sqrt{L_\eta \bar{\rho} }}{\varepsilon^2}\big) = \Ocal(\frac{1}{\varepsilon^3})$ complexity without knowing $\bar{\rho}$.
\end{rem}

\section{Smoothing algorithms for generalized smooth problems} \label{sec:generalized-smooth}
Building on the intuitions from \Cref{sec:algorithms-for-smooth-approximation}, this section relaxes \Cref{assum:standard-SA} and considers the setting of generalized smooth approximations where $\Rcal$ and $\Lcal_\eta$ are not constants. 
We continue to utilize the inexact proximal point method framework, which iteratively solves a sequence of generalized smooth subproblems. Notably, while the subproblems remain convex, they no longer exhibit global Lipschitz smoothness. 
Therefore, a key step is to develop an accelerated algorithm tailored to these generalized smooth subproblems. For simplicity, we focus on deterministic optimization and leave the stochastic setting to the future work. 
In the remainder of this section, we introduce an accelerated gradient method with line search for convex generalized smooth optimization problems. This algorithm will be used as a subroutine within the inexact proximal point framework for minimizing generalized smooth approximation of non-Lipschitz weakly convex functions.

\subsection{An accelerated method for generalized smooth convex optimization}
We begin by presenting a variant of accelerated gradient descent tailored for convex generalized smooth optimization problems.
Consider the following convex program:
\[
\min_{\xbf}\, \psi(\xbf) \coloneqq g(\xbf) + \pi(\xbf),
\]
where $\pi$ is $\mu$-strongly convex (with $\mu \geq 0$), lower semi-continuous, and $g$ is convex and generalized smooth. Specifically, $g$ satisfies the condition:
\begin{equation}\label{eq:g(x)-gensmooth}
\norm{\nabla g(\ybf) - \nabla g(\xbf)} \leq \Lcal(\xbf, \ybf) \norm{\ybf - \xbf},
\end{equation}
where $\Lcal: \Rbb^d \times \Rbb^d \rightarrow (0, \infty)$ is a symmetric non-negative continuous map. 

In the classical accelerated gradient methods, achieving sufficient descent typically requires a global upper bound on the problem's curvature. However, this requirement is not satisfied when $\Lcal(\xbf, \ybf)$ varies with the decision variables. To overcome this issue, we apply a variant of the accelerated gradient method that employs an adaptive line search, as described in \Cref{alg:AGM-LS}. 
Additionally, we incorporate an early stopping criterion, enabling the algorithm to serve effectively as a subroutine within the proximal point framework for nonconvex optimization problems.

\begin{algorithm}[h!]
\KwIn{$\xbf^{0}$, $\mu\ge 0$, $L_{0}\in(0,+\infty)$,
$\tau_\mathrm{d}\in(0,1)$, $\tau_\mathrm{u}\in(1,+\infty)$;}

set $\ybf^{0}=\zbf^{0}=\xbf^{0}$, $\hat{L}_{0}=L_{0}$\;
\For{$t=1,2,\ldots,T$}{

\textbf{Line search:} Let $\bar{L}_{t}=\tau_\mathrm{d}\hat{L}_{t-1}$ and $k_{t}$ be the smallest
number such that $\xbf^{t}$, $\ybf^{t}$, $\zbf^{t}$ and $\hat{L}_{t}=\bar{L}_{t}\tau_\mathrm{u}^{k_{t}}$
satisfy: 
\begin{align}
\xbf^{t} & =(1-\alpha_{t})\ybf^{t-1}+\alpha_{t}\brbra{(1-\beta_{t})\ybf^{t-1}+\beta_{t}\zbf^{t-1}}, \label{eq:step-xbf} \\
\zbf^{t} & =\argmin_{\xbf} \big\{ \langle \nabla g(\xbf^{t}), \xbf \rangle+\pi(\xbf)+\frac{\gamma_{t}}{2}\norm{\xbf-\zbf^{t-1}}^{2} \big\}, \label{eq:step-prox} \\
\ybf^{t} & =(1-\alpha_{t})\ybf^{t-1}+\alpha_{t}\zbf^{t}, \nonumber 
\end{align}
and 
\begin{equation}
g(\ybf^{t})\le g(\xbf^{t})+\inner{\nabla g(\xbf^{t})}{\ybf^{t}-\xbf^{t}}+\frac{\hat{L}_{t}}{2}\norm{\ybf^{t}-\xbf^{t}}^{2}, \label{eq:ls-descent}
\end{equation}
where $\alpha_{t}$, $\beta_{t}$ and $\gamma_{t}$ satisfy \eqref{eq:step-1},
\eqref{eq:step-2}, and \eqref{eq:step-3}:
\begin{align}
\hat{L}_{t}(1-\beta_{t})\alpha_{t}^ {} & \le(1-\alpha_{t})\mu, & t \ge 1,\label{eq:step-1}\\
\hat{L}_{t}\beta_{t}\alpha_{t} & \le\gamma_{t}, &  t\ge 1,\label{eq:step-2}\\
\frac{\alpha_{t}}{1-\alpha_{t}}\gamma_{t} &  = \alpha_{t-1}(\gamma_{t-1}+\mu), \quad &  t\ge 2.\label{eq:step-3}
\end{align}
\textbf{Early stop (optional):}   \lIf{$\prod_{i=1}^t\brbra{1-\sqrt{\frac{\mu}{\hat{L}_i+\mu}}}\le \frac{1}{4}$}{\Return $\ybf^t$}
}
\KwOut{$\ybf^T$.}
\caption{Accelerated gradient method with line search (\agls)}\label{alg:AGM-LS}
\end{algorithm}

Our convergence analysis proceeds in three main steps. We first establish a general
convergence property under the assumption that the line search procedure terminates and that all iterates are well defined. We then use an induction argument to prove the boundedness of the iterates, which in turn guarantees the validity of the line search at
every iteration. Finally, we derive explicit convergence rates for both convex and strongly convex cases.

\begin{prop}\label{prop:convergence-bound}
Suppose $g$ is convex, generalized smooth, and that $\pi$ is $\mu$-strongly convex. If the line search in \Cref{alg:AGM-LS} succeeds for all $t=1,...,T$, then the following inequality holds:
\begin{equation}
\Gamma_{T}\Delta_{T}+\frac{\Gamma_{T}\alpha_{T}(\gamma_{T}+\mu)}{2}\norm{\zbf^{T} - {\xbf}^{\star}}^{2}\le(1-\alpha_{1})\Delta_{0}+\frac{\alpha_{1}\gamma_{1}}{2}\norm{\zbf^{0} - {\xbf}^{\star}}^{2},\label{eq:rec-bound-1}
\end{equation}
where $\ensuremath{\Gamma_{t}=\begin{cases}
(1-\alpha_{t})^{-1}\Gamma_{t-1} & t>1\\
1 & t=1
\end{cases}}$, $\Delta_t\coloneqq \psi(\ybf^t)-\psi(\xbf^\star)$ and $\xbf^\star$ is the optimal solution.
\end{prop}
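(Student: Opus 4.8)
The plan is to run the standard potential-telescoping (estimate-sequence style) argument for accelerated methods, adapted to the line-search setting of \Cref{alg:AGM-LS}. Fix an iteration $t$; since we assume the line search succeeded, \eqref{eq:ls-descent} holds with the accepted constant $\hat{L}_t$. The first stage aims at the one-step recursion
\[
\Delta_t + \tfrac{\alpha_t(\gamma_t+\mu)}{2}\norm{\zbf^t-\xbf^\star}^2 \;\le\; (1-\alpha_t)\Delta_{t-1} + \tfrac{\alpha_t\gamma_t}{2}\norm{\zbf^{t-1}-\xbf^\star}^2,
\]
with $\Delta_t$ the value gap at $\ybf^t$. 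I would start from \eqref{eq:ls-descent} for the pair $(\xbf^t,\ybf^t)$, use $\ybf^t=(1-\alpha_t)\ybf^{t-1}+\alpha_t\zbf^t$ to split $\inner{\nabla g(\xbf^t)}{\ybf^t-\xbf^t}$ into a $(1-\alpha_t)$-weighted part aimed at $\ybf^{t-1}$ and an $\alpha_t$-weighted part aimed at $\zbf^t$, and apply convexity of $g$ to each: the first gives $g(\xbf^t)+\inner{\nabla g(\xbf^t)}{\ybf^{t-1}-\xbf^t}\le g(\ybf^{t-1})$, and the second, after inserting $\xbf^\star$, gives $g(\xbf^t)+\inner{\nabla g(\xbf^t)}{\zbf^t-\xbf^t}\le g(\xbf^\star)+\inner{\nabla g(\xbf^t)}{\zbf^t-\xbf^\star}$.

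Next I would bound $\alpha_t\inner{\nabla g(\xbf^t)}{\zbf^t-\xbf^\star}$ by the three-point inequality for the subproblem \eqref{eq:step-prox}: since $\inner{\nabla g(\xbf^t)}{\cdot}+\pi$ is $\mu$-strongly convex and $\zbf^t$ minimizes it plus $\tfrac{\gamma_t}{2}\norm{\cdot-\zbf^{t-1}}^2$, one obtains $\inner{\nabla g(\xbf^t)}{\zbf^t-\xbf^\star}+\pi(\zbf^t)-\pi(\xbf^\star)\le \tfrac{\gamma_t}{2}\norm{\zbf^{t-1}-\xbf^\star}^2-\tfrac{\gamma_t+\mu}{2}\norm{\zbf^t-\xbf^\star}^2-\tfrac{\gamma_t}{2}\norm{\zbf^t-\zbf^{t-1}}^2$. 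Combining this (scaled by $\alpha_t$) with the $g$-bound and then adding $\pi(\ybf^t)$, bounded by the \emph{sharp} strong-convexity estimate along the convex combination $\pi(\ybf^t)\le(1-\alpha_t)\pi(\ybf^{t-1})+\alpha_t\pi(\zbf^t)-\tfrac{\mu\alpha_t(1-\alpha_t)}{2}\norm{\ybf^{t-1}-\zbf^t}^2$, collapses everything to the target recursion \emph{plus} a residual $E_t:=\tfrac{\hat{L}_t}{2}\norm{\ybf^t-\xbf^t}^2-\tfrac{\alpha_t\gamma_t}{2}\norm{\zbf^t-\zbf^{t-1}}^2-\tfrac{\mu\alpha_t(1-\alpha_t)}{2}\norm{\ybf^{t-1}-\zbf^t}^2$ that must be shown to be nonpositive.

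Showing $E_t\le 0$ is the crux. Writing $\wbf^t:=(1-\beta_t)\ybf^{t-1}+\beta_t\zbf^{t-1}$, \eqref{eq:step-xbf} gives $\ybf^t-\xbf^t=\alpha_t(\zbf^t-\wbf^t)$, and one checks the convex-combination identity $\zbf^t-\wbf^t=\beta_t(\zbf^t-\zbf^{t-1})+(1-\beta_t)(\zbf^t-\ybf^{t-1})$, so convexity of $\norm{\cdot}^2$ yields $\norm{\ybf^t-\xbf^t}^2\le\alpha_t^2\beta_t\norm{\zbf^t-\zbf^{t-1}}^2+\alpha_t^2(1-\beta_t)\norm{\zbf^t-\ybf^{t-1}}^2$. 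Plugging in, the coefficient of $\norm{\zbf^t-\zbf^{t-1}}^2$ in $E_t$ becomes $\tfrac{\alpha_t}{2}(\hat{L}_t\alpha_t\beta_t-\gamma_t)\le 0$ by \eqref{eq:step-2}, and the coefficient of $\norm{\zbf^t-\ybf^{t-1}}^2$ becomes $\tfrac{\alpha_t}{2}\bigl(\hat{L}_t\alpha_t(1-\beta_t)-\mu(1-\alpha_t)\bigr)\le 0$ by \eqref{eq:step-1}; this second cancellation is exactly where the strong-convexity bonus from $\pi$ is consumed, and getting this bookkeeping of the two residual square terms right is the step I expect to be the most delicate.

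Once the one-step recursion is in hand, I would multiply through by $\Gamma_t$: for $t\ge 2$ the identity $\Gamma_t(1-\alpha_t)=\Gamma_{t-1}$ telescopes the $\Delta$-terms, while \eqref{eq:step-3} — equivalently $\Gamma_t\alpha_t\gamma_t=\Gamma_{t-1}\alpha_{t-1}(\gamma_{t-1}+\mu)$ — telescopes the $\norm{\cdot-\xbf^\star}^2$-terms, yielding $\Gamma_T\Delta_T+\tfrac{\Gamma_T\alpha_T(\gamma_T+\mu)}{2}\norm{\zbf^T-\xbf^\star}^2\le\Delta_1+\tfrac{\alpha_1(\gamma_1+\mu)}{2}\norm{\zbf^1-\xbf^\star}^2$ (using $\Gamma_1=1$). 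Finally, the $t=1$ instance of the one-step recursion bounds the right-hand side by $(1-\alpha_1)\Delta_0+\tfrac{\alpha_1\gamma_1}{2}\norm{\zbf^0-\xbf^\star}^2$, which is \eqref{eq:rec-bound-1}. Throughout, the only facts used are the line-search success hypothesis (to invoke \eqref{eq:ls-descent}), convexity of $g$, $\mu$-strong convexity of $\pi$, and the three parameter relations \eqref{eq:step-1}–\eqref{eq:step-3} built into the algorithm.
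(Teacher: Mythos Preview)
Your proposal is correct and follows essentially the same route as the paper: both start from the line-search descent \eqref{eq:ls-descent}, split via $\ybf^t=(1-\alpha_t)\ybf^{t-1}+\alpha_t\zbf^t$, combine convexity of $g$ with the three-point inequality for \eqref{eq:step-prox} and the strong-convexity bound on $\pi(\ybf^t)$, then bound $\norm{\ybf^t-\xbf^t}^2$ by Jensen on the decomposition $\zbf^t-\wbf^t=\beta_t(\zbf^t-\zbf^{t-1})+(1-\beta_t)(\zbf^t-\ybf^{t-1})$ so that \eqref{eq:step-1}--\eqref{eq:step-2} absorb the residual, and finally telescope with $\Gamma_t$ via \eqref{eq:step-3}. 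The only cosmetic difference is that you insert $\xbf^\star$ into the $g$-linearization before applying the prox inequality, whereas the paper carries $\ell_g(\zbf^t,\xbf^t)$ through and bounds $\ell_g(\xbf^\star,\xbf^t)\le g(\xbf^\star)$ afterward; the algebra is identical.
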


\Cref{prop:convergence-bound} hinges on the successful termination of the line search at each step. The following proposition ensures this condition by demonstrating the boundedness of all the iterates, which in turn guarantees the success of the line search and allows us to bound the total line search complexity.
\begin{prop}
\label{prop:bound-all-iters}
Under the same conditions as \Cref{prop:convergence-bound}, all the iterates $\{\xbf^{t},\ybf^{t},\zbf^{t}\}_{0\le t\le T}$ generated by \Cref{alg:AGM-LS} fall in $\Bcal_{D^\star}({\xbf}^{\star})\coloneqq \{\xbf: \norm{\xbf-\xbf^\star}\le D^\star\}$, where $D^\star=\sqrt{\frac{2}{\alpha_{1}\gamma_{1}}(1-\alpha_{1})\Delta_{0}+\norm{{\xbf}^{\star}-\xbf^{0}}^{2}}$.
Let ${L}^{\star}= 2\, \sup\left\{ \Lcal(\xbf,\ybf):\xbf,\ybf\in\Bcal_{D^\star}({\xbf}^{\star})\right\} $, then
\eqref{eq:rec-bound-1} holds for all $t=1,\ldots,T$ and the total number of line search steps after $T$ iterations of \Cref{alg:AGM-LS}
is at most $\big\lceil\big(1+\frac{\log\tau_\mathrm{d}^{-1}}{\log\tau_\mathrm{u}}\big)T+\log\frac{\tau_{u}{L}^{\star}}{L_{0}} \big\rceil.$
\end{prop}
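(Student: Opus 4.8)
The plan is to prove all the assertions by a single induction on $t$ that simultaneously establishes three statements: (a) the line search at iteration $t$ terminates after finitely many doublings; (b) $\xbf^{t},\ybf^{t},\zbf^{t}\in\Bcal_{D^\star}(\xbf^\star)$; and (c) the recursion \eqref{eq:rec-bound-1} holds at index $t$. The base case $t=0$ is immediate because $\xbf^{0}=\ybf^{0}=\zbf^{0}$ and $\|\xbf^{0}-\xbf^\star\|\le D^\star$ by the definition of $D^\star$. A remark used throughout: since $\alpha_{t}\in(0,1)$ and $\beta_{t}\in[0,1]$, the updates \eqref{eq:step-xbf} and $\ybf^{t}=(1-\alpha_{t})\ybf^{t-1}+\alpha_{t}\zbf^{t}$ are convex combinations of already-generated points, so $\xbf^{t},\ybf^{t}\in\Bcal_{D^\star}(\xbf^\star)$ as soon as $\ybf^{t-1},\zbf^{t-1},\zbf^{t}$ do, because $\Bcal_{D^\star}(\xbf^\star)$ is convex.

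Assume (a)--(c) for all indices $<t$; in particular $\ybf^{t-1},\zbf^{t-1}\in\Bcal_{D^\star}(\xbf^\star)$. To show the line search at step $t$ terminates, note first that because $g$ is convex and $\Lcal$-generalized smooth, Lemma~\ref{lem:smoothness}(2) with weak-convexity modulus $0$ shows that \eqref{eq:ls-descent} is passed whenever $\hat{L}_{t}\ge 2\Lcal(\xbf^{t},\ybf^{t})$; hence it suffices to bound $\Lcal(\xbf^{t}(\hat{L}),\ybf^{t}(\hat{L}))$ uniformly over the trial values $\hat{L}\in\{\bar{L}_{t}\tau_\mathrm{u}^{k}\}_{k\ge 0}$. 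For any such $\hat{L}$, \eqref{eq:step-xbf} makes $\xbf^{t}(\hat{L})$ a convex combination of $\ybf^{t-1},\zbf^{t-1}$, so $\xbf^{t}(\hat{L})\in\Bcal_{D^\star}(\xbf^\star)$; and the step-size relations \eqref{eq:step-1}--\eqref{eq:step-3} give $\gamma_{t}=\alpha_{t-1}(\gamma_{t-1}+\mu)(1-\alpha_{t})/\alpha_{t}$ with $\alpha_{t}$ bounded away from $1$ (and $\alpha_{t}\to 0$, $\gamma_{t}\to\infty$ as $\hat{L}\to\infty$), so $\gamma_{t}$ stays bounded below by a positive constant. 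Since $\nabla g$ is continuous, hence bounded on the compact set $\Bcal_{D^\star}(\xbf^\star)\ni\xbf^{t}(\hat{L})$, the prox step \eqref{eq:step-prox} keeps $\zbf^{t}(\hat{L})$, and therefore $\ybf^{t}(\hat{L})$, in a fixed bounded neighborhood of $\zbf^{t-1}$, on which the continuous map $\Lcal$ is bounded. Thus \eqref{eq:ls-descent} is satisfied for all large enough $\hat{L}$ and the line search terminates.

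Once the line search at step $t$ has succeeded, the one-step inequality underlying Proposition~\ref{prop:convergence-bound} yields \eqref{eq:rec-bound-1} at index $t$; using $\Delta_{t}\ge 0$ this gives $\tfrac{\Gamma_{t}\alpha_{t}(\gamma_{t}+\mu)}{2}\|\zbf^{t}-\xbf^\star\|^{2}\le(1-\alpha_{1})\Delta_{0}+\tfrac{\alpha_{1}\gamma_{1}}{2}\|\xbf^{0}-\xbf^\star\|^{2}$. From $\Gamma_{s}=\Gamma_{s-1}/(1-\alpha_{s})$ and \eqref{eq:step-3} one gets $\Gamma_{s}\alpha_{s}\gamma_{s}=\Gamma_{s-1}\alpha_{s-1}(\gamma_{s-1}+\mu)$, so $s\mapsto\Gamma_{s}\alpha_{s}(\gamma_{s}+\mu)$ is nondecreasing and $\Gamma_{t}\alpha_{t}(\gamma_{t}+\mu)\ge\Gamma_{1}\alpha_{1}(\gamma_{1}+\mu)\ge\alpha_{1}\gamma_{1}$. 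Dividing through gives $\|\zbf^{t}-\xbf^\star\|^{2}\le\frac{2(1-\alpha_{1})\Delta_{0}}{\alpha_{1}\gamma_{1}}+\|\xbf^{0}-\xbf^\star\|^{2}=(D^\star)^{2}$, i.e.\ $\zbf^{t}\in\Bcal_{D^\star}(\xbf^\star)$, whence $\ybf^{t},\xbf^{t}\in\Bcal_{D^\star}(\xbf^\star)$ by the opening convexity remark. This closes the induction, proving the ball-membership claim and that \eqref{eq:rec-bound-1} holds for every $t\le T$. For the line-search complexity, at iteration $t$ the search starts from $\bar{L}_{t}=\tau_\mathrm{d}\hat{L}_{t-1}$ and multiplies by $\tau_\mathrm{u}$ exactly $k_{t}$ times, so $\hat{L}_{t}=\tau_\mathrm{d}\hat{L}_{t-1}\tau_\mathrm{u}^{k_{t}}$ and $k_{t}=\frac{\log(\hat{L}_{t}/\hat{L}_{t-1})}{\log\tau_\mathrm{u}}+\frac{\log\tau_\mathrm{d}^{-1}}{\log\tau_\mathrm{u}}$; the last rejected trial $\hat{L}_{t}/\tau_\mathrm{u}$ fails \eqref{eq:ls-descent} with iterates that again lie in $\Bcal_{D^\star}(\xbf^\star)$, so Lemma~\ref{lem:smoothness}(2) forces $\hat{L}_{t}/\tau_\mathrm{u}<2\sup\{\Lcal(\xbf,\ybf):\xbf,\ybf\in\Bcal_{D^\star}(\xbf^\star)\}=L^\star$, hence $\hat{L}_{t}<\tau_\mathrm{u}L^\star$ for all $t$ (the case $k_{t}=0$ following from $\hat{L}_{t}\le\tau_\mathrm{d}\hat{L}_{t-1}$ and induction, with $\hat{L}_{0}=L_{0}$). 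Summing over $t=1,\dots,T$, the terms $\log(\hat{L}_{t}/\hat{L}_{t-1})$ telescope to $\log(\hat{L}_{T}/L_{0})\le\log(\tau_\mathrm{u}L^\star/L_{0})$, giving the stated bound on $\sum_{t}k_{t}$.

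The main obstacle, and the reason the proof cannot be split into two independent lemmas, is the circular dependence between line-search validity and iterate boundedness: validity of \eqref{eq:ls-descent} needs a local bound on $\Lcal$, which needs the iterates inside $\Bcal_{D^\star}(\xbf^\star)$, which in turn is only delivered by the recursion \eqref{eq:rec-bound-1}, which itself presupposes the line search has already succeeded. The induction above is designed precisely to break this loop, and the most delicate point inside it is controlling \emph{every} trial point of the line search rather than only the accepted one — this is where the explicit form of the step-size relations \eqref{eq:step-1}--\eqref{eq:step-3} must be exploited to see that driving $\hat{L}$ up forces $\alpha_{t}\to 0$ and $\gamma_{t}\to\infty$, so that the prox step contracts toward $\zbf^{t-1}$ and the trial iterates stay in a fixed compact set.
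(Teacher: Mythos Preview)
Your overall strategy — a single strong induction that simultaneously establishes (a) line-search termination, (b) membership of the accepted iterates in $\Bcal_{D^\star}(\xbf^\star)$, and (c) the recursion \eqref{eq:rec-bound-1} — is exactly the paper's approach. Your derivation of $\|\zbf^t-\xbf^\star\|\le D^\star$ from \eqref{eq:rec-bound-1} via the monotonicity of $\Gamma_t\alpha_t(\gamma_t+\mu)$ matches the paper verbatim, and the convexity observation that $\xbf^t,\ybf^t$ inherit the ball-membership is the same. The paper packages step (a) into a separate technical lemma (\Cref{lem:line-search-valid}), which derives an explicit bound $\|\ybf^t(\hat L)-\xbf^\star\|\le(1+2\bar L/\hat L)D^\star$ by combining the optimality conditions of $\zbf^t$ and of $\xbf^\star$; you instead argue qualitatively via compactness, which is acceptable as a plan.

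There is, however, a genuine gap in your line-search \emph{complexity} argument. You claim that ``the last rejected trial $\hat L_t/\tau_\mathrm{u}$ fails \eqref{eq:ls-descent} with iterates that again lie in $\Bcal_{D^\star}(\xbf^\star)$,'' and from this deduce $\hat L_t/\tau_\mathrm{u}<L^\star$. But only the \emph{accepted} iterates are ever placed inside $\Bcal_{D^\star}$: that containment comes from \eqref{eq:rec-bound-1}, which in turn relies on \eqref{eq:ls-descent} holding, and by definition \eqref{eq:ls-descent} fails at a rejected trial. Your own termination argument only puts the trial points $\zbf^t(\hat L),\ybf^t(\hat L)$ in ``a fixed bounded neighborhood of $\zbf^{t-1}$,'' which is generically strictly larger than $\Bcal_{D^\star}$; so the inference $\hat L_t/\tau_\mathrm{u}<L^\star$ is not justified. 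What is actually needed is a quantitative control of the \emph{trial} iterates — this is precisely what the paper's \Cref{lem:line-search-valid} supplies, showing that once $\hat L$ is large the trial $\ybf^t(\hat L)$ lies in a ball whose radius shrinks toward $D^\star$, which then caps $\hat L_t$. A secondary minor point: your use of \eqref{eq:step-3} to get $\gamma_t$ bounded below applies only for $t\ge 2$, so the $t=1$ case of line-search termination needs to be argued separately (as the paper implicitly does via the concrete parameter rule $\gamma_t=(\hat L_t+\mu)\alpha_t-\mu$).
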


In order to prove \Cref{prop:bound-all-iters}, we first show a technical lemma which establishes the well-definedness of the line search conditioned on the success of the previous steps. Then using the induction principle, we shall prove solution boundedness and the success of the line search in all the iterations.
\begin{lem}
\label{lem:line-search-valid}Suppose that \Cref{alg:AGM-LS} generates $\xbf^{s},\ybf^{s},\zbf^{s}\in\Bcal_{D}(\xbf^{\star})$, where
$s=1,2,\ldots,t-1$, for some $D>0$. Then at the $t$-th iteration,
the line search is well-defined and terminates in a finite number
of steps. 
\end{lem}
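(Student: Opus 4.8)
The plan is to prove the two halves of the statement separately: (a) for every trial value of the line-search constant $\hat L_t=\bar L_t\tau_{\mathrm u}^{k}$ the points $\xbf^t,\ybf^t,\zbf^t$ generated by the body of \Cref{alg:AGM-LS} are well defined, and (b) the descent test \eqref{eq:ls-descent} is satisfied as soon as $\hat L_t$ exceeds a finite threshold that does not depend on $\hat L_t$, so the inner loop over $k_t$ stops after finitely many trials.

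For (a) I would fix a trial value $\hat L>0$ and first check that the coupled relations \eqref{eq:step-1}--\eqref{eq:step-3} admit an admissible triple $(\alpha_t,\beta_t,\gamma_t)$ with $\alpha_t\in(0,1]$, $\beta_t\in[0,1]$, $\gamma_t>0$. Using \eqref{eq:step-3} to express $\gamma_t$ as a function of $\alpha_t$ (with $\alpha_1=1$, $\gamma_1=\hat L$ for the base case $t=1$) and combining \eqref{eq:step-1} with \eqref{eq:step-2}, the existence of a feasible $\beta_t\in[0,1]$ reduces to a single scalar inequality in $\alpha_t$, which is met by the root in $(0,1)$ of an explicit quadratic that is negative at $\alpha_t=0$ and equal to $\hat L>0$ at $\alpha_t=1$; a short monotonicity check shows this root is nonincreasing in $\hat L$, hence $\gamma_t$ is nondecreasing in $\hat L$ (a fact used below). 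Given such parameters, $\xbf^t$ from \eqref{eq:step-xbf} is the convex combination $(1-\alpha_t\beta_t)\ybf^{t-1}+\alpha_t\beta_t\zbf^{t-1}$; the subproblem \eqref{eq:step-prox} has a unique minimizer $\zbf^t$ because its objective is proper, lsc and $(\mu+\gamma_t)$-strongly convex; and $\ybf^t$ is then defined. Thus all iterates exist for every trial $\hat L$.

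For (b), the main obstacle is that the test \eqref{eq:ls-descent} compares $g(\ybf^t)$ with a quadratic model at $\xbf^t$, while both $\ybf^t$ and the relevant curvature $\Lcal(\xbf^t,\ybf^t)$ vary with $\hat L$; one must therefore confine $\xbf^t,\ybf^t$ to a \emph{fixed} bounded set, independent of $\hat L$, before the generalized-smoothness descent bound can be turned into a finite threshold. Here the hypothesis $\ybf^{t-1},\zbf^{t-1}\in\Bcal_D(\xbf^\star)$ is essential: the convex-combination identity gives $\xbf^t\in\Bcal_D(\xbf^\star)$ for every $\hat L$, so $\norm{\nabla g(\xbf^t)}\le G:=\sup\{\norm{\nabla g(\xbf)}:\xbf\in\Bcal_D(\xbf^\star)\}<\infty$. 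To bound $\zbf^t$ I would compare the value of the strongly convex subproblem \eqref{eq:step-prox} at $\zbf^t$ and at $\zbf^{t-1}$ and invoke a subgradient $w\in\partial\pi(\zbf^{t-1})$ (nonempty, since $\zbf^{t-1}$ is itself produced by a prox step whose optimality condition supplies one; for $t=1$ one uses $\xbf^0\in\dom\pi$), obtaining $\norm{\zbf^t-\zbf^{t-1}}\le 2(G+\norm{w})/\gamma_t\le 2(G+\norm{w})/\gamma_t^{\min}$, where $\gamma_t^{\min}>0$ is $\gamma_t$ evaluated at the smallest trial $\hat L=\bar L_t$; this bound is uniform in $\hat L$. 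Consequently $\zbf^t$, the convex combination $\ybf^t$, and the segment $[\xbf^t,\ybf^t]$ all lie in a fixed ball $\Bcal_{D'}(\xbf^\star)$ with $D'$ independent of $\hat L$, on which $\Lcal^\sharp:=\sup\{\Lcal(\xbf,\ybf):\xbf,\ybf\in\Bcal_{D'}(\xbf^\star)\}<\infty$ by continuity of $\Lcal$ and compactness. Applying the second part of \Cref{lem:smoothness} with weak-convexity modulus $0$ (valid since $g$ is convex) yields $g(\ybf^t)-g(\xbf^t)-\inner{\nabla g(\xbf^t)}{\ybf^t-\xbf^t}\le \Lcal^\sharp\norm{\ybf^t-\xbf^t}^2$, so \eqref{eq:ls-descent} holds whenever $\hat L_t\ge 2\Lcal^\sharp$. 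Since each unsuccessful trial multiplies $\hat L_t$ by $\tau_{\mathrm u}>1$, this threshold is reached after at most $\lceil\log_{\tau_{\mathrm u}}(2\Lcal^\sharp/\bar L_t)\rceil$ trials (none if it already holds), which would show that the line search is well defined and terminates.
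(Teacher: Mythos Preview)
Your route diverges from the paper's in how you confine $\zbf^t$. You bound $\|\zbf^t-\zbf^{t-1}\|$ by comparing the prox objective at $\zbf^t$ and $\zbf^{t-1}$ via a subgradient $w\in\partial\pi(\zbf^{t-1})$, then lower-bound $\gamma_t$ by its value $\gamma_t^{\min}$ at the first trial $\hat L=\bar L_t$, producing a radius $D'$ independent of $\hat L$. The paper instead compares $\zbf^t$ with $\xbf^\star$: summing the three-point inequality for \eqref{eq:step-prox} with the optimality relation $\langle\nabla g(\xbf^\star),\xbf^\star-\zbf^t\rangle+\pi(\xbf^\star)-\pi(\zbf^t)\le 0$ yields $\|\zbf^t-\xbf^\star\|\le\bigl(2\bar L/(\gamma_t+\mu)+1\bigr)D$ with $\bar L=\sup\{\Lcal(\xbf,\ybf):\xbf,\ybf\in\Bcal_D(\xbf^\star)\}$; since $\gamma_t+\mu=(\hat L_t+\mu)\alpha_t$ one obtains $\|\ybf^t-\xbf^\star\|\le(1+2\bar L/\hat L_t)D$, a bound that \emph{shrinks} as $\hat L_t$ grows. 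Hence the paper can take its termination threshold over the fixed ball $\Bcal_{2D}(\xbf^\star)$, whereas your $D'$ depends on $\gamma_t^{\min}$ and $\|w\|$ and is in general much larger.

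Your argument has a gap at the base case $t=1$. You justify $\partial\pi(\zbf^0)\neq\emptyset$ by ``$\xbf^0\in\dom\pi$'', but for a proper lsc convex $\pi$ the subdifferential can be empty on the relative boundary of $\dom\pi$; for instance, with $\pi(x)=1-\sqrt{1-x^2}$ on $[-1,1]$ (and $+\infty$ outside) one has $\partial\pi(1)=\emptyset$. Without such a $w$ your bound on $\|\zbf^1-\zbf^0\|$ never gets off the ground. The paper's comparison with $\xbf^\star$ sidesteps this entirely, since optimality of $\xbf^\star$ always furnishes $-\nabla g(\xbf^\star)\in\partial\pi(\xbf^\star)$. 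A simple patch is to handle $t=1$ with this $\xbf^\star$-based estimate, or to add the mild hypothesis $\xbf^0\in\relint(\dom\pi)$; but as written the step is not justified.
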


\begin{proof}[Proof of \Cref{prop:bound-all-iters}]
Applying \Cref{lem:line-search-valid}, we have that $\ybf^{1},\zbf^{1}$
are well-defined and $\gamma_{1}$ has a finite value.  Let us prove the result by strong induction. First, it is clear that $\xbf^{0},\ybf^{0},\zbf^{0}\in\Bcal_{D^\star}(\xbf^{\star})$.
Now suppose we have $\xbf^{t},\ybf^{t},\zbf^{t}\in\Bcal_{D^\star}(\xbf^{\star})$,
for $t\le T-1$, then applying \Cref{lem:line-search-valid},
we know that the line search to find $\ybf^{T}$ is successful. Using \Cref{prop:convergence-bound} and non-negativity of $\Delta_{T}$,
we have 
\[
\frac{\Gamma_{T}\alpha_{T}(\gamma_{T}+\mu)}{2}\norm{\zbf^{T} - \xbf^{\star}}^{2}\le(1-\alpha_{1})\Delta_{0}+\frac{\alpha_{1}\gamma_{1}}{2}\norm{\zbf^{0} - \xbf^{\star}}^{2}.
\]
Note that by \eqref{eq:step-3}, we have $\Gamma_{t}\alpha_{t}\gamma_{t}\le\Gamma_{t+1}\alpha_{t+1}\gamma_{t+1}$,
which implies $\Gamma_{T}\alpha_{T}(\gamma_{T}+\mu)\ge\Gamma_{1}\alpha_{1}\gamma_{1}=\alpha_{1}\gamma_{1}$.
It follows that 
\[
\norm{\zbf^{T}-\xbf^{\star}}^{2}\le\frac{2(1-\alpha_{1})}{\alpha_{1}\gamma_{1}}\Delta_{0}+\norm{\zbf^{0} - \xbf^{\star}}^{2}\le(D^\star)^{2}.
\]
Since $\ybf^{T}$ is a convex combination of $\ybf^{T-1}$ and $\zbf^{T}$,
we obtain $\ybf^{T}\in\Bcal_{D^{\star}}(\xbf^{\star})$. 

In view of the line search procedure, we have $k_{t}\leq\frac{1}{\log\tau_\mathrm{u}}\left(\log\frac{\hat{L}_{t}}{\hat{L}_{t-1}}+\log\tau_\mathrm{d}^{-1}\right)$,
where $\hat{L}_{t}\le\tau_\mathrm{u} {L}^\star$. The total number of line searches
after $T$ iterations is 
\[
N_{T}=\sum_{t=1}^{T}(k_{t}+1)\le\sum_{t=1}^{T}\left(1+\frac{\log\tau_\mathrm{d}^{-1}}{\log\tau_\mathrm{u}}\right)+\log\frac{\hat{L}_{T}}{L_{0}}\le\left(1+\frac{\log\tau_\mathrm{d}^{-1}}{\log\tau_\mathrm{u}}\right)T+\log\frac{\tau_{u}{L^\star}}{L_{0}}.
\]
\end{proof}

With \Cref{prop:bound-all-iters}, we establish specific convergence rates of {\agls} under different convexity assumptions. It is easy to check that the following rules enforce the conditions \eqref{eq:step-1}-\eqref{eq:step-3}:
\begin{equation}\label{eq:choose-gamma-beta}
\gamma_{t}=(\hat{L}_{t}+\mu)\alpha_{t}-\mu,\ \beta_{t}=\frac{\gamma_{t}}{\hat{L}_{t}\alpha_{t}},
\end{equation}
where $\alpha_1\in[0,1]$ and $\alpha_{t}$ ($t\ge 2$) is the solution of 
\begin{equation}
\brbra{\hat{L}_{t}+\mu}\alpha_{t}^{2}+(b_{t-1}-\mu)\alpha_{t}-b_{t-1}=0,  \text{ where } b_{t}=\alpha_{t}^{2}(\hat{L}_{t}+\mu). \label{eq:root-alphat}
\end{equation}
\Cref{thm:rate-nag-convex} establishes the convergence rate of \Cref{alg:AGM-LS} when $\mu=0$.
\begin{thm}\label{thm:rate-nag-convex}
Under the same conditions as \Cref{prop:convergence-bound}, assume $\mu=0$, set $\alpha_{1}=1$, and choose the rest of the parameters according to \eqref{eq:choose-gamma-beta}. 
Then all the iterates of \Cref{alg:AGM-LS} remain in $\Bcal_{D_1^\star}(\xbf^*)$, where $D_1^\star\assign\norm{\xbf^0-\xbf^\star}$.
Moreover, the convergence rate is given by
\begin{equation}
\psi(\ybf^{T})-\psi({\xbf}^{\star})\le\frac{2\tau_\mathrm{u}L_{1}^{\star}}{(T+1)^{2}}\norm{{\xbf}^{\star}-\xbf^{0}}^{2},\label{eq:rate-convex}
\end{equation}
where $L_{1}^{\star}=\sup\left\{ 2\Lcal(\xbf,\ybf):\xbf,\ybf\in \Bcal_{D_1^\star}(\xbf^\star)\right\} $. 
\end{thm}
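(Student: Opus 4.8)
The plan is to specialize \Cref{prop:convergence-bound} and \Cref{prop:bound-all-iters} to the regime $\mu=0$, $\alpha_{1}=1$, and then extract the $\Ocal(1/T^{2})$ rate by proving a lower bound on the aggregated weight $\Gamma_{T}$. First I would record what the choices $\mu=0,\ \alpha_{1}=1$ force for the auxiliary sequences: plugging $\mu=0$ into \eqref{eq:choose-gamma-beta} gives $\gamma_{t}=\hat{L}_{t}\alpha_{t}$ and $\beta_{t}=1$, so \eqref{eq:step-1} holds trivially and \eqref{eq:step-xbf} collapses to $\xbf^{t}=(1-\alpha_{t})\ybf^{t-1}+\alpha_{t}\zbf^{t-1}$; in particular $\gamma_{1}=\hat{L}_{1}$ and $\xbf^{1}=\xbf^{0}$. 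Feeding $\alpha_{1}=1$ into the radius $D^{\star}$ of \Cref{prop:bound-all-iters} makes the term $\tfrac{2}{\alpha_{1}\gamma_{1}}(1-\alpha_{1})\Delta_{0}$ vanish, so $D^{\star}=\norm{\xbf^{0}-\xbf^{\star}}=D_{1}^{\star}$; hence \Cref{prop:bound-all-iters} guarantees that all iterates lie in $\Bcal_{D_{1}^{\star}}(\xbf^{\star})$, that the line search succeeds at every step, and that $\hat{L}_{t}\le\tau_\mathrm{u}L^{\star}=\tau_\mathrm{u}L_{1}^{\star}$ for all $t$. This settles the boundedness claim and pins down the constant $L_{1}^{\star}$ with no circularity, since $D_{1}^{\star}$ is fixed the moment $\alpha_{1}=1$.

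Next I would specialize the recursion \eqref{eq:rec-bound-1}: with $\alpha_{1}=1$ the term $(1-\alpha_{1})\Delta_{0}$ disappears, leaving $\Gamma_{T}\Delta_{T}\le\tfrac{\alpha_{1}\gamma_{1}}{2}\norm{\zbf^{0}-\xbf^{\star}}^{2}=\tfrac{\hat{L}_{1}}{2}\norm{\xbf^{0}-\xbf^{\star}}^{2}$, where $\Delta_{T}$ is the suboptimality gap of the returned iterate $\ybf^{T}$ (the point on which the descent inequality \eqref{eq:ls-descent} acts). It then remains to lower-bound $\Gamma_{T}$. Using \eqref{eq:step-3} with $\mu=0$, namely $\tfrac{\alpha_{t}}{1-\alpha_{t}}\gamma_{t}=\alpha_{t-1}\gamma_{t-1}$, together with $\Gamma_{t}=(1-\alpha_{t})^{-1}\Gamma_{t-1}$, I would show that $\Gamma_{t}\alpha_{t}\gamma_{t}$ is independent of $t$ and equals $\Gamma_{1}\alpha_{1}\gamma_{1}=\hat{L}_{1}$; since $\gamma_{t}=\hat{L}_{t}\alpha_{t}$ this yields $\alpha_{t}=\sqrt{\hat{L}_{1}/(\Gamma_{t}\hat{L}_{t})}$. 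Combining with $\Gamma_{t}-\Gamma_{t-1}=\alpha_{t}\Gamma_{t}=\sqrt{\Gamma_{t}\hat{L}_{1}/\hat{L}_{t}}$ and dividing by $\sqrt{\Gamma_{t}}+\sqrt{\Gamma_{t-1}}\le 2\sqrt{\Gamma_{t}}$ gives the telescoping bound $\sqrt{\Gamma_{t}}-\sqrt{\Gamma_{t-1}}\ge\tfrac12\sqrt{\hat{L}_{1}/\hat{L}_{t}}\ge\tfrac12\sqrt{\hat{L}_{1}/(\tau_\mathrm{u}L_{1}^{\star})}$.

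Finally I would telescope from $t=2$ to $T$ and use $\sqrt{\Gamma_{1}}=1\ge\sqrt{\hat{L}_{1}/(\tau_\mathrm{u}L_{1}^{\star})}$ (valid because $\hat{L}_{1}\le\tau_\mathrm{u}L_{1}^{\star}$) to get $\sqrt{\Gamma_{T}}\ge\tfrac{T+1}{2}\sqrt{\hat{L}_{1}/(\tau_\mathrm{u}L_{1}^{\star})}$, hence $\Gamma_{T}\ge\tfrac{(T+1)^{2}\hat{L}_{1}}{4\tau_\mathrm{u}L_{1}^{\star}}$. Substituting this into $\Delta_{T}\le\tfrac{\hat{L}_{1}}{2\Gamma_{T}}\norm{\xbf^{0}-\xbf^{\star}}^{2}$ cancels the unknown $\hat{L}_{1}$ and produces exactly \eqref{eq:rate-convex}. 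The main obstacle is the bookkeeping in the middle step: verifying that $\Gamma_{t}\alpha_{t}\gamma_{t}$ is a conserved quantity under the implicitly defined updates \eqref{eq:choose-gamma-beta}--\eqref{eq:root-alphat} (in particular that $\alpha_{t}\in(0,1)$, so that $\Gamma_{t}$ is increasing and the $\sqrt{\cdot}$-telescope is legitimate), and a secondary point requiring care is confirming that the $L_{1}^{\star}$ inherited from \Cref{prop:bound-all-iters} is precisely the one in the statement, which hinges on the vanishing of the $(1-\alpha_{1})$ term when $\alpha_{1}=1$.
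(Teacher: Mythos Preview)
Your proposal is correct and follows the same overall structure as the paper: invoke \Cref{prop:bound-all-iters} with $\alpha_1=1$ to get $D^\star=D_1^\star$ and the uniform bound $\hat L_t\le\tau_\mathrm{u}L_1^\star$, then specialize \Cref{prop:convergence-bound} to obtain $\Gamma_T\Delta_T\le\tfrac{\hat L_1}{2}\norm{\xbf^0-\xbf^\star}^2$, and finally lower-bound $\Gamma_T$.

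The only place your argument differs from the paper is in the derivation of the $\Gamma_T$ lower bound. The paper first derives the recursion $\hat L_t\alpha_t^2=\hat L_{t-1}\alpha_{t-1}^2(1-\alpha_t)$, solves for $\alpha_t$ explicitly, and shows by induction that $\hat L_t\alpha_t^2\le 4\tilde L_t/(t+1)^2$ with $\tilde L_t=\max_{s\le t}\hat L_s$; since $\Gamma_t\hat L_t\alpha_t^2=\hat L_1$, this immediately yields $\Gamma_t\ge\hat L_1(t+1)^2/(4\tilde L_t)$. You instead exploit the conserved quantity $\Gamma_t\alpha_t\gamma_t\equiv\hat L_1$ directly and telescope $\sqrt{\Gamma_t}-\sqrt{\Gamma_{t-1}}\ge\tfrac12\sqrt{\hat L_1/\hat L_t}$. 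Both are standard devices for accelerated methods and produce the same final bound; your square-root telescope is arguably cleaner, while the paper's induction keeps the running maximum $\tilde L_t$ rather than immediately coarsening each $\hat L_t$ to $\tau_\mathrm{u}L_1^\star$, which gives a marginally sharper intermediate estimate but the same endpoint.
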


\begin{proof}
First, the boundedness of $\{\xbf^{t},\ybf^{t},\zbf^{t}\}$ follows
from \Cref{prop:bound-all-iters}. In view of the parameter
selection, we have the relation $\hat{L}_{t}\alpha_{t}^{2}=\hat{L}_{t-1}\alpha_{t-1}^{2}(1-\alpha_{t})$,
and hence 
$\alpha_{t}=\frac{2}{1+\sqrt{1+\frac{4\hat{L}_{t}}{\hat{L}_{t-1}\alpha_{t-1}^{2}}}}$.

Define $\tilde{L}_{t}=\max\{\hat{L}_{1},\hat{L}_{2},\ldots,\hat{L}_{t}\}$.
We use induction to show $\hat{L}_{t}\alpha_{t}^{2}\le\frac{4\tilde{L}_{t}}{(1+t)^{2}}$.
The $t=1$ is clear from the initialization. Next, we assume this result holds for
the $(t-1)$-th iteration, namely, $\hat{L}_{t-1}\alpha_{t-1}^{2}\le\frac{4\tilde{L}_{t-1}}{t^{2}}.$
We have 
\[
\sqrt{\hat{L}_{t}}\alpha_{t}=\frac{2\sqrt{\hat{L}_{t}}}{1+\sqrt{1+\frac{4\hat{L}_{t}}{\hat{L}_{t-1}\alpha_{t-1}^{2}}}}\le\frac{2\sqrt{\hat{L}_{t}}}{1+\sqrt{1+\frac{4\hat{L}_{t}}{4\tilde{L}_{t-1}}t^{2}}}\le\frac{2\sqrt{\hat{L}_{t}}}{1+\sqrt{\frac{\hat{L}_{t}}{\tilde{L}_{t-1}}}\cdot t}\le\frac{2\sqrt{\hat{L}_{t}}}{1+\sqrt{\frac{\hat{L}_{t}}{\tilde{L}_{t}}}\cdot t}\le\frac{2\sqrt{\tilde{L}_{t}}}{1+t}.
\]
As a result,
\[
\Gamma_{t}=\frac{\Gamma_{1}\alpha_{1}\gamma_{1}}{\hat{L}_{t}\alpha_{t}^{2}}\ge\frac{\gamma_{1}}{4\tilde{L}_{t}}(t+1)^{2}.
\]
Applying \Cref{prop:convergence-bound}, we have the first
inequality in \eqref{eq:rate-convex}. Finally, since all the iterates
are in $\Bcal_{D_1^\star}(\xbf^\star)$, we have $\tau_\mathrm{d}\hat{L}_{t-1}\le\hat{L}_{t}\le\tau_\mathrm{u}L_{1}^{\star}$,
which implies $\tilde{L}_{t}\le\tau_\mathrm{u}L_{1}^{\star}$, for any $t>0$. Taking $t = T$ completes he proof.
\end{proof}

We now present the convergence rate for the case when the objective function is strongly convex ($\mu > 0$).
\begin{thm}\label{thm:strongly-convex}
Under the same conditions as \Cref{prop:convergence-bound}, assume $\mu > 0$, set $\alpha_{1}=\sqrt{\frac{\mu}{\hat{L}_{1}+\mu}}$, and choose the rest of the parameters according to \eqref{eq:choose-gamma-beta}. 
Then all the iterates of \Cref{alg:AGM-LS} remain in $\Bcal_{D_2^\star}(\xbf^\star)$, where $D_2^\star\assign\sqrt{\frac{2}{\mu}\left[\psi(\xbf^{0})-\psi({\xbf}^{\star})\right]+\norm{\xbf^{0}-{\xbf}^{\star}}^{2}}$. Moreover, the convergence rate is given by
\begin{equation}
\begin{aligned}
\psi(\ybf^{T})-\psi({\xbf}^{\star})+\frac{\mu}{2}\norm{\zbf^{T} - {\xbf}^{\star}}^{2}
& \le \prod_{1\le t\le T}\left(1-\sqrt{\frac{\mu}{\hat{L}_{t}+\mu}}\right)\left[\psi(\xbf^{0})-\psi({\xbf}^{\star})+\frac{\mu}{2}\norm{\xbf^{0} - {\xbf}^{\star}}^{2}\right]\\
& \le 2 \exp \left(-\sqrt{\frac{\mu}{\tau_u{L}_2^\star+\mu}} T\right)\left[\psi(\xbf^{0})-\psi({\xbf}^{\star})\right],
\end{aligned} \label{eq:rate-strongly-convex}
\end{equation}
where $L_{2}^{\star}=\sup\left\{ 2\Lcal(\xbf, \ybf ):\xbf\in \Bcal_{D_2^\star}(\xbf^\star)\right\} $.
\end{thm}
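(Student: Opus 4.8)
The plan is to reduce everything to the two structural results already in hand — the master recursion \eqref{eq:rec-bound-1} of \Cref{prop:convergence-bound} and the boundedness/line-search control of \Cref{prop:bound-all-iters} — after specializing the stepsize rules to the strongly convex regime. \textbf{Step 1 (closed form for the stepsizes).} I would first show by induction on $t$ that, with $\alpha_{1}=\sqrt{\mu/(\hat{L}_{1}+\mu)}$ and the rules \eqref{eq:choose-gamma-beta}--\eqref{eq:root-alphat}, one has $b_{t}\equiv\mu$ and hence $\alpha_{t}=\sqrt{\mu/(\hat{L}_{t}+\mu)}$ for all $t\ge1$: the base case gives $b_{1}=\alpha_{1}^{2}(\hat{L}_{1}+\mu)=\mu$, and if $b_{t-1}=\mu$ then \eqref{eq:root-alphat} degenerates to $(\hat{L}_{t}+\mu)\alpha_{t}^{2}-\mu=0$, whose positive root is $\sqrt{\mu/(\hat{L}_{t}+\mu)}$, so $b_{t}=\mu$ again; conditions \eqref{eq:step-1}--\eqref{eq:step-3} then hold by construction. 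I record two identities for later use: $\gamma_{t}+\mu=(\hat{L}_{t}+\mu)\alpha_{t}$, hence $\alpha_{t}(\gamma_{t}+\mu)=(\hat{L}_{t}+\mu)\alpha_{t}^{2}=\mu$; and $\alpha_{1}\gamma_{1}=(\hat{L}_{1}+\mu)\alpha_{1}^{2}-\mu\alpha_{1}=\mu(1-\alpha_{1})$.

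\textbf{Step 2 (boundedness and validity of the line search).} Substituting $\alpha_{1}\gamma_{1}=\mu(1-\alpha_{1})$ into the radius $D^{\star}$ of \Cref{prop:bound-all-iters} gives $\tfrac{2}{\alpha_{1}\gamma_{1}}(1-\alpha_{1})\Delta_{0}=\tfrac{2}{\mu}[\psi(\xbf^{0})-\psi(\xbf^{\star})]$, so $D^{\star}=D_{2}^{\star}$. Hence \Cref{prop:bound-all-iters} guarantees that every iterate $\xbf^{t},\ybf^{t},\zbf^{t}$ lies in $\Bcal_{D_{2}^{\star}}(\xbf^{\star})$, that the line search succeeds at each $t=1,\dots,T$ (so \Cref{prop:convergence-bound} is applicable), and — since the line search accepts \eqref{eq:ls-descent} as soon as $\hat{L}_{t}$ passes a local smoothness witness on this ball — that $\hat{L}_{t}\le\tau_\mathrm{u}L_{2}^{\star}$ for every $t$, with $L_{2}^{\star}=2\sup\{\Lcal(\xbf,\ybf):\xbf,\ybf\in\Bcal_{D_{2}^{\star}}(\xbf^{\star})\}$.

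\textbf{Step 3 (collapsing the recursion and the geometric bound).} Feeding the Step 1 identities into \eqref{eq:rec-bound-1}, the coefficient $\Gamma_{T}\alpha_{T}(\gamma_{T}+\mu)/2$ on the left equals $\Gamma_{T}\mu/2$, while on the right $\zbf^{0}=\xbf^{0}$ together with $\alpha_{1}\gamma_{1}=\mu(1-\alpha_{1})$ turn the bound into
\[
\Gamma_{T}\Bsbra{\Delta_{T}+\tfrac{\mu}{2}\norm{\zbf^{T}-\xbf^{\star}}^{2}}\le(1-\alpha_{1})\Bsbra{\psi(\xbf^{0})-\psi(\xbf^{\star})+\tfrac{\mu}{2}\norm{\xbf^{0}-\xbf^{\star}}^{2}},
\]
using $\ybf^{0}=\xbf^{0}$ to identify $\Delta_{0}=\psi(\xbf^{0})-\psi(\xbf^{\star})$. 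Since $\Gamma_{t}=(1-\alpha_{t})^{-1}\Gamma_{t-1}$ with $\Gamma_{1}=1$, we have $(1-\alpha_{1})/\Gamma_{T}=\prod_{t=1}^{T}(1-\alpha_{t})$, and dividing by $\Gamma_{T}$ gives the first inequality of \eqref{eq:rate-strongly-convex}. For the second inequality, Step 2 yields $1-\alpha_{t}=1-\sqrt{\mu/(\hat{L}_{t}+\mu)}\le 1-\sqrt{\mu/(\tau_\mathrm{u}L_{2}^{\star}+\mu)}\le\exp\brbra{-\sqrt{\mu/(\tau_\mathrm{u}L_{2}^{\star}+\mu)}}$, so $\prod_{t=1}^{T}(1-\alpha_{t})\le\exp\brbra{-\sqrt{\mu/(\tau_\mathrm{u}L_{2}^{\star}+\mu)}\,T}$; and since $\psi=g+\pi$ is $\mu$-strongly convex with minimizer $\xbf^{\star}$, first-order optimality gives $\tfrac{\mu}{2}\norm{\xbf^{0}-\xbf^{\star}}^{2}\le\psi(\xbf^{0})-\psi(\xbf^{\star})$, so the bracket on the right is at most $2[\psi(\xbf^{0})-\psi(\xbf^{\star})]$. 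Chaining the three displays gives \eqref{eq:rate-strongly-convex}.

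\textbf{Main obstacle.} Almost all of the analytic content is carried by \Cref{prop:convergence-bound,prop:bound-all-iters}, so the real work is concentrated in two places: the induction of Step 1, where one must verify that the quadratic \eqref{eq:root-alphat} genuinely collapses to the fixed point $\alpha_{t}=\sqrt{\mu/(\hat{L}_{t}+\mu)}$ while keeping \eqref{eq:step-1}--\eqref{eq:step-3} valid; and the uniform estimate $\hat{L}_{t}\le\tau_\mathrm{u}L_{2}^{\star}$ of Step 2, which is precisely where generalized smoothness restricted to the ball $\Bcal_{D_{2}^{\star}}(\xbf^{\star})$ — rather than a global Lipschitz constant — is used, and where some care is needed because $D_{2}^{\star}$ itself is defined through the iterates. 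Everything else is routine bookkeeping.
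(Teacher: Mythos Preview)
Your proposal is correct and follows essentially the same skeleton as the paper: boundedness via \Cref{prop:bound-all-iters} (specializing $D^\star$ to $D_2^\star$ through $\alpha_1\gamma_1=\mu(1-\alpha_1)$), an induction on $\alpha_t$, then feeding the resulting identities into \eqref{eq:rec-bound-1} and finishing with $\hat{L}_t\le\tau_\mathrm{u}L_2^\star$ and strong convexity.

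The one noteworthy difference is in Step~1. The paper proves only the \emph{inequality} $\alpha_t\ge\sqrt{\mu/(\hat{L}_t+\mu)}$ by showing $b_{t-1}\ge\mu$ and then rearranging $(\hat{L}_t+\mu)\alpha_t^2-\mu\alpha_t=b_{t-1}(1-\alpha_t)\ge\mu(1-\alpha_t)$. You instead observe that with the specific initialization $\alpha_1=\sqrt{\mu/(\hat{L}_1+\mu)}$ one has $b_1=\mu$ exactly, and then \eqref{eq:root-alphat} degenerates to $(\hat{L}_t+\mu)\alpha_t^2=\mu$, propagating $b_t\equiv\mu$ and $\alpha_t=\sqrt{\mu/(\hat{L}_t+\mu)}$ for all $t$. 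Your route is slightly cleaner: it makes the identity $\alpha_t(\gamma_t+\mu)=\mu$ exact, so the left-hand coefficient in \eqref{eq:rec-bound-1} becomes $\Gamma_T\mu/2$ without any slack, whereas the paper carries the lower bound $\Gamma_T\alpha_T(\gamma_T+\mu)\ge\mu\prod_{2\le t\le T}(1-\sqrt{\mu/(\hat{L}_t+\mu)})^{-1}$ through the argument. The final bound is the same either way.
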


\begin{proof}
First, the boundedness of $\{\xbf^{t},\ybf^{t},\zbf^{t}\}$ follows
from \Cref{prop:bound-all-iters}. Next, we show $\alpha_{t}\ge\sqrt{\frac{\mu}{\hat{L}_{t}+\mu}}$
by induction. The $t=1$ is clear from the initialization. Suppose the condition holds for $s=1,2,\ldots,t-1$, which implies, in \eqref{eq:root-alphat}, that $b_{s}\ge\mu$ for $1\le s\le t-1$. Then we have
\[
\frac{(\hat{L}_{t}+\mu)\alpha_{t}^{2}-\mu\alpha_{t}}{(1-\alpha_{t})}=b_{t-1}\ge\mu.
\]
As it is clear from \eqref{eq:root-alphat} that $\alpha_{t}<1$,
we immediately have $\alpha_{t}\ge\sqrt{\frac{\mu}{\hat{L}_{t}+\mu}}$.
Consequently, we have
\[
\Gamma_{T}=\prod_{2\le t\le T}\left(1-\sqrt{\frac{\mu}{\hat{L}_{t}+\mu}}\right)^{-1},\ \text{and}\ \Gamma_{T}\alpha_{T}(\gamma_{T}+\mu)\ge\mu\prod_{2\le t\le T}\left(1-\sqrt{\frac{\mu}{\hat{L}_{t}+\mu}}\right)^{-1}.
\]
Applying \Cref{prop:convergence-bound} with the lower
bound on $\Gamma_{T}$ and noticing $\alpha_{1}\gamma_{1}=\mu(1-\alpha_{1})$,we
have the desired convergence rate \eqref{eq:rate-strongly-convex}.

Lastly, since all the iterates fall in $\Bcal_{D_2^\star}(\xbf^\star)$, the line
search guarantees $\hat{L}_{t}\le\tau_\mathrm{u}L_{2}^{\star}$, using the property $(1-x)^T\le \exp(-T x)$ for $x \in(0,1)$ and $T>0$, and strong convexity $\psi(\xbf^0)-\psi(\xbf^\star)\ge \frac{\mu}{2} \norm{\xbf^0-\xbf^\star}^2$, we have the second inequality.
\end{proof}

\subsection{Smooth approximation algorithms for weakly convex problems}

We are ready to incorporate the accelerated gradient method with line search as a subroutine for solving the subproblems arising within the {\sipp} framework. We refer to the resulting approach as the {\aglssipp} method. To facilitate the subsequent analysis, we introduce the following key assumption.
\begin{assumption}\label{assum:level-phi-eta}
Both $r$ and $f_\eta$ (for any $\eta > 0$) are bounded below. Specifically, let $l \in \mathbb{R}$ denote a lower bound of $\phi_\eta$. Furthermore, we assume that the smoothness parameter $\Lcal_\eta(\xbf, \ybf)$ can be expressed as $\Lcal_\eta(\xbf, \ybf) = B(\xbf, \ybf) + \frac{L(\xbf, \ybf)}{\eta}$, where $B\colon \mathbb{R}^d \times \mathbb{R}^d \to [0, \infty)$ and $L\colon \mathbb{R}^d \times \mathbb{R}^d \to [0, \infty)$ is a symmetric negative continuous map.
\end{assumption}

\begin{rem}
The property of lower boundedness of \(\phi_\eta\) typically follows from that of the original objective \(\phi\). Assume both $f$ and $r$ are lower-bounded.
For example, in generalized Nesterov smoothing, we have 
\(\lvert \phi_\eta(\mathbf{x}) - \phi(\mathbf{x}) \rvert = \mathcal{O}(\eta)\), which yields
\(\inf_{\mathbf{x}} \phi_\eta(\mathbf{x}) \ge \inf_{\mathbf{x}} \phi(\mathbf{x}) - \mathcal{O}(\eta) > -\infty\).
For Moreau-envelope smoothing, we have
\[
\inf_\xbf \phi_\eta(\xbf) = \inf_\xbf \inf_\ybf \big[ f(\ybf) + \frac{\rho+\max\{\eta^{-1},\rho\}}{2}\norm{\ybf-\xbf}^2 \big] + r(\xbf) \ge \inf_\ybf f(\ybf) + \inf_\xbf r(\xbf) > -\infty.
\]
\end{rem}

Next, we establish the complexity of the overall algorithm.
\begin{thm}\label{thm:complexity-agm-ls-ipp}
    In {\aglssipp}, suppose \Cref{assum:level-phi-eta} holds, and {\agls} (\Cref{alg:AGM-LS}) employs the early stop strategy. Then the iterates produced by the inexact proximal point scheme satisfy
    \[
    \mathbf{x}^k,\ \hat{\mathbf{x}}^{\,k}\in
    S_0\,\assign\,\bigl\{\mathbf{x}:\phi_\eta(\mathbf{x})\le \phi_\eta(\mathbf{x}^0)\bigr\},
    \qquad k=0,1,\dots,K.
    \]
Moreover, all the intermediate iterates produced by {\agls} lie in the set
    $
    \Bcal_{D^\dagger}(S_0) \coloneqq \{\xbf: \dist(\xbf,S_0)\le D^\dagger\}, 
    $
    where $D^\dagger=\sqrt{\frac{8}{\hat\rho-\bar\rho}\bsbra{{\phi}(\xbf^{0})-l}}$.
Besides, after $K$ iterations, the total number of iterations of {\agls} is bounded by 
       $ \big\lceil \log  4   \sqrt{\frac{\tau_u \Lcal^\dagger+\hat\rho-\bar\rho}{\hat\rho-\bar\rho}} K \big\rceil$, 
    where $\Lcal^\dagger = \sup \{2\Lcal_\eta(\xbf,\ybf) + 2\bar\rho : \xbf,\ybf\in \Bcal_{{D^\dagger}}(S_0)\}$.
\end{thm}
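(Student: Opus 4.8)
The plan is to exploit the descent structure of the inexact proximal point scheme and the contraction guarantee of \texttt{AGLS} with early stopping, proceeding in three stages: (i) show the outer iterates stay in the sublevel set $S_0$; (ii) bound the displacement of every inner iterate from $S_0$; (iii) count the line-search/iteration budget of \texttt{AGLS}. First I would record the fundamental inexactness consequence of the early-stop rule in \Cref{alg:AGM-LS}: by \Cref{thm:strongly-convex} applied to the subproblem $\phietaenvk$ (which is $(L_\eta+\hat\rho)$-smooth in the generalized sense and $(\hat\rho-\bar\rho)$-strongly convex), the early-stop condition $\prod_{i=1}^t(1-\sqrt{\mu/(\hat L_i+\mu)})\le 1/4$ with $\mu=\hat\rho-\bar\rho$ guarantees that the returned point $\xbf^k$ satisfies
\[
\phietaenvk(\xbf^{k})-\phietaenvk(\barx^{k})\le \tfrac14\bsbra{\phietaenvk(\xbf^{k-1})-\phietaenvk(\barx^{k})},
\]
i.e.\ condition~\eqref{eq:sub-criteria-1} holds with $\lambda=\tfrac14$ and $\zeta_k=0$. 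In particular $\phietaenvk(\xbf^k)\le\phietaenvk(\xbf^{k-1})=\phi_\eta(\xbf^{k-1})$, so $\phi_\eta(\xbf^k)\le\phi_\eta(\xbf^k)+\tfrac{\hat\rho}{2}\norm{\xbf^k-\xbf^{k-1}}^2=\phietaenvk(\xbf^k)\le\phi_\eta(\xbf^{k-1})$. This monotone decrease gives $\xbf^k\in S_0$ for all $k$ by induction; and since $\hat\xbf^{\,k}=\barx^k=\prox_{\phi_\eta/\hat\rho}(\xbf^{k-1})$ minimizes $\phietaenvk$, we also have $\phi_\eta(\hat\xbf^{\,k})\le\phietaenvk(\hat\xbf^{\,k})\le\phietaenvk(\xbf^{k-1})\le\phi_\eta(\xbf^0)$, so $\hat\xbf^{\,k}\in S_0$ as well.

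Next I would bound the radius $D^\dagger$ of the tube around $S_0$ containing the inner iterates. Running \texttt{AGLS} on $\phietaenvk$ from the warm start $\xbf^{k-1}\in S_0$, \Cref{thm:strongly-convex} (with $\mu=\hat\rho-\bar\rho$, $\psi=\phietaenvk$, $\xbf^\star=\barx^k$) confines all its intermediate iterates to $\Bcal_{D_2^\star}(\barx^k)$ with $D_2^\star=\sqrt{\tfrac{2}{\hat\rho-\bar\rho}[\phietaenvk(\xbf^{k-1})-\phietaenvk(\barx^k)]+\norm{\xbf^{k-1}-\barx^k}^2}$. Using $\phietaenvk(\xbf^{k-1})-\phietaenvk(\barx^k)=\phi_\eta(\xbf^{k-1})-\phietaenvk(\barx^k)\le\phi_\eta(\xbf^0)-l$ and $\norm{\xbf^{k-1}-\barx^k}^2\le\tfrac{2}{\hat\rho-\bar\rho}[\phietaenvk(\xbf^{k-1})-\phietaenvk(\barx^k)]\le\tfrac{2}{\hat\rho-\bar\rho}(\phi_\eta(\xbf^0)-l)$ by strong convexity, I get $D_2^\star\le\sqrt{\tfrac{4}{\hat\rho-\bar\rho}(\phi_\eta(\xbf^0)-l)}$. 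Since $\barx^k\in S_0$, every inner iterate lies within distance $D_2^\star$ of a point of $S_0$; replacing $\phi_\eta(\xbf^0)-l$ with $\phi(\xbf^0)-l\ge\phi_\eta(\xbf^0)-l$ (from \ref{SA-func}) yields the stated $D^\dagger=\sqrt{\tfrac{8}{\hat\rho-\bar\rho}[\phi(\xbf^0)-l]}$ (the factor $8$ rather than $4$ absorbs the warm-start offset conservatively).

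Finally, for the iteration count I would use the early-stop rule directly. On $\Bcal_{D^\dagger}(S_0)$ the line search never produces $\hat L_t$ exceeding $\tau_u\Lcal^\dagger$ where $\Lcal^\dagger=\sup\{2\Lcal_\eta(\xbf,\ybf)+2\rho:\xbf,\ybf\in\Bcal_{D^\dagger}(S_0)\}$ is a valid generalized-smoothness bound for $\phietaenvk$ on the tube (the $+2\rho$ accounting for the extra curvature in the quadratic upper bound of \Cref{lem:smoothness}), so $\sqrt{\mu/(\hat L_t+\mu)}\ge\sqrt{(\hat\rho-\bar\rho)/(\tau_u\Lcal^\dagger+\hat\rho-\bar\rho)}=:q$. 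Thus $\prod_{i=1}^t(1-\sqrt{\mu/(\hat L_i+\mu)})\le(1-q)^t\le e^{-qt}$, which drops below $1/4$ once $t\ge q^{-1}\log 4$, i.e.\ \texttt{AGLS} terminates in at most $\lceil\log 4\sqrt{(\tau_u\Lcal^\dagger+\hat\rho-\bar\rho)/(\hat\rho-\bar\rho)}\,\rceil$ iterations per outer step; over $K$ outer steps this gives the claimed bound. I expect the main obstacle to be the bookkeeping in stage (ii): one must carefully verify that the generalized-smoothness modulus $\Lcal_\eta$ of $f_\eta$, inflated by $\hat\rho$ from the proximal term and handled via \Cref{lem:smoothness} for the non-Lipschitz case, indeed yields a finite $\Lcal^\dagger$ on the \emph{fixed} tube $\Bcal_{D^\dagger}(S_0)$ — which requires $S_0$ to be bounded or at least that $\Lcal_\eta$ be locally bounded — and that the warm-start radius estimate is uniform in $k$, so that a single $D^\dagger$ works for all outer iterations simultaneously.
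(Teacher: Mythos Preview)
Your proposal is correct and follows the same three-stage structure as the paper. One minor slip: the early-stop criterion $\prod_{i=1}^t(1-\sqrt{\mu/(\hat L_i+\mu)})\le 1/4$ together with \Cref{thm:strongly-convex} only gives $\lambda=1/2$, not $1/4$, because the right-hand side of \eqref{eq:rate-strongly-convex} carries the extra term $\tfrac{\mu}{2}\norm{\xbf^0-\xbf^\star}^2$, which costs another copy of the initial gap via strong convexity. This does not affect your argument, since stages (i)--(iii) only need $\lambda<1$.

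The one substantive difference is in stage (ii). You bound each subproblem radius uniformly via $\phietaenvk(\xbf^{k-1})-\phietaenvk(\barx^k)\le\phi_\eta(\xbf^{k-1})-l\le\phi_\eta(\xbf^0)-l$, which is elementary and does not even need the contraction. The paper instead leverages the contraction $\lambda=1/2$ to obtain $\phietaenvk(\xbf^{k-1})-\phietaenvk(\barx^k)\le 2[\phi_\eta(\xbf^{k-1})-\phi_\eta(\xbf^k)]$ and then telescopes, yielding the stronger summability bound $\sum_{k=1}^K D_k^2\le (D^\dagger)^2$. Your route is simpler and already certifies the containment in $\Bcal_{D^\dagger}(S_0)$ that the theorem claims (in fact with a factor $4$ rather than $8$); the paper's telescoping is sharper but not needed here. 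Your closing concern about finiteness of $\Lcal^\dagger$ is appropriate: the paper treats this separately, under \Cref{assum:level-bounded-phi-eta} or \Cref{assum:level-bounded-h-eta}, after the theorem.
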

\begin{proof}
At the $k$-th outer iteration, the subproblem considered is
\[
\phi_\eta(\xbf)=f_\eta(\xbf)+\frac{\bar\rho}{2}\norm{\xbf-\xbf^{k-1}}^2 + \frac{\hat\rho-\bar\rho}{2}\norm{\xbf-\xbf^{k-1}}^2+ r(\xbf).
\]
By invoking \Cref{thm:strongly-convex}, the accelerated gradient method achieves the following rate of convergence:
\begin{equation}
\begin{aligned}
    \hat{\phi}_k(\xbf^k) - \hat{\phi}_k(\hatx^k) 
    &\leq 2\exp\left(-\sqrt{\frac{\hat\rho-\bar\rho}{\tau_u{L}^\star(\hatx^k)+\hat\rho-\bar\rho}}T_k\right)\left[ \hat{\phi}_k(\xbf^{k-1}) - \hat{\phi}_k(\hatx^k)\right],
\end{aligned}
\end{equation}
where $T_k$ denotes the number of iterations of \Cref{alg:AGM-LS}, $L^\star(\hatx^k)=\sup\{2\Lcal_\eta(\xbf,\ybf)+2\bar\rho: \xbf,\ybf\in\Bcal_{D_k}(\hatx^k)\}$, and $D_k^2=\frac{2}{\hat\rho-\bar\rho}\bsbra{\hat{\phi}_k(\xbf^{k-1})-\hat{\phi}_k(\hatx^k)}+\norm{\xbf^{k-1}-\hatx^k}^2$.

Consequently, it takes at most
$
T_k^\star \leq \log  4 \sqrt{\frac{\tau_u L^\star(\hatx^k)+\hat\rho-\bar\rho}{\hat\rho-\bar\rho}}
$
iterations of \Cref{alg:AGM-LS} to ensure
\begin{equation}\label{eq:relative-gap}
\hat{\phi}_k(\xbf^k) - \hat{\phi}_k(\hatx^k) \le 
\frac{1}{2}\bsbra{\hat{\phi}_k(\xbf^{k-1}) - \hat{\phi}_k(\hatx^k)}.
\end{equation}
Therefore, the total number of {\agls} iterations across the $K$ outer steps is bounded by
\begin{equation}\label{eq:total-number-agls}
    \sum_{k=1}^K T_k^\star = \sum_{k=1}^K \log  4 \sqrt{\frac{\tau_u L^\star(\hatx^k)+\hat\rho-\bar\rho}{\hat\rho-\bar\rho}}.
\end{equation}

Note that
\[
\phi_\eta(\xbf^k)\le \hat{\phi}_k(\xbf^k) \le \hat{\phi}_k(\xbf^{k-1}) = \phi_\eta(\xbf^{k-1}) \le \dots \le \phi_\eta(\xbf^0),
\]
implying that the sequence $\{\phi_\eta(\xbf^k)\}_k$ is non-increasing. Thus, the entire sequence $\{\xbf^k\}_{k}$ is contained in $S_0$. Similarly, since $\phi_\eta(\hat{\xbf}^k)\le \hat{\phi}_k(\xbf^{k-1})= \phi_\eta(\xbf^{k-1})$, it follows that $\{\hat{\xbf}^k\}_k\subset S_0$ as well.

Using the definition of $D_k$ and exploiting the strong convexity of $\hat{\phi}_k$, we observe that
\begin{equation}
    D_k^2 
    \leq \frac{4}{\hat\rho-\bar\rho}\bsbra{\hat{\phi}_k(\xbf^{k-1})-\hat{\phi}_k(\hatx^k)} 
    \leq \frac{8}{\hat\rho-\bar\rho}\bsbra{\hat{\phi}_k(\xbf^{k-1})-\hat{\phi}_k(\xbf^k)} 
    \leq \frac{8}{\hat\rho-\bar\rho}\bsbra{\phi_\eta(\xbf^{k-1})-\phi_\eta(\xbf^k)},
\end{equation}
where the second equality rearranges from \eqref{eq:relative-gap} and the last one follows from the definition of $\hat\phi_k$ and nonnegativity of the quadratic term. 
Summing over $k$ yields
\[
\sum_{k=1}^K D_k^2 \le \frac{8}{\hat\rho-\bar\rho}\bsbra{\phi(\xbf^{0})-\phi_\eta(\xbf^K)} \le \frac{8}{\hat\rho-\bar\rho}\bsbra{\phi(\xbf^{0})-l} = (D^\dagger)^2.
\]
Therefore, all iterates generated by the accelerated gradient method reside within the set
\[
\bigcup_{1\leq k\leq K} \Bcal_{D_k}(\hatx^k) \subset \left\{ \xbf: \dist(\xbf, S_0) \leq D^\dagger \right\} = \Bcal_{D^\dagger}(S_0).
\]
Consequently, we have
\[
L^\star(\hatx^k) \leq \sup \left\{ 2\Lcal_\eta(\xbf, \ybf) + 2\bar\rho : \xbf, \ybf \in \Bcal_{D^\dagger}(S_0) \right\} 
= \Lcal^\dagger.
\]
Substituting this uniform bound into \eqref{eq:total-number-agls} yields
$
\sum_{k=1}^K T_k^\star \leq \log  4 \sqrt{\frac{\tau_u \Lcal^\dagger+\hat\rho-\bar\rho}{\hat\rho-\bar\rho}} K.
$
\end{proof}

We observe that $\Lcal^\dagger$ does not depend on the iteration index $k$. We primarily consider the following two scenarios where $\Lcal^\dagger$ is globally bounded.
\begin{assumption}\label{assum:level-bounded-phi-eta}
    For every \(v \in \mathbb{R}\), the sublevel set \(\level(\phi_\eta, v) = \{\xbf : \phi_\eta(\xbf) \le v\}\) is bounded.
\end{assumption}
    Imposing level-boundedness is a natural and reasonable requirement in our setting. For instance, if the regularizer \(r\) is level-bounded, then $\phi_\eta(\xbf)$ automatically possesses this property. In particular, for any $\xbf$ such that $\phi_\eta(\xbf) \le v$, we have
    \(\inf_\ybf f_\eta(\ybf) + r(\xbf) \le v\), which implies $\xbf \in \level(r, v - \inf_\ybf f_\eta(\ybf))$.

Next, we present a more specific assumption tailored to generalized Nesterov smoothing:
\begin{assumption}\label{assum:level-bounded-h-eta}
    Suppose generalized Nesterov smoothing is employed, i.e., $\phi_\eta(\xbf) = f_|eta(\xbf)= h_\eta(A(\xbf))$, where $A:\Rbb^d\!\raw\!\Rbb^m$ and each component $A_i, i \in [m]$, is a smooth function. We assume that $h_\eta$ is level-bounded and that each $A_i$ is lower-bounded.
\end{assumption}
The level-boundedness of $h_\eta$ is a natural assumption, as $h_\eta$ frequently serves as a loss function in machine learning. Notably, widely used examples of $h_\eta$, such as the Huber loss, possess this property.

We now establish that $\Lcal^\dagger$ is bounded under the proposed assumptions.
\begin{prop}
    Suppose either \Cref{assum:level-bounded-phi-eta} holds, or that generalized Nesterov smoothing is employed and \Cref{assum:level-bounded-h-eta} holds. Then, $\Lcal^\dagger$ is bounded.
\end{prop}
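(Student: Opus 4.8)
The plan is to reduce the boundedness of $\Lcal^\dagger$ to a compactness-plus-continuity argument and then treat the two hypotheses of the proposition separately. Recall from \Cref{thm:complexity-agm-ls-ipp} that $\Lcal^\dagger=\sup\{2\Lcal_\eta(\xbf,\ybf)+2\bar\rho:\xbf,\ybf\in\Bcal_{D^\dagger}(S_0)\}$, that $D^\dagger$ is a fixed finite constant (it is finite because $\hat\rho>\bar\rho$, $\xbf^0\in\dom\phi$, and $\phi_\eta$ is bounded below by \Cref{assum:level-phi-eta}), and that $\Lcal_\eta$ is a continuous symmetric map (equivalently $\Lcal_\eta=B(\cdot,\cdot)+L(\cdot,\cdot)/\eta$ with $B,L$ continuous, as in \Cref{assum:level-phi-eta}); in the generalized Nesterov case $\Lcal_\eta(\xbf,\ybf)=\tfrac{1}{\sigma\eta}\sup_{\theta\in[0,1]}\norm{\nabla A(\theta\xbf+(1-\theta)\ybf)}_{\mathrm{op}}^2+BL_A$ from \Cref{thm:nes-smoothing}, which is still jointly continuous since it is a supremum over the compact interval $[0,1]$ of a jointly continuous function. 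Since $\Bcal_{D^\dagger}(S_0)=\{\xbf:\dist(\xbf,S_0)\le D^\dagger\}$ is closed, it suffices to show it is bounded: it is then compact, and a continuous map attains a finite maximum on a compact set; in the Nesterov case this also forces $\norm{\nabla A}_{\mathrm{op}}$ to be bounded on the (compact) set $\{\theta\xbf+(1-\theta)\ybf:\xbf,\ybf\in\Bcal_{D^\dagger}(S_0),\,\theta\in[0,1]\}$, hence $\Lcal_\eta$ bounded there.

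Under \Cref{assum:level-bounded-phi-eta} the first step is immediate: $S_0=\level(\phi_\eta,\phi_\eta(\xbf^0))$ is bounded by hypothesis, and enlarging a bounded set by the fixed finite radius $D^\dagger$ preserves boundedness, so $\Bcal_{D^\dagger}(S_0)$ is compact and $\Lcal^\dagger<\infty$ follows. For the generalized Nesterov case under \Cref{assum:level-bounded-h-eta}, I would prove instead that $\nabla A$ is bounded on $S_0$ and then propagate the bound to $\Bcal_{D^\dagger}(S_0)$. Since $h_\eta$ is level-bounded, the set $K\coloneqq\level(h_\eta,\phi_\eta(\xbf^0))\subseteq\Rbb^m$ is bounded; for every $\xbf\in S_0$ we have $A(\xbf)\in K$, so each component $A_i$ is bounded above on $S_0$ by a constant $M_i$. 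Combining $A_i\le M_i$ with the lower bound $A_i\ge\ell_i$ of \Cref{assum:level-bounded-h-eta} and the global Lipschitz smoothness of $A$ from \Cref{sec:nes-smoothing}, a descent-lemma computation --- minimizing the upper model $t\mapsto A_i(\xbf)-t\norm{\nabla A_i(\xbf)}^2+\tfrac{L_A}{2}t^2\norm{\nabla A_i(\xbf)}^2$ over $t\ge 0$ and comparing its minimum value with $\ell_i$ --- yields $\norm{\nabla A_i(\xbf)}^2\le 2L_A(A_i(\xbf)-\ell_i)\le 2L_A(M_i-\ell_i)$ for all $\xbf\in S_0$, hence $\norm{\nabla A(\cdot)}_{\mathrm{op}}\le G$ on $S_0$ for some finite $G$. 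Since $\nabla A$ is $L_A$-Lipschitz, $\norm{\nabla A(\xbf)}_{\mathrm{op}}\le G+L_A\,\dist(\xbf,S_0)\le G+L_A D^\dagger$ for all $\xbf\in\Bcal_{D^\dagger}(S_0)$; substituting this into the expression for $\Lcal_\eta$ then bounds $\Lcal^\dagger$.

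The delicate step, which I expect to be the main obstacle, is the Nesterov case: making rigorous the passage from the pointwise bound on $\norm{\nabla A}_{\mathrm{op}}$ over $S_0$ (and over $\Bcal_{D^\dagger}(S_0)$) to a bound over the line segments $\theta\xbf+(1-\theta)\ybf$ appearing in the $\sup_\theta$ defining $\Lcal_\eta$, since such a segment can exit $\Bcal_{D^\dagger}(S_0)$. I would control this with a Gr\"onwall-type estimate along $\zbf(s)=\xbf+s(\ybf-\xbf)$: the inequality $\norm{\nabla A_i(\zbf(s))}\le\sqrt{2L_A(A_i(\zbf(s))-\ell_i)}$ --- valid at every point by the descent-lemma bound above --- turns into a differential inequality for $s\mapsto\sqrt{A_i(\zbf(s))-\ell_i}$ with a bounded right-hand side, which bounds $A_i$, and hence $\norm{\nabla A_i}$, uniformly along the whole segment in terms of the endpoint values and $\norm{\ybf-\xbf}$. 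It then remains only to bound $\norm{\ybf-\xbf}$ over $\Bcal_{D^\dagger}(S_0)$, i.e. to establish that $\Bcal_{D^\dagger}(S_0)$ is bounded also under \Cref{assum:level-bounded-h-eta}; this is the crux of that case, and once it is in place, continuity of $\Lcal_\eta$ over the resulting compact set closes the proof in both cases.
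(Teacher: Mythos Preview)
Your core argument matches the paper's. Under \Cref{assum:level-bounded-phi-eta} both you and the paper argue that $S_0$ is bounded, hence $\Bcal_{D^\dagger}(S_0)$ is compact, hence the continuous map $\Lcal_\eta$ attains a finite supremum there. Under \Cref{assum:level-bounded-h-eta} both you and the paper (i) use level-boundedness of $h_\eta$ to trap $A(\xbf)$ in a bounded set for $\xbf\in S_0$, (ii) invoke the self-bounding inequality $\norm{\nabla A_i(\xbf)}^2\le 2L_i\bigl(A_i(\xbf)-\inf A_i\bigr)$ to bound $\norm{\nabla A}_{\mathrm{op}}$ on $S_0$, and (iii) propagate this bound to $\Bcal_{D^\dagger}(S_0)$ via the $L_A$-Lipschitzness of $\nabla A$ and the triangle inequality. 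One small correction: since $\phi_\eta=h_\eta\circ A+r$, from $\xbf\in S_0$ you only get $h_\eta(A(\xbf))\le\phi_\eta(\xbf^0)-\inf r$, so the sublevel set of $h_\eta$ you land in sits at height $\phi_\eta(\xbf^0)-\inf r$, not $\phi_\eta(\xbf^0)$; the paper records this with the lower bound $r\ge a$.

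The genuine gap is in your last paragraph. The paper stops after step~(iii): it declares that bounding $\norm{\nabla A}_{\mathrm{op}}$ on $\Bcal_{D^\dagger}(S_0)$ suffices and does not discuss segments $\theta\xbf+(1-\theta)\ybf$ leaving $\Bcal_{D^\dagger}(S_0)$. Your proposed resolution reduces everything to showing that $\Bcal_{D^\dagger}(S_0)$ is bounded under \Cref{assum:level-bounded-h-eta}, which you flag as ``the crux''---but this is false in general and therefore cannot be the crux. Take $r\equiv 0$ and any smooth $A$ that is constant along a nonzero direction $\vbf$ (each $A_i$ remains lower-bounded and smooth, and $h_\eta$ stays level-bounded); then $\phi_\eta$ is invariant under translations by $\vbf$, so $S_0$ and $\Bcal_{D^\dagger}(S_0)$ are unbounded, $\norm{\ybf-\xbf}$ is unbounded over $\Bcal_{D^\dagger}(S_0)$, and your Gr\"onwall bound blows up. So you should not try to establish compactness of $\Bcal_{D^\dagger}(S_0)$ in the Nesterov case; follow the paper in stopping at the uniform bound on $\norm{\nabla A}_{\mathrm{op}}$ over $\Bcal_{D^\dagger}(S_0)$. (If you want to make the segment step fully rigorous, note that in the proof of \Cref{thm:complexity-agm-ls-ipp} the quantity actually used is $\sup_k L^\star(\barx^k)$, and each $L^\star(\barx^k)$ is a supremum over the \emph{convex} ball $\Bcal_{D_k}(\barx^k)\subset\Bcal_{D^\dagger}(S_0)$, so the relevant segments never leave $\Bcal_{D^\dagger}(S_0)$ and the bound on $\norm{\nabla A}_{\mathrm{op}}$ there is indeed enough.)
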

\begin{proof}
First, consider the case where \Cref{assum:level-bounded-phi-eta} holds. It is straightforward to see that $\Bcal_{D^\dagger}(S_0)$ is a bounded set. 
Since $\Lcal_\eta(\xbf,\ybf)$ is continuous and $\Bcal_{D^\dagger}(S_0)$ is compact, it follows that $\Lcal^\dagger$ is finite. 

Next, consider the case where \Cref{assum:level-bounded-h-eta} holds. 
Recall that $\Lcal_\eta(\xbf,\ybf) = \frac{1}{\sigma \eta} \sup_{0\leq\theta\leq1} \norm{\nabla A(\theta \xbf + (1-\theta)\ybf)}_\mathrm{op}^2 + B L_A$. Therefore, it suffices to establish that $\{\norm{\nabla A(\xbf)}_\mathrm{op}: \xbf \in \Bcal_{D^\dagger}(S_0)\}$ is bounded above.
 We first analyze the situation for $\xbf \in S_0$. By the definition of $S_0$, we have $\phi_\eta(A(\xbf)) \le \phi_\eta(A(\xbf^0))$. Given our assumption that $r(\xbf)$ is lower-bounded, i.e., $r(\xbf) \ge a$ for some $a \in \Rbb$, it follows that $h_\eta(A(\xbf)) \le \phi_\eta(\xbf^0) - a < +\infty$. Since $h_\eta$ is level-bounded, the set $\{A(\xbf) : \xbf \in S_0\}$ is also bounded. We denote $\kappa = \sup \{\norm{A(\xbf)}_\infty: \xbf \in S_0\}$.

Boundedness of $\{\nabla A(\xbf): \xbf\in S_0\}$ then follows from the self-bounding property of lower-bounded smooth functions. Specifically, suppose each $A_i, i \in [m]$ is $L_i$-smooth. Then, the relation
\[
A_i(\ybf) \le A_i(\xbf) + \langle \nabla A_i(\xbf),  \ybf-\xbf \rangle + \frac{L_i}{2}\norm{\ybf-\xbf}^2
\]
holds for any $\xbf \in S_0$ and any $\ybf\in\Rbb^d$. Minimizing both sides with respect to $\ybf$ gives
\[
A_i^\star = \min_\ybf A_i(\ybf) \le A_i(\xbf) - \frac{1}{2L_i}\norm{\nabla A_i(\xbf)}^2,
\]
which implies $\norm{\nabla A_i(\xbf)}^2 \le 2L_i \left[A_i(\xbf)-A_i^\star\right] \le 2\max_{i \in [m]} L_i (\kappa - \min_{i \in [m]} A_i^\star)<\infty$. This ensures the boundedness of $\{\norm{\nabla A(\xbf)}_\mathrm{op}: \xbf \in S_0\}$.

Next, for $\xbf \in \Bcal_{D^\dagger}(S_0) \setminus S_0$, we obtain by the triangle inequality that
\begin{equation*}
    \begin{aligned}
\norm{\nabla A(\xbf)}_\mathrm{op} & \leq \norm{\nabla A(\xbf) - \nabla A(\hatx)}_\mathrm{op} + \norm{\nabla A(\hatx)}_\mathrm{op} 
\leq L_A \norm{\xbf - \hatx} + \norm{\nabla A(\hatx)}_\mathrm{op} 
 \leq L_A D^\dagger + \norm{\nabla A(\hatx)}_\mathrm{op},
\end{aligned}
\end{equation*}
where $\hatx \in S_0$ denotes the projection of $\xbf$ onto $S_0$. This completes the proof.
\end{proof}

Under the preceding assumptions, we now summarize the overall complexity of the inexact proximal point method.
\begin{coro}
    Suppose that either \Cref{assum:level-bounded-phi-eta} or \Cref{assum:level-bounded-h-eta} holds, then $\Lcal^\dagger$ is bounded. Define $B^\dagger = \sup\{B(\xbf,\ybf) : \xbf, \ybf \in \Bcal_{D^\dagger}(S_0)\}$ and $L^\dagger = \sup\{L(\xbf,\ybf) : \xbf, \ybf \in \Bcal_{D^\dagger}(S_0)\}$. 
    Set the parameters $\hat\rho = 2\bar\rho$ and $\eta = \varepsilon^{2}$. 
    Then, to compute an \((\varepsilon, \varepsilon)\)-stationary point of problem~\eqref{pb:main}, 
    it requires at most
    $
        \mathcal{O}\left(\frac{1}{\vep^2}\left(\sqrt{B^\dagger} + \frac{\sqrt{L^\dagger}}{\vep} + 1\right)\right)$
    iterations of \Cref{alg:AGM-LS}.
\end{coro}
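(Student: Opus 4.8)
The plan is to assemble three results already in hand: the outer-loop rate of the inexact proximal point scheme (\Cref{thm:sipp-rate}); the per-subproblem iteration count together with the uniform localization of the curvature parameters (\Cref{thm:complexity-agm-ls-ipp} and the preceding proposition, which makes $\Lcal^\dagger<\infty$ under either assumption); and the translation of a small Moreau-envelope gradient into approximate stationarity of \eqref{pb:main} (Part~2 of \Cref{thm:smooth-criteria-convert}).

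First I fix $\hat\rho=2\bar\rho$ and $\eta=\vep^2$, so $\hat\rho-\bar\rho=\bar\rho$. By \Cref{thm:complexity-agm-ls-ipp}, running {\agls} with the early-stop rule on each subproblem \eqref{eq:pp-subprob} makes the relative-error criterion \eqref{eq:sub-criteria-1} hold with $\lambda=\tfrac12$ and $\zeta_k=0$ (the setting is deterministic), and keeps $\xbf^k$, $\hat{\xbf}^{\,k}$, and every inner iterate inside the fixed bounded set $\Bcal_{D^\dagger}(S_0)$, on which $B^\dagger$, $L^\dagger$, and hence $\Lcal^\dagger$ are finite. Plugging $\lambda=\tfrac12$, $\zeta_k=0$ into \Cref{thm:sipp-rate} gives $\min_{1\le k\le K}\norm{\nabla\phi_\eta^{\hat\rho}(\xbf^k)}^{2}\le \tfrac{24\bar\rho}{K}\bsbra{\phi(\xbf^0)-\phi(\xbf^\star)+\bar R_\eta}$, where $\bar R_\eta\coloneqq\sup\{\Rcal_\eta(\xbf):\xbf\in S_0\}$ plays the role of the constant $R\eta$ of the original statement; the proof of \Cref{thm:sipp-rate} carries over verbatim since the minimizer $\xbf^\star_\eta$ lies in $S_0$. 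For the smoothing schemes under consideration $\Rcal_\eta=\Ocal(\eta)$ on any bounded set, so $\bar R_\eta=\Ocal(\vep^2)$, and $K=\Ocal(1/\vep^2)$ outer iterations produce an index $k$ with $\norm{\nabla\phi_\eta^{\hat\rho}(\xbf^k)}\le\vep$.

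Next I invoke Part~2 of \Cref{thm:smooth-criteria-convert} at this $\xbf^k$ with $\beta=\hat\rho$. Its proximal point $\ybf_{\xbf^k}=\prox_{\phi_\eta/\hat\rho}(\xbf^k)$ again satisfies $\phi_\eta(\ybf_{\xbf^k})\le\phi_\eta(\xbf^k)\le\phi_\eta(\xbf^0)$, hence $\ybf_{\xbf^k}\in S_0$ and $\Rcal_\eta(\ybf_{\xbf^k})\le\bar R_\eta=\Ocal(\vep^2)$; consequently $\sqrt{2\Rcal_\eta(\ybf_{\xbf^k})/(\hat\rho-\rho)}=\Ocal(\vep)$ and $\xbf^k$ is an $(\Ocal(\vep),\Ocal(\vep))$-stationary point, i.e.\ an $\vep$-approximate stationary point in the sense of \Cref{defi:apprx-stationary}. (This uses $\hat\rho-\rho=2\bar\rho-\rho>0$, which holds for the smoothing schemes here where $\bar\rho\ge\rho$; otherwise one takes $\hat\rho=\max\{2\bar\rho,2\rho\}$ with no change to the rate.) Finally I count the total work: by \Cref{thm:complexity-agm-ls-ipp} the number of {\agls} iterations over the $K$ outer steps is at most $\bigl\lceil\log 4\,\sqrt{(\tau_u\Lcal^\dagger+\hat\rho-\bar\rho)/(\hat\rho-\bar\rho)}\;K\bigr\rceil=\Ocal\bigl(\sqrt{\Lcal^\dagger/\bar\rho+1}\,K\bigr)$. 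Using $\Lcal_\eta(\xbf,\ybf)=B(\xbf,\ybf)+L(\xbf,\ybf)/\eta$ from \Cref{assum:level-phi-eta}, on $\Bcal_{D^\dagger}(S_0)$ we have $\Lcal^\dagger=\Ocal(B^\dagger+L^\dagger/\vep^2+\bar\rho+\rho)$, hence $\sqrt{\Lcal^\dagger}=\Ocal(\sqrt{B^\dagger}+\sqrt{L^\dagger}/\vep+1)$ by subadditivity of $\sqrt{\cdot}$; multiplying by $K=\Ocal(1/\vep^2)$ yields the claimed bound $\Ocal\bigl(\tfrac1{\vep^2}(\sqrt{B^\dagger}+\tfrac{\sqrt{L^\dagger}}{\vep}+1)\bigr)$.

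The step I expect to be the real obstacle is taming the non-constant curvature and error functions: none of the earlier complexity theorems were stated with variable $\Rcal_\eta$/$\Lcal_\eta$, so the whole argument hinges on the localization furnished by \Cref{thm:complexity-agm-ls-ipp} — all iterates and all the relevant proximal points $\ybf_{\xbf^k}$ stay in one fixed compact set $\Bcal_{D^\dagger}(S_0)$ — together with the preceding proposition, which makes $\Lcal^\dagger$ finite and $\bar R_\eta=\Ocal(\eta)$ there. Everything else is substitution and the arithmetic of combining $\Ocal(1/\vep^2)$ outer iterations with an $\Ocal(\sqrt{L^\dagger}/\vep)$-per-step inner cost.
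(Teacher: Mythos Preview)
Your proposal is correct and follows the same overall strategy as the paper: bound the per-subproblem inner cost via \Cref{thm:complexity-agm-ls-ipp} (which localizes $\Lcal^\dagger$), invoke \Cref{thm:sipp-rate} to get $K=\Ocal(1/\vep^2)$ outer iterations, and multiply. You are in fact more careful than the paper in two places: you explicitly handle the variable $\Rcal_\eta$ by passing to $\bar R_\eta=\sup_{S_0}\Rcal_\eta$ and noting $\xbf^\star_\eta\in S_0$ (the paper simply cites \Cref{thm:sipp-rate}, which was stated under the constant-$R$ \Cref{assum:standard-SA}), and you explicitly invoke Part~2 of \Cref{thm:smooth-criteria-convert} to convert $\norm{\nabla\phi_\eta^{\hat\rho}}\le\vep$ into $(\Ocal(\vep),\Ocal(\vep))$-stationarity, whereas the paper leaves that implicit.
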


\begin{proof}
Note that $\Lcal^\dagger$ can be bounded as $\Lcal^\dagger \le 2B^\dagger + 2\frac{L^\dagger}{\eta} + 2\rho$. According to \Cref{thm:strongly-convex}, this yields
$
T_k^\star \le \log  4 \sqrt{\frac{2\tau_{\mathrm{u}}\left(B^\dagger + L^\dagger/\eta\right) + \hat\rho + (2\tau_{\mathrm{u}} - 1)\bar\rho}{\hat\rho - \bar\rho}}$.
By invoking \Cref{thm:sipp-rate}, we conclude that the inexact proximal point method requires $K = \mathcal{O}(1/\vep^2)$ iterations to obtain an $\mathcal{O}(\vep)$-approximate stationary point. Consequently, the total number of accelerated gradient method iterations is bounded by
$
\sum_{k=1}^K T_k^\star = \mathcal{O}\left(\frac{1}{\vep^2}\left(\sqrt{B^\dagger} + \frac{\sqrt{L^\dagger}}{\vep} + 1\right)\right).
$
\end{proof}
\begin{rem}
    It is important to emphasize that \Cref{assum:level-bounded-phi-eta,assum:level-bounded-h-eta} are imposed primarily for the purposes of theoretical analysis. In practice, these assumptions may not always hold. In such cases, the complexity bound instead depends on the local smoothness constant $L^\star(\hat{\xbf}^k)$ as stated in \eqref{eq:total-number-agls}. Nevertheless, this does not significantly impact our algorithm, as the line search in \Cref{alg:AGM-LS} is designed to automatically adapt to local smoothness property. Consequently, the algorithm remains practical and efficient without requiring a priori knowledge of global Lipschitz constants.
\end{rem}

\section{Numerical experiments}

In this section, we conduct numerical experiments to demonstrate the
efficiency of the smoothing approach developed in this paper. In particular, we consider the following robust nonlinear regression
\begin{equation*}
  \min_{\x \in \mathcal{X} \subseteq \mathbb{R}^d}  f ( \x )
  \assign \textstyle \frac{1}{m} \sum_{i = 1}^m f ( \x, \xi_i ), 
\end{equation*}
where $\xi = ( \tma, b )$ and $f ( \x, \xi ) \assign
| h ( \langle \tma, \x \rangle ) - b |$. The
nonlinear function $h (z) \in \{ z^2, z^5 + z^3 + 1, e^z + 10 \}$.
\vspace{-5pt}
\paragraph{Smoothing functions}
Denote by $f_{\eta} ( \x, \xi )$ the smooth approximation of $f
( \x, \xi )$. We consider:
\begin{itemize}[leftmargin=15pt]
  \item \textit{Nesterov smoothing} smoothes the outer absolute value function by
  taking 
  \[ | z | \approx \alpha_{\eta} (z) \assign \bigg\{ \begin{array}{cc}
       \frac{z^2}{2 \eta}, & | z | \leq \eta\\
       | z | - \frac{\eta}{2}, & \text{else}
     \end{array}  \]
  and $f_{\eta} ( \x, \xi ) = \alpha_{\eta} ( h (
  \langle \tma, \x \rangle ) - b )$.
  
  \item \textit{Moreau envelop smoothing} takes $f_{\eta} ( \x, \xi ) = \argmin_{\y} \{ f ( \y, \xi )
     + \frac{2 \rho + \eta^{- 1}}{2} \| \y - \x \|^2\}$. Experiments using Moreau envelope use $h(z) = z^2$ to ensure a closed-form solution \cite{davis2019stochastic}.
\end{itemize}

\subsection{Experiment setups}

\paragraph{Dataset generation~\citep{gao2024stochastic}} 
Let $A \in \Rbb^{m\times d}$ have $\{\tma_i\}$ as its rows.
Given a condition number parameter $\kappa \geq 1$, we generate $A = Q D \in \mathbb{R}^{m \times d}$, where each element of $Q \in \mathbb{R}^{m \times d}$ is sampled from standard normal distribution and $D \in \mathbb{R}^{d \times d}$ is a diagonal matrix whose diagonal elements are evenly distributed between 1 and $\kappa$. We generate $\xbf^{\star} \sim \mathcal{N} (0, I_d)$ and let $b_i = h
(\langle \abf_i, \xbf^{\star} \rangle) + \theta_i \varepsilon_i$, where
$\varepsilon_i$ simulates corruption by random noise, $\theta_i \sim
\tmop{Bernoulli} (p)$ and $\varepsilon_i \sim \mathcal{N} (0, 25)$. Here $p \in [0, 1]$ represents the fraction of corrupted data on expectation.
\begin{enumerate}[leftmargin=15pt,label=\textbf{\arabic*)},itemsep=1pt]
  \item \tmtextbf{Dataset}. We use $m = 300, d = 100$ to test deterministic
  algorithms and $m = 1000, d = 20$ to test stochastic algorithms. In particular, we use $m = 30, d = 10$ when Moreau envelope smoothing is used.
  
  \item \tmtextbf{Initial point}. We set the initial point of all the
  algorithms to be $\xbf^0 \sim \frac{\hat{\xbf}}{\| \hat{\xbf} \|}$, where $\hat{\xbf}
  \sim \mathcal{N} (0, I_d)$.
  
  \item \tmtextbf{Stopping criterion}. The stopping criterion is set to $f
  (\xbf^k) \leq 1.5 f (\xbf^{\star})$.
  
  \item \tmtextbf{Oracle access}. We allow at most $400 m$ gradient oracle
  accesses for all the algorithms.
  
  \item \tmtextbf{Bounded feasible region}. We take $\Xcal = \{\x: \|\x\| \leq M\} $ for $M = 10^5$.
\end{enumerate}

\paragraph{Parameter configuration} 
For each algorithm, we tune its parameters as follows.  First, for the {stepsize} $\alpha$ (and $\gamma$), we set $\alpha = \gamma^{-1} = \frac{\alpha_0}{\sqrt{K}}$, where $\alpha_0$ is selected as the best value from the range $\{10^{-2}, 10^{-1}, 1, 10\}$.  Second, the smoothing parameter is set to $\eta = \varepsilon^2 = 2 f(\xbf^\star)^2 \approx 0.8$.  Finally, in the {\proxacc} method, the {proximal point subproblem} is solved to a gradient-norm tolerance of $0.75$ with a maximum of eight iterations. If more than six iterations are required, we update the regularization parameter via $\gamma_{k+1} = \max\{0.5 \gamma_k, 10\}$ and adjust the smoothing parameter by $\eta \leftarrow \varepsilon^2 / k$.

\subsection{Experiments on deterministic problems}

We compare the following deterministic algorithms:
\begin{itemize}[leftmargin=15pt]
\item \textit{Deterministic subgradient method} (\gm). $\xbf^{k + 1} = \xbf^k - \alpha_k f' (\xbf^k)$.
\item \textit{Deterministic gradient descent on smoothed function} ({\sspg} with no randomness, \Cref{alg:Smoothed-spg}).
\item \textit{Inexact proximal point with deterministic Nesterov acceleration} ({\proxaccvr} with no randomess, \Cref{alg:Smoothed-ipp}).
\end{itemize}
\begin{figure}[h]
\centering
\includegraphics[scale=0.22]{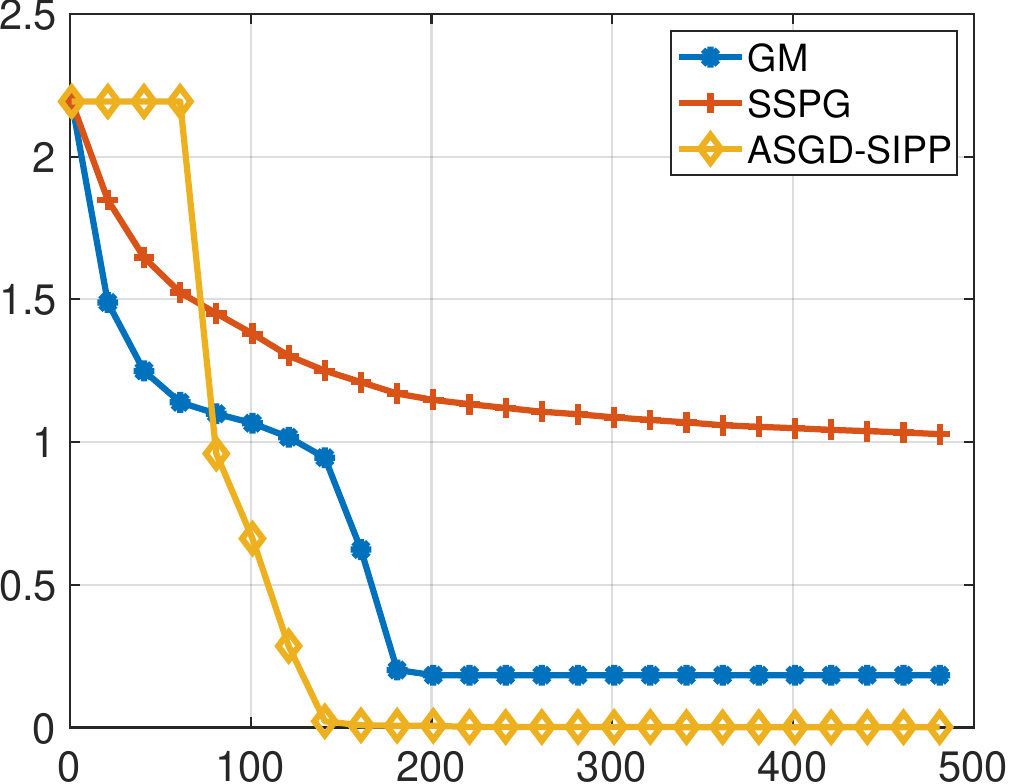}
\includegraphics[scale=0.22]{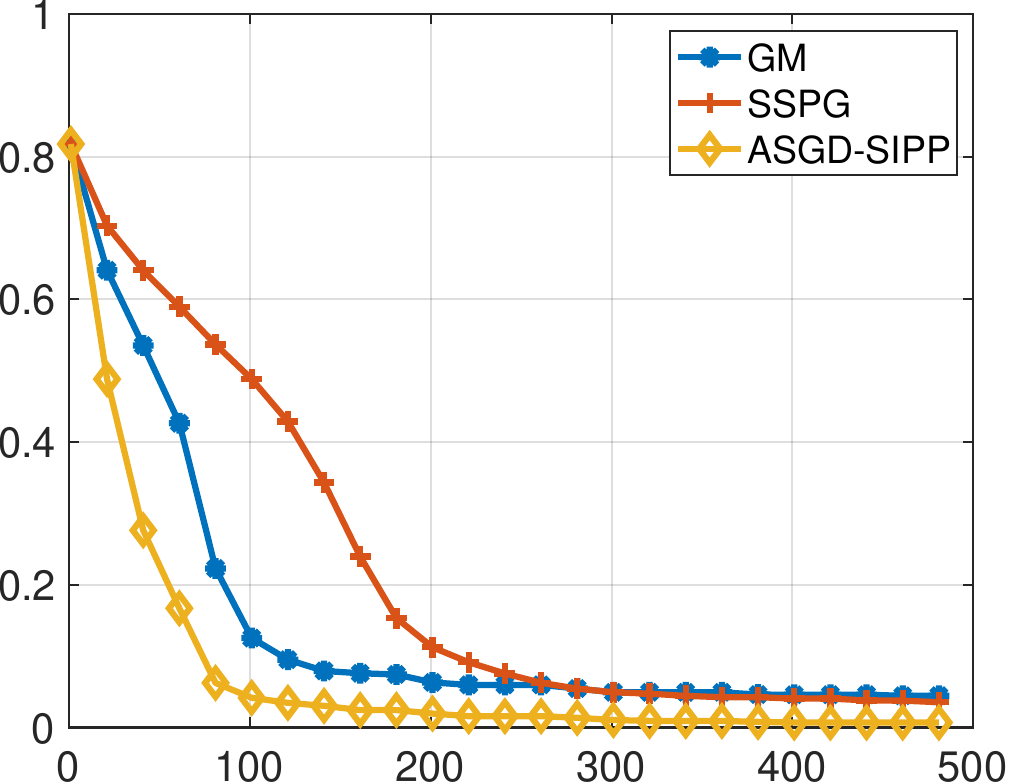}
\includegraphics[scale=0.22]{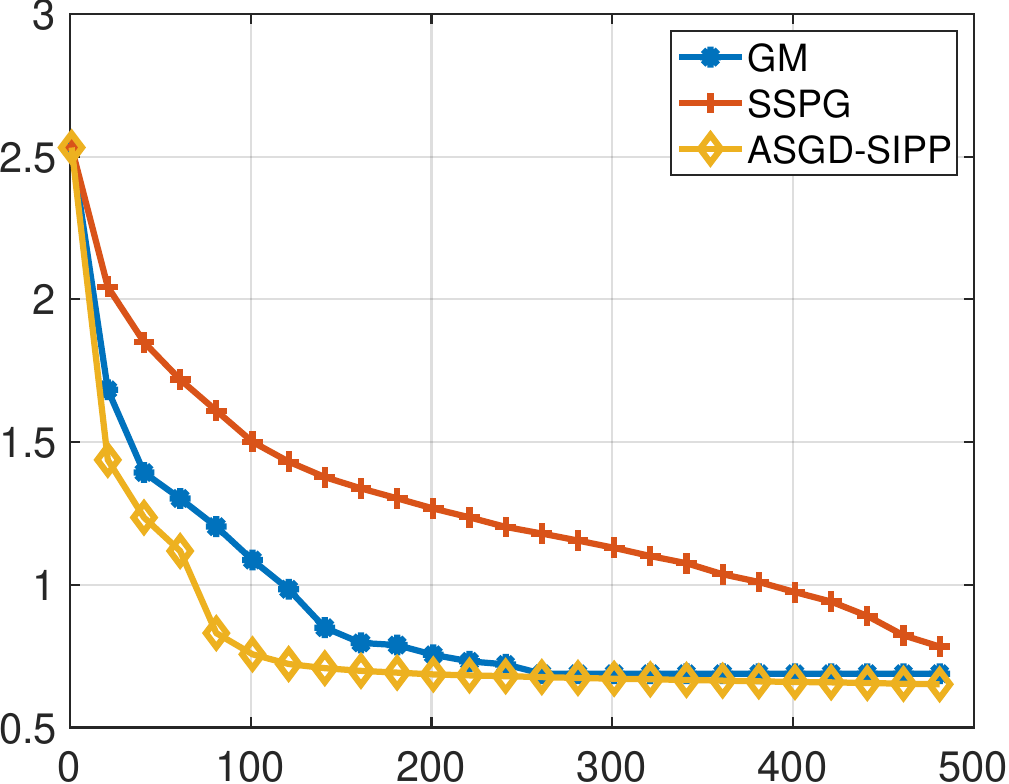}
\includegraphics[scale=0.22]{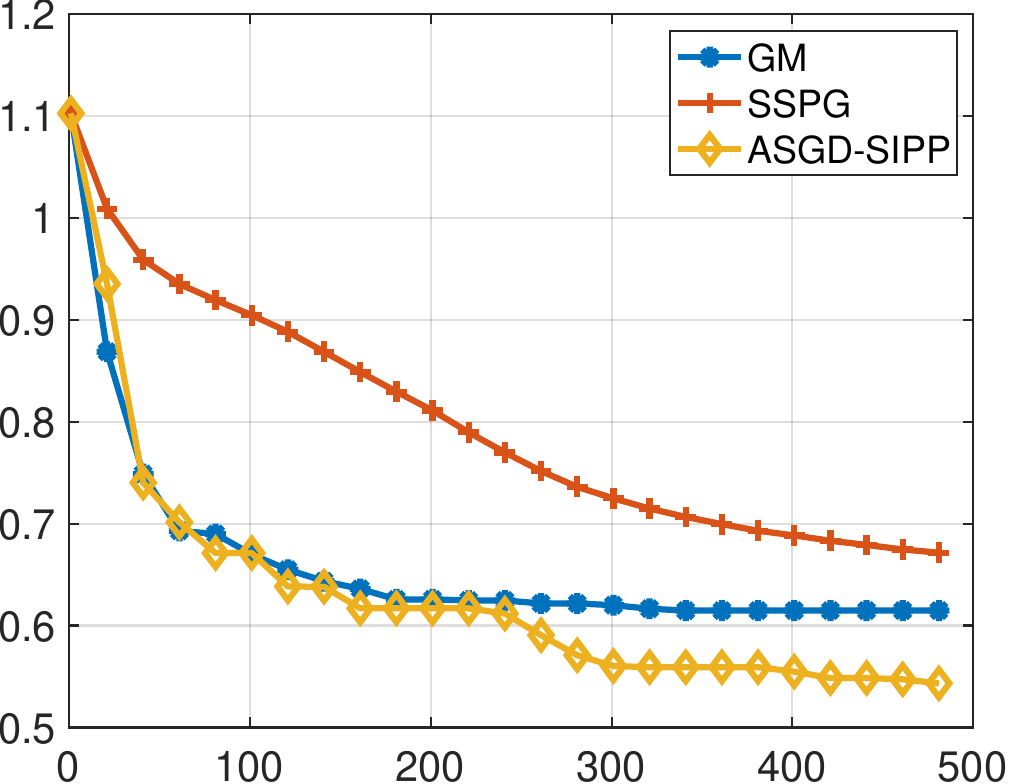}
\includegraphics[scale=0.22]{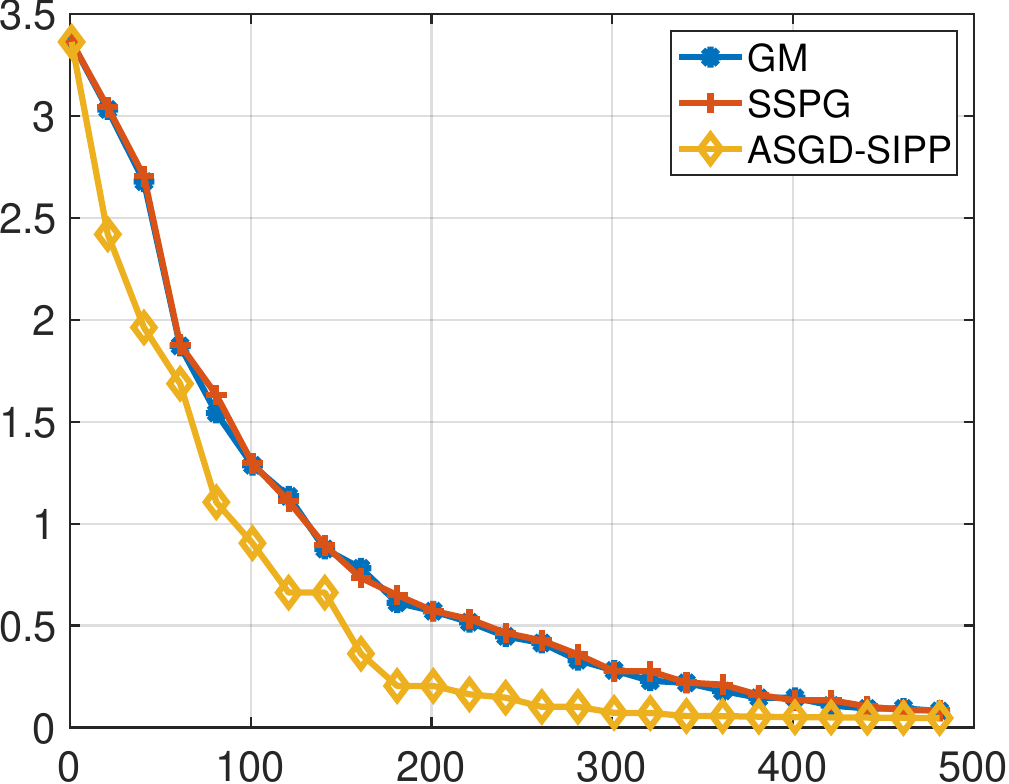}
\includegraphics[scale=0.22]{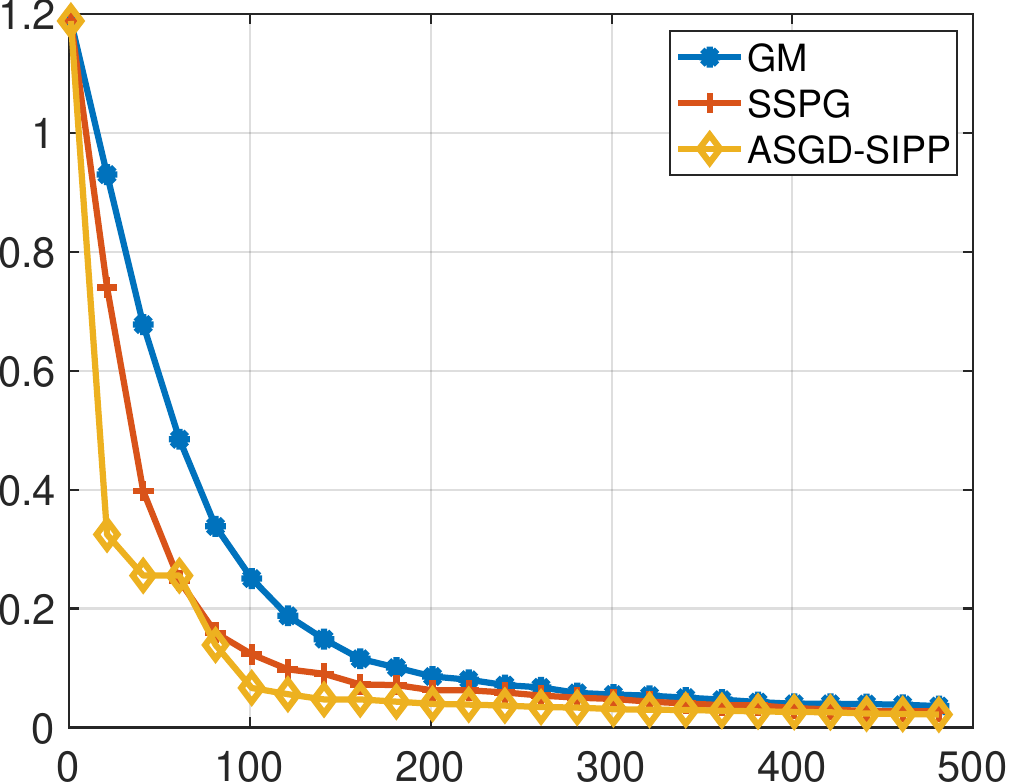}
\includegraphics[scale=0.22]{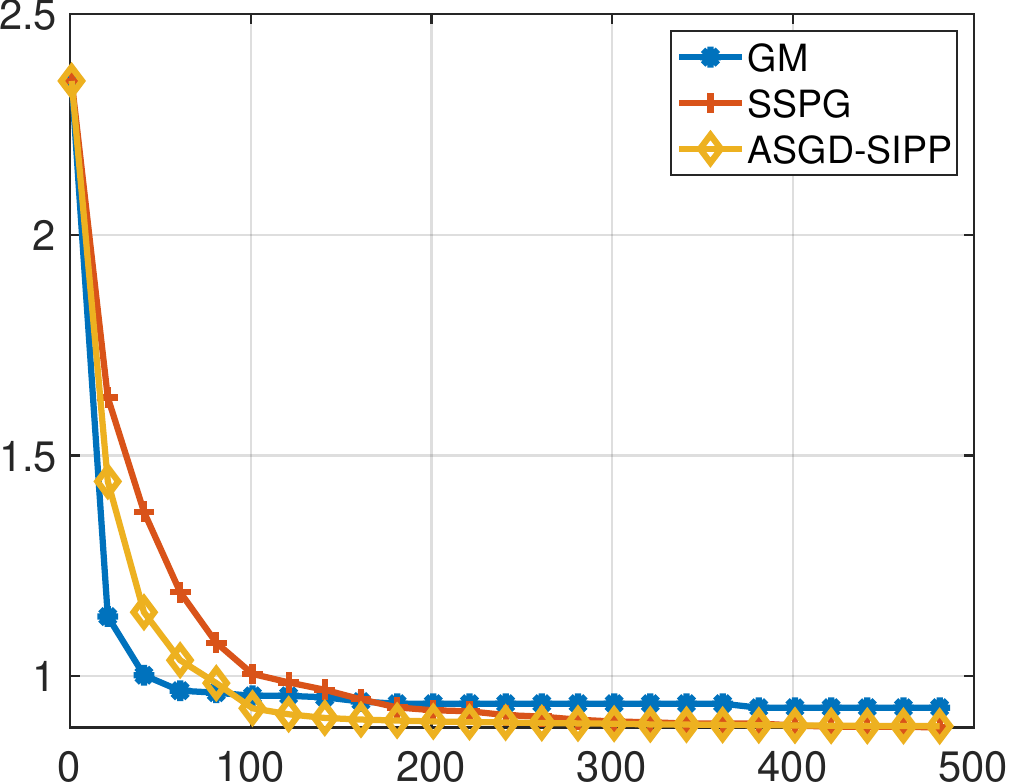}
\includegraphics[scale=0.22]{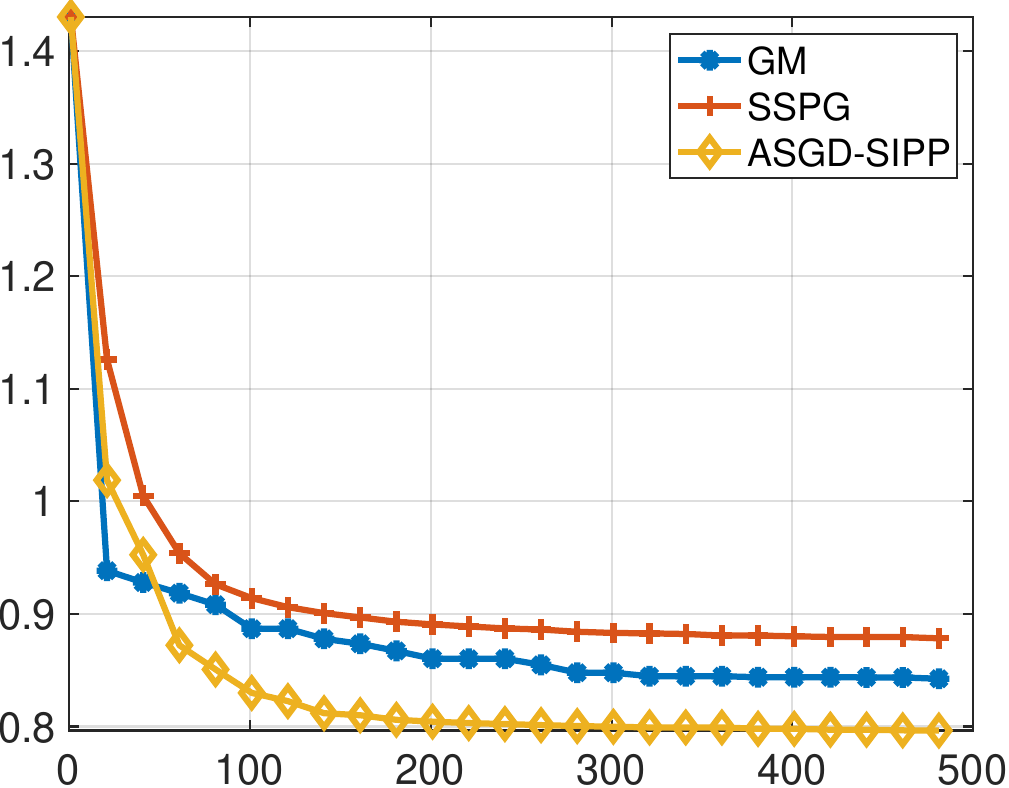}	
\caption{Deterministic problems. First row: $h(z) = z^2$; Second row: $h(z) = e^x + 10$; Within each row from left to right: $(\kappa, p) \in \{(1, 0), (10, 0), (1, 0.2), (10, 0.2)\}$. x-axis: iteration number. y-axis: $f(\xbf^k)$~.\label{fig-deterministic}}
\end{figure}
\Cref{fig-deterministic} illustrates the performance of different algorithms on the tested problems. As our theory suggests, we often observe that {\proxaccvr} outperforms {\gm} when the smoothing parameters are appropriately configured. In addition, even if {\sspg} does not yield improved complexity, its practical performance in terms of function value decrease is often more stable.
\vspace{-5pt}
\paragraph{Range of Optimal Stepsizes for Different Smoothing Approaches.}
Our theory predicts that Moreau envelope smoothing admits a different range of
admissible stepsizes than Nesterov smoothing. To examine this behavior empirically,
our second experiment studies the sensitivity of each method to the choice of
stepsize. \Cref{fig-det-robust} reports the number of iterations required to achieve
convergence under various stepsize configurations. As the figure illustrates, the admissible stepsizes for Moreau envelope smoothing are larger than for Nesterov smoothing, which is  consistent with our theoretical findings.

\begin{figure}[h]
\centering
\includegraphics[scale=0.22]{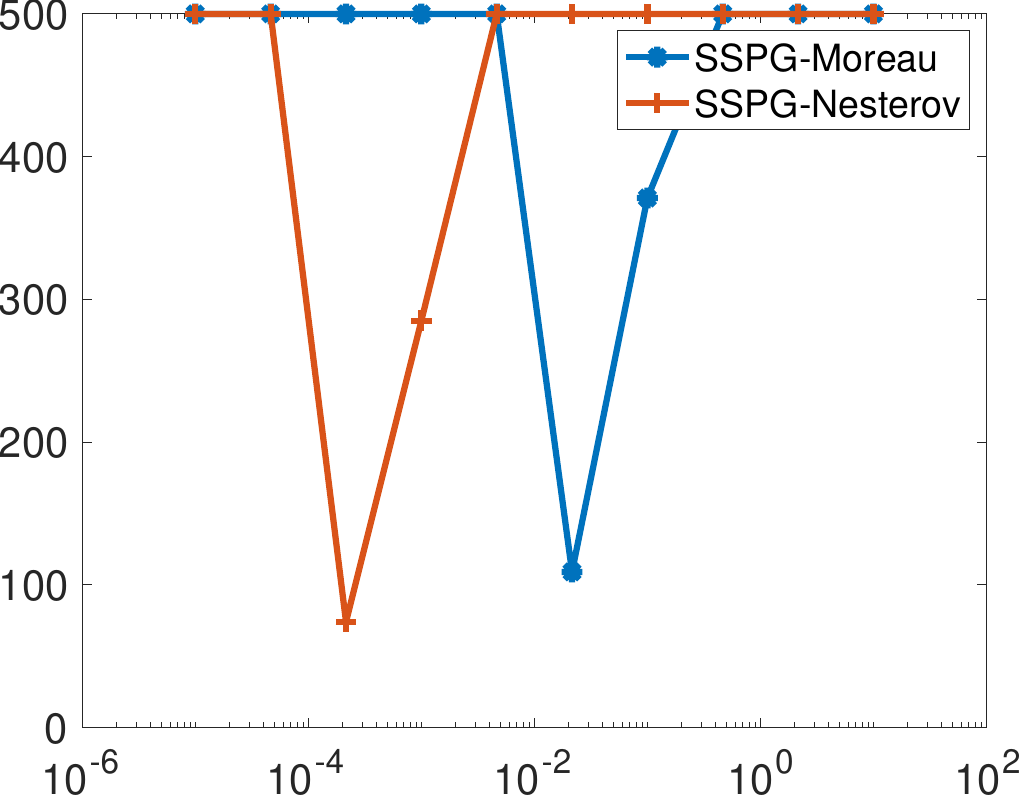}
\includegraphics[scale=0.22]{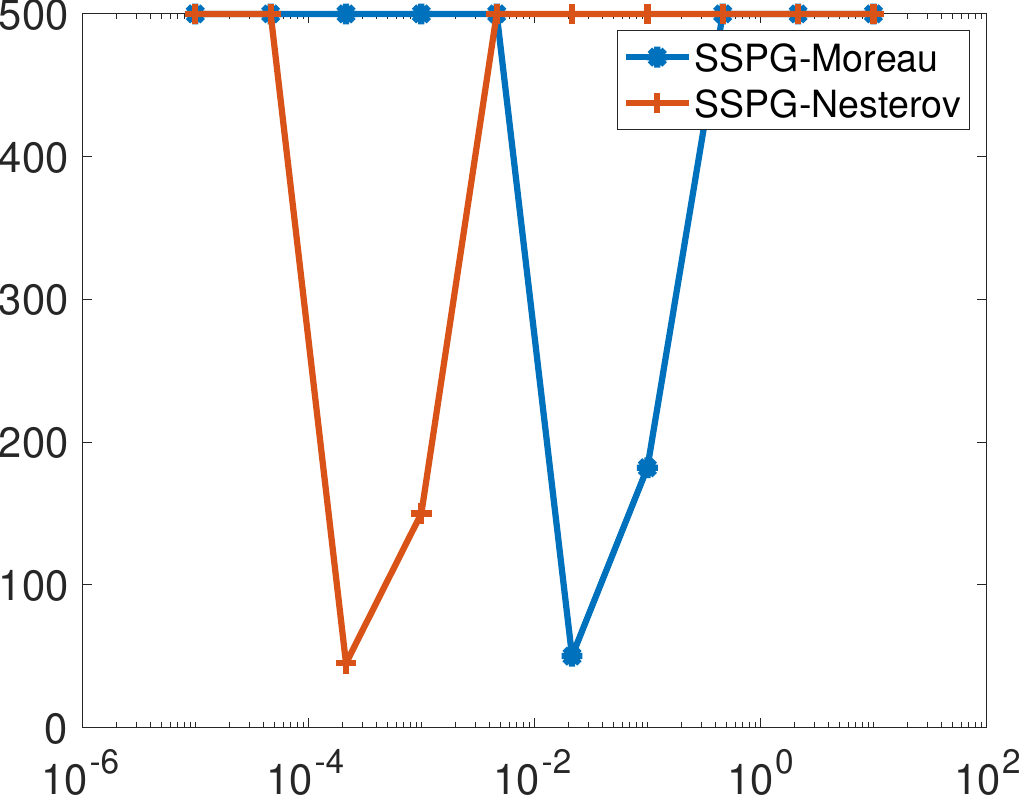}
\includegraphics[scale=0.22]{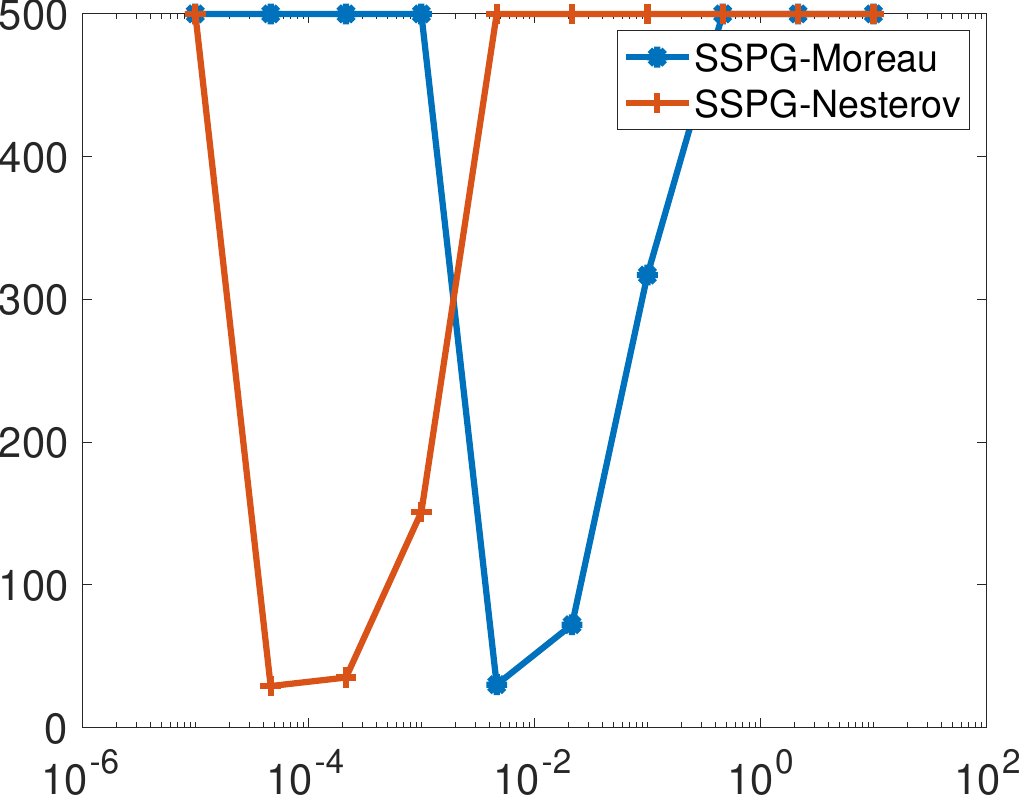}
\includegraphics[scale=0.22]{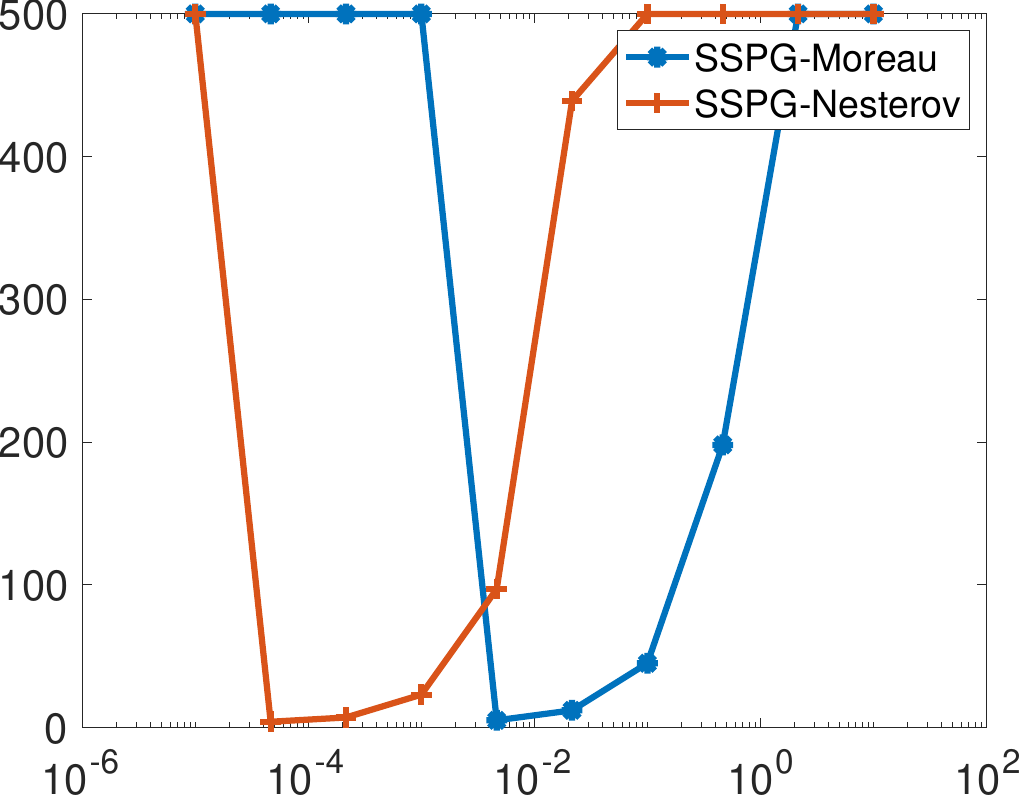}
\caption{Experiments comparing the range of optimal stepsize for different smoothing approaches\label{fig-det-robust}. x-axis: $\alpha_0$. y-axis: number of iterations to reach the stopping criterion.}
\end{figure}

\subsection{Experiments on stochastic problems}
 Let $\xi_k \sim \text{Uniform}(\{ \xi_1,
   \ldots, \xi_m \}) $ be a sample drawn uniformly at random.
We evaluate the following stochastic algorithms:
\begin{itemize}[leftmargin=15pt,itemsep=2pt,topsep=5pt]
\item \textit{Stochastic subgradient method} (\sgm). $\xbf^{k + 1} = \xbf^k - \alpha_k f' (\xbf^k, \xi^k)$
\item \textit{Stochastic gradient descent on smoothed function} ({\sspg}, \Cref{alg:Smoothed-spg}).
\item \textit{Inexact proximal point with stochastic Nesterov acceleration} ({\proxaccvr}, \Cref{alg:Smoothed-ipp}).
\end{itemize}
\begin{figure}[h!]
\centering
\includegraphics[scale=0.22]{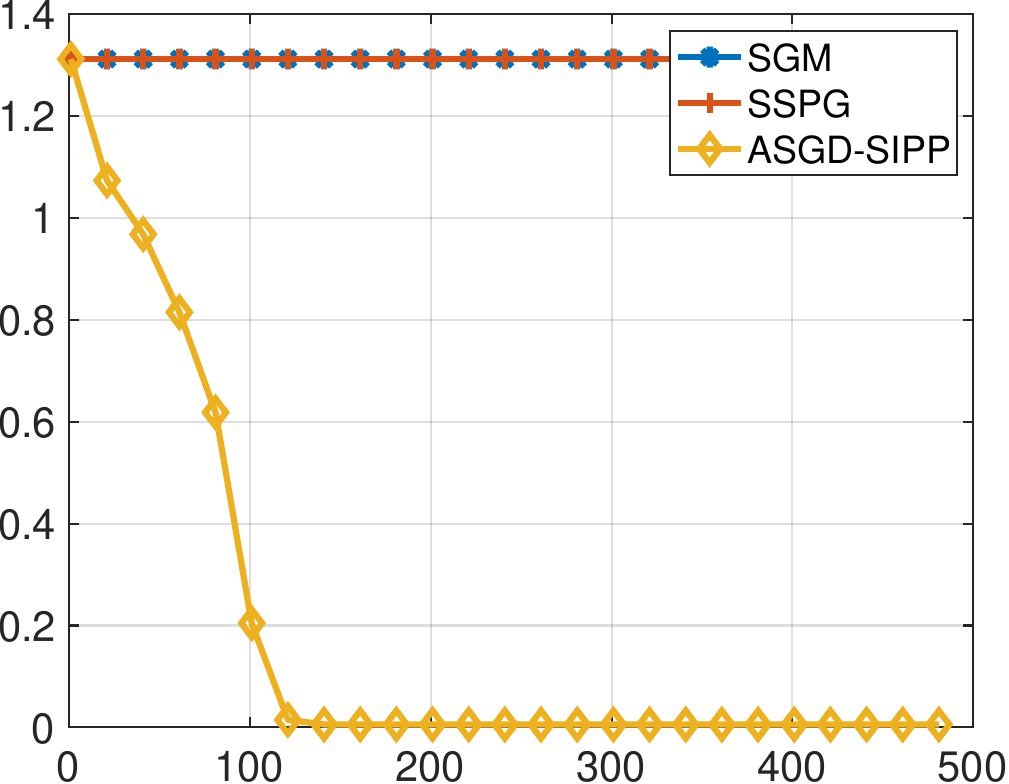}
\includegraphics[scale=0.22]{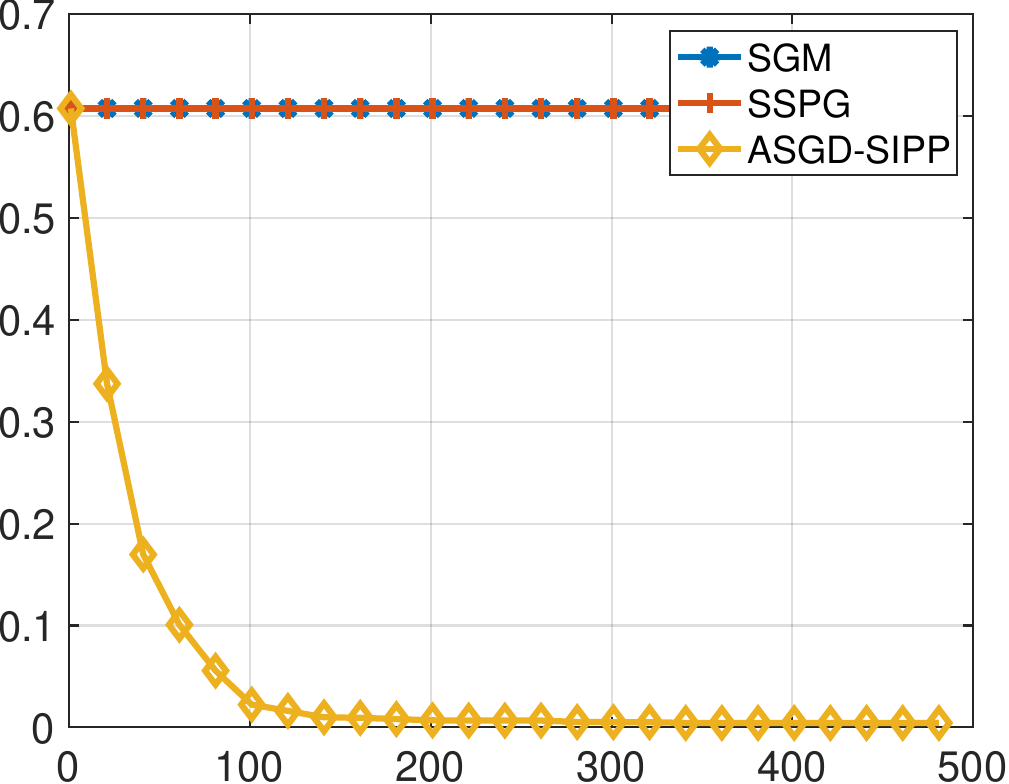}
\includegraphics[scale=0.22]{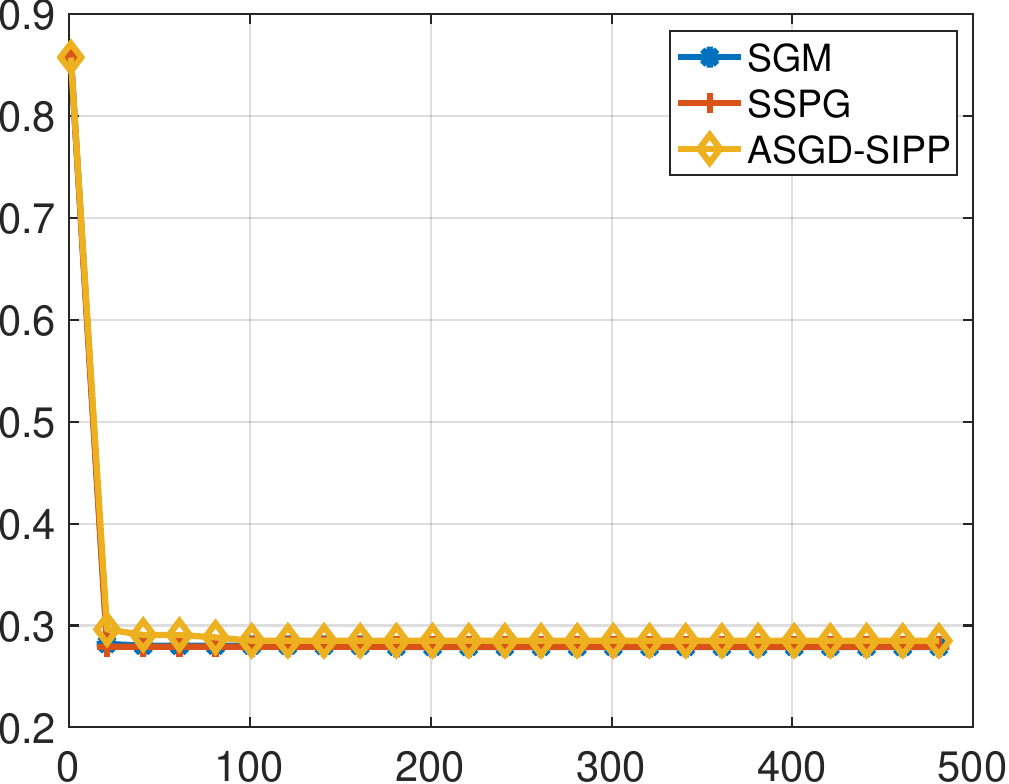}
\includegraphics[scale=0.22]{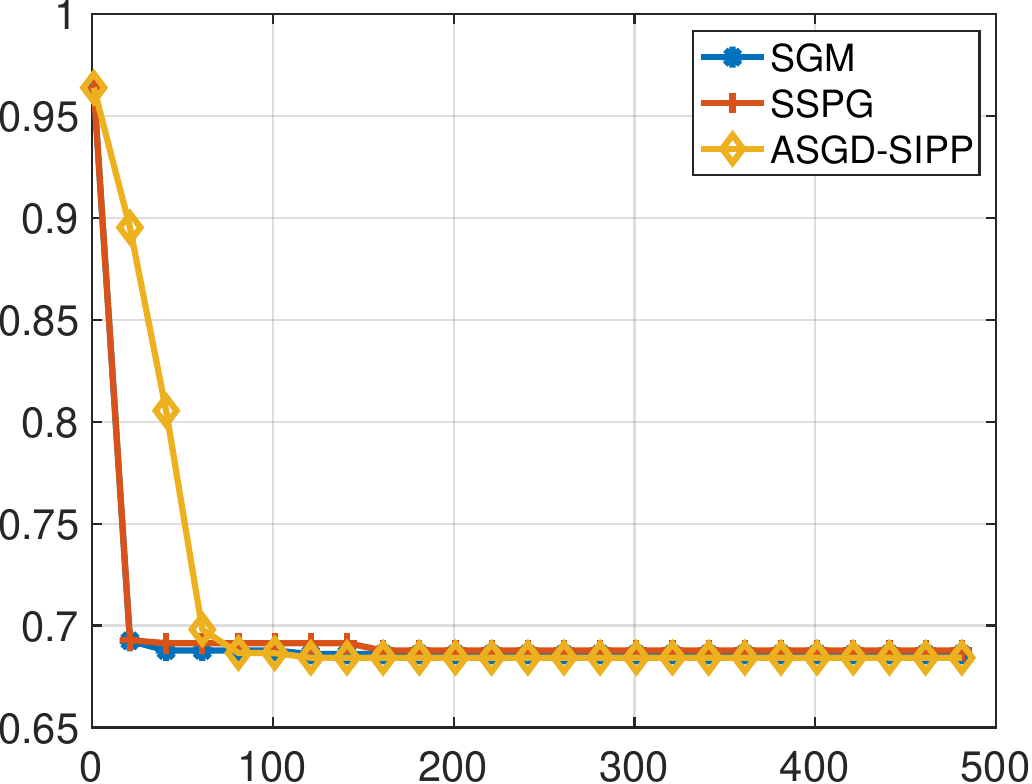}
\includegraphics[scale=0.22]{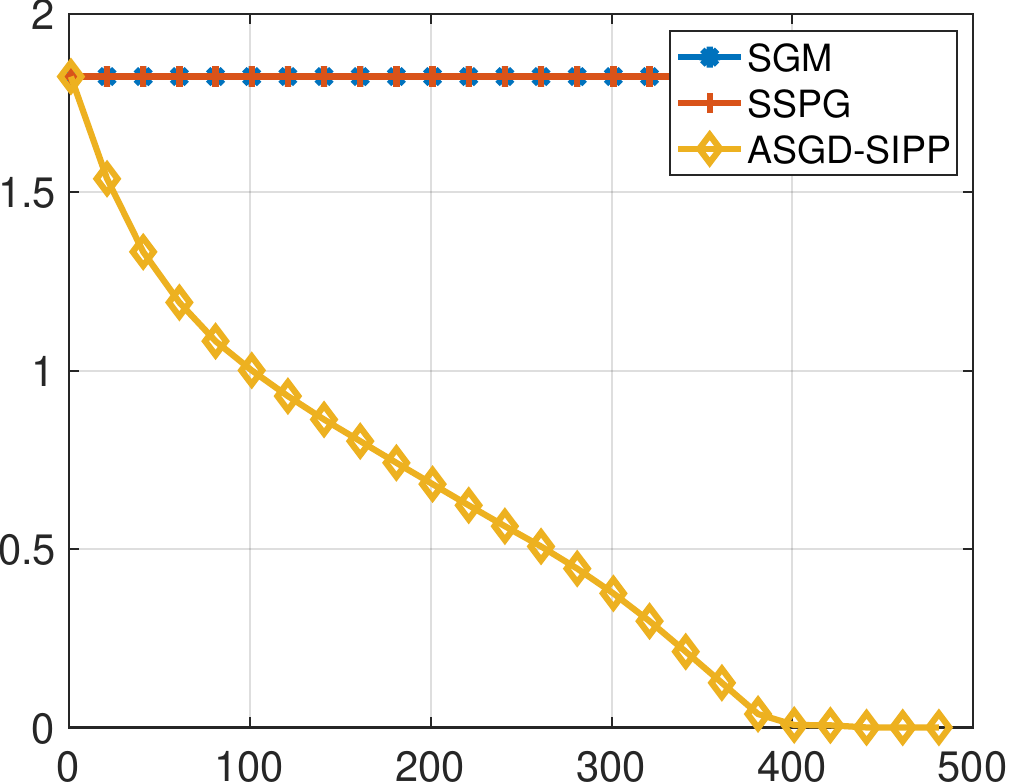}
\includegraphics[scale=0.22]{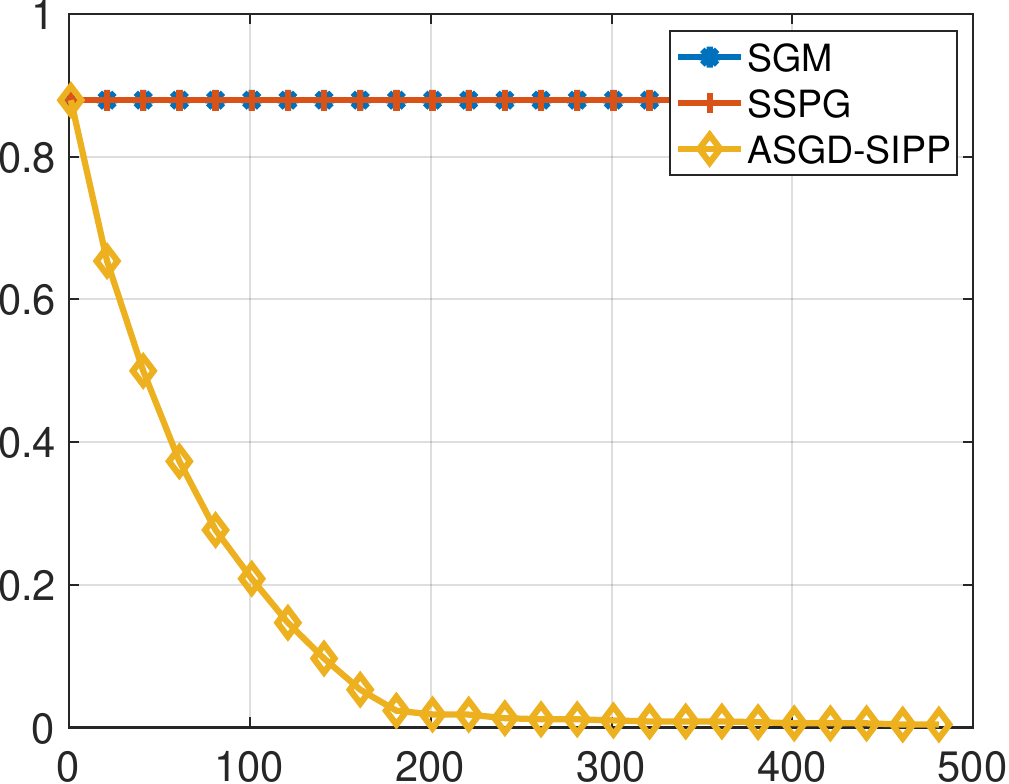}
\includegraphics[scale=0.22]{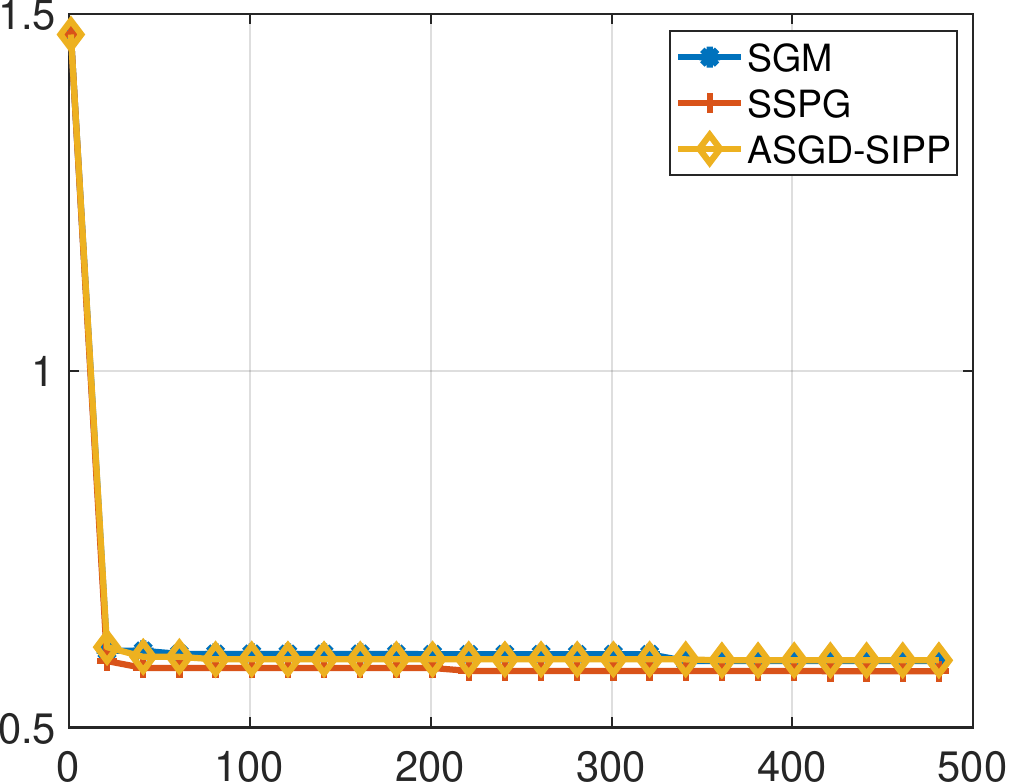}
\includegraphics[scale=0.22]{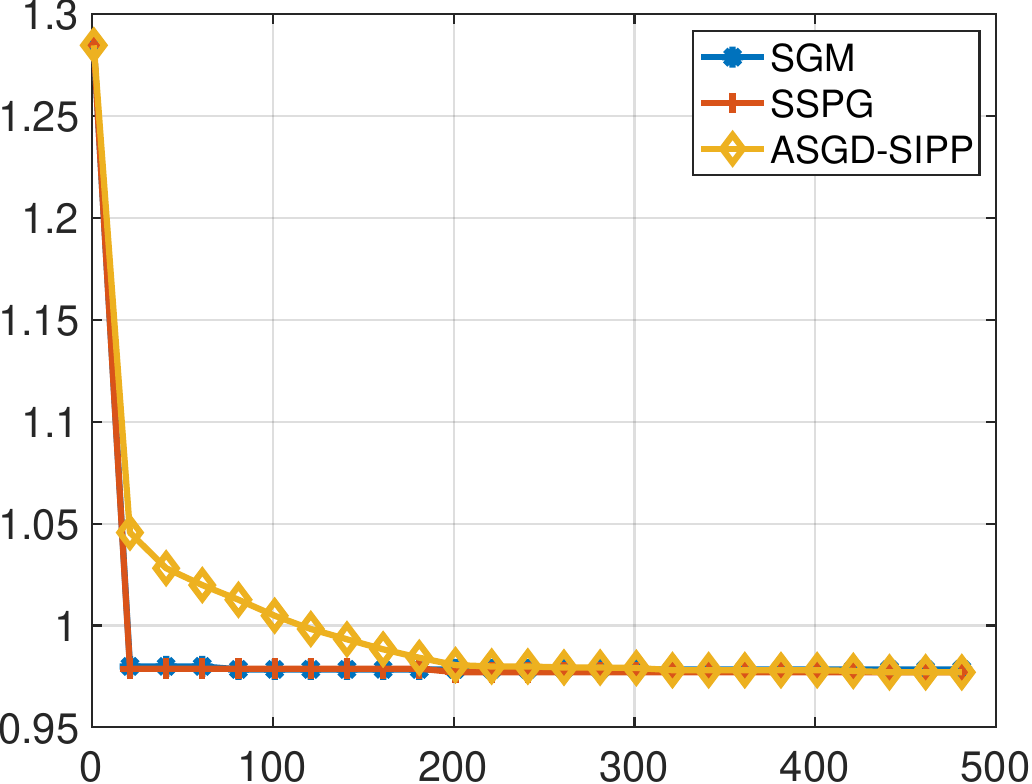}
\caption{Stochastic problems. First row: $h(z) = z^2$; Second row: $h(z) = z^5 + z^3 + 1$. Within each row from left to right: $(\kappa, p) \in \{(1, 0), (10, 0), (1, 0.2), (10, 0.2)\}$. x-axis: iteration number. y-axis: $f(\xbf^k)$.\label{fig-stochastic}}
\end{figure}
\Cref{fig-stochastic} summarizes the performance of the stochastic algorithms on the
tested problems. With appropriate choices of the smoothing parameter and the
minibatch size for the proximal-point subproblems, we observe that smoothing often
achieves significantly better performance than the vanilla {\sgm}. Although smoothing
does not universally accelerate convergence, it typically yields more robust behavior
across different instances. Notably, on certain problems, convergence is observed only
when {\aglssipp} is employed.
\vspace{-5pt}
\paragraph{Robustness of Moreau Envelope Smoothing.}
Moreau envelope smoothing exploits the full stochastic objective via a proximal
update, rather than relying on a first-order approximation as in
gradient-based methods. As a result, it is closely related to the stochastic proximal
point method, and we therefore expect it to inherit the stability properties of
proximal algorithms \citep{deng2021minibatch, asi2019stochastic}. \Cref{fig-robust}
reports the number of iterations required to meet the stopping criterion for different
values of $\alpha_0$. The results confirm that Moreau envelope smoothing is indeed more
robust than both {\sgm} and Nesterov smoothing, consistent with the existing theoretical findings.

\begin{figure}[h!]
\centering
\includegraphics[scale=0.22]{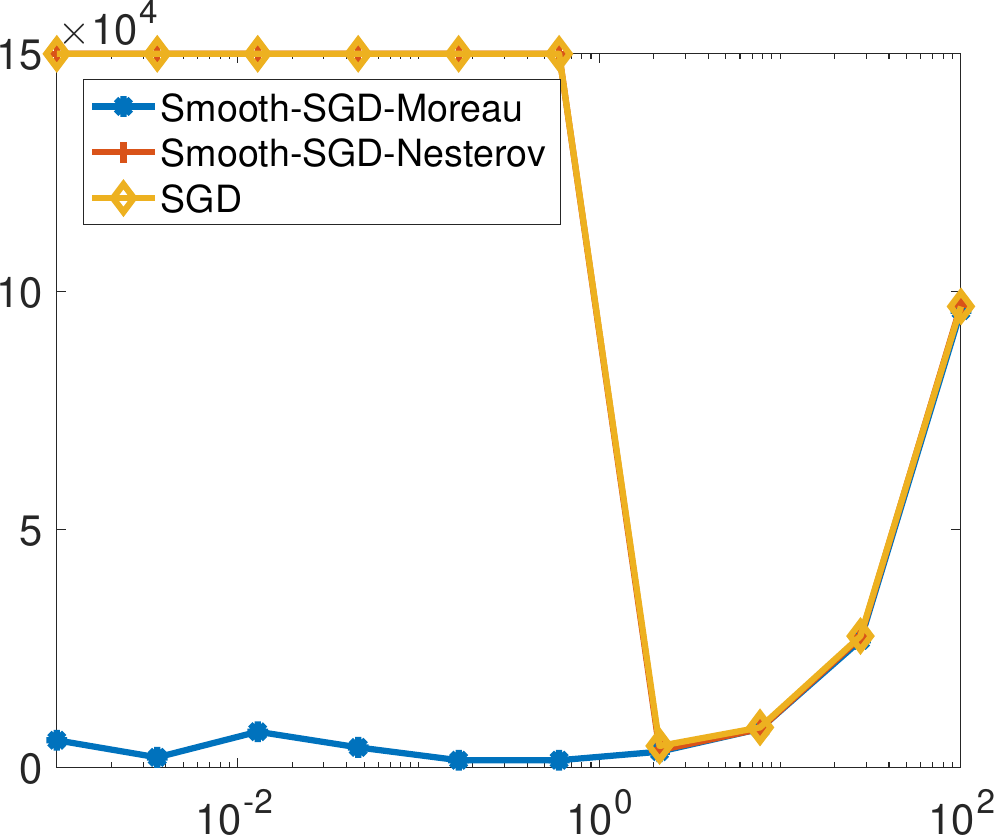}
\includegraphics[scale=0.22]{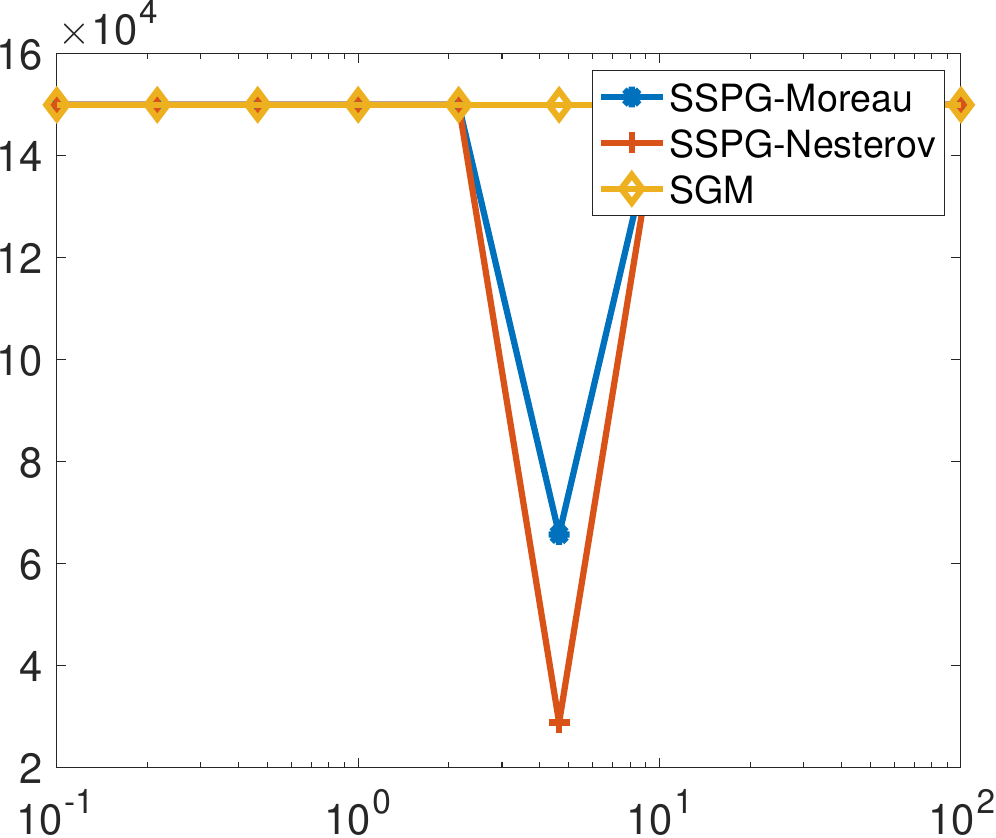}
\includegraphics[scale=0.22]{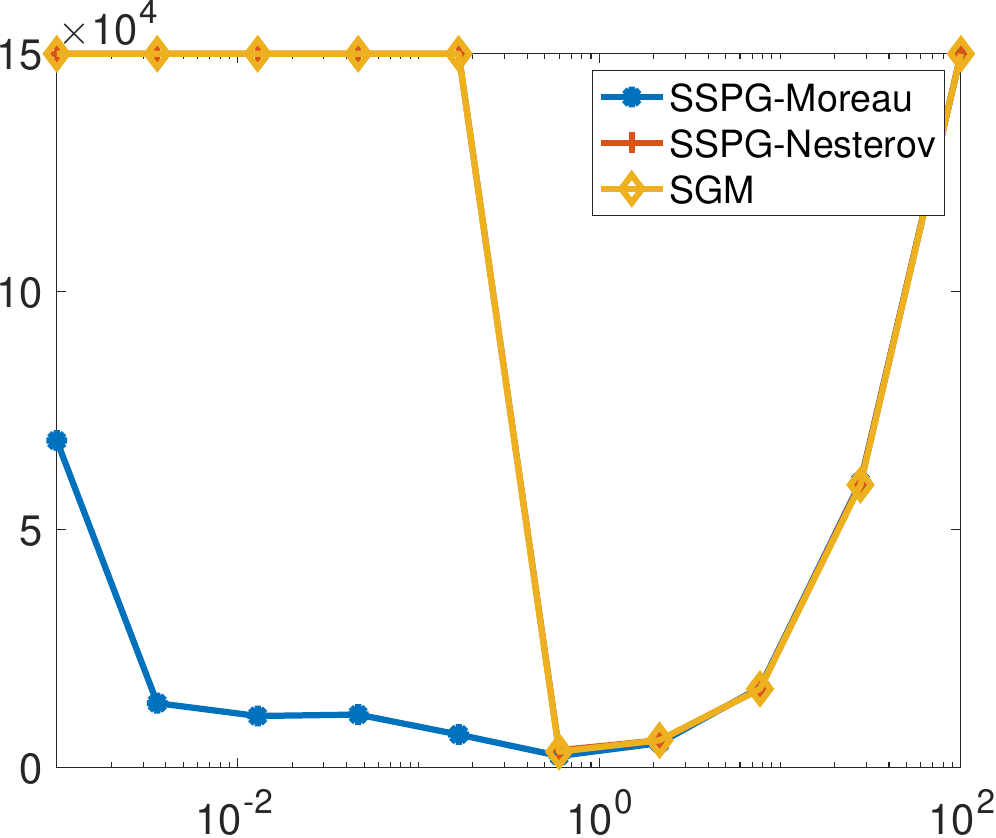}
\includegraphics[scale=0.22]{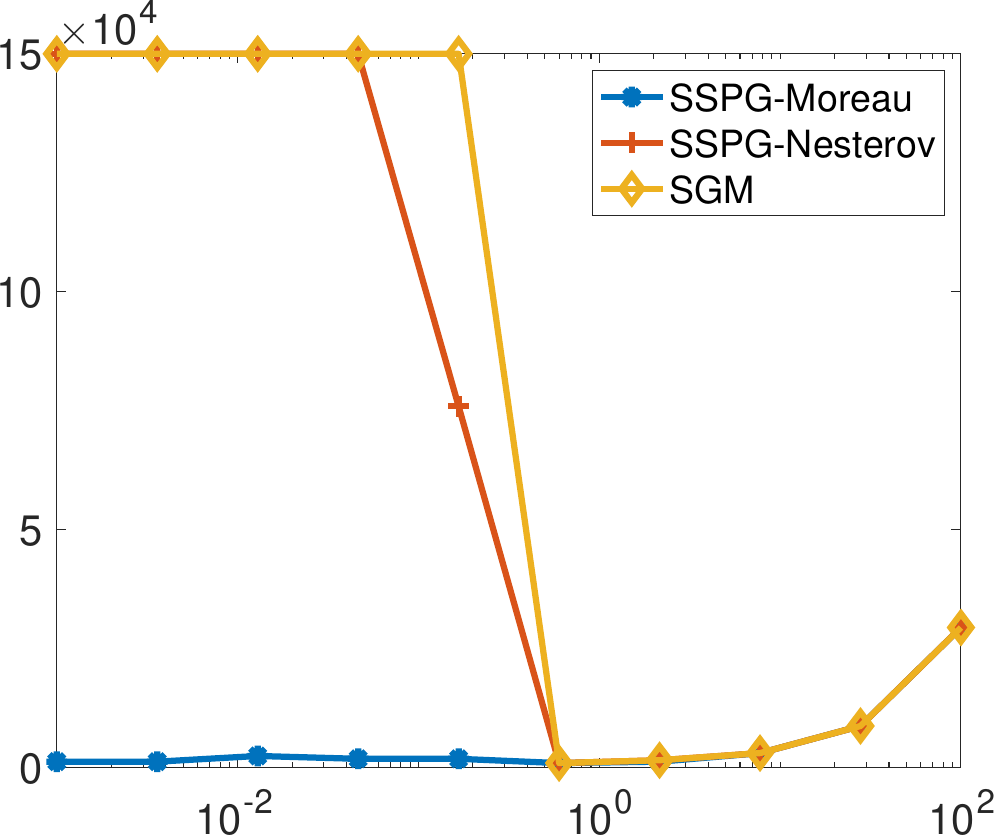}
\caption{Experiments comparing robustness of different smoothing approaches\label{fig-robust}. $x$-axis: $\alpha_0$. $y$-axis: number of iterations to reach the stopping criterion.}
\end{figure}

\subsection{Experiments on generalized Lipschitz problems}

This section reports additional experiments evaluating our smoothing algorithm on non-Lipschitz problems. In particular, we consider the piecewise quadratic problem
\begin{align*}
  \min_{\xbf \in \mathbb{R}^d}  \max_{1 \leq j \leq m} \{ \tfrac{1}{2}
  \langle \xbf, A_j \xbf \rangle - \langle \bbf_j, \xbf \rangle \}, & 
\end{align*}
where $\{A_i\}$ are symmetric matrices. 
\paragraph{Dataset generation} We generate $A_i = C C^{\top} - \sigma I$ with $C_{i j}
\sim \mathcal{N} (0, 1), \bbf \sim \mathcal{N} (0, I_d)$ and $\sigma \in \{0, 1\}$. When $\sigma = 0$, we generate $\{A_i \}$ such that at least one of them is positive definite, so that the objective function is coercive. We test $m \in \{ 5, 10 \}$ and $d \in \{ 20, 100 \}$.
As a result, the objective is not globally Lipschitz, but it satisfies \Cref{assum:level-bounded-phi-eta}.

\paragraph{Benchmark algorithms}
When $\sigma = 0$, we compare {\agls} applied to the softmax-smoothed objective with the normalized (sub)gradient method (\texttt{NSGM}) that is designed for generalized Lipschitz problems \cite{grimmer2019convergence}.
The \texttt{NSGM} update takes the form $\xbf^{k + 1} = \xbf^k - \eta \frac{f' (\xbf^k)}{\| f' (\xbf^k) \|}$.
When $\sigma = 1$, we use {\aglssipp} as the benchmark algorithm. As \Cref{fig-generalized-smooth} illustrates, on non-Lipschitz optimization tasks, our method exhibits competitive performance in practice compared to the subgradient-based method.
\begin{figure}[h]
\centering
\includegraphics[scale=0.21]{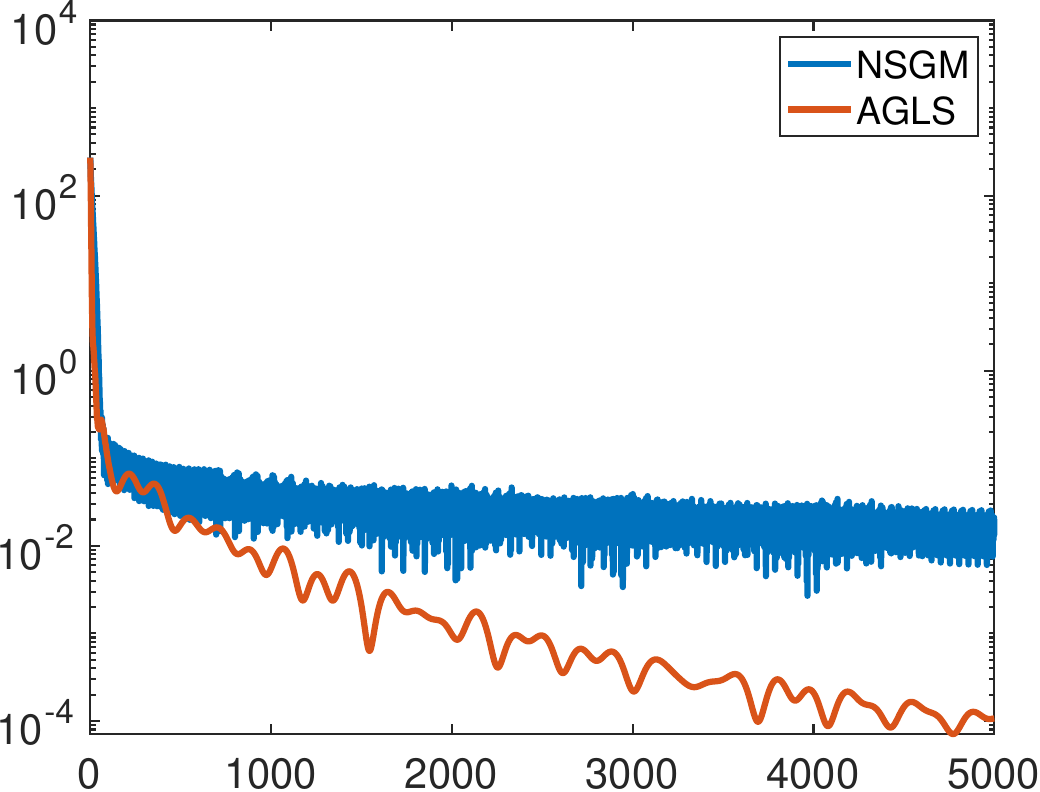}
\includegraphics[scale=0.21]{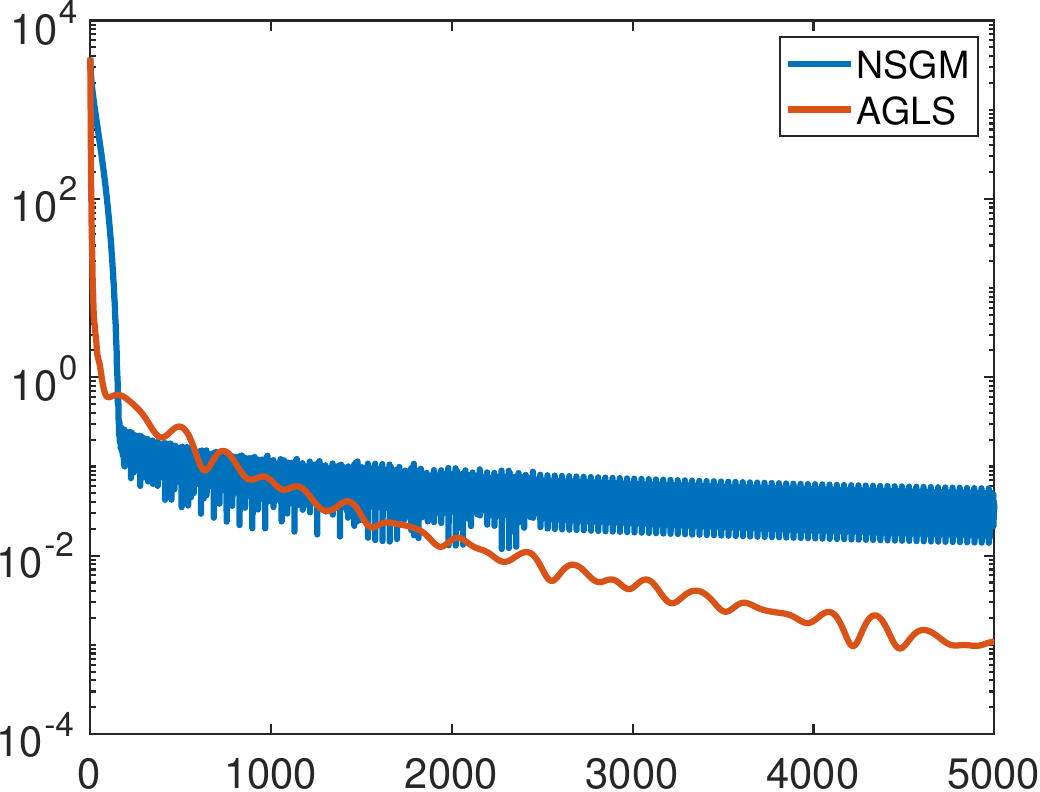}
\includegraphics[scale=0.21]{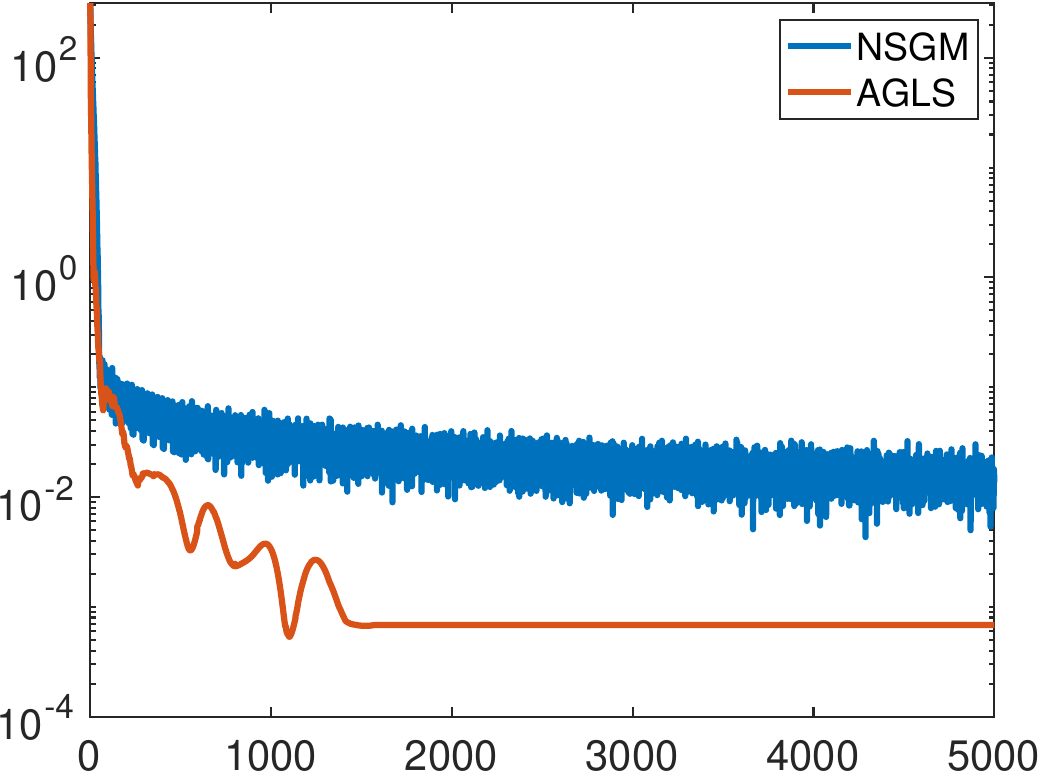}
\includegraphics[scale=0.21]{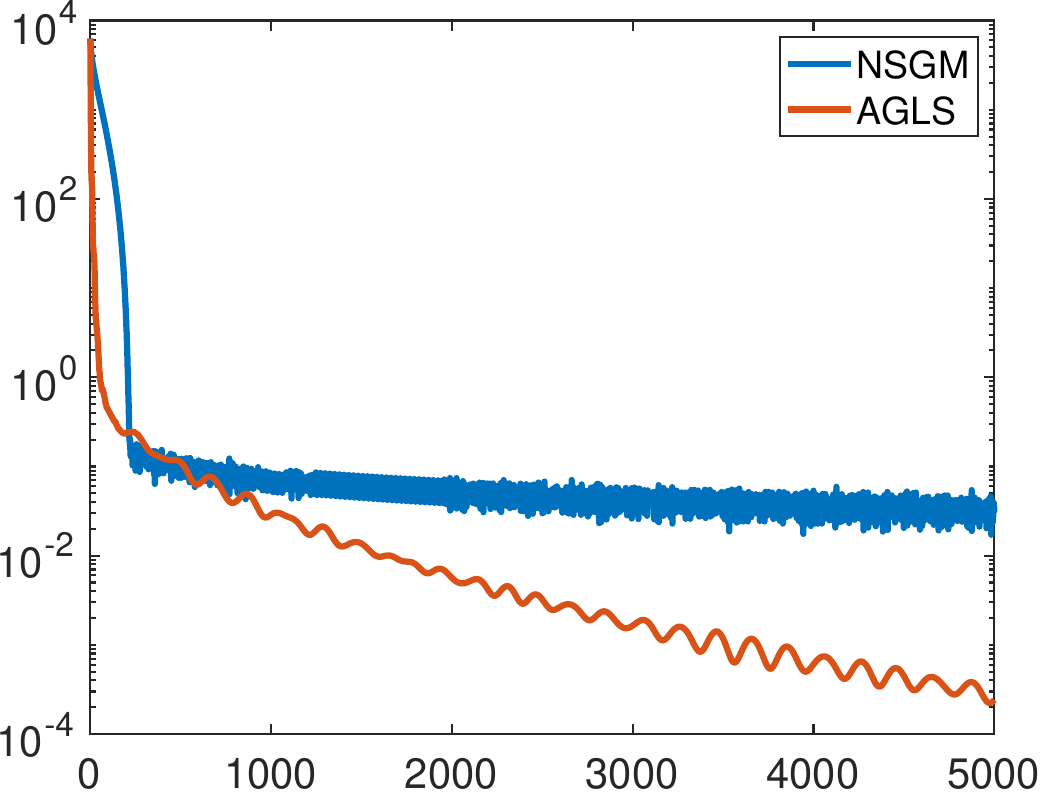}
\includegraphics[scale=0.22]{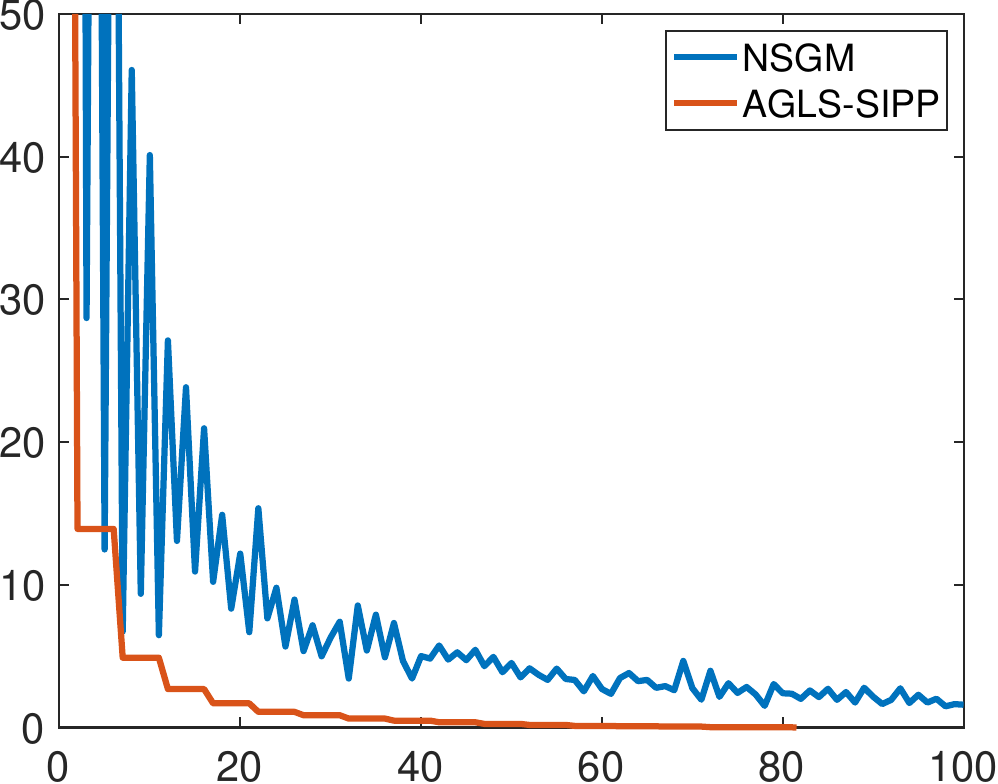}
\includegraphics[scale=0.22]{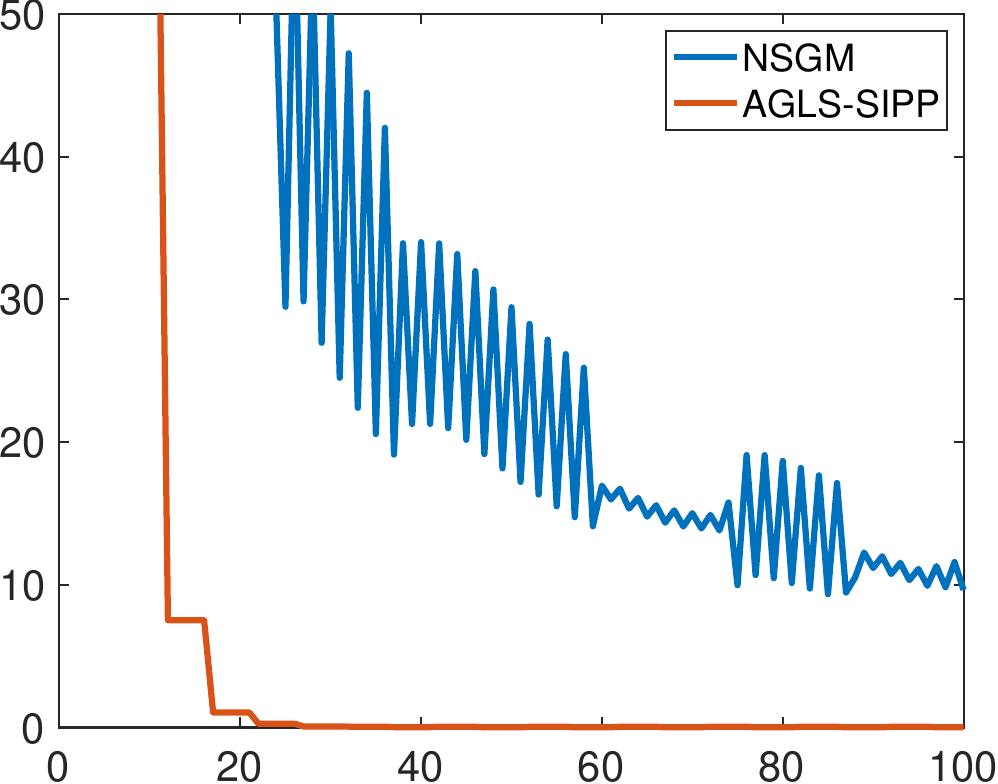}
\includegraphics[scale=0.22]{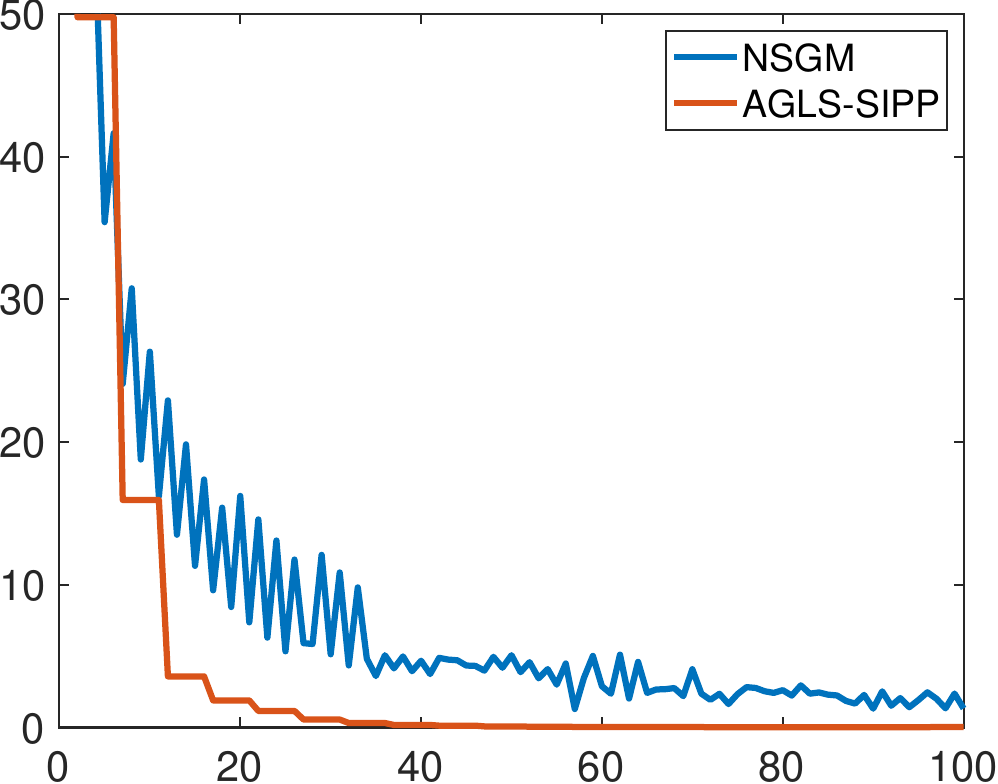}
\includegraphics[scale=0.22]{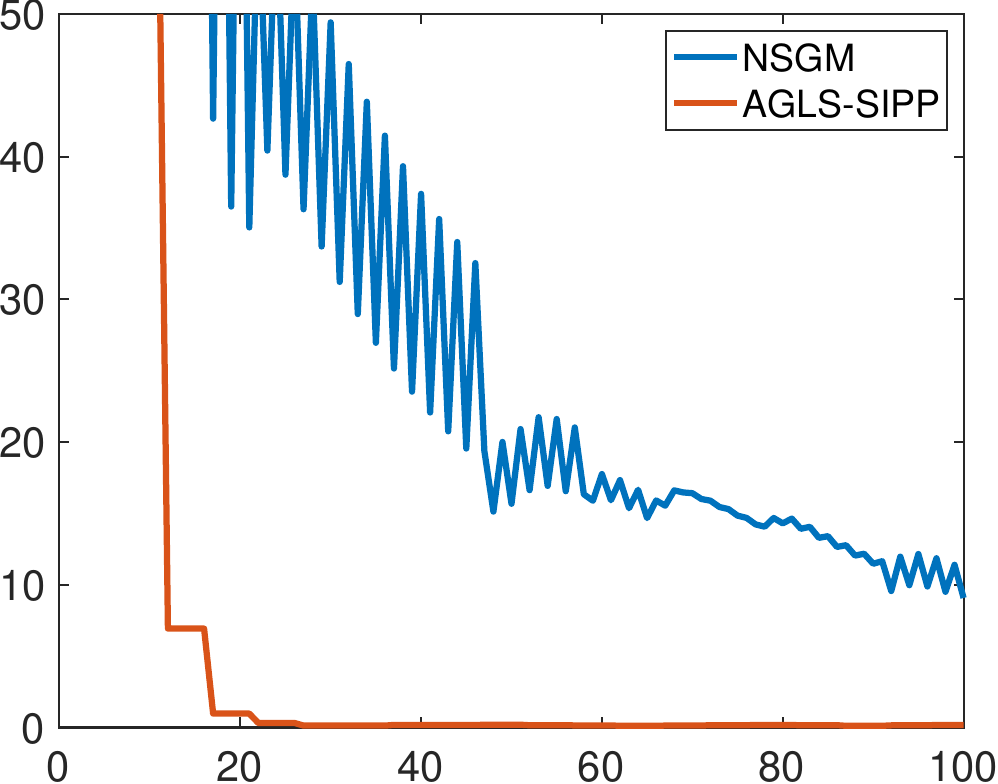}
\caption{Experiments on the comparison between {\agls} (\aglssipp) and \texttt{NSGM} for generalized smooth problems. The first row corresponds to $\sigma = 0$ and the second row corresponds to $\sigma = 1$. From left to right: $(m, d) \in \{(5, 20), (5, 100), (10, 20), (10, 100)\}$. $x$-axis: iteration number. $y$-axis: $f(\xbf^k)$  (or $f(\xbf^k) - f^\star$ for convex problems) .\label{fig-generalized-smooth}}
\end{figure}

\section{Conclusions}
We introduced a general smoothing framework for weakly convex optimization that unifies and extends approaches such as Nesterov-type smoothing and Moreau-envelope smoothing. Our analysis provides a unified complexity theory for both deterministic and stochastic settings. By applying an inexact proximal point scheme to the smooth approximations, we improve the deterministic complexity for achieving an $\varepsilon$-approximate stationary point from $\Ocal(1/\varepsilon^{4})$ to $\Ocal(1/\varepsilon^{3})$. We also establish a complexity of $\Ocal(\max\{1/\varepsilon^{3}, 1/(m\varepsilon^{4})\})$ in the stochastic setting. 
Furthermore, the proposed line search accelerated method enables an $\Ocal(1/\varepsilon^{3})$ complexity without requiring global smoothness. 
Several promising directions remain for future research. One avenue is to explore additional smoothing techniques, such as randomized smoothing via Gaussian convolution. From a practical standpoint, developing adaptive strategies for choosing the smoothing parameter and the proximal regularization may yield further speedups.

\renewcommand \thepart{}
\renewcommand \partname{}

\bibliographystyle{abbrvnat}
\bibliography{ref}

\doparttoc
\faketableofcontents
\part{}

\newpage
\appendix

\addcontentsline{toc}{section}{Appendix}

\part{Appendix} 
\parttoc

\section{Auxiliary results}
\paragraph{Local Lipschitzness and subgradient bound}
We establish some connection between local Lipschitz continuity and the subgradient norm bound. 
\begin{lem}
\label{lem:lip-bdd-subgrad}Let $g: \Ebb\raw \Rbb\cup\{+\infty\}$ be a proper weakly
convex function and $\Scal\subseteq \dom g$ be a convex open set.  Then the following two claims are equivalent. 
\begin{enumerate}
\item[a).] There exists $L>0$ such that $\vert g(\xbf)-g(\ybf)\vert\le L\norm{\xbf-\ybf}$
for any $\xbf,\ybf\in\Scal$. 
\item[b).] There exists $L>0$ such that $\norm{\vbf}\le L$ for any
$\vbf\in\partial g(\xbf)$ and $\xbf\in\Scal$.
\end{enumerate}
\end{lem}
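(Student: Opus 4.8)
\textbf{Proof proposal for \Cref{lem:lip-bdd-subgrad}.}

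The plan is to prove both implications directly, using the weak convexity of $g$ to convert between first-order (subgradient) information and zeroth-order (function value) information. The key tool is that $g(\xbf) + \frac{\rho}{2}\norm{\xbf}^2$ is convex, so $g$ enjoys a subgradient inequality with a quadratic correction: for any $\vbf \in \partial g(\ybf)$ and any $\xbf$, $g(\xbf) \ge g(\ybf) + \inner{\vbf}{\xbf-\ybf} - \frac{\rho}{2}\norm{\xbf-\ybf}^2$. Since $\Scal$ is open, every point has a small ball around it contained in $\Scal$, which lets us localize all the estimates and keep the quadratic error term negligible.

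For the implication $b) \Rightarrow a)$: fix $\xbf, \ybf \in \Scal$. Because $\Scal$ is convex and open, I would consider the segment $[\xbf,\ybf]$ and use the fact that a weakly convex function is locally Lipschitz in the interior of its domain (this is standard; alternatively one can argue along the segment). The cleanest route: pick $\vbf \in \partial g(\xbf)$ and $\wbf \in \partial g(\ybf)$, both of norm $\le L$ by hypothesis; adding the two weak-convexity subgradient inequalities (one at $\xbf$ evaluated at $\ybf$, one at $\ybf$ evaluated at $\xbf$) gives $0 \ge \inner{\vbf-\wbf}{\ybf-\xbf} - \rho\norm{\xbf-\ybf}^2$, hence $g(\ybf) - g(\xbf) \le \inner{\vbf}{\ybf-\xbf} \le L\norm{\ybf-\xbf}$, and symmetrically $g(\xbf) - g(\ybf) \le L\norm{\xbf-\ybf}$, so $|g(\xbf)-g(\ybf)| \le L\norm{\xbf-\ybf}$. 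One subtlety is that $\partial g(\xbf)$ must be nonempty on $\Scal$; this follows from weak convexity on the open set $\Scal$ (the shifted convex function is subdifferentiable on the interior of its domain), so I would note that at the outset.

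For the implication $a) \Rightarrow b)$: suppose $g$ is $L$-Lipschitz on $\Scal$, fix $\xbf \in \Scal$ and $\vbf \in \partial g(\xbf)$. Choose $\delta > 0$ with $B_\delta(\xbf) \subseteq \Scal$. For any unit vector $\dbf$ and $t \in (0,\delta)$, the weak-convexity subgradient inequality at $\xbf$ evaluated at $\xbf + t\dbf$ gives $g(\xbf + t\dbf) - g(\xbf) \ge t\inner{\vbf}{\dbf} - \frac{\rho}{2}t^2$, while Lipschitzness gives $g(\xbf+t\dbf) - g(\xbf) \le Lt$. Combining, $\inner{\vbf}{\dbf} \le L + \frac{\rho}{2}t$; letting $t \to 0^+$ yields $\inner{\vbf}{\dbf} \le L$ for every unit $\dbf$, hence $\norm{\vbf} \le L$. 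The constant in $b)$ thus comes out equal to the constant in $a)$ (and vice versa the constant transfers without loss), which is a clean statement worth recording.

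The main obstacle is mostly bookkeeping rather than depth: one must be careful that subgradients exist on $\Scal$ (invoking subdifferentiability of the associated convex function on the interior of its domain), and that the quadratic error terms vanish in the limit — which is exactly why the openness of $\Scal$ is needed. No result beyond the definition of weak convexity and basic convex analysis (subdifferentiability on open sets) is required; in particular this lemma is self-contained and does not depend on the smoothing machinery in the main text, so it can be placed in the appendix as stated.
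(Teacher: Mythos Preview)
Your $a)\Rightarrow b)$ direction is correct and essentially identical to the paper's proof: both use openness of $\Scal$ to take a small step in a well-chosen direction, combine the Lipschitz upper bound with the weak-convexity lower bound, and let the step length tend to zero so the quadratic correction vanishes.

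Your $b)\Rightarrow a)$ direction, however, has a genuine gap. The ``hence $g(\ybf)-g(\xbf)\le\inner{\vbf}{\ybf-\xbf}$'' step does not follow: with $\vbf\in\partial g(\xbf)$, weak convexity gives the \emph{lower} bound $g(\ybf)-g(\xbf)\ge\inner{\vbf}{\ybf-\xbf}-\frac{\rho}{2}\norm{\ybf-\xbf}^2$, not an upper bound (try $g(x)=x^2$, $\xbf=0$, $\ybf=1$). Using the subgradient $\wbf$ at $\ybf$ instead yields $g(\ybf)-g(\xbf)\le\inner{\wbf}{\ybf-\xbf}+\frac{\rho}{2}\norm{\ybf-\xbf}^2$, and the quadratic term does not disappear when $\xbf,\ybf$ are far apart on a possibly unbounded $\Scal$. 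The monotonicity inequality you derived by adding the two subgradient inequalities is correct but plays no role in bounding $g(\ybf)-g(\xbf)$.

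The paper handles this direction differently: it first notes that a weakly convex function is locally Lipschitz on the interior of its domain, and then invokes the mean value theorem for locally Lipschitz functions to write $g(\xbf)-g(\ybf)=\inner{\zeta}{\xbf-\ybf}$ for some $\zeta\in\partial g(\theta\xbf+(1-\theta)\ybf)$ with $\theta\in(0,1)$; since the intermediate point lies in the convex set $\Scal$, $\norm{\zeta}\le L$ and Cauchy--Schwarz finishes. Your parenthetical ``alternatively one can argue along the segment'' is in fact the right idea: partitioning $[\xbf,\ybf]$ into $N$ equal pieces and summing the one-step bounds gives $g(\ybf)-g(\xbf)\le L\norm{\ybf-\xbf}+\frac{\rho}{2}\norm{\ybf-\xbf}^2/N\to L\norm{\ybf-\xbf}$, which is an elementary substitute for the MVT. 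Either route works; your stated ``cleanest route'' does not.
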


\begin{proof}
First, we show ``$b \Raw a$''. Since $g$ is weakly convex on $\Scal$, it is locally Lipschitz continuous (see \citet[Lemma 4.4.1]{cui2021modern}). 
  By the mean value theorem~\citep[Theorem 7.44]{mordukhovich2022convex}, we have 
\[
g(\xbf)-g(\ybf)  = \inner{\zeta}{\xbf-\ybf}, 
\]
for some $\zeta\in\partial g(\theta\xbf+(1-\theta)\ybf)$, $\theta\in(0,1)$. Applying Cauchy-Schwarz's inequality, we immediately obtain part~a.

Next, we show ``$a \Raw b$''. The idea follows closely the proof of a similar result for a convex function~\citep[Theorem 3.61]{beck2017first}. Since $\Scal$ is an open set, for any $\xbf\in\Scal$,
$\vbf\in\partial g(\xbf)$, there exists an $\bar{\vep}\in(0,\infty)$
such that for any $\vep\in(0,\bar{\vep})$, the neighborhood $N_{\vep}(\xbf)=\{\ybf:\norm{\ybf-\xbf}\le\vep\}\subseteq\Scal$.
Moreover, let $\vbf^\dag\in\Ebb$ be a vector such that $\norm{\vbf^\dag}=1$, and $\inner{\vbf}{\vbf^\dag}=\norm{\vbf}_*$.
Therefore, $\xbf+\vep\vbf^\dag\in\Scal$. Using Lipschitz
continuity and weak convexity (of modulus $\mu$, $\mu>0$), we have 
\[
L\vep=L\norm{\vep\vbf^\dag}\ge g\rbra{\xbf+\vep\vbf^\dag}-g(\xbf)\ge\inner{\vbf}{\vep\vbf^\dag}-\frac{\mu}{2}\norm{\vep\vbf^\dag}^{2}=\norm{\vbf}\vep-\frac{\mu}{2}\vep^{2}.
\]
Dividing both sides by $\vep$ gives $\norm{\vbf}-\frac{\mu}{2}\vep\le L.$
Since $\vep$ can be arbitrarily small, we conclude that $\norm{\vbf}\le L$,
thereby completing the proof.
\end{proof}

\paragraph{Three-point inequality}
The following three-point property is known, and it will be used several times in our analysis.
\begin{lem}\label{lem:three-point}
Let $f$ be a $\rho$-weakly convex function and define $\ybf_{\xbf} = \prox_{f/\beta}(\xbf)$ for some $\beta\ge\rho$. Then, for any $\zbf \in \reals^{d}$,
\begin{equation}
\begin{aligned}
f(\zbf) - f(\ybf_{\xbf}) \geq \frac{\beta}{2}\norm{\xbf-\ybf_{\xbf}}^{2} + \frac{\beta-\rho}{2}\norm{\zbf-\ybf_{\xbf}}^{2} - \frac{\beta}{2}\norm{\zbf-\xbf}^{2}.
\end{aligned}
\label{eq:opt-01}
\end{equation}
If $f$ is $\mu$-strongly convex ($\mu>0$), then the inequality~\eqref{eq:opt-01} holds with $\rho=-\mu$.
\end{lem}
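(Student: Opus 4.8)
The plan is to establish the three-point inequality \eqref{eq:opt-01} by combining the weak convexity of $f$ with the optimality (first-order) condition characterizing the proximal point $\ybf_{\xbf}$. First I would write down the optimality condition for $\ybf_{\xbf} = \prox_{f/\beta}(\xbf) = \argmin_{\ybf}\{f(\ybf) + \tfrac{\beta}{2}\norm{\ybf-\xbf}^2\}$. Since the objective $\ybf \mapsto f(\ybf) + \tfrac{\beta}{2}\norm{\ybf-\xbf}^2$ is $(\beta-\rho)$-strongly convex (using that $f + \tfrac{\rho}{2}\norm{\cdot}^2$ is convex and $\beta > \rho$), its unique minimizer $\ybf_{\xbf}$ satisfies $\zerobf \in \partial f(\ybf_{\xbf}) + \beta(\ybf_{\xbf} - \xbf)$, i.e. there is a subgradient $\gbf \in \partial f(\ybf_{\xbf})$ with $\gbf = \beta(\xbf - \ybf_{\xbf})$.

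The key step is then the subgradient inequality for weakly convex functions: for any $\zbf \in \reals^d$ and any $\gbf \in \partial f(\ybf_{\xbf})$,
\[
f(\zbf) \geq f(\ybf_{\xbf}) + \inner{\gbf}{\zbf - \ybf_{\xbf}} - \frac{\rho}{2}\norm{\zbf - \ybf_{\xbf}}^2.
\]
Substituting $\gbf = \beta(\xbf - \ybf_{\xbf})$ gives
\[
f(\zbf) - f(\ybf_{\xbf}) \geq \beta\inner{\xbf - \ybf_{\xbf}}{\zbf - \ybf_{\xbf}} - \frac{\rho}{2}\norm{\zbf - \ybf_{\xbf}}^2.
\]
The remaining work is purely algebraic: I would apply the polarization identity $\inner{\xbf - \ybf_{\xbf}}{\zbf - \ybf_{\xbf}} = \tfrac{1}{2}\norm{\xbf - \ybf_{\xbf}}^2 + \tfrac{1}{2}\norm{\zbf - \ybf_{\xbf}}^2 - \tfrac{1}{2}\norm{\zbf - \xbf}^2$ to the inner-product term, which converts the bound into exactly
\[
f(\zbf) - f(\ybf_{\xbf}) \geq \frac{\beta}{2}\norm{\xbf - \ybf_{\xbf}}^2 + \frac{\beta - \rho}{2}\norm{\zbf - \ybf_{\xbf}}^2 - \frac{\beta}{2}\norm{\zbf - \xbf}^2,
\]
which is \eqref{eq:opt-01}.

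For the strongly convex case, the only modification is that if $f$ is $\mu$-strongly convex then $f(\zbf) \geq f(\ybf_{\xbf}) + \inner{\gbf}{\zbf - \ybf_{\xbf}} + \tfrac{\mu}{2}\norm{\zbf - \ybf_{\xbf}}^2$, which is the weak-convexity inequality with $\rho$ replaced by $-\mu$; the identical algebra then yields \eqref{eq:opt-01} with $\rho = -\mu$. I do not anticipate a genuine obstacle here — the result is essentially the standard prox three-point lemma adapted to the weakly convex setting — the only point requiring mild care is confirming that the proximal minimizer is well-defined and unique (which follows from $(\beta-\rho)$-strong convexity of the proximal objective when $\beta > \rho$, and from $(\beta+\mu) > 0$ in the strongly convex case), so that the first-order condition is both necessary and available.
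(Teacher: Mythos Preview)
Your proposal is correct and essentially mirrors the paper's proof: both combine the optimality condition $\zerobf\in\partial f(\ybf_{\xbf})+\beta(\ybf_{\xbf}-\xbf)$ with the (weak/strong) convexity inequality, differing only cosmetically in that the paper applies strong convexity directly to the composite $f(\cdot)+\tfrac{\beta}{2}\norm{\cdot-\xbf}^2$ (so the subgradient term vanishes), while you apply weak convexity to $f$ and then expand the inner product via the polarization identity. The two routes are algebraically equivalent and equally valid.
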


\begin{proof}
Observe that the function $f(\zbf) + \frac{\beta}{2}\norm{\zbf-\xbf}^2$ is $(\beta-\rho)$-strongly convex with respect to $\zbf$. Consequently, for any subgradient $\vbf \in \partial f(\ybf_\xbf) + \beta(\ybf_\xbf - \xbf)$, the following inequality holds:
\[
f(\zbf) + \frac{\beta}{2}\norm{\zbf-\xbf}^2 \geq f(\ybf_\xbf) + \frac{\beta}{2}\norm{\ybf_\xbf-\xbf}^2 + \inner{\vbf}{\zbf-\ybf_\xbf} + \frac{\beta-\rho}{2}\norm{\zbf-\ybf_\xbf}^2.
\]
The optimality condition yields $\zerobf \in \partial f(\ybf_\xbf) + \beta(\ybf_\xbf - \xbf)$. Substituting $\vbf = \zerobf$ into the above inequality establishes the desired result.
\end{proof}

\section{Missing Proofs}

\subsection{Proof of Lemma \ref{lem:moreau}}

\textsf{Proof of relation \eqref{eqn:moreau-gradient}.} For any $\xbf,\hat{\xbf}\in\reals^{d}$, denote $\ybf_{\xbf}=\prox_{f/\beta}(\xbf)$
and $\ybf_{\hat{\xbf}}=\prox_{f/\beta}(\hat{\xbf})$. In view of \Cref{lem:three-point}, for any $\ybf\in\reals^{d}$, 
we have 
\begin{equation}
\begin{aligned}f(\ybf) & \ge f(\ybf_{\xbf})+\frac{\beta}{2}\norm{\ybf_{\xbf}-\xbf}^{2}-\frac{\beta}{2}\norm{\ybf-\xbf}^{2}+\frac{\beta-\rho}{2}\norm{\ybf-\ybf_{\xbf}}^{2}\\
 & =f(\ybf_{\xbf})+\beta\inner{\xbf-\ybf_{\xbf}}{\ybf-\ybf_{\xbf}}-\frac{\rho}{2}\norm{\ybf-\ybf_{\xbf}}^{2}.
\end{aligned}
\label{eq:opt-04}
\end{equation}
Plugging $\ybf=\ybf_{\hat{\xbf}}$ into \eqref{eq:opt-04} and switching the roles of $\xbf$ and $\hat\xbf$, we obtain the following two relations:
\vspace{-20pt}
\begin{align}
f(\ybf_{\hat{\xbf}}) & \ge f(\ybf_{\xbf})+\beta\inner{\xbf-\ybf_{\xbf}}{\ybf_{\hat{\xbf}}-\ybf_{\xbf}}-\frac{\rho}{2}\norm{\ybf_{\xbf}-\ybf_{\hat{\xbf}}}^{2},\label{eq:opt-05} \\
f(\ybf_{\xbf}) & \ge f(\ybf_{\hat{\xbf}})+\beta\inner{\hat{\xbf}-\ybf_{\hat{\xbf}}}{\ybf_{\xbf}-\ybf_{\hat{\xbf}}}-\frac{\rho}{2}\norm{\ybf_{\xbf}-\ybf_{\hat{\xbf}}}^{2}.
\label{eq:opt-06}
\end{align}
Using \eqref{eq:opt-05} again, we have
\begin{equation}
\begin{aligned}f^{\beta}(\hat{\xbf})-f^{\beta}(\xbf) & =f(\ybf_{\hat{\xbf}})+\frac{\beta}{2}\norm{\ybf_{\hat{\xbf}}-\hat{\xbf}}^{2}-f(\ybf_{\xbf})-\frac{\beta}{2}\norm{\ybf_{\xbf}-\xbf}^{2}\\
 & \ge\beta\inner{\xbf-\ybf_{\xbf}}{\ybf_{\hat{\xbf}}-\ybf_{\xbf}}+\frac{\beta}{2}\norm{\ybf_{\hat{\xbf}}-\hat{\xbf}}^{2}-\frac{\beta}{2}\norm{\ybf_{\xbf}-\xbf}^{2}-\frac{\rho}{2}\norm{\ybf_{\xbf}-\ybf_{\hat{\xbf}}}^{2}\\
 & =\beta\inner{\xbf-\ybf_{\xbf}}{\hat{\xbf}-\xbf}+\beta\inner{\xbf-\ybf_{\xbf}}{\ybf_{\hat{\xbf}}-\hat{\xbf}-(\ybf_{\xbf}-\xbf)}\\
 & \quad+\frac{\beta}{2}\norm{\ybf_{\hat{\xbf}}-\hat{\xbf}}^{2}-\frac{\beta}{2}\norm{\ybf_{\xbf}-\xbf}^{2}-\frac{\rho}{2}\norm{\ybf_{\xbf}-\ybf_{\hat{\xbf}}}^{2}\\
 & =\beta\inner{\xbf-\ybf_{\xbf}}{\hat{\xbf}-\xbf}+\frac{\beta}{2}\norm{\ybf_{\hat{\xbf}}-\hat{\xbf}-(\ybf_{\xbf}-\xbf)}^{2}-\frac{\rho}{2}\norm{\ybf_{\xbf}-\ybf_{\hat{\xbf}}}^{2}.
\end{aligned}
\label{eq:opt-07}
\end{equation}
It follows that
\[
\begin{aligned}
    &\frac{\beta}{2}\norm{\ybf_{\hat{\xbf}}-\hat{\xbf}-(\ybf_{\xbf}-\xbf)}^{2}  -\frac{\rho}{2}\norm{\ybf_{\xbf}-\ybf_{\hat{\xbf}}}^{2} 
    \\
    & = \frac{\beta-\rho}{2}\norm{\ybf_{\xbf}-\ybf_{\hat{\xbf}}}^2 -\beta\inner{\ybf_{\xbf}-\ybf_{\hat{\xbf}}}{\xbf-\hat\xbf} +\frac{\beta}{2} \norm{\xbf-\hat\xbf}^2\\
    & = \frac{\beta-\rho}{2}\bnorm{\ybf_\xbf-\ybf_{\hat\xbf} - \frac{\beta}{\beta-\rho}(\xbf-\hat\xbf)}^2  - \frac{\rho}{2(1-\rho/\beta)} \norm{\xbf-\hat\xbf}^2 \\
    &\ge -\frac{\rho}{2(1-\rho/\beta)} \norm{\xbf-\hat\xbf}^2.
\end{aligned}
\]
Combining the above inequalities gives
\begin{equation}
f^{\beta}(\hat{\xbf})-f^{\beta}(\xbf)\ge\beta\inner{\xbf-\ybf_{\xbf}}{\hat{\xbf}-\xbf}-\frac{\rho}{2(1-\rho/\beta)} \norm{\xbf-\hat\xbf}^2.\label{eq:opt-08}
\end{equation}
On the other hand, following from \eqref{eq:opt-06} and using $\rho < \beta$, we have 
\begin{equation}
\begin{aligned} & f^{\beta}(\hat{\xbf})-f^{\beta}(\xbf)-\beta\inner{\xbf-\ybf_{\xbf}}{\hat{\xbf}-\xbf}\\
 & =f(\ybf_{\hat{\xbf}})+\frac{\beta}{2}\norm{\ybf_{\hat{\xbf}}-\hat{\xbf}}^{2}-f(\ybf_{\xbf})-\frac{\beta}{2}\norm{\ybf_{\xbf}-\xbf}^{2}-\beta\inner{\xbf-\ybf_{\xbf}}{\hat{\xbf}-\xbf}\\
 & \le\frac{\beta}{2}\norm{\ybf_{\xbf}-\hat{\xbf}}^{2}-\frac{\beta}{2}\norm{\ybf_{\xbf}-\xbf}^{2}-\beta\inner{\xbf-\ybf_{\xbf}}{\hat{\xbf}-\xbf}=\frac{\beta}{2}\norm{\xbf-\hat{\xbf}}^{2}.
\end{aligned}
\label{eq:opt-09}
\end{equation}
The relations \eqref{eq:opt-07} and \eqref{eq:opt-08} together imply that
\[
\lim_{\hat{\xbf}\rightarrow\xbf}\frac{\left|f^{\beta}(\hat{\xbf})-f^{\beta}(\xbf)-\beta\inner{\xbf-\ybf_{\xbf}}{\hat{\xbf}-\xbf}\right|}{\norm{\xbf-\hat{\xbf}}}
\le
\max\left\{\frac{\beta}{2}, \frac{\rho}{2(1-\rho/\beta)} \right\}\lim_{\hat{\xbf}\rightarrow\xbf} \norm{\xbf-\hat\xbf} =0.
\]
Hence $f^{\beta}$ is Fr\'echet differentiable with $\nabla f^{\beta}(\xbf)=\beta(\xbf-\ybf_{\xbf})$. Finally, using the optimality condition yields $\zerobf \in \beta(\ybf_{\xbf} - \xbf) + \partial f(\prox_{f/\beta}(\xbf))$, which gives the desired inclusion.\\

\subsection{Proof of Lemma \ref{lem:smoothness}}
 \textsf{Part 1)}. The proof for the  $L$-Lipschitz smooth case is standard; we include a proof for completeness.
        Fix $\xbf,\ybf\in\Xcal$ and define \( h(t) = g(\ybf + t(\xbf - \ybf)) \). By the fundamental theorem of calculus, we have  $g(\xbf)-g(\ybf)=\int_0^1 h'(t) \mathrm{d}t=\int_0^1 \langle \nabla g(\ybf+t(\xbf-\ybf)), \xbf-\ybf \rangle \mathrm{d}t$. 
        Subtracting $\langle \nabla g(\ybf), \xbf-\ybf \rangle $, applying Cauchy-Schwarz, and invoking $L$ smoothness yields
        \begin{equation*}
        \begin{aligned}
        g(\xbf)-g(\ybf)-\langle \nabla g(\ybf), \xbf-\ybf \rangle  & = \int_0^1 \langle \nabla g(\ybf+t(\xbf-\ybf))-\nabla g(\ybf), \xbf-\ybf \rangle \mathrm{d} t \\
        & \le \int_0^1 \norm{\nabla g(\ybf+t(\xbf-\ybf))-\nabla g(\ybf)}\norm{\xbf-\ybf} \mathrm{d}t \\
        &  \le  \int_0^1 Lt \norm{\xbf-\ybf} ^2  \mathrm{d}t  = \frac{L}{2} \norm{\xbf-\ybf}^2.
        \end{aligned}
        \end{equation*}
For \textsf{Part 2)}, we first use $\rho$-weak convexity:
    $    g(\xbf)-g(\ybf)
        \;\le\;
        \langle\nabla g(\xbf),\xbf-\ybf\rangle
        +\frac{\rho}{2}\|\xbf-\ybf\|^{2}.$
    Adding and subtracting $\langle \nabla g(\ybf), \xbf - \ybf \rangle$ and applying Cauchy-Schwarz inequality together with~\eqref{eq:gen-smooth} gives
    \begin{equation}
        \begin{aligned}
            g(\xbf)-g(\ybf) & \leq \inner{\nabla g(\ybf)}{\xbf-\ybf}  + \inner{\nabla g(\xbf)-\nabla g(\ybf)}{\xbf-\ybf} + \frac{\rho}{2}\norm{\xbf-\ybf}^2 \\
            & \le \inner{\nabla g(\ybf)}{\xbf-\ybf}  + \norm{\nabla g(\xbf)-\nabla g(\ybf)}\cdot \norm {\xbf-\ybf} + \frac{\rho}{2}\norm{\xbf-\ybf}^2 \\
            & \le \inner{\nabla g(\ybf)}{\xbf-\ybf}  + \frac{\rho + 2\Lcal(\ybf,\xbf)}{2} \norm{\xbf-\ybf}^2.
        \end{aligned}
    \end{equation}

\subsection{Proof of Proposition \ref{prop:tightness}}

Let us consider the function 
$g(x)\coloneqq\frac{1}{p} x^p-\frac{\rho}{2} x^2,$
where $p>0$ is an even number and $x^p$ is convex. Therefore, $g(x)$ is $\rho$-weakly convex by definition.  The gradient  of $g$ is $g'(x)=x^{p-1}-\rho x$. For any $x, y\in\reals$, 
we have
    \begin{equation}
\begin{aligned}
    |g'(x)-g'(y)| & = |x^{p-1} -y^{p-1} - \rho(x-y)| =\big|\sum_{k=0}^{p-2} x^{p-2-k} y^k-\rho\big|  |x-y|
\end{aligned}
\end{equation}
Thus, $g$ is $\Lcal_g$-generalized smooth with $\Lcal_g(x,y)=\big|\sum_{k=0}^{p-2} x^{p-2-k} y^k-\rho\big|$.

 Let $p$ be any even integer larger than $2/\vep$, set $x=0$, $y\in (\rho^{1/(p-2)},+\infty)$. We denote $0^0=1$ and $\Lcal_g(0,y)=y^{p-2}-\rho
$.
Note that 
\begin{equation}
    g(x) - g(y)-g'(y)(x-y) = \underbrace{\left(\frac{2}{p}  \sum_{k=0}^{p-2} (k+1) x^{p-2-k} y^k-\rho\right)}_{\tilde{L}_g(x,y)} \frac{(x-y)^2}{2}.
\end{equation}
We have $\tilde{L}_g(0, y)=\frac{2(p-1)}{p}y^{p-2} - \rho > 0$. 
It follows that
\[
\begin{aligned}
         \tilde{L}_g(0,y)- (1-\vep)\sbra{\rho+2\Lcal_g(0,y)} 
        & = \tfrac{2(p-1)}{p}y^{p-2} - \rho - (1-\vep)\sbra{2y^{p-2}-\rho}\nonumber \\
        & =2(\vep-\tfrac{1}{p})  y^{p-2} - \vep \rho  \nonumber \\
        & \ge \vep (y^{p-2}-\rho) \ge 0, 
    \end{aligned}
\]
where the last inequality uses the assumption $p \geq \frac{2}{\varepsilon}$. The relation rearranges to the desired inequality.

\subsection{Proof of Proposition \ref{prop:moreau-smoothing}}

\textsf{Part 1).} 
 In view of \eqref{eq:opt-08} and \eqref{eq:opt-09}, we conclude the quadratic bound~\eqref{eq:quad-bound-2}. The Lipschitz smoothness follow from a more general result~\citep[Lemma~7]{boob2024level}. Here, we use the argument of Corollary 3.4~\citep{hoheisel2010proximal} to give a self-contained proof specified for the Moreau envelope. 
 We first establish some nonexpansiveness properties. Summing up \eqref{eq:opt-05} and \eqref{eq:opt-06}, we obtain 
\begin{equation}
(\beta-\rho)\norm{\ybf_{\xbf}-\ybf_{\hat{\xbf}}}^{2}\le\beta\inner{\xbf-\hat{\xbf}}{\ybf_{\xbf}-\ybf_{\hat{\xbf}}}.\label{eq:expansive}
\end{equation}
Applying Cauchy-Schwarz inequality, we immediately obtain 
\begin{equation}
(\beta-\rho)\norm{\ybf_{\xbf}-\ybf_{\hat{\xbf}}}\le\beta\norm{\xbf-\hat{\xbf}}.\label{eq:expansive-2}
\end{equation}
 Note that
\begin{equation}
\begin{aligned}\norm{\nabla f^{\beta}(\hat{\xbf})-\nabla f^{\beta}(\xbf)}^{2} & =\beta^{2}\norm{\xbf-\hat{\xbf}-\ybf_{\xbf}+\ybf_{\hat{\xbf}}}^{2}\\
 & =\beta^{2}\brbra{\norm{\xbf-\hat{\xbf}}^{2}+\norm{\ybf_{\xbf}-\ybf_{\hat{\xbf}}}^{2}-2\inner{\xbf-\hat{\xbf}}{\ybf_{\xbf}-\ybf_{\hat{\xbf}}}}.
\end{aligned}
\label{eq:opt-11}
\end{equation}
When $\beta\ge2\rho$, combining \eqref{eq:expansive} and \eqref{eq:opt-11}, gives
\begin{equation}
\begin{aligned}\norm{\nabla f^{\beta}(\hat{\xbf})-\nabla f^{\beta}(\xbf)}^{2} & \le\beta^{2}\big({\norm{\xbf-\hat{\xbf}}^{2}+\rbra{2\rho/\beta-1}\norm{\ybf_{\xbf}-\ybf_{\hat{\xbf}}}^{2}\big)}%
\end{aligned}
\end{equation}
and we have $\norm{\nabla f^{\beta}(\hat{\xbf})-\nabla f^{\beta}(\xbf)}^{2}\le\beta^{2}\norm{\xbf-\hat{\xbf}}^{2}$.
Hence $\nabla f^{\beta}(\xbf)$ is $\beta$-Lipschitz continuous.
When $\beta \in (\rho, 2\rho)$, \eqref{eq:expansive-2} implies
\begin{equation}
\begin{aligned}\norm{\nabla f^{\beta}(\hat{\xbf})-\nabla f^{\beta}(\xbf)}^{2} & \le\beta^{2}\Brbra{\norm{\xbf-\hat{\xbf}}^{2}+\brbra{\frac{1}{1-\rho/\beta}-2}\inner{\xbf-\hat{\xbf}}{\ybf_{\xbf}-\ybf_{\hat{\xbf}}}}\\
 & \le\beta^{2}\brbra{\norm{\xbf-\hat{\xbf}}^{2}+\brbra{\frac{1}{1-\rho/\beta}-2}\norm{\xbf-\hat{\xbf}}\norm{\ybf_{\xbf}-\ybf_{\hat{\xbf}}}}\\
 & \le\beta^{2}\brbra{\norm{\xbf-\hat{\xbf}}^{2}+\brbra{\frac{1}{1-\rho/\beta}-2}\frac{1}{1-\rho/\beta}\norm{\xbf-\hat{\xbf}}^{2}}\\
 & =\brbra{\frac{\rho}{1-\rho/\beta}}^{2}\norm{\xbf-\hat{\xbf}}^{2},
\end{aligned}
\label{eq:opt-12}
\end{equation}
and this completes the proof.

\textsf{For Part 2)}, placing $\zbf=\xbf$ in \eqref{eq:opt-01} gives 
$f(\xbf)-f^{\beta}(\xbf)\ge\frac{\beta-\rho}{2}\norm{\xbf-\ybf_{\xbf}}^{2}=\frac{(1-\rho/\beta)}{2\beta}\norm{\nabla f^{\beta}(\xbf)}^{2}.$
Let $\vbf\in\partial f(\xbf)$ denote a minimum-norm subgradient. 
Using the definition of $f^{\beta}(\xbf)$,  we have 
\begin{equation}
\begin{aligned}f^{\beta}(\xbf) & =\min_{\ybf}\bcbra{f(\ybf)+\frac{\beta}{2}\norm{\xbf-\ybf}^{2}}\\
 & \ge\min_{\ybf}\bcbra{f(\xbf)+\inner{\vbf}{\ybf-\xbf}+\frac{\beta-\rho}{2}\norm{\xbf-\ybf}^{2}}\\
 & \ge f(\xbf)+\min_{\ybf}\bcbra{-\norm{\vbf}\cdot\norm{\ybf-\xbf}+\frac{\beta-\rho}{2}\norm{\xbf-\ybf}^{2}}\\
 & =f(\xbf)-\frac{\norm{\vbf}^{2}}{2(\beta-\rho)} = f(\xbf) - \frac{\|\partial f(\xbf)\|^2}{2(\beta - \rho)}.
\end{aligned}
\end{equation}

\subsection{Proof of Theorem \ref{thm:rate-smooth-sgd}}
For convenience, define $\deltabf^{k}\coloneqq\gbf^{k}-\nabla f_{\eta}(\xbf^{k})$ and $G^{k}\coloneqq\frac{1}{\gamma}(\xbf^{k-1}-\xbf^{k})$. Let $\hat{\xbf}^{k}=\prox_{\gamma r}(\xbf^{k-1}-\gamma\nabla f_{\eta}(\xbf^{k-1}))$
and $\hat{G}^{k}\coloneqq\Gcal_{\gamma}(\xbf^{k-1})$. Using the optimality condition on $\hat{\xbf}$ and $\xbf^k$ in their proximal updates, respectively, we have 
\begin{equation}
\inner{\nabla f_{\eta}(\xbf^{k-1})}{\hat{\xbf}^{k}-\xbf}+r(\hat{\xbf}^{k})-r(\xbf)\le\frac{1}{2\gamma}(\norm{\xbf-\xbf^{k-1}}^{2}-\norm{\hat{\xbf}^{k}-\xbf^{k-1}}^{2}-\norm{\hat{\xbf}^{k}-\xbf}^{2}),\label{eq:Feta-opt}
\end{equation}
and
\begin{equation}
\inner{\gbf^{k-1}}{\xbf^{k}-\xbf}+r(\xbf^{k})-r(\xbf)\le\frac{1}{2\gamma}(\norm{\xbf-\xbf^{k-1}}^{2}-\norm{\xbf^{k}-\xbf^{k-1}}^{2}-\norm{\xbf^{k}-\xbf}^{2}).\label{eq:gk-opt}
\end{equation}
Placing $\xbf=\xbf^{k-1}$ in \eqref{eq:gk-opt}, we have
\[
\inner{\gbf^{k-1}}{\xbf^{k}-\xbf^{k-1}}+r(\xbf^{k})-r(\xbf^{k-1})\le-\frac{1}{\gamma}\norm{\xbf^{k}-\xbf^{k-1}}^{2}.
\]
Applying the $L_\eta$-Lipschitz smoothness of $f_\eta$, we deduce that
\[
\begin{aligned}
f_\eta(\xbf^{k}) & \le f_\eta(\xbf^{k-1})+\inner{\nabla f_{\eta}(\xbf^{k-1})}{\xbf^{k}-\xbf^{k-1}}+\frac{L_{\eta}}{2}\norm{\xbf^{k}-\xbf^{k-1}}^{2}\\
 & =f_\eta(\xbf^{k-1})+\inner{\gbf^{k-1}-\deltabf^{k-1}}{\xbf^{k}-\xbf^{k-1}}+\frac{L_{\eta}}{2}\norm{\xbf^{k}-\xbf^{k-1}}^{2}\\
 & \le f_\eta(\xbf^{k-1})+\inner{\gbf^{k-1}}{\xbf^{k}-\xbf^{k-1}}+\frac{1}{2L_{\eta}}\norm{\deltabf^{k-1}}^{2}+L_{\eta}\norm{\xbf^{k}-\xbf^{k-1}}^{2},
\end{aligned}
\]
where the last inequality uses $\langle \xbf, \ybf\rangle \leq \frac{\alpha}{2}\|\xbf\|^2 + \frac{1}{2\alpha} \|\ybf\|^2$. Combining the above two inequalities and using the definition of $\deltabf^{k-1}$, we obtain 
\[\begin{aligned}
\phi_{\eta}(\xbf^{k}) & \le\phi_{\eta}(\xbf^{k-1})+\frac{1}{2L_{\eta}}\norm{\deltabf^{k-1}}^{2}-\frac{1-L_{\eta}\gamma}{\gamma}\norm{\xbf^{k}-\xbf^{k-1}}^{2}\\
 & \le\phi_{\eta}(\xbf^{k-1})+\frac{1}{2L_{\eta}}\norm{\deltabf^{k-1}}^{2}-\frac{1-L_{\eta}\gamma}{\gamma}\norm{\xbf^{k}-\xbf^{k-1}}^{2}.
\end{aligned}
\]
Using the definition of $G^{k}$, we have
\[
\norm{G^{k}}^{2}\le\frac{1}{\gamma-L_{\eta}\gamma^{2}}[\phi_{\eta}(\xbf^{k-1})-\phi_{\eta}(\xbf^{k})]+\frac{1}{(2\gamma-2L_{\eta}\gamma^{2})L_{\eta}}\norm{\deltabf^{k-1}}^{2}.\label{eq:descent-Gk}
\]

Placing $\xbf=\xbf^{k}$ and $\xbf=\hat{\xbf}^{k}$ in \eqref{eq:Feta-opt}
and \eqref{eq:gk-opt}, respectively, and sum up the resulting inequalities,
we have
\[
\inner{\deltabf^{k-1}}{\xbf^{k}-\hat{\xbf}^{k}}\le-\frac{1}{\gamma}\norm{\hat{\xbf}^{k}-\xbf^{k}}^{2}.
\]
Applying Cauchy Schwartz inequality $-\inner{\deltabf^{k-1}}{\xbf^{k}-\hat{\xbf}^{k}}\ge-\norm{\deltabf^{k-1}}\cdot\norm{\hat{\xbf}^{k}-\xbf^{k}}$.
Combining the above results, we have 
\[
\norm{\hat{\xbf}^{k}-\xbf^{k}}\le\gamma\norm{\deltabf^{k-1}}.
\]
Thus we have 
\begin{equation}
\frac{1}{2}\norm{\hat{G}^{k}}^{2}\le\norm{G^{k}}^{2}+\norm{\hat{G}^{k}-G^{k}}^{2}\le\norm{G^{k}}^{2}+\frac{1}{\gamma^{2}}\norm{\hat{\xbf}^{k}-\xbf^{k}}^{2}\le\norm{G^{k}}^{2}+\norm{\deltabf^{k-1}}^{2}.\label{eq:Gksquare-bound}
\end{equation}
Combining \eqref{eq:descent-Gk} and \eqref{eq:Gksquare-bound}, we
obtain 
\[
\norm{\hat{G}^{k}}^{2}\le\frac{2}{\gamma-L_{\eta}\gamma^{2}}[\phi_{\eta}(\xbf^{k-1})-\phi_{\eta}(\xbf^{k})]+\frac{1}{(\gamma-L_{\eta}\gamma^{2})L_{\eta}}\norm{\deltabf^{k-1}}^{2}+2\norm{\deltabf^{k-1}}^{2}.
\]
Setting $\gamma=1/(2L_{\eta})$, the relation simplifies to 
\[
\norm{\hat{G}^{k}}^{2}\le8L_{\eta}[\phi_{\eta}(\xbf^{k-1})-\phi_{\eta}(\xbf^{k})]+6\norm{\deltabf^{k-1}}^{2}.
\]
Summing over $k=1,2,\ldots, K$, taking expectation with respect to all sources of randomness, and noting that $\Expe[\norm{\deltabf^{k}}^{2}] \le \frac{\sigma^{2}}{m}$, we obtain
\[
\begin{aligned}
\frac{1}{K}\sum_{k=1}^{K} \Expe[\norm{\hat{G}^{k}}^{2}] &\le \frac{8L_{\eta} \Expe[\phi_{\eta}(\xbf^{0}) - \phi_{\eta}(\xbf^{K})]}{K} + \frac{6\sigma^{2}}{m} \\
&\le \frac{8L_{\eta} \Expe[\phi(\xbf^{0}) - \phi(\xbf^{K}) + R\eta]}{K} + \frac{6\sigma^{2}}{m} \\
&\le \frac{8L_{\eta} (\Delta + R\eta)}{K} + \frac{6\sigma^{2}}{m},
\end{aligned}
\]
where the second inequality uses the fact that $\phi_\eta(\xbf) \le \phi(\xbf)$, as established in the definition of the smooth approximation, and the final inequality follows from the bound $\Delta \ge \phi(\xbf^{0}) - \min_{\xbf} \phi(\xbf)$.

\subsection{Proof of Theorem \ref{thm:moreau-analysis}}
The key idea follows from the Moreau envelope-based analysis in \citet{davis2019stochastic}.
We set $\hat{\xbf}^{k}=\prox_{\phi_{\eta}/\hat{\rho}}(\xbf^{k-1})$.
By the optimality condition of the proximal mapping, we have
\begin{equation}
\phi_{\eta}(\hat{\xbf}^{k})\le\phi_{\eta}(\xbf)+\frac{\hat{\rho}}{2}\norm{\xbf-\xbf^{k-1}}^{2}-\frac{\hat{\rho}}{2}\norm{\hat{\xbf}^{k}-\xbf^{k-1}}^{2}-\frac{\hat{\rho}-\bar{\rho}}{2}\norm{\xbf-\hat{\xbf}^{k}}^{2}.\label{eq:phieta-opt}
\end{equation}
Placing $\xbf=\xbf^{k}$ in \eqref{eq:phieta-opt}, we obtain that
\[
\phi_{\eta}(\hat{\xbf}^{k})\le\phi_{\eta}(\xbf^{k})+\frac{\hat{\rho}}{2}\norm{\xbf^{k}-\xbf^{k-1}}^{2}-\frac{\hat{\rho}}{2}\norm{\hat{\xbf}^{k}-\xbf^{k-1}}^{2}-\frac{\hat{\rho}-\bar{\rho}}{2}\norm{\xbf^{k}-\hat{\xbf}^{k}}^{2}.
\]
Placing $\xbf=\hat{\xbf}^{k}$ in \eqref{eq:gk-opt} we have
\[
\inner{\gbf^{k-1}}{\xbf^{k}-\hat{\xbf}^{k}}+r(\xbf^{k})-r(\hat{\xbf}^{k})\le\frac{1}{2\gamma}(\norm{\hat{\xbf}^{k}-\xbf^{k-1}}^{2}-\norm{\xbf^{k}-\xbf^{k-1}}^{2}-\norm{\xbf^{k}-\hat{\xbf}^{k}}^{2}).
\]
Combining these two results gives
\begin{equation}
    \begin{aligned}
	& f_{\eta}(\hat{\xbf}^{k})-f_{\eta}(\xbf^{k})+\inner{\gbf^{k-1}}{\xbf^{k}-\hat{\xbf}^{k}}\nonumber 
	\\& \le{}-\frac{\gamma^{-1}-\hat{\rho}}{2}\norm{\xbf^{k}-\xbf^{k-1}}^{2}-\frac{\hat{\rho}-\bar{\rho}+\gamma^{-1}}{2}\norm{\xbf^{k}-\hat{\xbf}^{k}}^{2}+\frac{\gamma^{-1}-\hat{\rho}}{2}\norm{\hat{\xbf}^{k}-\xbf^{k-1}}^{2}. \label{eqn:thm-moreau-analyai-1}
    \end{aligned}
\end{equation}
We further notice that 
\begin{equation*}
\begin{aligned}
 & f_{\eta}(\hat{\xbf}^{k})-f_{\eta}(\xbf^{k})+\inner{\gbf^{k-1}}{\xbf^{k}-\hat{\xbf}^{k}}\\
 & =f_{\eta}(\hat{\xbf}^{k})-f_{\eta}(\xbf^{k-1})+\inner{\nabla f_{\eta}(\xbf^{k-1})}{\xbf^{k-1}-\hat{\xbf}^{k}}\\
 & \quad+f_{\eta}(\xbf^{k-1})-f_{\eta}(\xbf^{k})+\inner{\nabla f_{\eta}(\xbf^{k-1})}{\xbf^{k}-\xbf^{k-1}}+\inner{\deltabf^{k-1}}{\xbf^{k}-\hat{\xbf}^{k}}\\
 & \ge-\frac{\bar{\rho}}{2}\norm{\xbf^{k-1}-\hat{\xbf}^{k}}^{2}-\frac{L_{\eta}}{2}\norm{\xbf^{k}-\xbf^{k-1}}^{2}+\inner{\deltabf^{k-1}}{\xbf^{k}-\hat{\xbf}^{k}}, 
\end{aligned}
\end{equation*}
where the last inequality is due to the $\bar\rho$-weak convexity and $L_\eta$-smoothness of $f_\eta(\xbf)$. Combining the above two inequalities, we have
\begin{equation} \label{eqn:thm-moreau-analyai-3}
    \begin{aligned}
	0 & \le-\inner{\deltabf^{k-1}}{\xbf^{k}-\hat{\xbf}^{k}}-\frac{\gamma^{-1}-\hat{\rho}-L_{\eta}}{2}\norm{\xbf^{k}-\xbf^{k-1}}^{2} \\ 
    & \quad -\frac{\hat{\rho}-\bar\rho+\gamma^{-1}}{2}\norm{\xbf^{k}-\hat{\xbf}^{k}}^{2}+\frac{\gamma^{-1}+\bar{\rho}-\hat{\rho}}{2}\norm{\hat{\xbf}^{k}-\xbf^{k-1}}^{2}.
    \end{aligned}
\end{equation}
Notice that 
\[\begin{aligned}
 & -\inner{\deltabf^{k-1}}{\xbf^{k}-\hat{\xbf}^{k}}-\frac{\gamma^{-1}-\hat{\rho}-L_{\eta}}{2}\norm{\xbf^{k}-\xbf^{k-1}}^{2}\\
 & =-\inner{\deltabf^{k-1}}{\xbf^{k}-\xbf^{k-1}}  -\frac{\gamma^{-1}-\hat{\rho}-L_{\eta}}{2}\norm{\xbf^{k}-\xbf^{k-1}}^{2}- \inner{\deltabf^{k-1}}{\xbf^{k-1}-\hat{\xbf}^{k}}\\
 & \le\norm{\deltabf^{k-1}}\norm{\xbf^{k}-\xbf^{k-1}}-\frac{\gamma^{-1}-\hat{\rho}-L_{\eta}}{2}\norm{\xbf^{k}-\xbf^{k-1}}^{2}-\inner{\deltabf^{k-1}}{\xbf^{k-1}-\hat{\xbf}^{k}}\\
 & \le\frac{\norm{\deltabf^{k-1}}^{2}}{2(\gamma^{-1}-\hat{\rho}-L_{\eta})}-\inner{\deltabf^{k-1}}{\xbf^{k-1}-\hat{\xbf}^{k}}.
\end{aligned}
\]
Plugging the bound back into \eqref{eqn:thm-moreau-analyai-3}, we have
\[\begin{aligned}
0 & \le-\frac{\hat{\rho}-\rho+\gamma^{-1}}{2}\norm{\xbf^{k}-\hat{\xbf}^{k}}^{2}+\frac{\gamma^{-1}+\bar{\rho}-\hat{\rho}}{2}\norm{\xbf^{k-1}-\hat{\xbf}^{k}}^{2}+\frac{\norm{\deltabf^{k-1}}^{2}}{2(\gamma^{-1}-\hat{\rho}-L_{\eta})}-\inner{\deltabf^{k-1}}{\xbf^{k-1}-\hat{\xbf}^{k}} .
\end{aligned}
\]
Rearranging this inequality and dividing both sides by $\frac{\hat{\rho}-\bar{\rho}+\gamma^{-1}}{2}$,
we have
\[
\norm{\xbf^{k}-\hat{\xbf}^{k}}^{2}\le\norm{\xbf^{k-1}-\hat{\xbf}^{k}}^{2}+\frac{1}{\hat{\rho}-\bar{\rho}+\gamma^{-1}}\Big[\frac{\norm{\deltabf^{k-1}}^{2}}{(\gamma^{-1}-\hat{\rho}-L_{\eta})}-2\inner{\deltabf^{k-1}}{\xbf^{k-1}-\hat{\xbf}^{k}}-2(\hat{\rho}-\bar{\rho})\norm{\xbf^{k-1}-\hat{\xbf}^{k}}^{2}\Big].
\]
Following the definition of the Moreau envelope, we have
\[\begin{aligned}
\phi_\eta^{\hat\rho}(\xbf^{k})
 & =\min_{\xbf}\, \big\{\phi_{\eta}(\xbf)+\frac{\hat{\rho}}{2}\norm{\xbf-\xbf^{k}}^{2} \big\}\\
 & \le \phi_{\eta}(\hat{\xbf}^{k})+\frac{\hat{\rho}}{2}\norm{\hat{\xbf}^{k}-\xbf^{k}}^{2}\\
 & \le\phi_{\eta}(\hat{\xbf}^{k})+\frac{\hat{\rho}}{2}\norm{\hat{\xbf}^{k}-\xbf^{k-1}}^{2}+\frac{\hat{\rho}}{2(\hat{\rho}-\bar{\rho}+\gamma^{-1})}\Big[\frac{\norm{\deltabf^{k-1}}^{2}}{(\gamma^{-1}-\hat{\rho}-L_{\eta})}- \\
 & \qquad 2\inner{\deltabf^{k-1}}{\xbf^{k-1}-\hat{\xbf}^{k}}-2(\hat{\rho}-\bar{\rho})\norm{\hat{\xbf}^{k}-\xbf^{k-1}}^{2}\Big]\\
 & =\phi_\eta^{\hat\rho}(\xbf^{k-1})+\frac{\hat{\rho}}{2(\hat{\rho}-\bar{\rho}+\gamma^{-1})}\left[\frac{\norm{\deltabf^{k-1}}^{2}}{(\gamma^{-1}-\hat{\rho}-L_{\eta})}-2\inner{\deltabf^{k-1}}{\xbf^{k-1}-\hat{\xbf}^{k}}-2(\hat{\rho}-\bar{\rho})\norm{\hat{\xbf}^{k}-\xbf^{k-1}}^{2}\right].
\end{aligned}
\]
Plugging the value $\xbf^{k-1}-\hat{\xbf}^{k}=\hat{\rho}^{-1}\nabla\phi_\eta^{\hat\rho}(\xbf^{k-1})$
in the above inequality and rearranging, we have
\[
\frac{\hat{\rho}-\bar{\rho}}{\hat{\rho}(\hat{\rho}-\bar{\rho}+\gamma^{-1})}\norm{\nabla\phi_\eta^{\hat\rho}(\xbf^{k-1})}^{2}\le\phi_\eta^{\hat\rho}(\xbf^{k-1})-\phi_\eta^{\hat\rho}(\xbf^{k})+\frac{\hat{\rho}}{2(\hat{\rho}-\bar{\rho}+\gamma^{-1})}\left[\frac{\norm{\deltabf^{k-1}}^{2}}{(\gamma^{-1}-\hat{\rho}-L_{\eta})}-2\inner{\deltabf^{k-1}}{\xbf^{k-1}-\hat{\xbf}^{k}}\right].
\]
Taking expectation on both sides and noticing $\Ebb[\inner{\deltabf^{k-1}}{\xbf^{k-1}-\hat{\xbf}^{k}}]=0$,
and then rescaling, we have 
\[
\Ebb[\norm{\nabla\phi_\eta^{\hat\rho}(\xbf^{k-1})}^{2}] \le \frac{\hat{\rho}(\hat{\rho}-\bar{\rho}+\gamma^{-1})}{\hat{\rho}-\bar{\rho}}\Ebb[\phi_\eta^{\hat\rho}(\xbf^{k-1})-\phi_\eta^{\hat\rho}(\xbf^{k})]+\frac{\hat{\rho}^{2}}{2(\hat{\rho}-\bar{\rho})}\left[\frac{\sigma^{2}}{(\gamma^{-1}-\hat{\rho}-L_{\eta})m}\right].
\]
 Summing up the above relation over $k=0,1,\ldots,K-1$, we have
\[\begin{aligned}
\frac{1}{K}\sum^{K-1}_{k=0}\Ebb[\norm{\nabla\phi_\eta^{\hat\rho}(\xbf^{k})}^{2}] & \le\frac{\hat{\rho}(\hat{\rho}-\bar{\rho}+\gamma^{-1})}{\hat{\rho}-\bar{\rho}}\frac{\Ebb[\phi_\eta^{\hat\rho}(\xbf^{0})-\phi_\eta^{\hat\rho}(\xbf^{K})]}{K}+\frac{\hat{\rho}^{2}}{2(\hat{\rho}-\bar{\rho})}\frac{\sigma^{2}}{(\gamma^{-1}-\hat{\rho}-L_{\eta})m}\\
 & \le\frac{\hat{\rho}}{\hat{\rho}-\bar{\rho}}\left[ (\hat{\rho}-\bar{\rho}+\gamma^{-1})\frac{\Delta+R\eta}{K}+\frac{\hat{\rho}\sigma^{2}}{2(\gamma^{-1}-\hat{\rho}-L_{\eta})m}\right], 
\end{aligned}
\]
where the last inequality uses
\[\begin{aligned}
\phi_\eta^{\hat\rho}(\xbf^{0})-\phi_\eta^{\hat\rho}(\xbf^{K}) & \le\phi_\eta^{\hat\rho}(\xbf^{0})-\min_{\xbf}\phi_\eta^{\hat\rho}(\xbf)\\
 & \le\phi_{\eta}(\xbf^{0})-\min_{\xbf}\phi_{\eta}(\xbf)\\
 & \le\phi(\xbf^{0})-\min_{\xbf}\left[\phi(\xbf)-R\eta\right]\\
 & =\phi(\xbf^{0})-\min_{\xbf}\phi(\xbf)+R\eta\\
 & \le\Delta+R\eta.
\end{aligned}
\]
Suppose we take $\gamma=(c\sqrt{K}+\hat{\rho}+L_{\eta})^{-1}$ where
$c=\sqrt{\frac{\hat{\rho}}{2(\Delta+R\eta)m}}\sigma$, then we have
\[\begin{aligned}
\frac{1}{K}\sum^{K-1}_{k=0}\Ebb[\norm{\nabla\phi_\eta^{\hat\rho}(\xbf^{k})}^{2}] & \le\frac{\hat{\rho}}{\hat{\rho}-\bar{\rho}}\left\{ \frac{(2\hat{\rho}-\bar{\rho}+L_{\eta})(\Delta+R\eta)}{K}+\frac{c(\Delta+R\eta)}{\sqrt{K}}+\frac{\hat{\rho}\sigma^{2}}{2cm\sqrt{K}}\right\} \\
 & =\frac{\hat{\rho}}{\hat{\rho}-\bar{\rho}}\left\{ \frac{(2\hat{\rho}-\bar{\rho}+L_{\eta})(\Delta+R\eta)}{K}+\sqrt{\frac{2\hat{\rho}(\Delta+R\eta)}{mK}}\sigma\right\},
\end{aligned}
\]
and this completes the proof.

\subsection{Proof of Proposition~\ref{prop:convergence-bound}}
We begin by applying \Cref{lem:three-point}, which yields
\begin{equation}\label{eq:opt-2}
    \begin{aligned}
     & \langle \nabla g(\xbf^{t}), \zbf^{t}-\xbf^\star \rangle + \pi(\zbf^{t}) - \pi(\xbf^\star) \\
     &\le \frac{\gamma_{t}}{2}\norm{\xbf^\star-\zbf^{t-1}}^{2} - \frac{\gamma_{t}+\mu}{2}\norm{\xbf^\star-\zbf^{t}}^{2} - \frac{\gamma_{t}}{2}\norm{\zbf^{t}-\zbf^{t-1}}^{2}.
    \end{aligned}
\end{equation}
For convenience, let us define $\ell_{g}(\ybf, \xbf) \coloneqq g(\ybf) - g(\xbf) - \nabla g(\xbf)^{\top}(\ybf - \xbf)$ and $\hat{\zbf}^{t-1} \coloneqq (1-\beta_{t})\ybf^{t-1} + \beta_{t}\zbf^{t-1}$.
Utilizing the smoothness of $g(\xbf)$ and the strong convexity of $\pi(\xbf)$, we establish
\[\begin{aligned}
\psi(\ybf^{t}) & \leq g(\xbf^{t}) + \nabla g(\xbf^{t})^{\top}(\ybf^{t} - \xbf^{t}) + \pi(\ybf^{t}) + \frac{\hat{L}_{t}}{2}\norm{\ybf^{t} - \xbf^{t}}^{2}\\
 & \leq (1-\alpha_{t})\ell_{g}(\ybf^{t-1}, \xbf^{t}) + (1-\alpha_{t})\pi(\ybf^{t-1}) + \alpha_{t}\ell_{g}(\zbf^{t}, \xbf^{t}) + \alpha_{t}\pi(\zbf^{t})\\
 & \qquad + \frac{\hat{L}_{t}}{2}\norm{\ybf^{t} - \xbf^{t}}^{2} - \frac{\alpha_{t}(1-\alpha_{t})\mu}{2}\norm{\ybf^{t-1} - \zbf^{t}}^{2}\\
 & \leq (1-\alpha_{t})\psi(\ybf^{t-1}) + \alpha_{t}\ell_{g}(\zbf^{t}, \xbf^{t}) + \alpha_{t}\pi(\zbf^{t}) + \frac{\hat{L}_{t}}{2}\norm{\ybf^{t} - \xbf^{t}}^{2} - \frac{\alpha_{t}(1-\alpha_{t})\mu}{2}\norm{\ybf^{t-1} - \zbf^{t}}^{2},
\end{aligned}
\]
where the last inequality exploits the convexity of $\psi$.

Applying Jensen's inequality, we obtain
\[
\norm{\ybf^{t} - \xbf^{t}}^{2} = \alpha_{t}^{2}\norm{\zbf^{t} - \hat{\zbf}^{t-1}}^{2} \leq (1-\beta_{t})\alpha_{t}^{2}\norm{\zbf^{t} - \ybf^{t-1}}^{2} + \beta_{t}\alpha_{t}^{2}\norm{\zbf^{t} - \zbf^{t-1}}^{2}.
\]
Consequently,
\[\begin{aligned}
\psi(\ybf^{t}) & \leq (1-\alpha_{t})\psi(\ybf^{t-1}) + \alpha_{t}\ell_{g}(\zbf^{t}, \xbf^{t}) + \alpha_{t}\pi(\zbf^{t}) + \frac{\hat{L}_{t}\beta_{t}\alpha_{t}^{2}}{2}\norm{\zbf^{t} - \zbf^{t-1}}^{2}\\
 & \qquad + \frac{\hat{L}_{t}(1-\beta_{t})\alpha_{t}^{2} - \alpha_{t}(1-\alpha_{t})\mu}{2}\norm{\ybf^{t-1} - \zbf^{t}}^{2}\\
 & \leq (1-\alpha_{t})\psi(\ybf^{t-1}) + \alpha_{t}\ell_{g}(\xbf^{\star}, \xbf^{t}) + \alpha_{t}\pi(\xbf^{\star}) + \frac{\hat{L}_{t}\beta_{t}\alpha_{t}^{2} - \gamma_{t}\alpha_{t}}{2}\norm{\zbf^{t} - \zbf^{t-1}}^{2}\\
 & \qquad + \frac{\alpha_{t}\gamma_{t}}{2}\norm{\xbf^{\star} - \zbf^{t-1}}^{2} - \frac{\alpha_{t}(\gamma_{t} + \mu)}{2}\norm{\xbf^{\star} - \zbf^{t}}^{2}\\
 & \qquad + \frac{\hat{L}_{t}(1-\beta_{t})\alpha_{t}^{2} - \alpha_{t}(1-\alpha_{t})\mu}{2}\norm{\ybf^{t-1} - \zbf^{t}}^{2} \\
 & \leq (1-\alpha_{t})\psi(\ybf^{t-1}) + \alpha_{t}\psi(\xbf^{\star}) + \frac{\alpha_{t}\gamma_{t}}{2}\norm{\xbf^{\star} - \zbf^{t-1}}^{2} - \frac{\alpha_{t}(\gamma_{t} + \mu)}{2}\norm{\xbf^{\star} - \zbf^{t}}^{2},
\end{aligned}
\]
where the second inequality applies \eqref{eq:opt-2}, and the final step follows from the convexity of $\psi$ together with (\ref{eq:step-1}) and (\ref{eq:step-2}).

Rearranging and multiplying the preceding inequality by $\Gamma_{t}$, we have
\[
\Gamma_t[\psi(\ybf^t)-\psi(\xbf^\star)] \le \Gamma_{t}(1-\alpha_t) [\psi(\ybf^{t-1})-\psi(\xbf^\star)] + \frac{\Gamma_t\alpha_{t}\gamma_{t}}{2}\norm{\xbf^{\star} - \zbf^{t-1}}^{2} - \frac{\Gamma_t\alpha_{t}(\gamma_{t} + \mu)}{2}\norm{\xbf^{\star} - \zbf^{t}}^{2}.
\]
Summing over $t$, and invoking (\ref{eq:step-3}), leads directly to the assertion in~\eqref{eq:rec-bound-1}.

\subsection{Proof of Lemma \ref{lem:line-search-valid}}

    For notational simplicity, we omit the iteration index and denote by $\gamma$, $\hat{L}$, and $\bar{L}$ the respective line search parameters at the $t$-th iteration. By the optimality condition, we have
    \[
    \langle \nabla g(\xbf^{t}), \zbf^{t}-\xbf^{\star} \rangle + \pi(\zbf^{t}) - \pi(\xbf^{\star}) \leq \frac{\gamma_{t}}{2}\norm{\xbf^{\star}-\zbf^{t-1}}^{2} - \frac{\gamma+\mu}{2}\norm{\xbf^{\star}-\zbf^{t}}^{2}.
    \]
    Moreover, since $\xbf^{\star} \in \argmin_{\xbf} \langle \nabla g(\xbf^{\star}), \xbf \rangle + \pi(\xbf)$, it follows that
    \[
    \langle \nabla g(\xbf^{\star}), \xbf^{\star}-\zbf^{t} \rangle + \pi(\xbf^{\star})-\pi(\zbf^{t}) \leq 0.
    \]
    By summing the preceding two inequalities, we obtain
    \begin{equation} \label{eq:recursive-zt-dist}
        \begin{aligned}
    \frac{\gamma+\mu}{2}\norm{\xbf^\star-\zbf^{t}}^{2} & \le\frac{\gamma}{2}\norm{\xbf^\star-\zbf^{t-1}}^{2}-\inner{\nabla g(\xbf^{t})-\nabla g(\xbf^{\star})}{\zbf^{t}-\xbf^{\star}}\nonumber \\
     & \le\frac{\gamma}{2}\norm{\xbf^\star-\zbf^{t-1}}^{2}+\norm{\nabla g(\xbf^{t})-\nabla g(\xbf^{\star})}\norm{\zbf^{t}-\xbf^{\star}}\nonumber \\
     & \le\frac{\gamma}{2}\norm{\xbf^\star-\zbf^{t-1}}^{2}+\bar{L}D\norm{\zbf^{t}-\xbf^{\star}},
    \end{aligned}
    \end{equation}
    where the last inequality follows from the Lipschitz continuity of $\nabla g$ and the assumption $\norm{\xbf^{\star}-\xbf^{t}}\le D$. Define $D_{s}=\norm{\xbf^{\star}-\zbf^{s}}$, and we denote $\bar{L}=\sup\{L(\xbf,\ybf): \xbf,\ybf\in\Bcal_D(\xbf^\star)\}$. Then, applying~\eqref{eq:recursive-zt-dist}, we have
    \[
    D_{t}\le\frac{\bar{L}D}{\gamma+\mu}+\sqrt{\left(\frac{\bar{L}D}{\gamma+\mu}\right)^{2}+\frac{\gamma}{\gamma+\mu}D_{t-1}^{2}}
    \le\frac{2\bar{L}D}{\gamma+\mu}+D_{t-1}\le\left(\frac{2\bar{L}}{\gamma+\mu}+1\right)D.
    \]
    Since $\ybf^{t}$ is constructed as a convex combination of $\ybf^{t-1}$ and $\zbf^{t}$, we have
    \begin{equation} \label{eq:yt-bound}
        \begin{aligned}
    \norm{\ybf^{t}-\xbf^{\star}} & \le (1-\alpha_{t})\norm{\ybf^{t-1}-\xbf^{\star}}+\alpha_{t}\norm{\zbf^{t}-\xbf^{\star}}\nonumber \\
     & \le(1-\alpha_{t})\norm{\ybf^{t-1}-\xbf^{\star}}+\alpha_{t}\left(\frac{2\bar{L}}{\gamma+\mu}+1\right)D\nonumber \\
     & =\left(1+\frac{2\bar{L}\alpha_{t}}{\gamma+\mu}\right)D\nonumber \\
     & \le\left(1+\frac{2\bar{L}}{\hat{L}}\right)D,
    \end{aligned}
    \end{equation}
    where the last inequality applies the relation $\gamma = (\hat{L}+\mu)\alpha_{t}-\mu$.
    
    We now establish that the line search procedure must terminate after a finite number of iterations, proceeding by contradiction. Define
    \[
    \tilde{L} = \sup \left\{ 2\Lcal(\xbf,\ybf) : \xbf, \ybf \in \Bcal_{2D}(\xbf^{\star}) \right\}.
    \]
    We claim that the line search will terminate in at most $k^\star = \max \left\{ \log_{\tau_\mathrm{u}} \frac{2\tilde{L}}{\bar{L}_{t}}, 0 \right\} + 1$ steps. To see this, suppose for the sake of contradiction that the line search does not terminate within $k^\star$ iterations. Then, upon reaching $\hat{L} = \bar{L}_t \tau_\mathrm{u}^{k^\star+1}$, we would have $\hat{L} \ge 2\tilde{L}$. By applying inequality~\eqref{eq:yt-bound}, we observe that
    \[
    \norm{\ybf^t(\hat{L}) - \xbf^{\star}} \le \left(1 + \frac{2\bar{L}}{\hat{L}}\right) D \le \left(1 + \frac{2\bar{L}}{2\tilde{L}}\right) D \le 2D.
    \]
    Therefore, condition~\eqref{eq:ls-descent} must be satisfied, leading to a contradiction. This completes the argument.

\end{document}